\newcounter{jcomments}
\newcounter{acomments}
\newtheorem{thm}{Theorem}[section]
\newtheorem{prop}[thm]{Proposition}
\newtheorem{lemma}[thm]{Lemma}
\newtheorem{cor}[thm]{Corollary}
\newtheorem{claim}[thm]{Claim}
\theoremstyle{remark}
\newtheorem{rmk}[thm]{Remark}
\newtheorem{quest}[thm]{Question}
\theoremstyle{definition}
\newtheorem{defn}[thm]{Definition}
\newtheorem{exmp}[thm]{Example}
\newcommand{\Z}{\mathbb{Z}}
\newcommand{\TP}{\mathcal{TP}}
\newcommand{\B}{\mathcal{B}}
\newcommand{\R}{\mathbb{R}}
\newcommand{\C}{\mathcal{C}}
\newcommand{\Hbb}{\mathbb{H}}
\newcommand{\MCG}{\operatorname{MCG}}
\newcommand{\defeq}{\vcentcolon=}
\newcommand{\gammaup}[1]{\gamma^{(#1)}}
\newcommand{\alphaup}[1]{\alpha^{(#1)}}
\newcommand{\uga}{\underline{\gamma}}
\newcommand{\Ray}{\mathcal{R}}
\numberwithin{equation}{section}
\title{Laminations and 2-filling rays on infinite type surfaces}
\author{Lvzhou Chen}
\address{Department of Mathematics\\ The University of Texas at Austin \\ Austin, TX, USA}
\email[L.~Chen]{lvzhou.chen@math.utexas.edu}
\author{Alexander J. Rasmussen}
\address{Department of Mathematics \\ University of Utah \\ Salt Lake City, UT, USA}
\email[A.J.~Rasmussen]{rasmussen@math.utah.edu}
\begin{document}
	\maketitle
	
	\begin{abstract}
	The \emph{loop graph} of an infinite type surface is an infinite diameter hyperbolic graph first studied in detail by Juliette Bavard. An important open problem in the study of infinite type surfaces is to describe the boundary of the loop graph as a space of geodesic laminations. We approach this problem by constructing the first examples of \emph{$2$-filling rays} on infinite type surfaces. Such rays have strong filling properties while failing to correspond to points on the boundary of the loop graph. As such they may be thought of as ``fake boundary points.'' We give multiple constructions using both a hands-on combinatorial approach and an approach using train tracks and automorphisms of flat surfaces. In addition, our approaches are sufficiently robust to describe \emph{all} $2$-filling rays with certain other basic properties as well as to produce uncountably many distinct mapping class group orbits.
	\end{abstract}

\section{Introduction}
Mapping class groups of infinite type surfaces (so-called \emph{big mapping class groups}) have recently become an object of intense study, in part owing to their connections to dynamics and foliations of 3-manifolds. A key tool for studying big mapping class groups has been the \emph{loop graphs}. Introduced by Danny Calegari on his blog \cite{calblog} and first studied in detail by Juliette Bavard in \cite{ray}, the loop graph $L(S;p)$ of a surface $S$ with an isolated puncture $p$ is an infinite diameter hyperbolic graph (see \cite{ray} and \cite{simultaneous}). See below for the definition. The graph $L(S;p)$ is acted on by the subgroup $\MCG(S;p)$ of the mapping class group consisting of the mapping classes stabilizing $p$. It is in many ways analogous to the \emph{curve graph} $\C(S)$ of a finite-type surface $S$. Bavard in \cite{ray} and Bavard--Walker in \cite{simultaneous} have successfully applied the action $\MCG(S;p)\curvearrowright L(S;p)$ to study the \emph{second bounded cohomology} of $\MCG(S;p)$. Recently, Schaffer-Cohen has shown in \cite{sc} that $L(S;p)$ is an optimally strong model for the geometry of $\MCG(S;p)$ in the case that $S$ is the plane minus a Cantor set --- the two are quasi-isometric.

An obstacle to fully harnessing the power of the action $\MCG(S;p)\curvearrowright L(S;p)$ has been a non-trivial amount of mystery surrounding the Gromov boundary $\partial L(S;p)$. In the finite-type case, the Gromov boundary $\partial \C(S)$ may be identified with the space of \emph{ending laminations} on $S$ with the coarse Hausdorff topology (\cite{kla}). Ideally, in the infinite-type case one would like to have an analogous description of $\partial L(S;p)$ as a space of geodesic laminations. In this paper we shed some light on the problem of understanding $\partial L(S;p)$, while at the same time pointing to even more mystery than was previously known.

Our main goal in this paper is to prove the \textit{existence of 2-filling rays on infinite type surfaces}. We recall the definition. Let $S$ be an infinite type surface with an isolated puncture $p$ and fix a complete hyperbolic metric on $S$. A simple geodesic ray on $S$ is a \textit{loop} if it is asymptotic to $p$ at both ends. The graph $L(S;p)$ has as vertices the loops on $S$ and edges joining disjoint loops. It is a subgraph of a larger graph $\Ray(S;p)$, called the \textit{completed ray-and-loop graph}. We call a simple geodesic ray which is proper and asymptotic to $p$ at exactly one end a \textit{short} ray. We call a simple geodesic ray \textit{long} if it is asymptotic to $p$ and is neither short nor a loop. The vertex set of $\Ray(S;p)$ consists of all of the long rays, short rays, and loops on $S$ with edges joining disjoint pairs. From the definition, $L(S;p)$ is naturally a subgraph of $\Ray(S;p)$. It is shown in \cite{Bavard_Walker} and \cite{simultaneous} that the graph $\Ray(S;p)$ consists of uncountably many components. One of these components contains $L(S;p)$ and is quasi-isometric to it. The other components are \textit{cliques} of rays and we call the members of these cliques \textit{high-filling}. The Gromov boundary $\partial L(S;p)$ is naturally identified with the set of cliques of high-filling rays on $S$. From the definitions, a high-filling ray in particular is \textit{filling} in the sense that it intersects every short ray and loop.

We define a ray to be \textit{2-filling} if it is filling but not high-filling. Bavard--Walker showed in \cite{simultaneous} that this is equivalent to the following: the ray $\gamma$ is 2-filling if it intersects every loop, but is disjoint from a long ray $\tau$ which is in turn disjoint from a loop. Thus, 2-filling rays \emph{just slightly fail} to be high-filling. Wondering if all filling rays are high-filling, Bavard--Walker asked the following in \cite{Bavard_Walker}:

\begin{quest}{\cite[Question 2.7.7]{Bavard_Walker}}
Does there exist an example of a surface $S$ with an isolated puncture and a 2-filling ray on $S$?
\end{quest}

We answer their question in the \textit{positive} by explicitly constructing \textit{many} examples of \textit{2-filling rays} on $S$ when $S$ is the plane minus a Cantor set. 

\begin{thm}
There are uncountably many distinct mapping class group orbits of $2$-filling rays on $S$.
\end{thm}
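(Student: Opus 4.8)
The plan is to prove the theorem by an explicit construction followed by an orbit-separation argument. First I would fix a convenient combinatorial model for $S=\mathbb{R}^2\setminus(\text{Cantor set})$: either an exhaustion of $S$ by finite-type subsurfaces indexed by the binary tree that organizes the Cantor set, or --- in the spirit of the train-track approach advertised in the abstract --- a single train track on $S$ together with a self-similar splitting sequence, so that geodesic rays carried by it are recorded by infinite itineraries. For a parameter $\omega$ ranging over an uncountable space $\Omega$ (say $\{0,1\}^{\mathbb{N}}$, or the subspace of sequences in which every finite word occurs), I would build a ray $\gamma_{\omega}$ by letting $\omega$ dictate the itinerary stage by stage; since the itineraries are nested and the carrying maps are contracting, a standard limit argument shows $\gamma_{\omega}$ is a well-defined simple geodesic, asymptotic to $p$ at one end and non-proper at the other, hence a long ray in the sense of the excerpt, accumulating onto a geodesic lamination $\Lambda_{\omega}$.

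Second, I would verify that each $\gamma_{\omega}$ is $2$-filling via the Bavard--Walker reformulation recalled above: $\gamma_{\omega}$ must meet every loop, yet be disjoint from a long ray $\tau_{\omega}$ which is itself disjoint from some loop. The ``meets every loop'' half is the delicate one, and it is what forces the design choices in the construction: one argues that an arbitrary loop $\delta$ on $S$ is, up to isotopy, carried by a finite stage of the filtration, and then that the itinerary prescribed by $\omega$ crosses every branch at that stage --- which is precisely why one restricts $\Omega$ to itineraries realizing all finite words, or builds periodic ``flushing'' blocks into the recipe. For the ``not high-filling'' half, the construction deliberately leaves a channel: a subsurface $\Sigma_{\omega}\subset S$ that $\gamma_{\omega}$ enters but does not fill, inside which a long ray $\tau_{\omega}$ disjoint from $\gamma_{\omega}$ is visibly available, together with a loop of $\Sigma_{\omega}$ disjoint from $\tau_{\omega}$.

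Third --- the step I expect to be the main obstacle --- I would extract from $\gamma_{\omega}$ an invariant of its $\MCG(S;p)$-orbit fine enough to separate uncountably many $\omega$. Any $f\in\MCG(S;p)$ with $f(\gamma_{\omega})=\gamma_{\omega'}$ carries $\Lambda_{\omega}$ to $\Lambda_{\omega'}$ and respects the complementary channel data, so any topological or combinatorial feature of $\Lambda_{\omega}$ relative to the non-filled part is such an invariant; the difficulty is that big mapping class groups are enormous, so one must pin down \emph{exactly} which modifications of an itinerary are realized by homeomorphisms of $S$ fixing $p$. I expect this to be a tame equivalence --- eventual agreement of itineraries, or agreement up to a shift and finitely many edits --- and the combinatorial model must be rigid enough that orbit-equivalence of rays provably forces itinerary-equivalence of this kind. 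Since each such equivalence class is countable while $\Omega$ is uncountable, the map $\omega\mapsto[\gamma_{\omega}]$ has uncountable image, which is the theorem. As the abstract suggests, pushing this analysis further should in fact classify all $2$-filling rays satisfying a few normalization hypotheses, with the uncountability of orbits then an immediate corollary.
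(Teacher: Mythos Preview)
Your outline has the right three-part shape, but Step~3 --- the orbit-separation --- is where the proposal has a genuine gap, and it is also where your approach diverges most sharply from the paper's.

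You propose to distinguish orbits by arguing that $\MCG(S;p)$-equivalence of the rays $\gamma_\omega$ forces a ``tame'' equivalence (eventual agreement, shift, finite edits) on the itineraries $\omega$, with countable classes. But you give no mechanism for this, and on an infinite-type surface the expectation is dangerous: the big mapping class group acts highly transitively on combinatorial data attached to exhaustions or train tracks, and there is no a priori reason an itinerary modulo such a group action should retain uncountably many classes. You correctly flag this step as the main obstacle, but the proposal does not actually clear it.

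The paper avoids this difficulty by choosing a different invariant. Rather than working with the itinerary of the $2$-filling ray $\gamma$, it first produces a continuum of $\MCG$-orbits of \emph{two-side approachable long rays} $\tau$, and the invariant lives in the limit set of $\tau$: one builds $\tau$ so that its limit set is a geodesic lamination assembled from a sequence of ideal $n_k$-gons $P_k$ (for a prescribed increasing sequence $(n_k)$) joined by spiraling geodesics. The multiset of polygon types appearing as non-isolated leaves is a topological invariant of the lamination, hence of the $\MCG$-orbit of $\tau$, and varying $(n_k)$ over all infinite subsets of $\mathbb{Z}_{\geq 1}$ already gives a continuum of inequivalent $\tau$. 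Then the earlier construction (your Step~2 analogue, carried out carefully in the paper) produces for each such $\tau$ a $2$-filling ray $\gamma$ whose \emph{only} disjoint ray is $\tau$; since $\tau$ is canonically recovered from $\gamma$, distinct orbits of $\tau$ force distinct orbits of $\gamma$. So the invariant is geometric (a lamination with prescribed polygon combinatorics) rather than symbolic, and it is attached to the companion non-filling ray rather than extracted from the $2$-filling ray's own coding.

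A smaller point on Step~2: your ``not high-filling'' mechanism --- a subsurface $\Sigma_\omega$ that $\gamma_\omega$ ``enters but does not fill'' --- is in tension with $\gamma_\omega$ meeting every loop. What is actually needed is a long ray $\tau_\omega$ disjoint from $\gamma_\omega$ which is itself disjoint from a loop; $\gamma_\omega$ can (and must) still intersect every loop in any subsurface.
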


This is in stark contrast to the finite-type case. In fact, when $S$ is of finite type a filling geodesic ray $\gamma$ necessarily accumulates onto a minimal lamination filling a subsurface of $S$ containing the puncture. In our situation, the limiting laminations are in some sense filling and have many dense leaves, yet still do not have strong enough properties to correspond to points of $\partial L(S;p)$.

We give two different approaches to the construction of $2$-filling rays. One is a hands-on combinatorial approach (Section \ref{sec:straightforward}). The other uses geodesic laminations, train tracks, and flat surface automorphisms (Sections \ref{sec:abstractfol} -- \ref{sec:laminations}). In Section \ref{sec:correspondence} we show that these two different approaches actually produce the same $2$-filling rays.

In Lemma \ref{lem:cliques} we show that $2$-filling rays naturally lie in cliques of mutually disjoint rays, some of which are $2$-filling and some of which are not $2$-filling. One may naturally wonder then what the structure of these cliques may be and in particular if there is any constraint on the number of $2$-filling rays and non-$2$-filling rays in a given clique. We give partial answers to this question. In Section \ref{sec:straightforward} we construct cliques containing \textit{any given finite number} of $2$-filling rays together with \textit{exactly one} non-$2$-filling ray. In Section \ref{sec:mnf} we construct for each $n\geq 1$ a clique --- this time on a surface with $2n$ nonplanar ends --- consisting of \textit{exactly $n$} $2$-filling rays and \textit{exactly $n$} non-$2$-filling rays.

Finally, our constructions are quite robust. In Section \ref{sec: 2-fill abound}, we show that there are uncountably many distinct mapping class group orbits of $2$-filling rays on the plane minus a Cantor set. In Section \ref{sec: uniqueness}, we show that \textit{any} finite clique of $2$-filling rays disjoint from a single non-$2$-filling ray arises from the construction of Section \ref{sec:straightforward}. 

At the end of the paper we give a list of open problems that we hope might help to guide future research into geodesic rays on infinite type surfaces and the boundary of the loop graph.

\subsection{Organization of the paper} In Section \ref{sec: rays} we introduce the basic concepts and set up notation. We introduce \emph{two-side approachable long rays} and establish fundamental results in Section \ref{sec: two-side}, and then use them to give the first construction of $2$-filling rays in Section \ref{sec:straightforward}. We further show that $2$-filling rays abound in Section \ref{sec: 2-fill abound} and prove that the construction is sometimes unique in Section \ref{sec: uniqueness}.

The second construction is given in Sections \ref{sec:tracks}--\ref{sec:laminations}. Background and basic concepts of this part are given in Section \ref{sec:lambg}. This construction relies on a geodesic lamination with desired properties listed in Section \ref{sec:tracks}. We start by constructing an abstract weighted train track in Section \ref{sec:abstractfol} and establishing properties of the associated foliation in Sections \ref{sec:pA} and \ref{sec:trainpaths}. Then in Section \ref{sec:laminations} we embed this train track in the plane minus a Cantor set to obtain the desired geodesic lamination. A nice property of the embedding that we need is proved in the appendix (Section \ref{appendix}).

In Section \ref{sec:correspondence} we show the correspondence of the two constructions. In Section \ref{sec:mnf} we give an example to show that there could be more than one non-filling ray disjoint from a $2$-filling ray on a surface of infinite genus.

Finally in Section \ref{sec: open} we give a list of open questions about $2$-filling rays.

\subsection*{Acknowledgements.} We thank the following people for invaluable conversations about laminations and rays on infinite type surfaces: Danny Calegari, Yair Minsky, Rodrigo Trevi\~{n}o, Pat Hooper, Ethan Farber, Kathryn Lindsey, and Anja Randecker. Special thanks to Yan Mary He and Kasra Rafi for insightful questions and suggestions.

	\section{Background on rays and loops}
	\label{sec: rays}
	
	Throughout this paper, let $\Omega$ be the plane minus a Cantor set. We often think of $\Omega$ as the sphere with a Cantor set and another isolated point $\infty$ removed.
	Fix an orientation on $\Omega$. We equip it with a complete hyperbolic metric in the following way. Choose a pants decomposition of $\Omega$ and let $\mathcal{P}$ be the resulting set of pants curves. If $c_1,c_2,c_3\in \mathcal{P}$ bound a pair of pants then we equip it with the unique complete hyperbolic metric in which the pants curves are geodesics of length one. If $c_1$ and $c_2$ bound a pair of pants together with $\infty$ then we equip it with the unique complete hyperbolic metric in which $c_1$ and $c_2$ are geodesics of length one. The resulting pairs of pants may be glued together by any desired isometries. 
	
	We are interested in geodesic rays starting at $\infty$. A ray is \emph{simple} if it does not self-intersect. Simple rays fall into three classes: loops, short rays, and long rays.
	Here a \emph{loop} is an oriented geodesic ray that starts and ends at $\infty$. 
	We assume loops to be simple unless stated otherwise.	
	The same geodesic ray with reversed orientation is a different loop. We use $\bar{\gamma}$ to denote the geodesic $\gamma$ with reversed orientation.
	A \emph{short ray} is a proper simple ray that escapes to a certain point in the Cantor set of ends.
	All simple rays other than loops and short rays are non-proper, i.e. have nontrivial limit sets in $\Omega$. We refer to these rays as \emph{long rays}.
	As usual, the limit set of a geodesic $\gamma$ is $\operatorname{cl}(\gamma)\setminus \gamma$, where $\operatorname{cl}(\gamma)$ denotes the closure of $\gamma$. See the lower half of Figure \ref{fig: conical} for examples.
		
	It is convenient to think of a loop (respectively a short ray) topologically as an isotopy class of embedded arcs on $S^2$ that are disjoint from the Cantor set and $\infty$ in their interiors and that go from $\infty$ to $\infty$ (resp. from $\infty$ to some point in the Cantor set). The mapping class group of $\Omega$ acts transitively on the set of loops and the set of short rays. In contrast, there is a continuum of different orbits of long rays.
	
	The \emph{conical circle} $S^1_C$, which we now define, is a space naturally parameterizing all geodesic rays starting at $\infty$. The \emph{conical cover} $\Omega_C$ is the covering space of $\Omega$ corresponding to the $\Z$ subgroup of $\pi_1(\Omega)$ generated by a simple closed curve around $\infty$. It inherits a hyperbolic metric from that of $\Omega$, by pullback. Its Gromov boundary consists of a disjoint union of a point and a circle. The \emph{conical circle} $S^1_C$ is this circle boundary component of $\Omega_C$; See the upper right of Figure \ref{fig: conical}.
	
	In other words, if $\widetilde{\infty}$ is the fixed point on $\partial \Hbb^2$ of a generator $z$ of the $\Z$ subgroup, and $\tilde{r}$ is a lift of any fixed ray $r$ to $\Hbb^2$ that starts at $\widetilde{\infty}$, then the region between $\tilde{r}$ and $z\tilde{r}$
	is a fundamental domain of $\Omega_C$. The corresponding segment on $\partial\Hbb^2$ with two endpoints identified is a copy of the conical circle $S^1_C$.
	
	\begin{figure}
		\labellist
		\small 
		\pinlabel $\infty$ at 75 120
		\pinlabel $\Omega$ at 250 60
		\pinlabel $\Omega_C$ at 400 295
		\pinlabel $S^1_C$ at 495 300
		\pinlabel $\Hbb^2$ at 0 380
		\pinlabel $\widetilde{\Omega}$ at 60 380
		\pinlabel $\widetilde{\infty}$ at 90 400
		\pinlabel $\tilde{\tau}_1$ at 33 350
		\pinlabel $\tilde{\tau}_2=z\tilde{\tau}_1$ at 200 350
		\pinlabel $\tilde{\alpha}$ at 100 350
		\pinlabel $\tilde{\beta}$ at 142 350
		\pinlabel $L$ at 112 305
		\endlabellist
		\centering
		\includegraphics[scale=0.5]{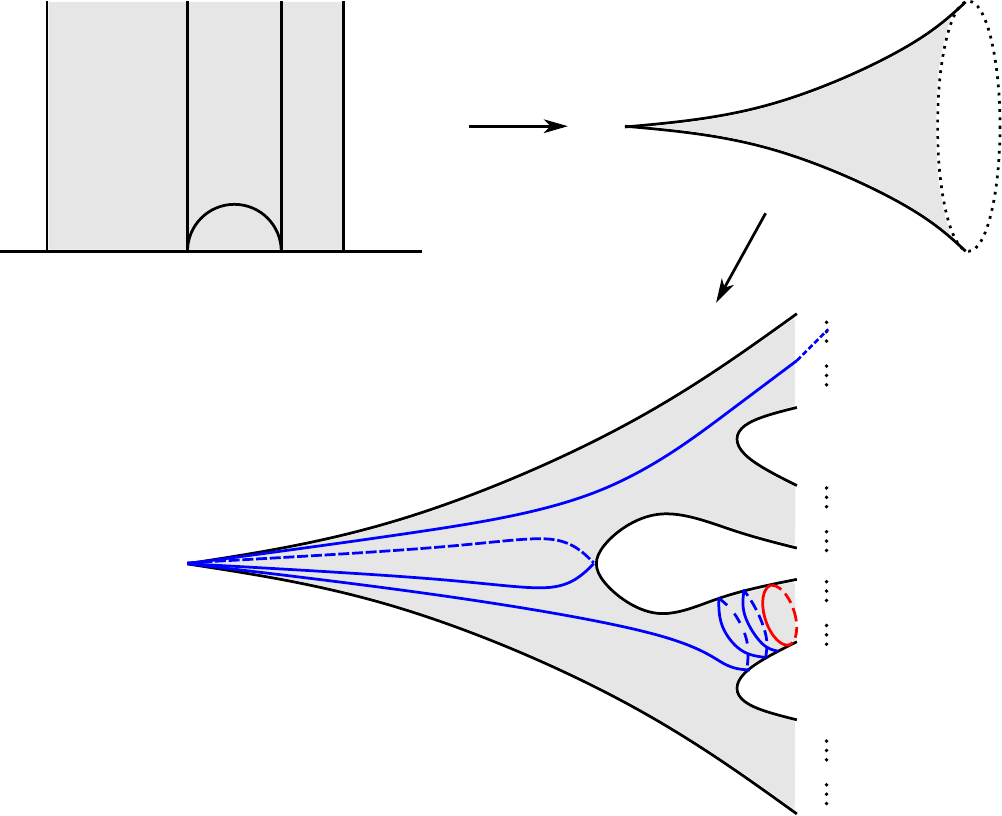}
		\caption{Bottom: Examples of a short ray, a loop, and a long ray in $\Omega$; Upper right: The conical cover and conical circle; Upper left: A fundamental domain $\widetilde{\Omega}$ of $\Omega_C$ and the geodesic $L$.}\label{fig: conical}
	\end{figure}
	
	We often think of a simple ray as a point on the conical circle. In particular we have a topology on the set of rays, where two rays are close if they fellow travel for a long time in the beginning. 
	In this topology, the property of a ray having a (transverse) self-intersection is open. Thus the set of simple rays is closed on $S^1_C$, and is also nowhere dense \cite[Lemma 3.3]{calegari_chen}.
	
	For any simple ray $\tau$, the set of rays disjoint from $\tau$ (including itself) is a \emph{closed} set $D_\tau$ on the conical circle. For each complementary interval $(\alpha,\beta)$, there is a corresponding bi-infinite geodesic $p(L)$ on $\Omega$ going from the end of $\alpha$ to the end of $\beta$. More precisely, pick a lift $\widetilde{\infty}$ of $\infty$ on $\partial \Hbb^2$ and two consecutive lifts $\tilde{\tau}_1,\tilde{\tau}_2$ of $\tau$ starting at $\widetilde{\infty}$, which bound a fundamental domain $\widetilde{\Omega}$ of $\Omega_C$ on $\Hbb^2$.	
	Let $\tilde{\alpha},\tilde{\beta}$ be the unique lifts of $\alpha,\beta$ in $\widetilde{\Omega}$ starting at $\widetilde{\infty}$. Let $L$ be the unique bi-infinite geodesic on $\widetilde{\Omega}$ going from the endpoint of $\tilde{\alpha}$ to that of $\tilde{\beta}$; See the upper left of Figure \ref{fig: conical}. Then $p(L)$ is the projection of $L$ on $\Omega$.	
	\begin{lemma}\label{lemma: converge to L}
		There are lifts of $\tau$ converging to $L$. In particular, the geodesic $p(L)$ is simple on $\Omega$.
		If neither $\alpha$ nor $\beta$ is a loop, then $L$ cannot be a lift of $\tau$, and $p(L)$ lies in the limit set of $\tau$.
	\end{lemma}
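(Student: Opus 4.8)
The plan is to argue inside $\Hbb^2$, viewed as the universal cover of $\Omega_C$, in the model set up just before the lemma. Normalise so that $\widetilde{\infty}=\infty$ in the upper half plane, the generator $z$ of the deck group of $\Hbb^2\to\Omega_C$ is $w\mapsto w+1$, and the lifts of $\tau$ issuing from $\widetilde{\infty}$ are the vertical lines over the integers; then $\tilde\tau_1,\tilde\tau_2$ are the vertical lines over $0,1$ and $\widetilde{\Omega}$ is the strip $0<\operatorname{Re}(w)<1$. Write $a,b$ for the endpoints of $\tilde\alpha,\tilde\beta$; after relabelling, $0<a<b<1$ (if $\alpha$ or $\beta$ is $\tau$ then $a$ or $b$ is $0$ or $1$, and the argument is unchanged once ``disjoint from $\tilde\alpha$'' is read as ``disjoint from $\tilde\tau_1$'' where needed). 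Thus $L$ is the semicircle over $[a,b]$, and the rays in the open complementary interval $(\alpha,\beta)$ are precisely those whose lift in $\widetilde{\Omega}$ is a vertical line over some $c\in(a,b)$; by hypothesis each such ray meets $\tau$.

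The first step I would carry out is a \emph{confinement} statement: if $\sigma$ is a lift of $\tau$ crossing the vertical line over some $c\in(a,b)$, then both ideal endpoints of $\sigma$ lie in $[a,b]$. Indeed $\sigma$ is disjoint from --- or shares an ideal endpoint with --- each of $\tilde\tau_1,\tilde\tau_2,\tilde\alpha,\tilde\beta$, since these are lifts of curves disjoint from $\tau$; as $\sigma$ crosses a vertical line over a non-integer it is not one of the lifts over an integer, and then the cyclic order of ideal endpoints on $\partial\Hbb^2$ forces $a\le\ell<c<r\le b$ for the endpoints $\ell,r$ of $\sigma$. The same reasoning shows that no lift of $\tau$ crosses $L$. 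Consequently the lifts of $\tau$ crossing some vertical line over $(a,b)$ all lie in the half-plane $H$ bounded by $L$, they form a pairwise disjoint family, and --- since every ray over a point of $(a,b)$ meets $\tau$ --- this family crosses the vertical line over \emph{every} $c\in(a,b)$.

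Now the heart of the matter: producing a sequence of lifts of $\tau$ converging to $L$. By confinement, $r^\ast:=\sup r$, taken over all lifts $(\ell,r)$ of $\tau$ crossing a vertical line over $(a,b)$, is $\le b$, and it is $\ge b$ because the lift crossing the vertical line over $c$ has right endpoint $>c$. So $r^\ast=b$; pick lifts $\sigma^{(k)}=(\ell^{(k)},r^{(k)})$ with $r^{(k)}\to b$, and pass to a subsequence so that $\ell^{(k)}\to\ell_\infty\in[a,b)$. If $\ell_\infty=a$ then $\sigma^{(k)}\to L$ and we are done. Otherwise $(\ell_\infty,b)$ is a leaf of the preimage of $\operatorname{cl}(\tau)$ (the closure of a simple geodesic being a geodesic lamination) with $\ell_\infty>a$; running the symmetric argument on the left yields a leaf $(a,r_\infty)$, and either $r_\infty=b$ (done) or $r_\infty<b$, in which case these two leaves cannot link, forcing $r_\infty\le\ell_\infty$. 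If $r_\infty=\ell_\infty$ one checks directly that $L$ itself is a lift of $\tau$ (the lift crossing the vertical line over $r_\infty$, being disjoint from both leaves, must be $L$), and again we are done. Otherwise the lifts of $\tau$ crossing vertical lines over the strictly shorter interval $(r_\infty,\ell_\infty)$ are, by confinement applied there, confined to $[r_\infty,\ell_\infty]$ and still cross every such vertical line, so the whole discussion repeats verbatim on this interval. The step I expect to be genuinely delicate is ruling out that this process iterates forever on ever shorter intervals --- equivalently, showing that the lifts of $\tau$ really do accumulate onto $L$ --- because an arbitrary disjoint family of geodesics crossing every vertical line over an interval need not accumulate onto the bounding geodesic. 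Overcoming it must use that $\tau$ is a single connected ray: one needs to show that if the lifts of $\tau$ in $H$ failed to accumulate onto $L$, then the complementary region of $\operatorname{cl}(\tau)$ adjacent to $p(L)$ would be ``visible from $\infty$'', producing a ray disjoint from $\tau$ whose lift endpoint lies strictly between $a$ and $b$ --- impossible since $(\alpha,\beta)$ is a complementary interval of $D_\tau$.

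Granting that lifts $\sigma_n$ of $\tau$ converge to $L$, the remaining assertions follow at once. For any $g\in\pi_1(\Omega)$ we have $gL=\lim g\sigma_n$; since $\sigma_n$ and $g\sigma_n$ are lifts of the simple ray $\tau$ they do not cross transversally, hence neither do $L$ and $gL$, so $p(L)$ is simple. If neither $\alpha$ nor $\beta$ is a loop then neither $a$ nor $b$ is a lift of the cusp $\widetilde{\infty}$, whereas every lift of $\tau$ has one endpoint at a lift of the cusp; so $L$ is not a lift of $\tau$, and then $p(L)$ is a leaf of $\operatorname{cl}(\tau)$ distinct from $\tau$, hence disjoint from $\tau$, i.e.\ $p(L)\subseteq\operatorname{cl}(\tau)\setminus\tau$ lies in the limit set of $\tau$.
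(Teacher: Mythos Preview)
Your setup and confinement step are correct and match the paper's, and your treatment of the last two assertions (simplicity of $p(L)$, and $p(L)$ lying in the limit set when neither $\alpha$ nor $\beta$ is a loop) is fine. The problem is exactly where you flag it: the infinite-descent in your main step is a genuine gap, and the ``visibility from $\infty$'' idea you sketch is not a proof. In fact your approach makes the argument harder than it needs to be, because taking $\sup r$ and $\inf \ell$ \emph{independently} throws away the one structural fact that makes the lemma easy.

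The missing observation is this. For each lift $\sigma$ of $\tau$ contained in the sector $\Omega_{\alpha\beta}$, its two ideal endpoints cut out an open subinterval $I_\sigma\subset(\alpha,\beta)$. Since $\tau$ is simple, distinct lifts are disjoint, so any two such intervals are either \emph{disjoint or nested}. Since every ray in $(\alpha,\beta)$ meets $\tau$, these intervals \emph{cover} $(\alpha,\beta)$. Now a family of open subintervals of an interval that is pairwise disjoint-or-nested and covers the whole interval must either contain the whole interval as a member (meaning $L$ itself is a lift of $\tau$) or contain no maximal element; in the latter case one immediately extracts an increasing nested sequence $I_{\sigma_1}\subset I_{\sigma_2}\subset\cdots$ whose union is forced to be all of $(\alpha,\beta)$ (any putative uncovered boundary point would lie in some $I_{\sigma'}$, which by disjoint-or-nested would strictly contain every $I_{\sigma_n}$). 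The corresponding lifts $\sigma_n$ then converge to $L$. This is the paper's argument, and it replaces your entire descent with a two-line combinatorial step; no appeal to the connectedness of $\tau$ or to complementary regions is needed.
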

	\begin{proof}
		Let $\widetilde{\Omega}$, $\tilde{\alpha}$ and $\tilde{\beta}$ be as above. Let $a$ and $b$ be the endpoints of $\tilde{\alpha}$ and $\tilde{\beta}$ respectively.
		Let $\Omega_{\alpha\beta}$ be the sector bounded by $\tilde{\alpha}$ and $\tilde{\beta}$ in the fundamental domain for $\Omega_C$. Then $(\alpha,\beta)$ corresponds to the boundary of $\Omega_{\alpha\beta}$.
		For each lift of $\tau$ contained in $\Omega_{\alpha\beta}$, its two ends bound an open sub-interval of $(\alpha,\beta)$.
		Any two such sub-intervals are either disjoint or nested since $\tau$ is simple.
		The union of such open sub-intervals is the entire $(\alpha,\beta)$.
		Indeed, for any $\gamma\in(\alpha,\beta)$, it has a lift $\tilde{\gamma}$ starting at $\widetilde{\infty}$ which lies in $\Omega_{\alpha\beta}$. 
		The lift $\tilde{\gamma}$ must intersect some lift $\tilde{\tau}$ of $\tau$, and any such $\tilde{\tau}$ lies in $\Omega_{\alpha\beta}$ since $\tau$ is disjoint from $\alpha,\beta$.
		The open sub-interval corresponding to $\tilde{\tau}$ contains $\gamma$.
		
		Unless $L$ itself is a lift of $\tau$, no such open sub-interval is maximal, and there is an increasing nested sequence of them converging to $(\alpha,\beta)$. 
		This proves the first assertion. Thus $p(L)$ is simple since $\tau$ is. If neither $\alpha$ nor $\beta$ is a loop, the endpoints of $\tilde{\alpha},\tilde{\beta}$ are not lifts of $\infty$,
		and thus $L$ cannot be a lift of $\tau$, so $p(L)$ lies in the limit set of $\tau$.
	\end{proof}
	
	Given a ray $\tau$, the circular order on $S^1_C$ 
	induces a total order $<$ on $S^1_C\setminus\{\tau\}$, where $x<y$ if and only if $(x,y,\tau)$ is positively oriented on $S^1_C$. We say a sequence of rays $\tau_n$ converges to a given ray $\tau$ \emph{from its left} if $\tau_n$ eventually converges to the left side (i.e. the small side under the order $<$) of $S^1_C\setminus\{\tau\}$. Alternatively, if an ant is moving on $\tau$ in the positive direction, then it will see $\tau_n$ converging to $\tau$ from its left-hand side.
	Convergence from the right is defined similarly.

	There is another space related to simple rays, namely the \emph{completed ray-and-loop graph} $\Ray$. It is the graph whose vertices correspond to the simple rays and loops on $\Omega$ and whose edges join disjoint geodesics. It is shown by Bavard--Walker \cite[Theorem 2.8.1]{Bavard_Walker} that $\Ray$ has a connected component (which we call major) containing all loops and short rays, which is $\delta$-hyperbolic and infinite diameter.
	Each other component is a clique (i.e. a complete subgraph), and such cliques correspond to points on the Gromov boundary of the major component (and hence also to points on the Gromov boundary of the loop graph $L(\Omega;\infty)$).
	
	Each ray outside the major component is called \emph{high-filling}. Here a ray is (loop- and ray-) \emph{filling} if it intersects all loops and short rays.
	It is known that any filling ray $\gamma$ is either high-filling or has distance at most $2$ to some loop. See \cite[Lemma 2.7.6]{Bavard_Walker}.
	We say that a long ray is \emph{$2$-filling} if its minimal distance to the set of loops and short rays on $\Ray$ is $2$. In other words, a long ray is 2-filling if it intersects every loop and short ray, but is disjoint from some long ray which is in turn disjoint from a short ray or loop.
	
	The following lemma is the analog for $2$-filling rays of the fact that any component in the ray-and-loop graph $\Ray$ containing a high-filling ray is a clique.
	\begin{lemma}\label{lem:cliques}	
		For any $2$-filling ray $\gamma$, its star in the ray-and-loop graph $\Ray$ is a clique. In addition, all $2$-filling rays in the clique have the same star. 
	\end{lemma}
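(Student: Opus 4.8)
The plan is to establish the two assertions in turn; the first --- that the (closed) star of $\gamma$ in $\Ray$ is a clique --- is the substantive one, and the second is then a formal consequence.

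To show the star is a clique, it suffices to take two rays $\sigma_1,\sigma_2$ in the star, each disjoint from and distinct from $\gamma$, and prove $\sigma_1\cap\sigma_2=\varnothing$; the only other pairs in the star involve $\gamma$ itself and are adjacent by definition. Since $\gamma$ is $2$-filling it meets every loop and every short ray, so $\sigma_1$ and $\sigma_2$ must be long rays. Suppose for contradiction that $\sigma_1$ and $\sigma_2$ have a transverse intersection. Fix an initial segment $a$ of $\sigma_2$ starting at $\infty$ and long enough to contain such a point. As $a$ is compact, there is a well-defined first point $y$, traversing $\sigma_1$ from $\infty$, at which $\sigma_1$ meets $a$; this $y$ is a transverse intersection point of $\sigma_1$ and $\sigma_2$, and $\sigma_1$ does not meet $a$ before $y$. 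Because two distinct geodesic rays asymptotic to the cusp $\infty$ are disjoint near $\infty$, concatenating the segment of $\sigma_1$ from $\infty$ to $y$ with the reverse of the segment of $\sigma_2$ from $\infty$ to $y$ yields a \emph{simple} loop $\ell$ based at $\infty$, and since $\ell\subseteq\sigma_1\cup\sigma_2$ is disjoint from the geodesic $\gamma$, the loop $\ell$ is disjoint from $\gamma$.

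The crucial point --- and the step I expect to be the main technical obstacle, because of the bookkeeping at the cusp $\infty$ --- is that $\ell$ is \emph{essential}, i.e.\ both complementary disks of $\ell$ in $S^2$ contain points of the Cantor set. If not, one side $R$ misses the Cantor set and misses $\infty$, so $R$ is an open disk in $\Omega$, and $\overline R$ is a bigon with one vertex the cusp $\infty$ and the other vertex $y$ whose two sides are the geodesic arcs of $\ell$ from $\infty$ to $y$. These two geodesic arcs are then properly homotopic rel $y$ through $R$, so lifting to $\Hbb^2$ they share both endpoints in $\overline{\Hbb^2}$ and hence coincide, contradicting that the two sides of the simple loop $\ell$ are distinct arcs. (Alternatively, one can check that $R$ has finite area --- its closure minus a cusp neighborhood of $\infty$ is compact in $\Omega$ --- and apply Gauss--Bonnet, which forces $\operatorname{area}(R)=-\alpha<0$ with $\alpha\in(0,\pi)$ the interior angle at $y$ and the cusp contributing exterior angle $\pi$.)

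Once $\ell$ is essential it has a geodesic representative $\ell^*$, which is a simple loop on $\Omega$. Since $\ell$ is disjoint from the geodesic $\gamma$, so is $\ell^*$: if $\ell^*$ crossed $\gamma$, then a lift of $\ell^*$ to $\Hbb^2$ would have its endpoints interleaved with those of a lift of $\gamma$, so, as $\ell$ is freely homotopic to $\ell^*$, the corresponding lift of $\ell$ would also cross that lift of $\gamma$, contradicting $\ell\cap\gamma=\varnothing$. But a loop disjoint from $\gamma$ contradicts that $\gamma$ is filling. Hence $\sigma_1$ and $\sigma_2$ are disjoint, so the star of $\gamma$ is a clique. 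Finally, for the second assertion, let $\sigma$ be any $2$-filling ray in the clique (the star of $\gamma$); we may assume $\sigma\ne\gamma$, so $\sigma$ is disjoint from $\gamma$. Applying the first part to $\sigma$, which is also $2$-filling, its star is also a clique. Now the star of $\gamma$ is a clique containing $\sigma$, so each of its vertices --- including $\gamma$ --- equals or is adjacent to $\sigma$, hence lies in the star of $\sigma$; symmetrically the star of $\sigma$ lies in the star of $\gamma$. Therefore the two stars coincide.
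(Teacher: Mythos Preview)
Your argument is correct. The paper's own proof is a two-line affair that simply invokes \cite[Lemma~2.7.4]{Bavard_Walker} for the key step---that any two long rays disjoint from a filling ray are disjoint from each other---and then deduces the equality of stars exactly as you do. You instead give a self-contained surgery argument: from a hypothetical transverse intersection of $\sigma_1$ and $\sigma_2$ you build a simple piecewise-geodesic loop contained in $\sigma_1\cup\sigma_2$, check it is essential via the bigon/Gauss--Bonnet observation, and straighten it to a geodesic loop disjoint from $\gamma$, contradicting that $\gamma$ is filling. This is almost certainly the content (in this special case) of the cited Bavard--Walker lemma, so you have effectively reproved it here rather than quoted it. The payoff of your route is that the lemma becomes independent of the external reference and the mechanism is visible to the reader; the cost is a paragraph of careful bookkeeping (first intersection point, essentiality, and the linking argument for why straightening preserves disjointness from $\gamma$) that the paper avoids. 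Both approaches handle the ``same star'' assertion identically.
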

		\begin{proof}
		Since $\gamma$ is $2$-filling, any point in the star represents a long ray. Let $r_1,r_2$ be two long rays disjoint from $\gamma$. Then $r_1$ and $r_2$ are disjoint by \cite[Lemma 2.7.4]{Bavard_Walker}. If $r_1$ is also $2$-filling, then this shows that any $r_2$ disjoint from $\gamma$ is also disjoint from $r_1$ and vice versa, thus $\gamma$ and $r_1$ have the same star.
	\end{proof}
	
	Finally, we introduce a piece of notation. Let $\alpha$ be an oriented ray or loop and $p,q\in \alpha$ with $p<q$ in the orientation on $\alpha$ (possibly with $p=\infty$ and/or $q=\infty$). Then we denote by $\alpha|[p,q]$ the subarc of $\alpha$ which is oriented from $p$ to $q$. If $\alpha$ is a simple compact arc and $p,q\in \alpha$ then $\alpha|[p,q]$ similarly denotes the subarc of $\alpha$ between $p$ and $q$.

\section{Background on train tracks, laminations, and foliations}
\label{sec:lambg}

For us, a train track will denote a locally finite graph with the following structure. At any vertex $v$ the set $\B(v)$ of incident edges at $v$ is partitioned into nonempty sets $\B_i(v)$ and $\B_o(v)$ which we call \textit{incoming} and \textit{outgoing}, respectively. Moreover, the sets $\B_i(v)$ and $\B_o(v)$ carry total orders $<_i$ and $<_o$, respectively. If $T$ is a train track then the vertices of $T$ will be called \textit{switches} and the edges will be called \textit{branches}. The set of branches of $T$ will be denoted by $\B(T)$. A \textit{train path} on $T$ is a (finite or infinite) edge path on $T$ with the following property. Any two consecutive branches are incident to a common switch $v$ and we require one of the branches to be incoming and the other to be outgoing at $v$. As usual, we may consider $T$ as a topological 1-complex endowed with the structure of a smooth manifold away from the switches and at any switch $v$ the structure of a well-defined tangent line so that 
\begin{itemize}
\item all of the branches incident to $v$ are tangent,
\item if all incident branches are oriented to point towards $v$, then the tangent vectors to the incoming (respectively outgoing) branches all point in the same direction,
\item the tangent vectors to the incoming branches and outgoing branches point in opposite directions.
\end{itemize} A train path on $T$ is then a smooth immersion of an interval into $T$.

A \textit{weight system} on $T$ is a function $w:\B(T)\to \R_{\geq 0}$ with the following property. If $v$ is a switch then we have that the sum of the weights of the incoming branches incident to $v$ is equal to the sum of the weights of the outgoing branches incident to $v$. We will call a \textit{weighted train track} a pair $(T,w)$ where $T$ is a train track and $w$ is a weight system on $T$.

Associated to a weighted train track $(T,w)$ there is a \textit{union of foliated rectangles} defined as follows. For each branch $b\in \B(T)$ we consider the rectangle $R(b)=[0,1]\times [0,w(b)]$. These rectangles are glued together as follows. Any switch $v$ defines an interval $I(v)=[0,\ell]$ where we set \[\ell=\sum_{b\in \B_i(v)} w(b)=\sum_{b\in \B_o(v)} w(b).\] If $b_1<_o b_2 <_o \ldots <_o b_n$ are the outgoing edges at $v$ then $I(v)$ is divided into consecutive closed subintervals $I_1,\ldots, I_n$ of lengths $w(b_1),\ldots,w(b_n)$, respectively and where $0\in I_1$. 
The left vertical side $\{0\}\times [0,w(b_i)]$ of $R(b_i)$ is glued isometrically to the interval $I_i$. Similarly, the \textit{right} vertical sides of the rectangles corresponding to the incoming branches at $v$ are glued to $I(v)$ isometrically according to the total order $<_i$.

Denote the union of foliated rectangles by $G$. Each rectangle $R(b)$ of $G$ is foliated by the horizontal line segments $[0,1]\times \{h\}$ for $h\in [0,w(b)]$. This endows $G$ with the structure of a \textit{singular foliation}. That is, $G$ is foliated by horizontal lines away from a discrete set of points (the \textit{singularities} of the foliation, where at least three rectangles meet) and its \textit{boundary} $\partial G$ which is defined to be the union of the horizontal boundary components $[0,1]\times \{0\}$ and $[0,1]\times \{w(b)\}$ of the rectangles $R(b)$. For the rest of the discussion, endow the topological space underlying $G$ with an orientation.

A \textit{saddle connection} of $G$ is an embedding of a compact interval into $G$ which is a union of horizontal line segments, having singularities at its endpoints and no singularities in its interior. A \textit{singular ray} of $G$ is an embedding of the half-line $[0,\infty)$ into $G$ which is a union of horizontal line segments with a singularity at its endpoint and no singularity in its interior.

A \textit{leaf} $l$ of $G$ is an embedding of $\R$ into $G$ which is the union of a sequence \[\ldots \sigma_{-2} \sigma_{-1} \sigma_0 \sigma_1 \sigma_2 \ldots\] of horizontal line segments of $G$ and satisfies the following properties. First of all, each $\sigma_i$ traverses a rectangle $R_{b_i}$ in $G$ and we require that \[\ldots b_{-2} b_{-1} b_0 b_1 b_2 \ldots\] is a train path on $T$. Secondly, there is a choice of \textit{left} or \textit{right} (assume left for simplicity) such that the following condition is satisfied. Suppose that some $\sigma_i$ has endpoints $p_i\in R_{b_{i-1}}\cap R_{b_i}$ and $q_i \in R_{b_i} \cap R_{b_{i+1}}$. Suppose that $\sigma_i$ is contained in the interior of $R_{b_i}$ but that $q_i$ is a singularity. Then the rectangle $R_{b_{i+1}}$ traversed by $\sigma_{i+1}$ lies to the \textit{left} at $q_i$ as we traverse $\sigma_i$ from $p_i$ to $q_i$. If on the other hand, $\sigma_i$ is contained in the interior of $R_{b_i}$ but $p_i$ is a singularity, orienting $\sigma_i$ from $q_i$ to $p_i$, we have that $R_{b_{i-1}}$ lies to the \textit{right} at $p_i$. A \emph{half leaf} of $l$ is an equivalence class of rays contained in $l$, where two sub-rays are considered to be equivalent if their symmetric difference is compact.

A leaf will be called \textit{singular} if it contains a singularity and \textit{non-singular} otherwise. The leaves of $G$ define train paths on $T$ and we denote by $\TP(T,w)$ the resulting set of train paths. If $t$ is the train path defined by some singular leaf, we will call it a \textit{boundary path}.

We say that a train path $t\in \TP(T,w)$ \emph{accumulates onto} the path $t'\in \TP(T,w)$ if every finite sub-train path $b_1\ldots b_k$ of $t'$ is contained in $t$. We say that $t$ is \emph{dense} in $\TP(T,w)$ if it accumulates onto every $t'\in \TP(T,w)$. There is a (typically non-Hausdorff) topology on $\TP(T,w)$ with sub-basis consisting of all sets of the form \[\{t\in \TP(T,w): b_1\cdots b_k \text{ is contained in } t\}\] where $b_1\cdots b_k$ is a finite train path on $T$. With this topology, $t$ accumulates onto $t'$ if and only if every neighborhood of $t'$ contains $t$.


Finally, we define a \emph{flat surface} to consist of the following data:
\begin{itemize}
\item a topological surface $\Sigma$;
\item a countable closed subset $P$ of $\Sigma$;
\item an atlas of charts from open subsets $U\subset \Sigma \setminus P$ to $\mathbb{C}$ such that all transition functions between these charts have the form $z\mapsto \pm z+c$ in coordinates, where $c\in \mathbb{C}$ is a constant.
\end{itemize}
The surface $\Sigma \setminus P$ inherits a Euclidean metric. This metric is typically incomplete and we require it to extend to $P$, so that $P$ is identified with a subset of the completion of $\Sigma \setminus P$. The points of $P$ are called \textit{singularities} of the flat surface $\Sigma$.

The surface $\Sigma$ also inherits a \textit{horizontal foliation} $\mathcal{F}^h$ defined as follows. If $U \subset \Sigma \setminus P$ and $\phi:U\to \mathbb{C}$ is one of the charts defined above, then the line segments $\operatorname{Im}(z)=y$ in $\phi(U)$ pull back to a family of line segments on $U$. The leaves of $\mathcal{F}^h$ are the maximal concatenations of such line segments. Similarly, the vertical line segments $\operatorname{Re}(z)=x$ in $\phi(U)$ pull back to line segments on $U$ and the leaves of the \textit{vertical foliation} $\mathcal{F}^v$ are the maximal concatenations of these line segments. Since all transition functions have the form $z\mapsto \pm z+c$, both $\mathcal{F}^h$ and $\mathcal{F}^v$ are well-defined and are indeed foliations of $\Sigma$.

\section{Two-side approachable long rays}\label{sec: two-side}
In this section we introduce the so-called \emph{two-side approachable} long rays. Given such a long ray, we will construct $2$-filling rays disjoint from it in the next section.

\begin{defn}
	A long ray $\tau$ is \emph{two-side approachable} if there are loops $\ell_i$ and $r_i$ converging to $\tau$ such that $\ell_i$ converges to $\tau$ from the left, $r_i$ converges to $\tau$ from the right, and such that $\ell_i$ and $r_i$ are all disjoint from $\tau$.
\end{defn}

\begin{exmp}
	Figure \ref{fig: two_side} depicts a simple example of a two-side approachable long ray $\tau$ that spirals and limits to a geodesic arc $\alpha$ connecting two points in the Cantor set. There are geodesics that follow $\tau$ for a long time and then turn around to the left to form a loop disjoint from $\tau$ and slightly to the left of $\tau$. See the loop $\ell$ in Figure \ref{fig: two_side}. Similarly there are loops disjoint from $\tau$ and sightly to the right of $\tau$. Thus $\tau$ is indeed two-side approachable.
	
	See Section \ref{sec: 2-fill abound} for more complicated examples, where we construct a continuum of mapping class group orbits of two-side approachable long rays. 
\end{exmp}

\begin{figure}
	\labellist
	\small 
	\pinlabel $\infty$ at 72 -5
	\pinlabel $\alpha$ at 72 100
	\pinlabel $\tau$ at 123 28
	\pinlabel $\ell$ at 92 28
	\endlabellist
	\centering
	\includegraphics[scale=0.8]{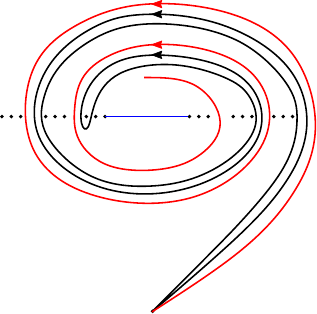}
	\caption{A two-side approachable long ray $\tau$ with a disjoint loop $\ell$ slightly on its left}\label{fig: two_side}
\end{figure}

Given a two-side approachable long ray $\tau$, a priori the loops $\ell_i,r_i$ as in the definition might intersect each other, but one can apply surgeries to make them pairwise disjoint and have other nice properties for our construction in the next section.

To state these properties, let $I_\tau$ be the closed interval obtained from cutting the conical circle $S^1_C$ at $\tau$. Recall that the circular order on $S^1_C$ induces a total order $<$ on $I_\tau$, where $x<y$ if and only if $(x,y,\tau)$ is positively oriented. Then $\{\ell_i\}$ and $\{r_i\}$ represent sequences on $I_\tau$ converging to the left and right endpoints of $I_\tau$, respectively.

We refer to the component of $\Omega\setminus\ell_i$ (resp. $\Omega\setminus r_i$) not containing $\tau$ as the \emph{interior} of $\ell_i$ (resp. $r_i$), and refer to the other component as the \emph{exterior}. We will further make $\ell_i$ decreasing, $r_i$ increasing, and together satisfy
\begin{equation}\label{eqn: order of loops}
	\cdots < \ell_i < \bar{\ell}_i < \cdots < \ell_1 < \bar{\ell}_1 < \bar{r}_1 < r_1 < \cdots < \bar{r}_i < r_i <\cdots,
\end{equation}
where $\bar{\ell}_i$ and $\bar{r}_i$ represent loops $\ell_i$ and $r_i$ with the reversed orientation respectively. Geometrically, given that the loops $\ell_i$ and $r_i$ are disjoint, the order guarantees them to have mutually disjoint interiors. See Figure \ref{fig: loops_std_form}. 

\begin{figure}
	\labellist
	\small 
	\pinlabel $\widetilde{\infty}$ at 110 228
	\pinlabel $\tilde{\tau}_1$ at -5 85
	\pinlabel $\ell_2$ at 0 65
	\pinlabel $\bar{\ell}_2$ at 10 47
	\pinlabel $\ell_1$ at 42 10
	\pinlabel $\bar{\ell}_1$ at 75 -5
	\pinlabel $\tilde{\tau}_2$ at 220 65
	\pinlabel $r_2$ at 208 43
	\pinlabel $\bar{r}_2$ at 198 28
	\pinlabel $r_1$ at 160 1
	\pinlabel $\bar{r}_1$ at 135 -8
	
	\pinlabel $\tau$ at 375 150	
	\pinlabel $\infty$ at 380 100
	\pinlabel $\ell_2$ at 300 120	
	\pinlabel $\ell_1$ at 300 30
	\pinlabel $r_1$ at 400 15
	\pinlabel $r_2$ at 455 85
	\endlabellist
	\centering
	\includegraphics[scale=0.7]{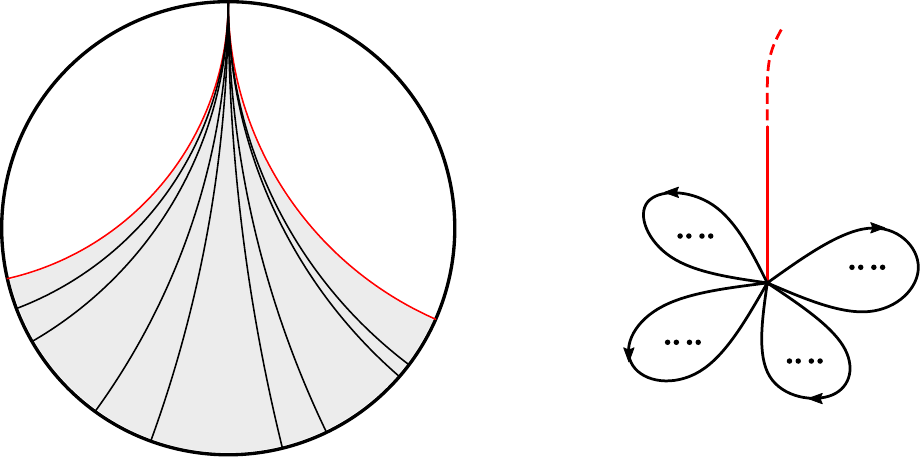}
	\caption{On the right we have loops $\ell_i,r_i$ with disjoint interiors that converge to and are disjoint from a two-side approachable long ray $\tau$, where black dots indicate Cantor subsets. The figure on the left depicts their lifts to $\Hbb^2$, where $\tilde{\tau}_1$ and $\tilde{\tau}_2$ are consecutive lifts of $\tau$.} \label{fig: loops_std_form}
\end{figure}

The surgery to promote $\ell_i$ and $r_i$ is based on the following lemmas. Consider two geodesics $r_1$ and $r_2$ intersecting transversely at a point $p$. Let $r$ be a piecewise geodesic which first traverses $r_1$ to the point $p$ and then traverses $r_2$. We say that $r$ is making a \emph{right} (resp. \emph{left}) turn if the positive unit vector of $r_2$ at $p$ is on the right (resp. left) of $r_1$. See the left of Figure \ref{fig: surgery} for an example of a right turn, where we further straighten $r$ to a geodesic.

\begin{lemma}\label{lemma: concat at intersection}
	Let $\tau$ be a simple ray, and let $r_1,r_2$ be geodesics disjoint from $\tau$ so that $r_1$ starts from $\infty$. Let $r$ be the straightening of a piecewise geodesic that first follows $r_1$ to an intersection $p$ of $r_1$ and $r_2$ and then follows $r_2$. Then $r$ is disjoint from $\tau$ and $r<r_1$ (resp. $r>r_1$) on $I_\tau$ if $r$ makes a right (resp. left) turn at $p$. Moreover, 
	\begin{enumerate}
		\item if $r_1,r_2$ are simple and the initial arc of $r_1$ up to $p$ does not intersect $r_2$ then $r$ is simple;
		\item a bi-infinite geodesic $\ell$ is disjoint from $r$ if it is disjoint from $r_1$ and $r_2$;
		\item if $\ell\notin \{ \tau,r,r_1\}$ is a geodesic starting from $\infty$ that does not intersect $r_2$ transversely, then $r>\ell$ on $I_\tau$ if and only if $r_1>\ell$.
	\end{enumerate}		
\end{lemma}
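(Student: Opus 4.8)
The plan is to lift to the universal cover $\Hbb^2$ of $\Omega$ and reduce everything to the cyclic order of ideal endpoints on $\partial\Hbb^2$. Fix a lift $\widetilde\infty$ of $\infty$ and a generator $z$ of its stabilizer, a lift $\tilde\tau_1$ of $\tau$ starting at $\widetilde\infty$, and set $\tilde\tau_2\defeq z\tilde\tau_1$, so the convex wedge $W$ between $\tilde\tau_1$ and $\tilde\tau_2$ is a fundamental domain for the conical cover and the arc $J\subset\partial\Hbb^2$ between the far endpoints $a_1$ of $\tilde\tau_1$ and $a_2$ of $\tilde\tau_2$ (together with $a_1,a_2$) is a copy of $I_\tau$; label so that $a_1$ is its smaller endpoint. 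Let $\tilde r_1=[\widetilde\infty,b_1]$ be the lift of $r_1$ into $W$, let $\tilde p\in\tilde r_1$ be the lift of $p$, and let $\tilde r_2=[c,d]$ be the lift of $r_2$ through $\tilde p$, with $c$ its endpoint in the direction of travel (we tacitly assume $c\neq\widetilde\infty$, so that $r$ is an honest ray). Since $r_1,r_2$ are disjoint from $\tau$, every lift of either misses every lift of $\tau$; in particular $\tilde r_2$ misses $\tilde\tau_1,\tilde\tau_2$, so $c,d\in\overline J$, and as $\tilde r_1$ crosses $\tilde r_2$ at $\tilde p$ the pairs $\{\widetilde\infty,b_1\}$ and $\{c,d\}$ link, hence $c$ and $d$ lie on opposite sides of $b_1$ in $I_\tau$. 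Now $r$ is the projection of $\tilde r\defeq[\widetilde\infty,c]$, and $c\neq b_1$ since crossing geodesics are never asymptotic, so $r\neq r_1$.

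With this picture set up, the ordering claim and claim (3) are bookkeeping on the circle. Since $\widetilde\infty\notin\overline J$ while $b_1,c,d\in\overline J$, one has $r<r_1$ precisely when $c$ lies on the $a_1$-side of $b_1$, and by the orientation conventions (cf. Figure \ref{fig: loops_std_form}) this is exactly the case in which the travel direction makes a right turn at $\tilde p$; the left-turn case is symmetric. For (3), lift $\ell$ to $\tilde\ell=[\widetilde\infty,e]$ in $W$, so $e\in\overline J$ is the point of $I_\tau$ representing $\ell$; the hypothesis that $\ell$ does not cross $r_2$ transversely makes $\{\widetilde\infty,e\}$ unlinked from $\{c,d\}$, hence $e$ avoids the open subinterval of $I_\tau$ bounded by $c$ and $d$, which contains $b_1$. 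So $e$ lies strictly on one common side of both $c$ and $b_1$ (using $\ell\notin\{\tau,r,r_1\}$ to exclude $e\in\{a_1,a_2,b_1,c\}$), and therefore $r>\ell\iff r_1>\ell$ on $I_\tau$.

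For (1) and (2) I would use the standard principle that straightening creates no new crossings: if two arcs properly embedded in $\overline{\Hbb^2}$ with ideal endpoints are disjoint then their endpoint pairs are unlinked, so the geodesics with the same endpoints are disjoint. Note $\tilde\rho\defeq\tilde r_1|[\widetilde\infty,\tilde p]\cup\tilde r_2|[\tilde p,c]$ is always an embedded arc from $\widetilde\infty$ to $c$ (since $\tilde r_1\cap\tilde r_2=\{\tilde p\}$ in $\Hbb^2$), and its straightening is $\tilde r$. For (2): any lift $\tilde\ell'$ of the geodesic $\ell$ misses $\tilde r_1$ and $\tilde r_2$, hence misses $\tilde\rho$, hence misses $\tilde r$; as this holds for every lift, $\ell$ is disjoint from $r$. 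For (1): the extra hypothesis says exactly that the downstairs path $\rho$ that follows $r_1$ from $\infty$ to $p$ and then follows $r_2$ from $p$ is embedded in $\Omega$; since $\pi_1(\Omega)$ acts freely, $\tilde\rho$ has trivial stabilizer and is disjoint from all its other translates, so the translates of $\tilde r$ — the lifts of $r$ — are pairwise disjoint or equal, i.e.\ $r$ is simple.

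The remaining and central claim is that $r$ is disjoint from $\tau$. By equivariance it suffices to show the one geodesic $\tilde r=[\widetilde\infty,c]$ misses every lift $\tilde\tau'=[x,y]$ of $\tau$. Since $c\in\overline J$ and $\widetilde\infty\notin\overline J$, such a crossing can only occur if $x$ and $y$ lie on opposite sides of $c$. If that happens then, because $\tilde\tau'$ misses $\tilde\tau_1$ and $\tilde\tau_2$, both $x$ and $y$ must actually lie in $\overline J$; and then, since $c$ and $d$ straddle $b_1$, tracking the positions of $x$ and $y$ relative to $c$, $b_1$, $d$ forces $\tilde\tau'$ to separate either $\{c,d\}$ (so $\tilde\tau'$ crosses $\tilde r_2$) or $\{\widetilde\infty,b_1\}$ (so $\tilde\tau'$ crosses $\tilde r_1$), contradicting disjointness of $\tau$ from $r_2$ or from $r_1$. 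I expect this last step to be the only genuinely fiddly part: one must carefully run the subcases in the cyclic order of $\widetilde\infty,a_1,a_2,b_1,c,d,x,y$ (including the degenerate boundary positions $x=a_1$, $y=a_2$). Every other part of the proof is a direct consequence of the configuration set up in the first paragraph.
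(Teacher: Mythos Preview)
Your proposal is correct and follows essentially the same approach as the paper: lift to the universal cover, use the fundamental domain of the conical cover bounded by two consecutive lifts of $\tau$, and observe that $\tilde r$ is the third side of the ideal triangle with sides on $\tilde r_1$ and $\tilde r_2$.

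The one place where the paper is cleaner is your ``genuinely fiddly part,'' the disjointness of $r$ from $\tau$. Rather than tracking the cyclic order of $\widetilde\infty,a_1,a_2,b_1,c,d,x,y$ through subcases, the paper simply notes that any bi-infinite geodesic meeting $\tilde r$ must enter and exit the ideal triangle with vertices $\widetilde\infty$, $\tilde p$, $c$, and hence must cross one of the other two sides $\tilde r_1|[\widetilde\infty,\tilde p]$ or $\tilde r_2|[\tilde p,c]$. Applied to a lift of $\tau$, this immediately contradicts disjointness of $\tau$ from $r_1$ or $r_2$; applied to a lift of $\ell$ it gives (2) in one stroke. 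This single observation replaces both your case analysis for $\tau$ and your separate ``straightening creates no new crossings'' argument for (2), and it is worth internalizing: working with the convex ideal triangle directly is often tidier than pushing everything out to boundary endpoints.
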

\begin{proof}
	Consider a fundamental domain $\widetilde{\Omega}$ of the conical cover in the universal cover with boundary geodesics being consecutive lifts of $\tau$ starting at the same lift $\widetilde{\infty}$ of $\infty$. See the left of Figure \ref{fig: surgery}. Let $\tilde{r}_1$ be the unique lift of $r_1$ in $\widetilde{\Omega}$ starting at $\widetilde{\infty}$, and let $\tilde{r}_2$ be the unique lift of $r_2$ intersecting $\tilde{r}_1$ at the unique lift of $p$ along $\tilde{r}_1$. Then $\tilde{r}_2$ stays in $\widetilde{\Omega}$ since $r_2$ is disjoint from $\tau$. Now a lift $\tilde{r}$ of $r$ is given by the third side of the geodesic triangle with two sides on $\tilde{r}_1,\tilde{r}_2$ shown in Figure \ref{fig: surgery}. Note that any infinite geodesic intersecting $\tilde{r}$ must intersect one of the other two sides of the geodesic triangle. The result easily follows from this.
\end{proof}

\begin{figure}
	\labellist
	\small 
	\pinlabel $\widetilde{\infty}$ at 110 228
	\pinlabel $\tilde{\tau}_1$ at -5 85
	\pinlabel $\widetilde{\Omega}$ at 40 60
	\pinlabel $\tilde{r}$ at 90 100
	\pinlabel $\tilde{\tau}_2$ at 220 65
	\pinlabel $\tilde{r}_1$ at 137 100
	\pinlabel $\tilde{r}_2$ at 117 57
	\pinlabel $p$ at 147 57
	
	\pinlabel $\widetilde{\infty}$ at 425 228
	\pinlabel $\widetilde{\Omega}$ at 355 60
	\pinlabel $\tilde{r}$ at 405 105
	\pinlabel $\overline{\tilde{r}_1}$ at 475 75
	\pinlabel $\tilde{\tau}$ at 495 62
	\pinlabel $\tilde{\tau}'$ at 452 25
	\pinlabel $\tilde{r}_1$ at 460 105
	\pinlabel $\tilde{r}_2$ at 432 57
	\pinlabel $\tilde{\tau}_1$ at 310 85
	\pinlabel $\tilde{\tau}_2$ at 535 65
	\pinlabel $\widetilde{\infty}_1$ at 500 15
	\endlabellist
	\centering
	\includegraphics[scale=0.6]{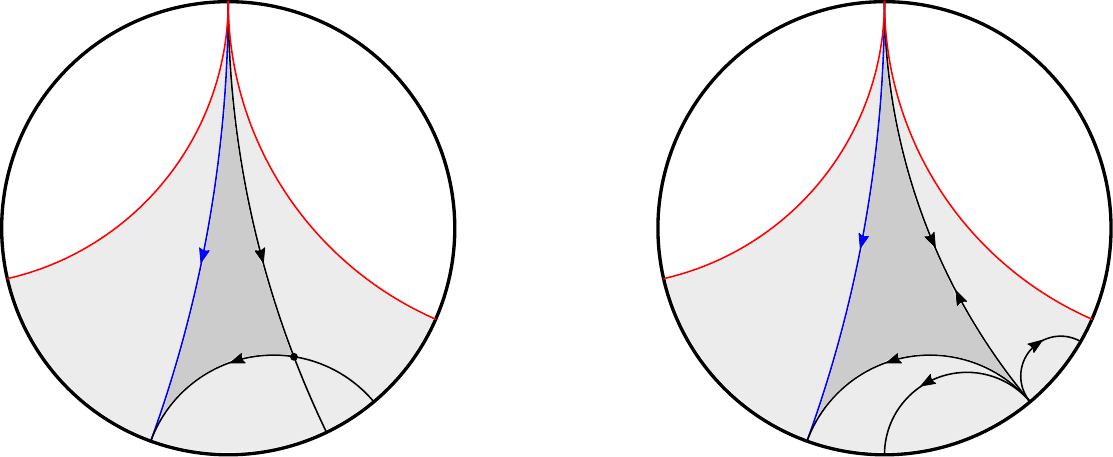}
	\caption{The figure on the left shows the concatenation of $r_1,r_2$ at their intersection $p$ by making a right turn, and $r$ is the straightening. The figure on the right shows the concatenation of $r_1,r_2$ at $\infty$ locally disjoint from $\tau$ by making a right turn, and $r$ is the straightening.}\label{fig: surgery}
\end{figure}

For two loops $r_1$ and $r_2$ disjoint from a simple ray $\tau$, there is a unique (possibly non-simple) loop $r$ whose homotopy class represents their product in $\pi_1(\widehat{\Omega},\infty)$ (where $\widehat{\Omega}$ denotes the filled-in surface $\Omega\cup \{\infty\}$) such that $r$ is disjoint from $\tau$ near $\infty$. An example is shown in Figure \ref{fig: 2fill_alpha2gamma} where $\gammaup{2}_{2k}$ is the straightened concatenation $\alphaup{2}_{2k}\cdot\alphaup{3}_{2k}$. We say $r$ is making a \emph{right} (resp. \emph{left}) turn if $r_2>\bar{r}_1$ (resp. $r_2<\bar{r}_1$) on $I_\tau$; see the right side of Figure \ref{fig: surgery} for an illustration of a right turn on the universal cover.
We have the following analog of Lemma \ref{lemma: concat at intersection} for this kind of surgery.
\begin{lemma}\label{lemma: concat at infty}
	Let $r_1$ and $r_2$ be loops disjoint from $\tau$, and let $r$ be the straightening of the unique concatenation of $r_1,r_2$ at $\infty$ locally disjoint from $\tau$. 
	Then $r$ is (globally) disjoint from $\tau$, and we have $r<r_1$ (resp. $r>r_1$) on $I_\tau$ if $r$ makes a right (resp. left) turn at the concatenation. Moreover, 
	\begin{enumerate}
		\item if $r_1,r_2$ are disjoint and $r_2,\bar{r}_1$ are adjacent among the four points $r_1,\bar{r}_1,r_2,\bar{r}_2$ on $I_\tau$, then $r$ is simple;
		\item a bi-infinite geodesic $\ell$ is disjoint from $r$ if it does not intersect $r_1$ or $r_2$ transversely and is not a ray starting from $\infty$ so that $\ell$ is between $\bar{r}_1$ and $r_2$ on $I_\tau$; 
		\item if $\ell\notin \{\tau,r,r_1\}$ is a geodesic starting from $\infty$ disjoint from $r_2$, then $r>\ell$ on $I_\tau$ if and only if $r_1>\ell$.
	\end{enumerate}
\end{lemma}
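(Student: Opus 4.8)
The plan is to adapt the proof of Lemma~\ref{lemma: concat at intersection}, replacing the compact geodesic triangle used there by an \emph{ideal} one. As in that proof, one works in $\Hbb^2$ with a fundamental domain $\widetilde\Omega$ for the conical cover bounded by two consecutive lifts $\tilde\tau_1,\tilde\tau_2$ of $\tau$ issuing from a lift $\widetilde\infty$ of $\infty$; recall that $\widetilde\Omega$ is convex and that its ideal boundary is $\{\widetilde\infty\}$ together with the arc $I$ of $\partial\Hbb^2$ joining the endpoints of $\tilde\tau_1,\tilde\tau_2$ and missing $\widetilde\infty$, where $I$ is a copy of $I_\tau$ (a ray $\rho$ from $\infty$ corresponding to the endpoint in $I$ of its lift starting at $\widetilde\infty$). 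First, lift $r_1$ to the geodesic $\tilde r_1\subset\widetilde\Omega$ from $\widetilde\infty$; since $r_1$ is a loop disjoint from $\tau$, its other endpoint $\widetilde\infty_1$ is a lift of $\infty$ lying in the interior of $I$, and it represents $r_1$ on $I_\tau$. Among the lifted rays from $\infty$ issuing from $\widetilde\infty_1$ are $\overline{\tilde r_1}$ (which represents $\bar r_1$) and the lifts of $\tau$ based at $\widetilde\infty_1$; the requirement that the concatenation be locally disjoint from $\tau$ near $\infty$ says precisely that the lift $\tilde r_2$ of $r_2$ at $\widetilde\infty_1$ lies in the sector there bounded by $\overline{\tilde r_1}$ and the adjacent lift of $\tau$, on the side prescribed by the direction of the turn. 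Then the lift of the straightening is the geodesic $\tilde r$ from $\widetilde\infty$ to the far endpoint $b$ of $\tilde r_2$, and $\tilde r_1\cup\tilde r_2\cup\tilde r$ bounds an ideal triangle $\Delta$ with ideal vertices $\widetilde\infty$, $\widetilde\infty_1$, and $b$.

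As in Lemma~\ref{lemma: concat at intersection}, the crux is that any bi-infinite geodesic meeting $\tilde r$ transversely must leave $\Delta$, hence either meets $\tilde r_1$ or $\tilde r_2$ transversely or else limits to an ideal vertex of $\Delta$; it cannot limit to $\widetilde\infty$ or to $b$ (it would then be asymptotic to, not transverse to, $\tilde r$), so the only possibility beyond those in Lemma~\ref{lemma: concat at intersection} is that it limits to $\widetilde\infty_1$. From this the first assertions follow as before: since $r_2$ is disjoint from $\tau$ the lift $\tilde r_2$ lies in $\widetilde\Omega$, so $b\in I$ and, being distinct from $\widetilde\infty$, it forces $\tilde r\subset\widetilde\Omega$; hence no lift of $r$ crosses a lift of $\tau$ and $r$ is disjoint from $\tau$, while the position of $b$ relative to $\widetilde\infty_1$ in $I$ is dictated by which side of $\overline{\tilde r_1}$ the ray $\tilde r_2$ issues from, giving $r<r_1$ for a right turn and $r>r_1$ for a left turn as in Figure~\ref{fig: surgery}. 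For (3): lift $\ell$ to $\tilde\ell\subset\widetilde\Omega$ from $\widetilde\infty$; if $\ell$ is disjoint from $r_2$ then $\tilde\ell$ misses $\tilde r_2$, so the endpoint of $\tilde\ell$ does not lie in the sub-arc of $I$ between $\widetilde\infty_1$ and $b$; since $\widetilde\infty_1$ and $b$ represent $r_1$ and $r$ and $\ell\notin\{r_1,r\}$, this is exactly the statement $r>\ell\iff r_1>\ell$. For (2): if $\ell$ met $r$ transversely, a lift of $\ell$ would meet $\tilde r$ transversely, so by the crux $\ell$ would meet $r_1$ or $r_2$ transversely, or else $\ell$ would have an end limiting to $\infty$ along a ray whose lift at $\widetilde\infty_1$ lies in the sector between $\overline{\tilde r_1}$ and $\tilde r_2$, that is, between $\bar r_1$ and $r_2$ on $I_\tau$; the hypotheses rule out all of these.

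Finally, for (1) one argues topologically near $\infty$: when $r_1,r_2$ are simple and disjoint, the straightening only modifies $r_1\cdot r_2$ near $\infty$, replacing the corner by a small arc $\sigma$ running from the direction of $\bar r_1$ to the direction of $r_2$ on the side missing $\tau$. Since $r_1$ and $r_2$ together occupy only the four directions $r_1,\bar r_1,r_2,\bar r_2$ at $\infty$, the arc $\sigma$ is disjoint from $r_1\cup r_2$ away from $\infty$ precisely when neither $r_1$ nor $\bar r_2$ lies strictly between $\bar r_1$ and $r_2$ on $I_\tau$, which is exactly the hypothesis that $r_2,\bar r_1$ be adjacent among those four points; in that case $r_1\cup\sigma\cup r_2$ is an embedded ray, so its geodesic representative $r$ is simple. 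I expect this simplicity assertion to be the part needing the most care --- making precise that a self-intersection of the straightened concatenation is forced to occur near $\infty$, and that the adjacency of $r_2$ and $\bar r_1$ (equivalently, that $r_1$ and $r_2$ do not cross at $\infty$) is exactly what removes it. Everything else runs parallel to Lemma~\ref{lemma: concat at intersection}, the one genuinely new feature being the extra ideal vertex $\widetilde\infty_1$ of $\Delta$, which is responsible for the additional exceptional case appearing in (2).
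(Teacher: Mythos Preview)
Your proof is correct and follows essentially the same approach as the paper: both work in the fundamental domain $\widetilde\Omega$, build the ideal triangle $\Delta$ with vertices $\widetilde\infty,\widetilde\infty_1,b$, and use that any geodesic crossing $\tilde r$ must exit $\Delta$ through $\tilde r_1$, $\tilde r_2$, or the ideal vertex $\widetilde\infty_1$. Your treatment of (1) is exactly what the paper does, only spelled out in more detail---the paper simply asserts that the adjacency hypothesis ``ensures a simple isotopy representative of the concatenation when $r_1,r_2$ are disjoint''---so your worry there is unfounded.
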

\begin{proof}
	The proof is similar to the previous one. We first visualize the lift of $r$ in the fundamental domain $\widetilde{\Omega}$ in this setting. Let $\widetilde{\Omega}$, $\widetilde{\infty}$ and $\tilde{r}_1$ be as before. Then the endpoint of $\tilde{r}_1$ is another lift $\widetilde{\infty}_1$ of $\infty$, viewing from which $\overline{\tilde{r}_1}$ is a lift of $\bar{r}_1$. Then there are two consecutive lifts $\tilde{\tau},\tilde{\tau}'$ of $\tau$ starting from $\widetilde{\infty}_1$ so that $(\tilde{\tau},\overline{\tilde{r}_1},\tilde{\tau}')$ has positive circular order. See the right of Figure \ref{fig: surgery}. Now there is a unique lift $\tilde{r}_2$ of $r_2$ starting at $\widetilde{\infty}_1$ so that $(\tilde{\tau},\tilde{r}_2,\tilde{\tau}')$ has positive circular order, and it is to the left of $\overline{\tilde{r}_1}$ if and only if $r_2>\bar{r}_1$ on $I_\tau$. 
	Then a lift $\tilde{r}$ of $r$ is the third side of the ideal geodesic triangle with sides $\tilde{r}_1$ and $\tilde{r}_2$ as shown on the right of Figure \ref{fig: surgery}, from which the last claim easily follows.
	The additional assumption that $r_2$ and $\bar{r}_1$ are adjacent ensures a simple isotopy representative of the concatenation when $r_1,r_2$ are disjoint, which implies that $r$ is simple. 
	Finally, a bi-infinite geodesic $\ell$ intersecting $r$ transversely must have a lift $\tilde{\ell}$ entering the ideal geodesic triangle above from the side $\tilde{r}$. Thus the only case where $\tilde{\ell}$ avoids $\tilde{r}_1$ and $\tilde{r}_2$ is when $\widetilde{\infty}_1$ is an end of $\tilde{\ell}$. In this case, with the appropriate orientation $\ell$ is a ray starting from $\infty$ sitting in between $\bar{r}_1$ and $r_2$ on $I_\tau$. This proves the second claim.
\end{proof}

\begin{lemma}\label{lemma: strengthen two-side approachable}
	Let $\tau$ be a two-side approachable long ray. With the notation above, we can choose the sequences of loops $\ell_i$ and $r_i$ so that they are 
	mutually disjoint and their order on $I_\tau$ satisfies (\ref{eqn: order of loops}).
\end{lemma}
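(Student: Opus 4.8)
The plan is to begin with loops supplied by the definition of two-side approachability and massage them using Lemmas~\ref{lemma: concat at intersection} and~\ref{lemma: concat at infty}. So let $\ell_i, r_i$ be loops disjoint from $\tau$ with $\ell_i \to \tau$ from the left and $r_i \to \tau$ from the right. First I would pass to subsequences so that, in the order on $I_\tau$, the $\ell_i$ are strictly decreasing with limit the left endpoint of $I_\tau$ and the $r_i$ strictly increasing with limit the right endpoint; this is exactly what ``converging to $\tau$ from the left (resp.\ right)'' provides, and it forces each $\ell_i$ to fellow-travel $\tau$ for a time tending to $\infty$ with $i$ (similarly for $r_i$). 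A useful companion fact, which I would establish separately, is a \emph{trapping principle}: any loop disjoint from $\tau$ whose forward end lies close to $\tau$ on $S^1_C$ must also have its backward end close to $\tau$, because the returning half of such a loop is caught on one side of $\tau$ and, to reach $\infty$ inside $\Omega\setminus\tau$, must fellow-travel $\tau$ backwards for a comparable length of time. In particular $\bar\ell_i\to$ left endpoint and $\bar r_i\to$ right endpoint of $I_\tau$.

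Next, the loops need not be pairwise disjoint, and the step is to make them so with Lemma~\ref{lemma: concat at intersection}. Process the loops in the order $\ell_1, r_1, \ell_2, r_2, \dots$. When it is $\ell_i$'s turn, surger it in turn against each already-processed loop $m$: replace the current $\ell_i$ by the straightening of the piecewise geodesic that follows $\ell_i$ to its \emph{first} intersection point with $m$ and then follows $m$, in the direction of $m$ that makes a right turn, so that by Lemma~\ref{lemma: concat at intersection} the new loop moves down on $I_\tau$, staying below the current $\ell_i$ (hence above the left endpoint). Because we surger at a first intersection point, part (1) of Lemma~\ref{lemma: concat at intersection} keeps the result simple --- so it is still a loop --- and a standard surgery calculation shows the result is now disjoint from $m$ as well; part (2) guarantees that no \emph{new} intersection is created with any previously-processed loop (each of these is disjoint both from the current $\ell_i$ and from $m$); and part (3) lets me keep track of the order, so the result still lies below $\ell_{i-1},\dots$. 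After finitely many surgeries $\ell_i$ is disjoint from everything processed before it; I would handle $r_i$ the same way, using left turns so that it moves up on $I_\tau$. Since at the end $(\text{left endpoint of }I_\tau) < \ell_i \le (\text{original }\ell_i) \to \text{left endpoint}$, a squeeze shows the new $\ell_i$ still converge to the left endpoint; likewise the new $r_i$ converge to the right endpoint, and by the trapping principle so do $\bar\ell_i$ and $\bar r_i$.

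Now all the loops are pairwise disjoint and disjoint from $\tau$, with $\ell_i,\bar\ell_i$ converging to the left endpoint and $r_i,\bar r_i$ to the right endpoint of $I_\tau$. Two disjoint simple loops that are disjoint from $\tau$ are either nested or have disjoint interiors; I would rule out nesting using that both ends of each $\ell_i$ lie arbitrarily close to the left endpoint, so its interior is the \emph{short} interval between those two ends, which --- once the fellow-traveling depths of the $\ell_i$ are arranged to be strictly increasing --- cannot contain either end of a ``deeper'' loop; any residual nesting I would clear up with Lemma~\ref{lemma: concat at infty}, whose simplicity and disjointness hypotheses (1), (2) hold precisely because the reversed end of a loop is adjacent to its forward end on $I_\tau$. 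After passing to a final subsequence making the depths strictly increase and reorienting each loop so that its smaller end becomes the forward one, the loops have pairwise disjoint interiors and are ordered exactly as in (\ref{eqn: order of loops}).

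The main obstacle I anticipate is the trapping principle and the bookkeeping built on it: controlling, throughout the surgeries, that the returning end of each loop stays near $\tau$, equivalently ruling out the configuration in which the loops are nested with their other ends spreading toward the far endpoint of $I_\tau$. This uses the geometry of $\Omega\setminus\tau$ near the tail of $\tau$ together with Lemma~\ref{lemma: concat at infty}; the remainder is routine manipulation with Lemma~\ref{lemma: concat at intersection} and passage to subsequences.
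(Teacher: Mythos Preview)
The central gap is your ``trapping principle.'' You claim that if a loop $\ell$ is disjoint from $\tau$ and $\ell$ is close to the left endpoint of $I_\tau$, then $\bar\ell$ is also close to the \emph{left} endpoint. This is false in general. A simple loop disjoint from $\tau$ may encircle $\tau$: it can head out fellow-travelling $\tau$ on the left and return fellow-travelling $\tau$ on the \emph{right}, so that $\bar\ell$ lies near the right endpoint of $I_\tau$ instead. Nothing in the definition of two-side approachability rules this out; the loops $\ell_i$ are only required to converge to $\tau$ from the left, with no hypothesis on $\bar\ell_i$. In fact the paper exhibits exactly such loops: the $\ell'_i$ in the intermediate ``nested'' order (\ref{eqn: order of nested loops}) satisfy $\ell'_i\to$ left endpoint while $\bar\ell'_i\to$ right endpoint. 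Your heuristic (``the returning half is caught on one side of $\tau$'') only shows that the return must fellow-travel $\tau$ near $\infty$, not on which side, and not for how long.

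This is precisely why the paper does \emph{not} aim directly for order (\ref{eqn: order of loops}). It first builds the nested configuration (\ref{eqn: order of nested loops}) inductively --- this is robust because each new loop need only nest inside the previous ones, which is what the surgeries of Lemmas \ref{lemma: concat at intersection} and \ref{lemma: concat at infty} naturally produce --- and only at the very end converts to (\ref{eqn: order of loops}) by a single round of concatenations at $\infty$ (setting $\bar r_n=r'_n\cdot\ell'_n$ and $\bar\ell_n=\ell'_n\cdot r'_{n+1}$). Your attempt to rule out nesting and go straight to disjoint interiors is what forces you to assume the trapping principle, and that assumption does not hold. There is also a secondary issue in your surgery bookkeeping: when you surger the current $\ell_i$ against one processed loop $m$, item (2) of Lemma \ref{lemma: concat at intersection} only protects disjointness from loops already disjoint from the \emph{current} $\ell_i$, so you cannot yet invoke it for processed loops that the original $\ell_i$ still intersects; the paper sidesteps this by using the nested structure, where only the outermost loop $r'_n$ can be hit first.
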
		
\begin{proof}
	We start with two sequences of loops $L_i,R_i$ as in the definition converging to the left and right endpoints of $I_\tau$ respectively. Up to taking subsequences, we may assume $L_i$, $R_i$ to be monotone on $I_\tau$ with $L_1<R_1$. 
	
	We will first inductively obtain mutually disjoint loops $\ell'_i,r'_i$ that converge to the two endpoints and satisfy a different order
	\begin{equation}\label{eqn: order of nested loops}
		\cdots < \ell'_i < \bar{r}'_i < \cdots < \ell'_1 < \bar{r}'_1 < r'_1 < \bar{\ell}'_1 < \cdots  < r'_i < \bar{\ell}'_i < \cdots.
	\end{equation}
	Geometrically this order makes the interiors of $\ell_i$ and $r_i$ nested. See Figure \ref{fig: loops_another_form} for an illustration.
	
	\begin{figure}
		\labellist
		\small 
		\pinlabel $\widetilde{\infty}$ at 110 228
		\pinlabel $\tilde{\tau}$ at -5 85
		\pinlabel $\ell'_2$ at 0 65
		\pinlabel $\bar{r}'_2$ at 10 47
		\pinlabel $\ell'_1$ at 42 10
		\pinlabel $\bar{r}'_1$ at 75 -5
		\pinlabel $\tilde{\tau}'$ at 220 65
		\pinlabel $\bar{\ell}'_2$ at 208 43
		\pinlabel $r'_2$ at 198 28
		\pinlabel $\bar{\ell}'_1$ at 160 1
		\pinlabel $r'_1$ at 135 -8
		
		\pinlabel $\tau$ at 375 150	
		\pinlabel $\infty$ at 380 105
		\pinlabel $\ell'_2$ at 288 30	
		\pinlabel $\ell'_1$ at 337 40
		\pinlabel $r_1'$ at 375 45
		\pinlabel $r_2'$ at 425 55
		\endlabellist
		\centering
		\includegraphics[scale=0.7]{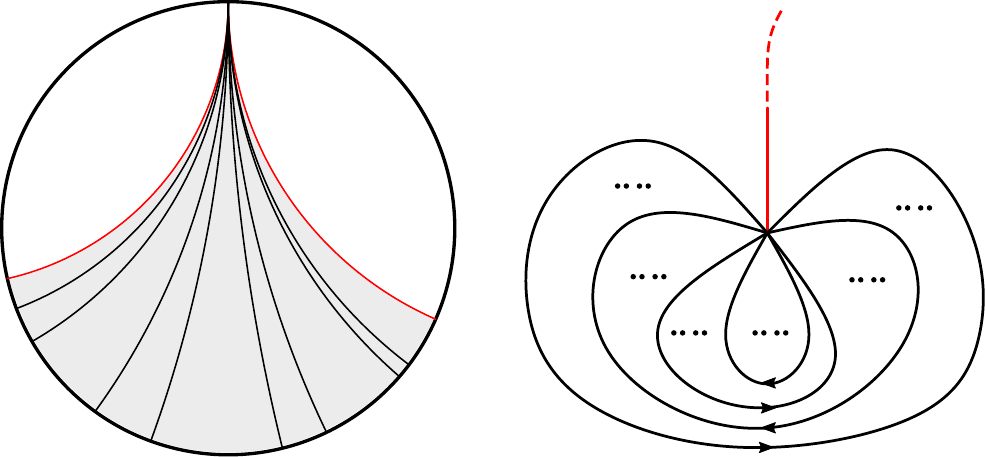}
		\caption{Loops $\ell'_i,r'_i$ with nested interiors that converge to and are disjoint from a two-side approachable long ray $\tau$, where dots indicate Cantor subsets}\label{fig: loops_another_form}
	\end{figure}
	
	To this end, let $r'_1$ be $R_1$ with a suitable orientation so that $\bar{r}'_1<r'_1$. Suppose we have obtained $r'_i$ for $1\le i\le n$ and $\ell'_i$ for $1\le i\le n-1$ such that they are mutually disjoint and satisfy the order (\ref{eqn: order of nested loops}). Since $L_j$ converges to the left endpoint of $I_\tau$, we may choose $j$ large enough so that $L_j<\bar{r}'_n$. 
	
	There are two cases: 
	\begin{enumerate}
		\item If $L_j$ intersects some of the already chosen $\ell'_i$ or $r'_i$, then the first intersection $p$ of $L_j$ with this collection of loops lies on $r'_n$ since the interiors are nested. Let $\ell'_n$ be the straightening of the piecewise geodesic that first follows $L_j$ up to $p$ and then follows $\bar{r}'_n$. See $L_j^{(1)}$ and $\ell'_n$ in Figure \ref{fig: surgery_to_standard} for an illustration. Applying Lemma \ref{lemma: concat at intersection} to $\bar{\ell}'_n$ and $\ell'_n$, we observe that $\ell'_n$ is a loop disjoint from $\tau$ and $r'_n$ such that $r'_n<\bar{\ell}'_n$ and $\ell'_n<L_j<\bar{r}'_n$. 
		\item If $L_j$ is disjoint from all the already chosen $\ell'_i$ or $r'_i$, then we have either $L_j<\bar{r}'_n<r_n'<\bar{L}_j$ or $L_j<\bar{L}_j<\bar{r}'_n<r_n'$. In the former case, we simply let $\ell'_n=L_j$. In the latter case, let $\ell'_n$ be the straightening of the unique concatenation of $L_j,\bar{r}'_n$ at $\infty$ locally disjoint from $\tau$. See $L_j^{(2)}$ and $\ell'_n$ in Figure \ref{fig: surgery_to_standard} for an illustration. 
		Applying Lemma \ref{lemma: concat at infty} to $\bar{\ell}'_n$ and $\ell'_n$, we observe that $\ell'_n$ is a loop disjoint from $\tau$ and $r'_n$ such that $r'_n<\bar{\ell}'_n$ and $\ell'_n<L_j<\bar{r}'_n$. 
	\end{enumerate}
	
	\begin{figure}
		\labellist
		\small 
		\pinlabel $\tau$ at 118 160	
		\pinlabel $\infty$ at 120 115
		\pinlabel $L_j^{(1)}$ at 42 55
		\pinlabel $L_j^{(2)}$ at 42 80
		\pinlabel $\ell'_n$ at 140 0
		\pinlabel $p$ at 70 20
		\pinlabel $\ell'_{n-1}$ at 137 40
		\pinlabel $r'_n$ at 135 20
		\endlabellist
		\centering
		\includegraphics[scale=0.7]{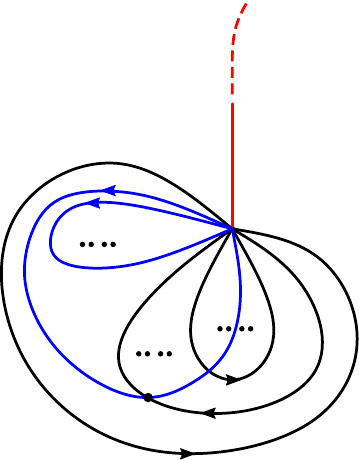}
		\caption{Two potential $L_j$'s that give rise to $\ell'_n$ after surgery}\label{fig: surgery_to_standard}
	\end{figure}
	
	In either case, we obtain a loop $\ell'_n$ with the desired properties and $\ell'_n<L_j$. A symmetric surgery to some $R_k$ for a large $k$ gives us the next loop $r'_{n+1}$ with the desired properties and $r'_{n+1}>R_k$. Hence by induction we obtain two sequences of mutually disjoint loops $\ell'_i,r'_i$ in the desired order (\ref{eqn: order of nested loops}) and they converge to the two endpoints of $I_\tau$ respectively.
	
	Now we modify $\ell'_i,r'_i$ to get the desired $\ell_i,r_i$ satisfying order (\ref{eqn: order of loops}). Let $\bar{r}_n$ be the concatenation of $r'_n$ with $\ell'_{n}$ locally disjoint from $\tau$ for $n\ge 1$. Similarly let $\bar{\ell}_n$ be the concatenation of $\ell'_n$ with $r'_{n+1}$ locally disjoint from $\tau$ for $n\ge 1$. Then by Lemma \ref{lemma: concat at infty}, we have $\bar{\ell}'_n>r_n>\bar{r}_n>r'_n$ and $\bar{r}'_{n+1}<\ell_n<\bar{\ell}_n<\ell'_n$ for all $n$. It follows that the sequences of loops $\ell_i,r_i$ are mutually disjoint, converge to the two endpoints of $I_\tau$, and satisfy the desired order (\ref{eqn: order of loops}) on $I_\tau$.
\end{proof}
\begin{rmk}\label{rmk: not simple}
	It is not even necessary to insist that $\ell_k$ and $r_k$ are simple in the definition of a two-side approachable ray $\tau$. This is because there are similar surgeries that eliminate self-intersections of any $\ell_k$ (resp. $r_k$) while keeping it disjoint from $\tau$ and making it closer to $\tau$.	We give a sketch.
	
	Suppose $r_k$ self-intersects. Let $p$ be the first self-intersection point on $r_k$ as one starts out from $\infty$ following $r_k$. This point $p$ cuts $r_k$ into the concatenation of geodesics $\alpha,\beta,\gamma$, where $\alpha$ (resp. $\gamma$) is the starting (resp. ending) geodesic path of $r_k$ from $\infty$ to $p$ (resp. from $p$ to $\infty$) and $\beta$ is the geodesic loop in between. There are two possible modifications: the straightening of $r'_k=\alpha\cdot\beta\cdot\bar{\alpha}$ or $r''_k=\alpha\cdot\bar{\beta}\cdot\bar{\alpha}$. 
	Both are disjoint from $\tau$ and have self-intersection numbers no more than that of $\beta$ and strictly less than that of $r_k$. Moreover, one of the two modifications makes a left turn at $p$ and the other makes a right turn. 
	Hence one of them is greater than $r_k$ on $I_\tau$. Continuing such modifications provides a (simple) loop disjoint from $\tau$ and gets even closer.		
\end{rmk}

\section{$2$-filling rays disjoint from two-side approachable long rays}
\label{sec:straightforward}
The goal of this section is to give an explicit and straightforward construction of $2$-filling rays and prove the following theorem.

\begin{thm}\label{thm: construction for two side approachable}
	For any two-side approachable long ray $\tau$, there is a $2$-filling ray $\gamma$ such that $\tau$ is the only ray disjoint from $\gamma$. 
	Moreover, for any $n\ge1$, there is a set $\uga=\{\gammaup{1},\ldots, \gammaup{n}\}$
	of $n$ mutually disjoint $2$-filling rays such that the set of rays disjoint from any $\gammaup{i}$ is $\{\tau\}\cup (\underline{\gamma}\setminus\{\gammaup{i}\})$. Equivalently, the star of each $\gammaup{i}$ on the ray-and-loop graph $\Ray$ is a clique with vertex set $\{\tau\}\cup \underline{\gamma}$.
\end{thm}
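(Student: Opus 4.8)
The plan is to build $\gamma$ explicitly out of the loops $\ell_i,r_i$ supplied by Lemma~\ref{lemma: strengthen two-side approachable}, realize it as a limit of simple loops, and then deduce the general statement by running the construction in $n$ parallel tracks. After replacing the given data using Lemma~\ref{lemma: strengthen two-side approachable}, we have mutually disjoint loops $\ell_i,r_i$ with mutually disjoint interiors, converging to $\tau$ from the left and right respectively and satisfying \eqref{eqn: order of loops}. The ray $\gamma$ will fellow-travel $\tau$ toward its end while \emph{weaving} back and forth across $\tau$: whenever it would approach a portion of $\Omega$ that lies near $\tau$, it instead detours into the interior of some $\ell_i$ (to be just on the left of $\tau$) or of some $r_i$ (to be just on the right), performing inside that interior an amount of ``complicated'' behavior that grows with $i$ (say, hitting the $i$-th loop and short ray in a fixed enumeration of those lying inside $\operatorname{int}(\ell_i)$, and likewise for $r_i$). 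Concretely I would fix a countable family of ``targets'' --- all loops and short rays, together with enough finite sub-arcs of other geodesics to pin down the accumulation set --- and construct simple loops $m_k$ by induction, where $m_{k}$ is obtained from $m_{k-1}$ together with $\ell_k$ and $r_k$ by the surgeries of Lemmas~\ref{lemma: concat at intersection} and~\ref{lemma: concat at infty} so that $m_k$ handles the first $k$ targets, is disjoint from $\tau$, and is monotone on $I_\tau$; then set $\gamma=\lim_k m_k$ on $S^1_C$.

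That $\gamma$ is disjoint from $\tau$ is immediate: each $\operatorname{int}(\ell_i)$ and $\operatorname{int}(r_i)$ is by definition disjoint from $\tau$, and the connecting arcs only fellow-travel $\tau$, so every $m_k$ is disjoint from $\tau$; since the set of rays disjoint from $\tau$ is closed on $S^1_C$ (the discussion around Lemma~\ref{lemma: converge to L}), so is $\gamma$. Likewise $\gamma$ is simple because the set of simple rays is closed on $S^1_C$, and $\gamma$ is a \emph{long} ray because it fellow-travels $\tau$ forever (it is not proper) and it is distinct from $\tau$ since it meets $\ell_1$ while $\tau$ does not. Consequently, once we know $\gamma$ meets every loop and short ray, $\gamma$ has distance $2$ to the set of loops and short rays on $\Ray$ --- distance at least $2$ by filling, and exactly $2$ because $\tau$ is a long ray disjoint from $\gamma$ and disjoint in turn from the loop $\ell_1$ --- so $\gamma$ is $2$-filling.

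It remains to show $\gamma$ meets every loop and short ray, and moreover that $\tau$ is the only ray disjoint from $\gamma$; this is where the density of the target family is used. Suppose $c$ is a simple ray disjoint from $\gamma$. If $c$ meets some $\ell_i$ or $r_i$ transversely then $c$ enters the corresponding interior, where $m_k$ for large $k$ (hence $\gamma$, by robustness of transverse intersection on $S^1_C$) has been made complicated enough to force $c\cap\gamma\ne\emptyset$; if instead $c$ is disjoint from every $\ell_i$ and $r_i$ but distinct from $\tau$, then $c$ either lies inside $\operatorname{int}(\ell_i)$ (resp. $\operatorname{int}(r_i)$) for some $i$ --- again contradicting disjointness from $\gamma$ by the complicated behavior of $\gamma$ there --- or it is ``far'' from $\tau$ and is one of the targets hit by $\gamma$ directly, or it is squeezed between the $\ell_i$'s or the $r_i$'s as they converge to the endpoints of $I_\tau$, forcing $c=\tau$. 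In all cases $c=\tau$, so $D_\gamma=\{\tau\}$, which proves the first assertion; note this also reproves that the star of $\gamma$ is a clique (Lemma~\ref{lem:cliques}), now with vertex set $\{\tau\}$. For the second assertion, run the construction in $n$ parallel tracks $\gammaup1,\ldots,\gammaup n$ kept in a fixed left-to-right order, all fellow-traveling $\tau$ and one another, and at each stage subdivide $\operatorname{int}(\ell_i)$ (resp. $\operatorname{int}(r_i)$) by $n-1$ disjoint loops into $n$ disjoint sub-regions --- each still a plane-minus-Cantor-set --- letting $\gammaup j$ detour into the $j$-th one. The surgery lemmas again keep each $\gammaup j$ simple and disjoint from $\tau$, the fixed relative order and disjoint detour regions make the $\gammaup j$ mutually disjoint, and the previous paragraph applied to each $j$ shows a ray disjoint from $\gammaup j$ is $\tau$, some $\gammaup i$ with $i\ne j$, or nothing else; that is, the star of $\gammaup j$ on $\Ray$ is the clique with vertex set $\{\tau\}\cup\uga$.

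The main obstacle is the verification in the previous paragraph that $D_\gamma$ is no larger than $\{\tau\}$: one must choose the target family and the ``amount of complication'' inside each $\operatorname{int}(\ell_i)$ and $\operatorname{int}(r_i)$ precisely enough that \emph{every} simple geodesic other than $\tau$ --- including every long ray --- is forced to cross $\gamma$, while simultaneously keeping $\gamma$ disjoint from $\tau$. Making this quantitative requires carefully tracking the surgeries of Lemmas~\ref{lemma: concat at intersection} and~\ref{lemma: concat at infty} and their effect on the order on $I_\tau$, and it is exactly here that two-side approachability of $\tau$ is essential: the loops $\ell_i,r_i$ are what allow $\gamma$ to accumulate densely on both sides of $\tau$ without ever crossing it, so that its limiting lamination fills $\Omega$ relative to $\tau$ yet stays disjoint from $\tau$.
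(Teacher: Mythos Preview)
Your proposal shares the paper's starting point --- the loops $\ell_i,r_i$ from Lemma~\ref{lemma: strengthen two-side approachable} and the surgery Lemmas~\ref{lemma: concat at intersection},~\ref{lemma: concat at infty} --- but the mechanism you propose for showing $D_\gamma=\{\tau\}$ is genuinely different from the paper's and, as written, has a real gap.

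Your plan is to make $\gamma$ ``complicated'' inside each $\operatorname{int}(\ell_i)$ and $\operatorname{int}(r_i)$, hitting an enumerated family of targets (loops and short rays). But the rays you must exclude from $D_\gamma$ include \emph{all} long rays, and there are uncountably many of these; a countable target list cannot be exhausted this way. Your case analysis for a hypothetical $c\in D_\gamma\setminus\{\tau\}$ is correspondingly incomplete: ``$c$ is far from $\tau$ and is one of the targets'' only handles countably many rays, and ``$c$ is squeezed between the $\ell_i$'s or the $r_i$'s \ldots\ forcing $c=\tau$'' is not justified --- there can certainly be long rays disjoint from every $\ell_i,r_i$, lying in none of the interiors, and not equal to $\tau$ (for instance, rays that themselves accumulate onto the limit set of $\tau$). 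You correctly identify this as ``the main obstacle,'' but you do not supply the missing idea.

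The paper's mechanism is quite different and does not involve entering the interiors of $\ell_i,r_i$ at all. The explicit concatenations
\[
\alphaup{n-i}_{2k}=\gammaup{n-i}_{2k-1}\cdot \ell_{p_k+i}\cdot\overline{\gammaup{n-i}_{2k-1}},\qquad
\gammaup{n-i}_{2k}=\alphaup{n-i}_{2k}\cdot\gammaup{n-i+1}_{2k}
\]
(and their odd-step analogues) make each $\gammaup{i}_k$ \emph{fold back over itself} after encircling an $\ell$- or $r$-loop. The payoff is in the universal cover: in the fundamental domain $\widetilde{\Omega}$ bounded by two lifts of $\tau$, these folded loops produce lifts of $\gammaup{i}$ whose two ends converge respectively to the endpoints of $\tilde{\gamma}^{(j)}$ and $\tilde{\gamma}^{(j+1)}$ (or of $\tilde{\tau}_1$ and $\tilde{\gamma}^{(1)}$, etc.); see Figures~\ref{fig: limit} and~\ref{fig: limit_2}. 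By the principle of Lemma~\ref{lemma: converge to L}, this forces every ray in the open interval between consecutive members of $\{\tau,\gammaup{1},\ldots,\gammaup{n}\}$ on $I_\tau$ to cross $\gammaup{i}$. That is the whole argument: no target enumeration, no work inside the interiors --- the filling comes from controlling which bi-infinite geodesics the lifts of $\gammaup{i}$ accumulate onto, and this is exactly what the carefully tracked inequalities of Lemmas~\ref{lemma: monotonicity} and~\ref{lemma: reversed order} establish.
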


We first describe the construction of $\uga$ for each $n\ge 1$. Let $\ell_m,r_m$ be disjoint loops converging to $\tau$ as in Lemma \ref{lemma: strengthen two-side approachable}. Choose two increasing sequences of positive integers $p_k$ and $q_k$ such that $p_{k+1}-p_k\ge n$ and $q_{k+1}-q_k\ge n$. 
We repeat the following two steps, depending on the parity of $j$, to inductively define $n$ sequences of loops $\gammaup{1}_j,\cdots,\gammaup{n}_j$. In the following, $\cdot$ denotes the concatenation near $\infty$ locally disjoint from $\tau$ introduced in Section \ref{sec: two-side}. 
\begin{flalign*}
	\textbf{Step 1:}&\text{ Let }\alphaup{n-i}_{2k}\defeq \gammaup{n-i}_{2k-1}\cdot \ell_{p_k+i}\cdot \overline{\gammaup{n-i}_{2k-1}}\text{ for all } 0\le i\le n-1, \text{ and}&\\
	&\text{ let }\alphaup{i}_{2k-1}\defeq \gammaup{i}_{2k-2}\cdot r_{q_k+i-1}\cdot \overline{\gammaup{i}_{2k-2}}\text{ for all } 1\le i\le n; \text{ or}&\\
	&\text{ let }\alphaup{i}_{1}\defeq r_{q_1+i-1}\text{ for all } 1\le i\le n \text{ for the initial case when } 2k-1=1.&\\
	\textbf{Step 2:}&\text{ Let }\gammaup{n}_{2k}\defeq \alphaup{n}_{2k} \text{ and }\gammaup{n-i}_{2k}\defeq \alphaup{n-i}_{2k}\cdot \gammaup{n-i+1}_{2k}\text{ for all } 1\le i\le n-1,\text{ and}&\\
	&\text{ let }\gammaup{1}_{2k-1}\defeq \alphaup{1}_{2k-1} \text{ and }\gammaup{i}_{2k-1}\defeq \alphaup{i}_{2k-1}\cdot \gammaup{i-1}_{2k-1}\text{ for all } 2\le i\le n.&
\end{flalign*}

The constructions of $\alphaup{n-i}_{2k}$ and $\gammaup{n-i}_{2k}$ as in the two steps above are depicted in Figures \ref{fig: 2fill_gamma2alpha} and \ref{fig: 2fill_alpha2gamma} respectively.
When $p_k$ and $q_k$ are large enough for all $k$, we will show that $\gammaup{1}_j,\cdots,\gammaup{n}_j$ converge to simple rays $\gammaup{1},\cdots,\gammaup{n}$ with the desired properties as $j\to\infty$.

\begin{figure}
	\labellist
	\small 
	
	\pinlabel $\ell_{p_k+2}$ at 10 173
	\pinlabel $\ell_{p_k+1}$ at 0 130
	\pinlabel $\ell_{p_k}$ at 25 52
	\pinlabel $\tau$ at 100 210
	\pinlabel $\gammaup{1}_{2k-1}$ at 170 52
	\pinlabel $\gammaup{2}_{2k-1}$ at 190 140
	\pinlabel $\gammaup{3}_{2k-1}$ at 190 218
	
	\pinlabel $\ell_{p_k+2}$ at 293 173
	\pinlabel $\ell_{p_k+1}$ at 283 130
	\pinlabel $\ell_{p_k}$ at 308 52
	\pinlabel $\tau$ at 383 210
	\pinlabel $\alphaup{1}_{2k}$ at 423 50
	\pinlabel $\alphaup{2}_{2k}$ at 470 115
	\pinlabel $\alphaup{3}_{2k}$ at 470 193
	\endlabellist
	\centering
	\includegraphics[scale=0.7]{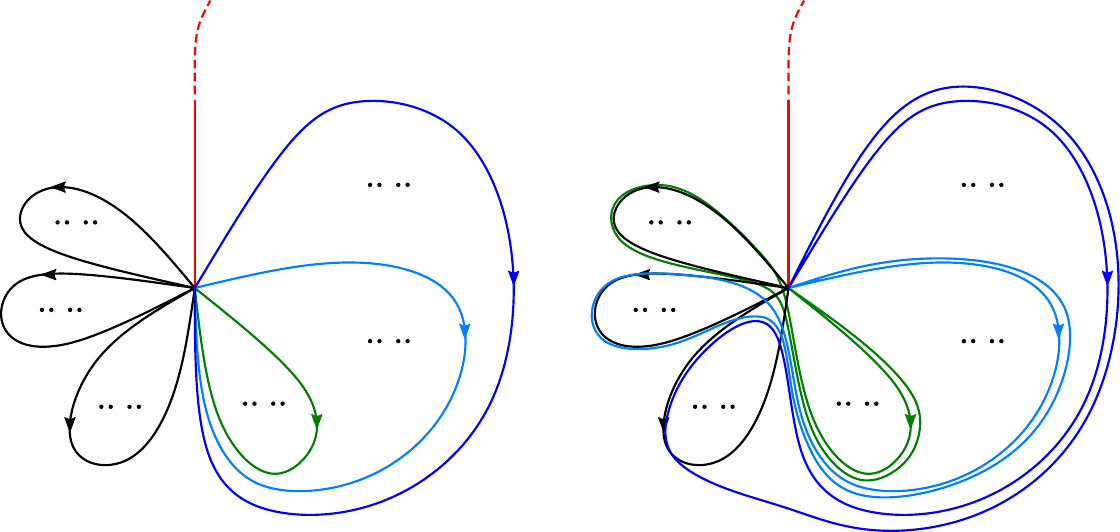}
	\caption{Constructing $\alphaup{n-i}_{2k}$ from $\gammaup{n-i}_{2k-1}$ when $n=3$ as in Step 1.}\label{fig: 2fill_gamma2alpha}
\end{figure}

\begin{figure}
	\labellist
	\small 
	
	\pinlabel $\tau$ at 70 210
	\pinlabel $\alphaup{1}_{2k}$ at 25 48
	\pinlabel $\alphaup{2}_{2k}$ at 117 50
	\pinlabel $\alphaup{3}_{2k}$ at 145 85
	
	\pinlabel $\tau$ at 303 210
	\pinlabel $\alphaup{1}_{2k}$ at 258 48
	\pinlabel $\alphaup{2}_{2k}$ at 350 50
	\pinlabel $\gammaup{3}_{2k}=\alphaup{3}_{2k}$ at 373 85
	\pinlabel $\gammaup{1}_{2k}$ at 208 68
	\pinlabel $\gammaup{2}_{2k}$ at 293 40
	\endlabellist
	\centering
	\includegraphics[scale=0.7]{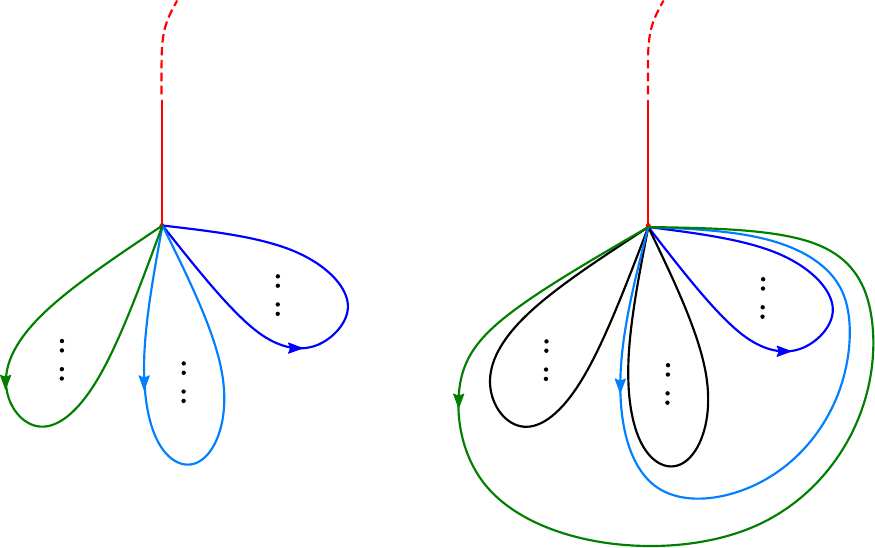}
	\caption{Constructing $\gammaup{n-i}_{2k}$ from $\alphaup{n-i}_{2k}$ when $n=3$ as in Step 2.}\label{fig: 2fill_alpha2gamma}
\end{figure}

Before we proceed to show that the construction gives us the desired $2$-filling rays, we explain how this intuitively works in the case $n=1$, where we take $p_k=q_k=k$. Figure \ref{fig: gamma_n1} shows the ray $\gamma$ right before it starts to follow $\ell_2$ for the first time. One key property of $\gamma$ is that when it starts to follow some $\ell_k$ (resp. $r_k$) for the first time it is in the middle slightly to the left (resp. right) of $\tau$. Such segments get close to the starting segments of $\tau$ and $\gamma$ on both sides, and thus force any ray other than $\gamma$ and $\tau$ to intersect $\gamma$ transversely.

\begin{figure}
	\labellist
	\small 
	\pinlabel $\infty$ at 105 48
	\pinlabel $\tau$ at 310 60
	\pinlabel $\ell_1$ at 175 90
	\pinlabel $\ell_2$ at 278 90
	\pinlabel $r_1$ at 55 55
	\pinlabel $\gamma$ at 24 55
	\pinlabel $r_2$ at 195 10
	\endlabellist
	\centering
	\includegraphics[scale=1]{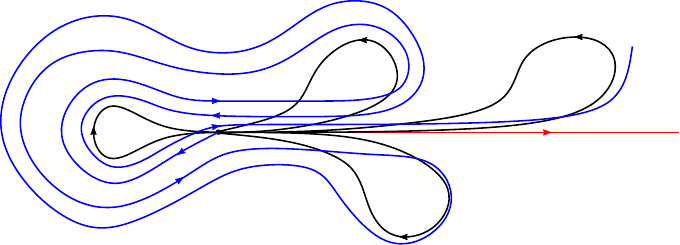}
	\caption{The $2$-filling ray $\gamma$ only disjoint from $\tau$ in our construction after the first few steps.}\label{fig: gamma_n1}
\end{figure}

In later sections we will give another construction of $2$-filling rays using train tracks and laminations (see Theorem \ref{lamthm}). That construction is similar to the construction here with $n=1$, and one can almost see a train track in Figure \ref{fig: gamma_n1} by collapsing parallel strands. Compare with Figure \ref{traintrack}. See Section \ref{sec:correspondence} for a detailed discussion on the correspondence.

To prove Theorem \ref{thm: construction for two side approachable}, we first prove some properties of the loops $\alphaup{i}_{k}$ and $\gammaup{i}_k$.

\begin{lemma}\label{lemma: monotonicity}
	The sequences of loops $\gammaup{1}_j,\cdots,\gammaup{n}_j$ and $\alphaup{1}_j,\cdots,\alphaup{n}_j$ constructed above have the following properties:
	\begin{enumerate}
		\item For any given $j$, the loops $\gammaup{i}_j$ (resp. $\alphaup{i}_j$) as we vary $i$ are mutually disjoint and disjoint from $\tau$;\label{item: simple}
		\item For any $1\le i \le n$, we have 
		$$\gammaup{i}_{2k-1}<\gammaup{i}_{2k}\le \alphaup{i}_{2k}<\overline{\alphaup{i}_{2k}}<\gammaup{i}_{2k-2},$$
		on $I_\tau$ for all $k> 1$ and similarly
		$$\gammaup{i}_{2k-1}<\overline{\alphaup{i}_{2k+1}}<\alphaup{i}_{2k+1}\le\gammaup{i}_{2k+1}<\gammaup{i}_{2k},$$
		on $I_\tau$ for all $k\ge 1$; \label{item: monotone}
		\item For all $k\ge 1$ and any $1\le i\le n$, $\gammaup{i}_{2k+1}$ and $\gammaup{i}_{2k}$ (resp. $\gammaup{i}_{2k}$ and $\gammaup{i}_{2k-1}$) can be made arbitrarily close on $I_\tau$ by choosing $p_k$ (resp. $q_{k}$) large enough;\label{item: close}
		\item For all $k\ge 1$ we have $\bar{\ell}_{p_k+n}<\alphaup{1}_{2k}<\overline{\alphaup{1}_{2k}}<\alphaup{2}_{2k}<\cdots<\alphaup{n}_{2k}<\overline{\alphaup{n}_{2k}}<\bar{r}_{q_{k+1}}$ on $I_\tau$, and 
		$\bar{\ell}_{p_k+n}<\gammaup{1}_{2k}<\cdots<\gammaup{n}_{2k}<\overline{\gammaup{n}_{2k}}<\cdots<\overline{\gammaup{1}_{2k}}<\bar{r}_{q_{k+1}}$.
		Similarly
		$\bar{\ell}_{p_k}<\overline{\alphaup{1}_{2k-1}}<\alphaup{1}_{2k-1}<\overline{\alphaup{2}_{2k-1}}<\cdots<\overline{\alphaup{n}_{2k-1}}<\alphaup{n}_{2k-1}<\bar{r}_{q_k+n}$, and
		$\bar{\ell}_{p_k}<\overline{\gammaup{n}_{2k-1}}<\cdots<\overline{\gammaup{1}_{2k-1}}<\gammaup{1}_{2k-1}<\cdots<\gammaup{n}_{2k-1}<\bar{r}_{q_k+n}$.
		\label{item: order}
		\item Both $\alphaup{i}_{2k-1}$ and $\gammaup{i}_{2k-1}$ are disjoint from $\ell_s$ and $r_t$ for all $s\ge p_k$ and $t\ge q_k+i$, and similarly
		$\alphaup{n-i}_{2k}$ and $\gammaup{n-i}_{2k}$ are disjoint from $\ell_s$ and $r_t$ for all $s\ge p_k+i+1$ and $t\ge q_{k+1}$.
		\label{item: disjoint}
	\end{enumerate}
\end{lemma}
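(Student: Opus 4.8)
The plan is to prove the five assertions simultaneously by induction on the construction index $j$, with Lemma~\ref{lemma: concat at infty} supplying essentially every step and the order~(\ref{eqn: order of loops}) together with Lemma~\ref{lemma: strengthen two-side approachable} as base data. (Every occurrence of $\cdot$ in the construction is a concatenation near $\infty$, so Lemma~\ref{lemma: concat at intersection} plays no role.) Within a single stage the assertions should be proved in the order (4), then (1) and (5), then (2), then (3): the positional data (4) is exactly what one needs in order to verify the adjacency hypothesis of Lemma~\ref{lemma: concat at infty}(1) that yields simplicity and to read off the left/right turn pinning down every order relation; Lemma~\ref{lemma: concat at infty}(2) then propagates disjointness from the various $\ell_s,r_t$; and (3) is a fellow-traveling estimate of a slightly different flavor.

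For the base case $j=1$, the loops $\alpha^{(i)}_1=r_{q_1+i-1}$ are standard, so (1), (4), (5) for the $\alpha$'s are immediate and there is nothing to check for (2) or (3). The $\gamma^{(i)}_1=r_{q_1+i-1}\cdot\gamma^{(i-1)}_1$ are then obtained by $n-1$ successive concatenations at $\infty$: by~(\ref{eqn: order of loops}) and the portions of (1) and (4) already recorded for the $\gamma^{(m)}_1$ with $m<i$, each concatenation is of two disjoint loops meeting the adjacency hypothesis of Lemma~\ref{lemma: concat at infty}(1), so each $\gamma^{(i)}_1$ is a simple loop disjoint from $\tau$, the turn direction extends (4) to $\gamma^{(i)}_1$, and Lemma~\ref{lemma: concat at infty}(1) and~(2) give (1) and (5).

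For the inductive step, assume (1)--(5) for all indices $<j$; by the left--right symmetry of the two steps (swapping $\ell_i\leftrightarrow r_i$ and reversing $I_\tau$) it suffices to treat $j=2k$. In Step~1, each $\alpha^{(n-i)}_{2k}=\gamma^{(n-i)}_{2k-1}\cdot\ell_{p_k+i}\cdot\overline{\gamma^{(n-i)}_{2k-1}}$ is processed as two concatenations at $\infty$, associated as $\gamma^{(n-i)}_{2k-1}\cdot(\ell_{p_k+i}\cdot\overline{\gamma^{(n-i)}_{2k-1}})$; the inductive (5) gives that $\gamma^{(n-i)}_{2k-1}$ and $\ell_{p_k+i}$ are disjoint (as $p_k+i\ge p_k$), while the inductive (4) and~(\ref{eqn: order of loops}) place $\ell_{p_k+i},\bar\ell_{p_k+i}$ strictly to the left of the block of $I_\tau$ occupied by all $\gamma^{(m)}_{2k-1}$ and $\overline{\gamma^{(m)}_{2k-1}}$; with this both concatenations meet the adjacency hypothesis of Lemma~\ref{lemma: concat at infty}(1), so $\alpha^{(n-i)}_{2k}$ is a simple loop disjoint from $\tau$, the turn direction of the outer concatenation gives $\gamma^{(i)}_{2k-1}<\alpha^{(i)}_{2k}$, Lemma~\ref{lemma: concat at infty}(2) gives disjointness of $\alpha^{(n-i)}_{2k}$ from $\ell_s$ for $s\ge p_k+i+1$ and $r_t$ for $t\ge q_{k+1}$ (these being exactly the standard loops falling outside the $I_\tau$-interval that Lemma~\ref{lemma: concat at infty}(2) must exclude) as well as the mutual disjointness of the $\alpha^{(n-i)}_{2k}$, and reading off positions yields the first line of (4). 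Step~2 builds $\gamma^{(n)}_{2k}=\alpha^{(n)}_{2k}$ and $\gamma^{(n-i)}_{2k}=\alpha^{(n-i)}_{2k}\cdot\gamma^{(n-i+1)}_{2k}$ by further concatenations at $\infty$; running a sub-induction on decreasing superscript, the order of the $\alpha$'s just obtained verifies the adjacency hypothesis at each stage, so each $\gamma^{(n-i)}_{2k}$ is a simple loop disjoint from $\tau$, the turn directions give $\gamma^{(i)}_{2k}\le\alpha^{(i)}_{2k}$ and the second line of (4), and Lemma~\ref{lemma: concat at infty}(1) and~(2) give the mutual disjointness and the $\ell_s,r_t$-disjointness of (5). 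The remaining inequalities of (2) linking stage $2k$ to stages $2k-1$ and $2k+1$ are then combinations of the lines of (4) at those stages. Finally, for (3): $\gamma^{(i)}_{2k}$ begins by running alongside $\gamma^{(i)}_{2k-1}$ for essentially the full length of $\gamma^{(i)}_{2k-1}$, which tends to $\infty$ as $q_k\to\infty$ since $\gamma^{(i)}_{2k-1}$ wraps around the loops $r_{q_k+\bullet}$, which recede to $\tau$; symmetrically $\gamma^{(i)}_{2k+1}$ runs alongside $\gamma^{(i)}_{2k}$ for a length tending to $\infty$ as $p_k\to\infty$; since closeness on $I_\tau$ is by definition length of initial fellow-traveling, this gives (3).

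The main obstacle is organizational rather than conceptual: the five assertions are genuinely interlocking, so the induction must be arranged so that at each stage the positional data (4) is available before one tries to verify simplicity, mutual disjointness, or a turn direction, and the index bookkeeping of (5) (each concatenation with some $\ell_{p_k+\bullet}$ or $r_{q_k+\bullet}$ uses up, and sweeps through on $I_\tau$, a definite block of standard loops) has to be tracked carefully. Once the order of proof and these conventions are fixed, each individual verification is a routine application of Lemma~\ref{lemma: concat at infty}, and the only genuinely geometric ingredient is the fellow-traveling estimate behind (3).
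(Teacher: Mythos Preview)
Your proposal is essentially correct and follows the same inductive scheme as the paper: induct on $j$, use Lemma~\ref{lemma: concat at infty} at every concatenation, treat the $\alpha$'s before the $\gamma$'s within a stage, and handle (\ref{item: close}) by a separate argument at the end.

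Two places where your write-up is looser than the paper's and would need tightening. First, the inequality $\overline{\alphaup{i}_{2k}}<\gammaup{i}_{2k-2}$ in (\ref{item: monotone}) is not literally a ``combination of the lines of (\ref{item: order})'': the bounds in (\ref{item: order}) place $\overline{\alphaup{i}_{2k}}$ below $\bar r_{q_{k+1}}$ and $\gammaup{i}_{2k-2}$ below $\bar r_{q_k}$, which does not compare them. The paper obtains this inequality by an explicit lift picture (its Figure~\ref{fig: gamma_1}), using that $\ell_{p_k+i}$ is disjoint from $\gammaup{n-i}_{2k-2}$ (the inductive (\ref{item: disjoint})); equivalently one can invoke Lemma~\ref{lemma: concat at infty}(3) with $\ell=\gammaup{i}_{2k-2}$ at each of the two concatenations building $\overline{\alphaup{i}_{2k}}$. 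Second, for (\ref{item: close}) the paper does not argue via fellow-traveling of the straightened loops but by a squeeze: by (\ref{item: monotone}) the lift of $\gammaup{i}_{2k+1}$ from $\widetilde\infty$ lands between the lifts of $\gammaup{i}_{2k-1}$ and $\gammaup{i}_{2k}$, and in the lift picture both of the latter land between two lifts $\widetilde\infty_1,\widetilde\infty_3$ of $\infty$ whose gap is controlled by $\ell_{p_k+i}$ and hence shrinks as $p_k\to\infty$. Your fellow-traveling heuristic is morally the same phenomenon, but the squeeze via lifts is what makes it rigorous without having to analyze how straightening affects initial segments.
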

\begin{proof}
	We prove bullets (\ref{item: simple}), (\ref{item: monotone}), (\ref{item: order}) and (\ref{item: disjoint}) together by induction on $j$. 
	
	For $j=1$, we have $\alphaup{i}_{1}\defeq r_{q_1+i-1}$ mutually disjoint and satisfying (\ref{eqn: order of loops}). We verify these statements about $\alphaup{i}_{1}$ and $\gammaup{i}_{1}$.
	By Lemma \ref{lemma: concat at infty} we see that the $\gammaup{i}_{1}$'s are mutually disjoint simple loops since the $\alphaup{i}_{1}$'s are disjoint and satisfy (\ref{eqn: order of loops}). 
	The second claim in Lemma \ref{lemma: concat at infty} also implies that each $\gammaup{i}_1$ is disjoint from $\tau$, $\ell_s$, and $r_t$ for all $s\ge p_1$ and $t\ge q_1+i$. This verifies bullets (\ref{item: simple}) and (\ref{item: disjoint}). Bullet (\ref{item: order}) follows from (\ref{eqn: order of loops}) and the last claim in Lemma \ref{lemma: concat at infty}, where we treat each $\overline{\gammaup{i}_1}$ as the concatenation $\overline{\gammaup{i-1}_1}\cdot \overline{\alphaup{i}_{1}}$ for $i\ge2$.
	Finally, bullet (\ref{item: monotone}) is vacuous for $\alphaup{i}_{1}$ and $\gammaup{i}_{1}$.

	Suppose the statements about loops up to $\alphaup{i}_j$ and $\gammaup{i}_j$ for all $1\le i\le n$ and some $j\ge 1$ are all correct. We verify the results as we add $\alphaup{i}_{j+1}$ and $\gammaup{i}_{j+1}$ for all $1\le i\le n$ to the list. We assume $j=2k-1$ for some $k\ge1$ in the sequel. The case where $j$ is even can be proved similarly in a symmetric way.
	
	By the induction hypothesis and (\ref{eqn: order of loops}), $\gammaup{n-i}_{2k-1}$ is disjoint from $\tau$, $\ell_s$ and $r_t$ for $s\ge p_k$ and $t\ge q_{k+1}\ge q_k+n$, and we have
	$\ell_{p_k+i}<\bar{\ell}_{p_k+i}<\overline{\gammaup{n-i}_{2k-1}}<\gammaup{n-i}_{2k-1}$.
	Applying Lemma \ref{lemma: concat at infty} twice to 
	$\alphaup{n-i}_{2k}\defeq \gammaup{n-i}_{2k-1}\cdot (\ell_{p_k+i}\cdot \overline{\gammaup{n-i}_{2k-1}})$ as the result of two concatenations,
	we see that 
	$\alphaup{n-i}_{2k}$ is a loop disjoint from $\tau$, $\ell_s$ and $r_t$ for all $s\ge p_k+i+1$ and $t\ge q_{k+1}$ as in bullets (\ref{item: simple}) and (\ref{item: disjoint}). 
	Lemma \ref{lemma: concat at infty} also implies that $\bar{\ell}_{p_k+n}<\alphaup{n-i}_{2k}, \overline{\alphaup{n-i}_{2k}}<\bar{r}_{q_{k+1}}$ as in bullet (\ref{item: order}).
	
	Next we show that $\alphaup{n-i}_{2k}$ and $\alphaup{n-j}_{2k}$ are disjoint for any $i<j$. This can be seen by observing the disjoint representatives in Figure \ref{fig: 2fill_gamma2alpha}.
	Alternatively, note that $\gammaup{n-j}_{2k-1}$ and $\ell_{p_k+j}$ are disjoint from $\ell_{p_k+i}$ and $\gammaup{n-i}_{2k-1}$, and that neither $\gammaup{n-j}_{2k-1}$ nor $\ell_{p_k+j}$ sits between $\bar{\ell}_{p_k+i}$ and $\overline{\gammaup{n-i}_{2k-1}}$. So we deduce from Lemma \ref{lemma: concat at infty} that $\ell_{p_k+i}\cdot \overline{\gammaup{n-i}_{2k-1}}$ is disjoint from $\gammaup{n-j}_{2k-1}$ and $\ell_{p_k+j}$, 
	and that neither $\gammaup{n-j}_{2k-1}$ nor $\ell_{p_k+j}$ sits between $\overline{\gammaup{n-i}_{2k-1}}$ and $\ell_{p_k+i}\cdot \overline{\gammaup{n-i}_{2k-1}}$. Thus by applying Lemma \ref{lemma: concat at infty} again, we see that $\alphaup{n-i}_{2k}$ is also disjoint from $\gammaup{n-j}_{2k-1}$ and $\ell_{p_k+j}$, and that $\ell_{p_k+j}<\bar{\ell}_{p_k+j}<\overline{\gammaup{n-j}_{2k-1}}<\gammaup{n-j}_{2k-1}<\alphaup{n-i}_{2k}$. 
	By a similar process, we can further deduce that $\alphaup{n-i}_{2k}$ and $\alphaup{n-j}_{2k}=\gammaup{n-j}_{2k-1}\cdot (\ell_{p_k+j}\cdot \overline{\gammaup{n-j}_{2k-1}})$ are disjoint.
	
	Now we prove $\gammaup{n-i}_{2k-1}<\gammaup{n-i}_{2k}\le \alphaup{n-i}_{2k}<\overline{\alphaup{n-i}_{2k}}<\gammaup{n-i}_{2k-2}$ as in bullet (\ref{item: monotone}) by finding suitable lifts of $\gammaup{n-i}_{2k}$, $\alphaup{n-i}_{2k}$, and $\overline{\alphaup{n-i}_{2k}}$.
	The induction hypothesis already guarantees $\gammaup{n-i}_{2k-1}<\gammaup{n-i}_{2k-2}$.
	On a fundamental domain $\widetilde{\Omega}$ of $\Omega_C$ between two consecutive lifts of $\tau$ starting from a chosen $\widetilde{\infty}$, 
	we have the lifts of $\gammaup{n-i}_{2k-2}$, $\gammaup{n-i}_{2k-1}$ starting at $\widetilde{\infty}$ shown in Figure \ref{fig: gamma_1}.
	Then we have lifts of $\ell_{p_k+i}$ and $\bar{\ell}_{p_k+i}$ starting at the endpoint of the lift of $\gammaup{n-i}_{2k-1}$. Their relative positions are correct since
	$\ell_{p_k+i}<\bar{\ell}_{p_k+i}<\overline{\gammaup{n-i}_{2k-1}}$, and they do not intersect the lift of $\gammaup{n-i}_{2k-2}$ since $\ell_{p_k+i}$ is disjoint from $\gammaup{n-i}_{2k-2}$.
	Then we have lifts of $\overline{\gammaup{n-i}_{2k-1}}$ starting at the endpoints of the lifts of $\ell_{p_k+i}$ and $\bar{\ell}_{p_k+i}$ respectively. 
	They both go to the left as shown in Figure \ref{fig: gamma_1} since $\ell_{p_k+i}<\bar{\ell}_{p_k+i}<\overline{\gammaup{n-i}_{2k-1}}$.
	From this, we obtain the lifts of $\alphaup{n-i}_{2k}$ and $\overline{\alphaup{n-i}_{2k}}$ starting from $\widetilde{\infty}$.
	This shows that $\gammaup{n-i}_{2k-1}<\alphaup{n-i}_{2k}<\overline{\alphaup{n-i}_{2k}}<\gammaup{n-i}_{2k-2}$. 
	It remains to find the lift of $\gammaup{n-i}_{2k}$.
	
	For any $1\le i<j\le n$, since $\gammaup{n-j}_{2k-1}<\gammaup{n-i}_{2k-1}$ and $\gammaup{n-i}_{2k-1}$ is disjoint from $\ell_{p_k+j}$, 
	the above configuration implies that
	$\overline{\alphaup{n-j}_{2k}}<\alphaup{n-i}_{2k}$. Based on this relation, the lift of $\alphaup{n-i+1}_{2k}$ starting at the end of the lift of $\alphaup{n-i}_{2k}$ must head to the left as shown in Figure \ref{fig: gamma_1}. It must stay inside the half-disk bounded by the lift of $\ell_{p_k+i}$ since $\alphaup{n-i+1}_{2k}$ is disjoint from $\ell_{p_k+i}$.
	Continuing this process, we obtain lifts of $\alphaup{n-i+1}_{2k},\ldots, \alphaup{n}_{2k}$ this way to construct a lift of $\gammaup{n-i}_{2k}$ starting from $\widetilde{\infty}$, shown in Figure \ref{fig: gamma_1}. 
	This implies that $$\gammaup{i}_{2k-1}<\gammaup{i}_{2k}\le \alphaup{i}_{2k}<\overline{\alphaup{i}_{2k}}<\gammaup{i}_{2k-2}$$
	as in bullet (\ref{item: monotone}).
	
	Note that we also proved the inequality $\alphaup{1}_{2k}<\overline{\alphaup{1}_{2k}}<\alphaup{2}_{2k}<\cdots<\alphaup{n}_{2k}<\overline{\alphaup{n}_{2k}}$ as in bullet (\ref{item: order}) along the way. 
	The inequalities about $\alphaup{n-i}_{2k}$'s that we have established, together with Lemma \ref{lemma: concat at infty}, implies that the $\gammaup{n-i}_{2k}$'s are simple loops and satisfy the disjointness in bullets (\ref{item: simple}) and (\ref{item: disjoint}). The inequalities in bullet (\ref{item: order}) concerning $\gammaup{n-i}_{2k}$'s also follow this way; also see Figure \ref{fig: 2fill_alpha2gamma} for an illustration.
	
	\begin{figure}
		\labellist
		\small 
		\pinlabel $\widetilde{\infty}$ at 250 250
		\pinlabel $\widetilde{\Omega}$ at 220 230
		\pinlabel $\tau$ at 5 190
		
		\pinlabel $\gammaup{n-i}_{2k-1}$ at 62 190
		\pinlabel $\tau$ at 60 30
		\pinlabel $\bar{\ell}_{p_k+i}$ at 110 120
		\pinlabel $\ell_{p_k+i}$ at 118 52
		\pinlabel $\tau$ at 100 28
		\pinlabel $\widetilde{\infty}_1$ at 118 -10
		\pinlabel $\gammaup{n-i}_{2k}$ at 140 190
		\pinlabel $\widetilde{\infty}_2$ at 163 -10
		\pinlabel $\alphaup{n}_{2k}$ at 180 35
		\pinlabel $\cdots$ at 180 10
		\pinlabel $\alphaup{n-i}_{2k}$ at 250 190
		\pinlabel $\overline{\alphaup{n-i}_{2k}}$ at 310 190
		\pinlabel $\alphaup{n-i+1}_{2k}$ at 245 40
		\pinlabel $\widetilde{\infty}_3$ at 293 -10
		\pinlabel $\overline{\gammaup{n-i}_{2k-1}}$ at 303 50
		\pinlabel $\overline{\gammaup{n-i}_{2k-1}}$ at 383 68
		\pinlabel $\gammaup{n-i}_{2k-2}$ at 450 190
		\pinlabel $\tau$ at 502 190
		
		\endlabellist
		\centering
		\includegraphics[scale=0.7]{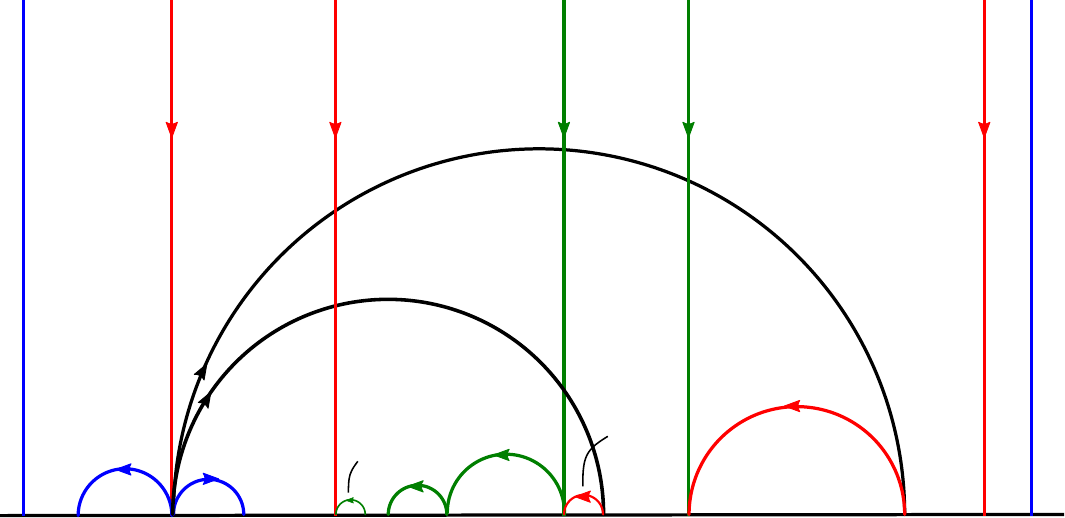}
		\vspace{5pt}
		\caption{Obtaining the lifts of $\alphaup{n-i}_{2k}$, $\overline{\alphaup{n-i}_{2k}}$ and $\gammaup{n-i}_{2k}$ starting from $\widetilde{\infty}$ in $\widetilde{\Omega}$, shown on the upper half-plane with the point at infinity being $\widetilde{\infty}$.}\label{fig: gamma_1}
	\end{figure}
	
	This completes the inductive step and proves bullets (\ref{item: simple}), (\ref{item: monotone}), (\ref{item: order}) and (\ref{item: disjoint}).
	
	To see bullet (\ref{item: close}), note that by bullet (\ref{item: monotone}), $\gammaup{n-i}_{2k+1}$ sits in between $\gammaup{n-i}_{2k-1}$ and $\gammaup{n-i}_{2k}$. 
	Since $\gammaup{n-i}_{2k+1}$ is disjoint from $\tau$ by bullet (\ref{item: simple}), in Figure \ref{fig: gamma_1}, the lift of $\gammaup{n-i}_{2k+1}$ starting from $\widetilde{\infty}$ must have endpoint between $\widetilde{\infty}_1$ and $\widetilde{\infty}_2$, and thus between $\widetilde{\infty}_1$ and $\widetilde{\infty}_3$. As $p_k\to \infty$, the loop $\ell_{p_k+i}$ converges to $\tau$ and $\widetilde{\infty}_3$ converges to $\widetilde{\infty}_1$, thus $\gammaup{n-i}_{2k+1}$ and $\gammaup{n-i}_{2k}$ can be made arbitrarily close by choosing $p_k$ large.
	
\end{proof}

\begin{lemma}\label{lemma: reversed order}
	For all $k> 1$ and any $1\le i\le j\le n$ we have 
	$$\overline{\alphaup{j}_{2k}}<\overline{\alphaup{j-1}_{2k}\cdot \alphaup{j}_{2k}}<\cdots<\overline{\alphaup{i}_{2k}\cdots \alphaup{j}_{2k}}<\gammaup{j}_{2k-2}$$
	on $I_\tau$ and similarly for all $k\ge 1$ we have
	$$\overline{\alphaup{i}_{2k+1}}>\overline{\alphaup{i+1}_{2k+1}\cdot \alphaup{i}_{2k+1}}>\cdots>\overline{\alphaup{j}_{2k+1}\cdots \alphaup{i}_{2k+1}}>\gammaup{i}_{2k-1}.$$
\end{lemma}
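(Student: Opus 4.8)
The plan is to reduce everything to the turn/order dictionary of Lemma~\ref{lemma: concat at infty} and to run a downward induction on the number of $\alpha$-blocks being concatenated. By the orientation-reversing symmetry (which swaps the roles of the $\ell$'s and $r$'s, of $<$ and $>$, and of left and right turns) it suffices to treat the first, even, chain. Write $\beta^{(m)}\defeq\overline{\alphaup{m}_{2k}\cdots\alphaup{j}_{2k}}$ for $i\le m\le j$; reversing a concatenation reverses the order of its factors, so $\beta^{(m)}=\overline{\alphaup{j}_{2k}}\cdot\overline{\alphaup{j-1}_{2k}}\cdots\overline{\alphaup{m}_{2k}}$, giving the recursion $\beta^{(m)}=\beta^{(m+1)}\cdot\overline{\alphaup{m}_{2k}}$ with $\beta^{(j)}=\overline{\alphaup{j}_{2k}}$, and the assertion to be proved is exactly $\beta^{(j)}<\beta^{(j-1)}<\cdots<\beta^{(i)}<\gammaup{j}_{2k-2}$ on $I_\tau$ (the case $i=j$ being contained in Lemma~\ref{lemma: monotonicity}). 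Dually set $\delta^{(m)}\defeq\alphaup{m}_{2k}\cdots\alphaup{j}_{2k}=\overline{\beta^{(m)}}=\alphaup{m}_{2k}\cdot\delta^{(m+1)}$.

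\textbf{Step 1 (an auxiliary forward chain).} First I would prove, by downward induction on $m$, that $\delta^{(m)}$ is disjoint from $\alphaup{s}_{2k}$ for all $s<m$ and that $\overline{\alphaup{m-1}_{2k}}<\delta^{(m)}$ on $I_\tau$. The base case $m=j$ (that $\alphaup{j}_{2k}$ is disjoint from the $\alphaup{s}_{2k}$ with $s<j$, and $\overline{\alphaup{j-1}_{2k}}<\alphaup{j}_{2k}$) is recorded in Lemma~\ref{lemma: monotonicity}. For the step, in $\delta^{(m)}=\alphaup{m}_{2k}\cdot\delta^{(m+1)}$ the factor $\alphaup{m}_{2k}$ is disjoint from every factor of $\delta^{(m+1)}$ and lies strictly to their left on $I_\tau$, so Lemma~\ref{lemma: concat at infty}(2) propagates disjointness of $\delta^{(m)}$ from the $\alphaup{s}_{2k}$, $s<m$; and Lemma~\ref{lemma: concat at infty}(3), applied with $\ell=\overline{\alphaup{m-1}_{2k}}$ (which is disjoint from $\delta^{(m+1)}$ and lies strictly to its left), reduces $\overline{\alphaup{m-1}_{2k}}<\delta^{(m)}$ to the already known $\overline{\alphaup{m-1}_{2k}}<\alphaup{m}_{2k}$.

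\textbf{Step 2 (monotonicity of $\beta^{(m)}$).} For $m<j$ I would apply Lemma~\ref{lemma: concat at infty} to $\beta^{(m)}=\beta^{(m+1)}\cdot\overline{\alphaup{m}_{2k}}$, so $r_1=\beta^{(m+1)}$, $r_2=\overline{\alphaup{m}_{2k}}$ and $\bar r_1=\overline{\beta^{(m+1)}}=\delta^{(m+1)}$. By Step~1 we have $r_2=\overline{\alphaup{m}_{2k}}<\delta^{(m+1)}=\bar r_1$, so the concatenation makes a left turn and hence $\beta^{(m)}>\beta^{(m+1)}$; iterating gives $\beta^{(j)}<\cdots<\beta^{(i)}$. \textbf{Step 3 (the upper bound).} I would prove $\beta^{(m)}<\gammaup{j}_{2k-2}$ by a second downward induction on $m$, with base case $\beta^{(j)}=\overline{\alphaup{j}_{2k}}<\gammaup{j}_{2k-2}$ from Lemma~\ref{lemma: monotonicity}. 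For the step, Lemma~\ref{lemma: concat at infty}(3) applied to $\beta^{(m)}=\beta^{(m+1)}\cdot\overline{\alphaup{m}_{2k}}$ with $\ell=\gammaup{j}_{2k-2}$ yields $\beta^{(m)}>\gammaup{j}_{2k-2}\iff\beta^{(m+1)}>\gammaup{j}_{2k-2}$, and the right-hand side fails by induction. To apply it I need $\gammaup{j}_{2k-2}\notin\{\tau,\beta^{(m)},\beta^{(m+1)}\}$ (clear, since these loops occupy strictly different positions on $I_\tau$ by Steps~1--2 and belong to different stages of the construction) and that $\gammaup{j}_{2k-2}$ is disjoint from $\overline{\alphaup{m}_{2k}}$. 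For the last point one unwinds the construction: $\alphaup{m}_{2k}=\gammaup{m}_{2k-1}\cdot\ell_{p_k+n-m}\cdot\overline{\gammaup{m}_{2k-1}}$, the loop $\gammaup{m}_{2k-1}$ is itself a concatenation of the loops $\gammaup{s}_{2k-2}$ ($s\le m$) and $r_t$ ($q_k\le t\le q_k+m-1$), and each of those --- together with $\ell_{p_k+n-m}$ --- is disjoint from $\gammaup{j}_{2k-2}$ by the disjointness clauses of Lemma~\ref{lemma: monotonicity} and the growth hypotheses $p_k-p_{k-1},q_k-q_{k-1}\ge n$; disjointness then propagates through the concatenations by Lemma~\ref{lemma: concat at infty}(2), the ``$\ell$ not a ray between $\bar r_1$ and $r_2$'' side conditions being confirmed by placing $\gammaup{j}_{2k-2}$ relative to each concatenation arc via the orderings of Lemma~\ref{lemma: monotonicity}. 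Combining Steps~2 and 3 proves the even chain, and the odd chain follows by the symmetry.

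\textbf{Main obstacle.} The real work is in Step~3: checking that $\gammaup{j}_{2k-2}$ is disjoint from $\alphaup{m}_{2k}$ and, with it, discharging the ``not a ray between $\bar r_1$ and $r_2$'' hypothesis of Lemma~\ref{lemma: concat at infty}(2) at each intermediate concatenation. A cleaner alternative, parallel to the proof of Lemma~\ref{lemma: monotonicity}, is to argue entirely in the universal cover: in the fundamental domain $\widetilde\Omega\subset\Hbb^2$ one builds the lift of each $\beta^{(m)}$ inductively by appending the lift of $\overline{\alphaup{m}_{2k}}$, which is trapped inside the half-disk cut off by the lift of $\ell_{p_k+n-m}$ (precisely the configuration of Figure~\ref{fig: gamma_1}), so that the whole chain of inequalities together with every disjointness and ``not in between'' condition can be read off directly from the cyclic order of endpoints on $\partial\Hbb^2$.
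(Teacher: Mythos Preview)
Your proposal is correct. Steps~1 and~2 go through cleanly, and your Step~3 works once the disjointness of $\gammaup{j}_{2k-2}$ from $\alphaup{m}_{2k}$ is verified via the unwinding you describe; that verification is tedious but sound.

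The paper takes exactly the ``cleaner alternative'' you sketch at the end: it works in $\widetilde\Omega$, building the lift of $\beta^{(m)}=\beta^{(m+1)}\cdot\overline{\alphaup{m}_{2k}}$ by appending the lift of $\overline{\alphaup{m}_{2k}}$ at the endpoint $\widetilde\infty_{(m+1)}$ of the previous lift. One small correction to your description: the half-disk used for trapping is the one bounded by the lift of $\overline{\gammaup{m+1}_{2k-1}}$ (the \emph{terminal} factor of $\overline{\alphaup{m+1}_{2k}}=\gammaup{m+1}_{2k-1}\cdot\bar\ell_{p_k+n-m-1}\cdot\overline{\gammaup{m+1}_{2k-1}}$), not the lift of $\ell_{p_k+n-m}$. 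The point is that the three factors $\gammaup{m}_{2k-1}$, $\bar\ell_{p_k+n-m}$, $\overline{\gammaup{m}_{2k-1}}$ of $\overline{\alphaup{m}_{2k}}$ are all disjoint from $\gammaup{m+1}_{2k-1}$ --- this comes directly from items~(\ref{item: simple}) and~(\ref{item: disjoint}) of Lemma~\ref{lemma: monotonicity} at level $2k-1$ --- so the appended lift stays inside that semicircle. Both the monotonicity $\beta^{(m)}>\beta^{(m+1)}$ and the upper bound $\beta^{(m)}<\gammaup{j}_{2k-2}$ then drop out of the picture simultaneously. By contrast, your algebraic Step~3 compares against $\gammaup{j}_{2k-2}$ and therefore has to unwind $\alphaup{m}_{2k}$ all the way down to level $2k-2$ to get the needed disjointness; the paper's choice of trapping semicircle at level $2k-1$ is what makes the universal-cover route genuinely shorter.
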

\begin{proof}
	Recall the construction of the lift of $\overline{\alphaup{j}_{2k}}=\gammaup{j}_{2k-1}\cdot \bar{\ell}_{p_k+n-j}\cdot \overline{\gammaup{j}_{2k-1}}$ starting from $\widetilde{\infty}$ in Figure \ref{fig: gamma_1}. Denote its endpoint as $\widetilde{\infty}_{(j)}$ in Figure \ref{fig: gamma_2}. 
	To obtain the lift of 
	$\overline{\alphaup{j-1}_{2k}\cdot \alphaup{j}_{2k}}=\overline{\alphaup{j}_{2k}}\cdot \overline{\alphaup{j-1}_{2k}}$ starting from $\widetilde{\infty}$, we first visualize the lift of 
	$\overline{\alphaup{j-1}_{2k}}$ starting from $\widetilde{\infty}_{(j)}$. Note that $\overline{\alphaup{j-1}_{2k}}$
	is the concatenation of $\gammaup{j-1}_{2k-1}$, $\bar{\ell}_{p_k+n-j+1}$ and $\overline{\gammaup{j-1}_{2k-1}}$, all of which are disjoint from $\overline{\gammaup{j}_{2k-1}}$.
	Thus the lift of $\overline{\alphaup{j-1}_{2k}}$ starting from $\widetilde{\infty}_{(j)}$ is shown as in Figure \ref{fig: gamma_2}, which stays inside the semicircle corresponding to $\overline{\gammaup{j}_{2k-1}}$. Here we have used that $\gammaup{j-1}_{2k-1}<\gammaup{j}_{2k-1}<\alphaup{j}_{2k}$ as in Lemma \ref{lemma: monotonicity}.
	
	By concatenating the lift of $\overline{\alphaup{j}_{2k}}$ from $\widetilde{\infty}$ and the lift of $\overline{\alphaup{j-1}_{2k}}$ starting from $\widetilde{\infty}_{(j)}$, we get the lift of 
	$\overline{\alphaup{j-1}_{2k}\cdot \alphaup{j}_{2k}}$ starting from $\widetilde{\infty}$, and observe that
	$\overline{\alphaup{j}_{2k}}<\overline{\alphaup{j-1}_{2k}\cdot \alphaup{j}_{2k}}<\gammaup{j}_{2k-2}$.
	
	We can continue lifting $\overline{\alphaup{m}_{2k}}$ for all $j-1<m\le i$. The same approach proves the first inequality in the lemma. A symmetric argument proves the other claimed inequality.
	\begin{figure}
		\labellist
		\small 
		\pinlabel $\widetilde{\infty}$ at 180 265
		\pinlabel $\widetilde{\Omega}$ at 220 235
		\pinlabel $\tau$ at 5 195
		
		\pinlabel $\gammaup{j}_{2k-1}$ at 62 195
		\pinlabel $\tau$ at 60 35
		\pinlabel $\bar{\ell}_{p_k+n-j}$ at 130 154
		\pinlabel $\bar{\ell}_{p_k+n-j+1}$ at 293 112
		\pinlabel $\tau$ at 100 32
		\pinlabel $\overline{\alphaup{j-1}_{2k}\cdot \alphaup{j}_{2k}}$ at 245 262
		\pinlabel $\overline{\alphaup{j}_{2k}}$ at 160 195
		\pinlabel $\overline{\alphaup{j-1}_{2k}}$ at 150 63
		\pinlabel $\gammaup{j-1}_{2k-1}$ at 210 125
		\pinlabel $\overline{\gammaup{j}_{2k-1}}$ at 300 150
		\pinlabel $\overline{\gammaup{j-1}_{2k-1}}$ at 260 -8
		\pinlabel $\widetilde{\infty}_{(j)}$ at 182 -3
		\pinlabel $\overline{\alphaup{j-2}_{2k}}$ at 280 40
		\pinlabel $\cdots$ at 300 13
		\pinlabel $\overline{\alphaup{i}_{2k}}$ at 310 28
		\pinlabel $\overline{\alphaup{i}_{2k}\cdots \alphaup{j}_{2k}}$ at 325 262
		\pinlabel $\gammaup{j}_{2k-2}$ at 453 190
		\pinlabel $\tau$ at 502 190
		
		\endlabellist
		\centering
		\includegraphics[scale=0.7]{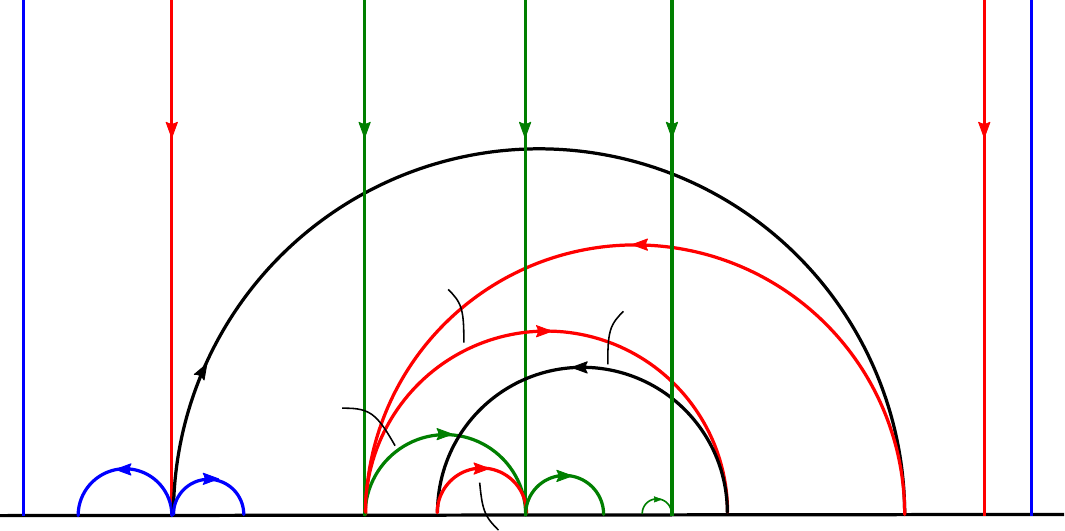}
		\vspace{8pt}
		\caption{Visualizing the lifts of $\overline{\alphaup{j-1}_{2k}\cdot\alphaup{j}_{2k}}$ and $\overline{\alphaup{i}_{2k}\cdots\alphaup{j}_{2k}}$ starting from $\widetilde{\infty}$, shown on the upper half-plane with the point at infinity being $\widetilde{\infty}$.}\label{fig: gamma_2}
	\end{figure}
\end{proof}

\begin{cor}\label{cor: reversed convergence}
	We have $\gammaup{1}_{2k-1}<\overline{\gammaup{i}_{2k+1}}\le\overline{\gammaup{1}_{2k+1}}<\gammaup{1}_{2k+1}$ on $I_\tau$ for any $1\le i\le n$ and any $k\ge 1$, 
	and similarly
	$\gammaup{n}_{2k}<\overline{\gammaup{n}_{2k}}\le\overline{\gammaup{i}_{2k}}< \gammaup{n}_{2k-2}$ for all $k>1$.
\end{cor}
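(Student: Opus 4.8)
The plan is to derive both chains of inequalities directly from Lemma~\ref{lemma: reversed order} together with Lemma~\ref{lemma: monotonicity}, once the recursions in Step~2 are unwound; no new geometric input is needed.

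First I would record closed forms for the $\gammaup{i}_j$. Iterating $\gammaup{1}_{2k+1}=\alphaup{1}_{2k+1}$ and $\gammaup{i}_{2k+1}=\alphaup{i}_{2k+1}\cdot\gammaup{i-1}_{2k+1}$ gives $\gammaup{i}_{2k+1}=\alphaup{i}_{2k+1}\cdot\alphaup{i-1}_{2k+1}\cdots\alphaup{1}_{2k+1}$, so, using $\overline{a\cdot b}=\bar b\cdot\bar a$, one has $\overline{\gammaup{i}_{2k+1}}=\overline{\alphaup{i}_{2k+1}\cdots\alphaup{1}_{2k+1}}$; this is exactly the term appearing in the second chain of Lemma~\ref{lemma: reversed order} when the lemma's lower index is set to $1$ and its upper index is our $i$. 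Symmetrically, iterating $\gammaup{n}_{2k}=\alphaup{n}_{2k}$ and $\gammaup{n-i}_{2k}=\alphaup{n-i}_{2k}\cdot\gammaup{n-i+1}_{2k}$ gives $\gammaup{i}_{2k}=\alphaup{i}_{2k}\cdot\alphaup{i+1}_{2k}\cdots\alphaup{n}_{2k}$, hence $\overline{\gammaup{i}_{2k}}=\overline{\alphaup{i}_{2k}\cdots\alphaup{n}_{2k}}$, which is the term $\overline{\alphaup{i}_{2k}\cdots\alphaup{j}_{2k}}$ of the first chain of Lemma~\ref{lemma: reversed order} with $j=n$.

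For the first chain, I would apply the second chain of Lemma~\ref{lemma: reversed order} with lower index $1$. Collapsing the strictly decreasing string $\overline{\alphaup{1}_{2k+1}}>\overline{\alphaup{2}_{2k+1}\cdot\alphaup{1}_{2k+1}}>\cdots$ and substituting the closed form above, this says
$$\gammaup{1}_{2k-1} < \overline{\gammaup{i}_{2k+1}} \le \overline{\gammaup{1}_{2k+1}}$$
on $I_\tau$ for every $1\le i\le n$ and $k\ge1$. It then remains only to note $\overline{\gammaup{1}_{2k+1}}=\overline{\alphaup{1}_{2k+1}}<\alphaup{1}_{2k+1}=\gammaup{1}_{2k+1}$, which is the $i=1$ instance of the second line of Lemma~\ref{lemma: monotonicity}(\ref{item: monotone}); concatenating the two gives the asserted $\gammaup{1}_{2k-1}<\overline{\gammaup{i}_{2k+1}}\le\overline{\gammaup{1}_{2k+1}}<\gammaup{1}_{2k+1}$.

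The second chain is handled symmetrically: apply the first chain of Lemma~\ref{lemma: reversed order} with upper index $n$, collapse the strictly increasing string, and substitute the closed form for $\overline{\gammaup{i}_{2k}}$ to obtain
$$\overline{\gammaup{n}_{2k}} \le \overline{\gammaup{i}_{2k}} < \gammaup{n}_{2k-2}$$
for every $1\le i\le n$ and $k>1$; then adjoin $\gammaup{n}_{2k}=\alphaup{n}_{2k}<\overline{\alphaup{n}_{2k}}=\overline{\gammaup{n}_{2k}}$, the $i=n$ instance of the first line of Lemma~\ref{lemma: monotonicity}(\ref{item: monotone}). I do not anticipate a genuine obstacle here: the corollary is a bookkeeping consequence of the two preceding lemmas, and the only point demanding care is aligning $\overline{\gammaup{i}_j}$ with the correct term of Lemma~\ref{lemma: reversed order}, that is, tracking reversals of concatenations at $\infty$ and keeping the increasing/decreasing index conventions of Step~2 straight.
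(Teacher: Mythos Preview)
Your proposal is correct and follows essentially the same route as the paper: unwind the Step~2 recursion to identify $\overline{\gammaup{i}_{2k}}$ and $\overline{\gammaup{i}_{2k+1}}$ with the terms of Lemma~\ref{lemma: reversed order} at $j=n$ and $i=1$ respectively, then append the missing endpoint inequality from Lemma~\ref{lemma: monotonicity}. The only cosmetic difference is that the paper cites bullet~(\ref{item: order}) of Lemma~\ref{lemma: monotonicity} for $\gammaup{n}_{2k}<\overline{\gammaup{n}_{2k}}$ (and its odd analogue), whereas you extract it from bullet~(\ref{item: monotone}) via $\gammaup{n}_{2k}=\alphaup{n}_{2k}$; both are valid.
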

\begin{proof}
	We have $\gammaup{n}_{2k}<\overline{\gammaup{n}_{2k}}$ by bullet (\ref{item: order}) of Lemma \ref{lemma: monotonicity}. Since $\overline{\gammaup{i}_{2k}}=\overline{\alphaup{i}_{2k}\cdots\alphaup{n}_{2k}}$, the rest of the inequality with even subscripts follows from this and the first inequality in Lemma \ref{lemma: reversed order} by taking $j=n$. Similarly the inequality with odd subscripts also follows from Lemmas \ref{lemma: reversed order} and \ref{lemma: monotonicity}.
\end{proof}

\begin{lemma}\label{lemma: convergence}
	By choosing $p_k$ and $q_k$ large enough for all $k$, the sequence $\gammaup{i}_k$ converges to a simple ray $\gammaup{i}$ disjoint from $\tau$ for all $1\le i\le n$. 
	In this case, we have 
	\begin{enumerate}
		\item $\gammaup{1}<\gammaup{2}<\cdots<\gammaup{n}$;
		\item the $\gammaup{i}$'s are mutually disjoint; and
		\item $\overline{\gammaup{i}_{2k-1}}$ (resp. $\overline{\gammaup{i}_{2k}}$) converges to $\gammaup{1}$ (resp. $\gammaup{n}$) as $k\to \infty$ for all $1\le i \le n$.
	\end{enumerate}
\end{lemma}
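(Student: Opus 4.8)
### Proof proposal for Lemma \ref{lemma: convergence}

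The plan is to establish convergence by showing that the sequences $\gammaup{i}_k$ are Cauchy on the conical circle $S^1_C$, using the monotonicity and closeness estimates already assembled in Lemma \ref{lemma: monotonicity}. First I would fix attention on a single index $i$ and examine the two subsequences $\gammaup{i}_{2k-1}$ and $\gammaup{i}_{2k}$ separately. By bullet (\ref{item: monotone}) of Lemma \ref{lemma: monotonicity}, the odd-indexed terms satisfy $\gammaup{i}_{2k-1} < \gammaup{i}_{2k+1}$ (reading off the chain $\gammaup{i}_{2k-1} < \overline{\alphaup{i}_{2k+1}} < \alphaup{i}_{2k+1} \le \gammaup{i}_{2k+1}$), so they are increasing on $I_\tau$; the even-indexed terms $\gammaup{i}_{2k}$ are decreasing, since $\gammaup{i}_{2k+1} < \gammaup{i}_{2k}$ combined with $\gammaup{i}_{2k-1}<\gammaup{i}_{2k}$ shows $\gammaup{i}_{2k+2} \le \gammaup{i}_{2k+1} < \gammaup{i}_{2k}$. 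Moreover the odd terms are bounded above by every even term and vice versa. Hence both monotone bounded sequences converge to points of $I_\tau$, and what remains is to show the two limits coincide.

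Here is where bullet (\ref{item: close}) of Lemma \ref{lemma: monotonicity} does the work: $\gammaup{i}_{2k+1}$ and $\gammaup{i}_{2k}$ can be made arbitrarily close by choosing $p_k$ large, and $\gammaup{i}_{2k}$ and $\gammaup{i}_{2k-1}$ arbitrarily close by choosing $q_k$ large. So I would choose the sequences $p_k, q_k$ recursively: having fixed $p_1,\dots,p_{k-1},q_1,\dots,q_k$, and hence the loops up through stage $2k-1$, pick $p_k$ large enough (and respecting $p_k - p_{k-1}\ge n$) that the distance on $S^1_C$ between $\gammaup{i}_{2k-1}$ and $\gammaup{i}_{2k}$ is at most $2^{-k}$ for every $1\le i\le n$, and similarly choose $q_{k+1}$ large so that the distance between $\gammaup{i}_{2k}$ and $\gammaup{i}_{2k+1}$ is at most $2^{-k}$. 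Then the full sequence $\gammaup{i}_1, \gammaup{i}_2, \gammaup{i}_3,\dots$ is Cauchy on $S^1_C$ and converges to a single point $\gammaup{i}$; in particular the odd and even subsequences share this limit. Since the set of simple rays is closed on $S^1_C$ (as recalled in Section \ref{sec: rays}), the limit $\gammaup{i}$ is a simple ray, and since each $\gammaup{i}_k$ is disjoint from $\tau$ and $D_\tau$ is closed on the conical circle, $\gammaup{i}$ is disjoint from $\tau$.

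For the three enumerated conclusions: claim (1), $\gammaup{1} < \cdots < \gammaup{n}$, follows by taking limits in the strict inequalities $\gammaup{1}_{2k} < \cdots < \gammaup{n}_{2k}$ from bullet (\ref{item: order}) — a priori one only gets $\le$ in the limit, but strictness is recovered because, for instance, $\gammaup{i}_{2k} < \overline{\gammaup{i}_{2k}} < \gammaup{i+1}_{2k}$ and the limit of $\gammaup{i}_{2k}$ is $\gammaup{i}$ while (by claim (3)) the limit of $\overline{\gammaup{i}_{2k}}$ is $\gammaup{n}\ge\gammaup{i+1}$... so I should instead argue strictness directly by noting that $\gammaup{i+1}_{2k-1} < \gammaup{i+1}$ is bounded below by $\gammaup{i+1}_{1} > \gammaup{i}_{2k}$ for large $k$ via bullet (\ref{item: order}), giving $\gammaup{i} \le \gammaup{i}_{2k} < \gammaup{i+1}_{2k-1}$... (the cleanest route is: $\gammaup{i}$ is the sup of the increasing sequence $\gammaup{i}_{2k-1}$, which is $<\gammaup{i+1}_{2k-1}$ for each fixed $k$, but that only gives $\le$; strictness then comes from the fact that $\gammaup{i+1}_{2k-1}$ is itself increasing to $\gammaup{i+1}$, so $\gammaup{i} = \sup_k \gammaup{i}_{2k-1} \le \gammaup{i+1}_1 < \gammaup{i+1}_{2m-1}$ for $m$ large once we know $\gammaup{i+1}_1 \neq \gammaup{i+1}$, i.e. the sequence is non-stationary — which holds because $\gammaup{i+1}_{2m-1}$ strictly increases). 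Claim (2), mutual disjointness, follows since each pair $\gammaup{i}, \gammaup{j}$ is a limit of pairs $\gammaup{i}_k, \gammaup{j}_k$ which are disjoint by bullet (\ref{item: simple}), and the set of pairs of disjoint simple rays is closed. Claim (3) is the content of Corollary \ref{cor: reversed convergence}: the chain $\gammaup{1}_{2k-1}< \overline{\gammaup{i}_{2k+1}} \le \overline{\gammaup{1}_{2k+1}} < \gammaup{1}_{2k+1}$ squeezes $\overline{\gammaup{i}_{2k+1}}$ between two sequences both converging to $\gammaup{1}$, and symmetrically for the even case and $\gammaup{n}$. The main obstacle I anticipate is purely bookkeeping: organizing the recursive choice of $p_k$ and $q_k$ so that a single choice works simultaneously for all $1\le i\le n$ and for both the "odd-to-even" and "even-to-odd" closeness estimates, while still respecting the gap conditions $p_{k+1}-p_k\ge n$, $q_{k+1}-q_k\ge n$; and being careful that "arbitrarily close on $I_\tau$" in bullet (\ref{item: close}) is upgraded to a genuine metric Cauchy statement on the compact circle $S^1_C$.
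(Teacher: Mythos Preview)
Your argument is essentially the paper's proof: monotone odd/even subsequences from bullet~(\ref{item: monotone}), convergence to a common limit via bullet~(\ref{item: close}) with $p_k,q_k$ chosen large, the ordering from bullet~(\ref{item: order}), disjointness as a limit of the disjoint pairs from bullet~(\ref{item: simple}), and the squeeze for the reversed loops via Corollary~\ref{cor: reversed convergence}. The explicit $2^{-k}$ Cauchy bookkeeping is harmless but unnecessary --- monotone, bounded, with shrinking gap already gives a common limit.

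The one place you diverge is the tangle over strictness in claim~(1). The paper simply cites bullet~(\ref{item: order}) and moves on, without isolating why the strict inequalities survive the limit; so your worry is legitimate, but your proposed resolution does not work: the step $\sup_k \gammaup{i}_{2k-1} \le \gammaup{i+1}_1$ would need $\gammaup{i}_{2k-1} \le \gammaup{i+1}_1$ for \emph{all} $k$, and bullet~(\ref{item: order}) only gives $\gammaup{i}_{2k-1} < \gammaup{i+1}_{2k-1}$ at the \emph{same} stage, with $\gammaup{i+1}_{2k-1}$ increasing. This is a small wrinkle rather than a structural gap, and it is one the paper does not resolve more carefully than you do.
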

\begin{proof}
	By bullet (\ref{item: monotone}) of Lemma \ref{lemma: monotonicity}, for each $i$, the sequence $\gammaup{i}_{2k}$ (resp. $\gammaup{i}_{2k-1}$) is decreasing (resp. increasing) in $k$, and $\gammaup{i}_{2k-1}<\gammaup{i}_{2k}$ for all $k$. Thus the sequence $\{\gammaup{i}_{2k}\}$ is convergent provided that $\gammaup{i}_{2k-1}$ and $\gammaup{i}_{2k}$ get close as $k$ increases, which can be done by choosing $p_k$ and $q_k$ large; see bullet (\ref{item: close}) of Lemma \ref{lemma: monotonicity}.
	
	Given the convergence, we have $\gammaup{1}<\gammaup{2}<\cdots<\gammaup{n}$ by bullet (\ref{item: order}) of Lemma \ref{lemma: monotonicity}. Since $\gammaup{i}_k$ and $\gammaup{j}_k$ are disjoint for any $k$, so are $\gammaup{i}$ and $\gammaup{j}$. Finally, since $\gammaup{1}_{2k-3}<\overline{\gammaup{i}_{2k-1}}<\gammaup{1}_{2k-1}$ and $\gammaup{n}_{2k}<\overline{\gammaup{i}_{2k}}<\gammaup{n}_{2k-2}$ by Corollary \ref{cor: reversed convergence}, we see that $\overline{\gammaup{i}_{2k-1}}$ (resp. $\overline{\gammaup{i}_{2k}}$) converges to $\gammaup{1}$ (resp. $\gammaup{n}$). 
\end{proof}

We are now in a place to prove Theorem \ref{thm: construction for two side approachable}.
\begin{proof}[Proof of Theorem \ref{thm: construction for two side approachable}]
	Since $\tau$ is two-side approachable, by Lemma \ref{lemma: strengthen two-side approachable}, we obtain disjoint loops $\ell_i,r_i$ converging to $\tau$ on the two sides and satisfying (\ref{eqn: order of loops}). Construct $n$ sequences of loops $\{\gammaup{i}_k\}$ with $1\le i\le n$ as above and choose the constants $p_k,q_k$ properly so that the sequences of loops $\{\gammaup{i}_k\}$ converge to disjoint rays $\gammaup{i}$ by Lemma \ref{lemma: convergence}. 
	
	Let $\uga=\{\gammaup{i}\}_{i=1}^n$. It remains to show that any ray $\alpha$ other than $\tau$ or those in $\uga$ intersects each $\gammaup{i}$.
	
	We first show that $\alpha$ intersects $\gammaup{i}$ for any $1\le i\le j$ if we have $\gammaup{j}<\alpha<\gammaup{j+1}$ for some $j\le n-1$. Recall that 
	$\gammaup{i}_{2k}=\alphaup{i}_{2k}\cdot\gammaup{i+1}_{2k}=\cdots=(\alphaup{i}_{2k}\cdot\alphaup{i+1}_{2k}\cdots\alphaup{j}_{2k})\cdot\gammaup{j+1}_{2k}$.
	We have $\gammaup{j}_{2k}\le\overline{\alphaup{j}_{2k}}<\overline{\alphaup{i}_{2k}\cdot\alphaup{i+1}_{2k}\cdots\alphaup{j}_{2k}}<\gammaup{j}_{2k-2}$ by Lemma \ref{lemma: reversed order} and bullet (\ref{item: monotone}) of Lemma \ref{lemma: monotonicity}. Thus $\overline{\alphaup{i}_{2k}\cdot\alphaup{i+1}_{2k}\cdots\alphaup{j}_{2k}}$ converges to $\gammaup{j}$ as $k$ goes to infinity. Combining this with the fact that $\gammaup{j+1}_{2k}$ converges to $\gammaup{j+1}$, we see lifts of $\gammaup{i}_{2k}$ with the starting point and endpoint converging to the endpoints of $\tilde{\gamma}^{(j)}$ and $\tilde{\gamma}^{(j+1)}$ respectively as $k\to\infty$, where $\tilde{\gamma}^{(j)}$ (resp. $\tilde{\gamma}^{(j+1)}$) is the lift of $\gammaup{j}$ (resp. $\gammaup{j+1}$) starting from $\widetilde{\infty}$, a chosen lift of $\infty$; see Figure \ref{fig: limit}. 
	
	Based on this lift of $\gammaup{i}_{2k}$, we obtain a lift of $\alphaup{i}_{2k+1}=\gammaup{i}_{2k}\cdot r_{q_{k+1}+i-1}\cdot \overline{\gammaup{i}_{2k}}$ starting at the same point $\widetilde{\infty}_1$, shown in Figure \ref{fig: limit}. Since $\overline{\gammaup{i}_{2k}}<\bar{r}_{q_{k+1}}\le\bar{r}_{q_{k+1}+i-1}$ and $r_{q_{k+1}+i-1}$ is disjoint from $\gammaup{j+1}_{2k-1}$, the endpoint of this lift of $\alphaup{i}_{2k+1}$ must sit in between those of $\tilde{\gamma}^{(j+1)}_{2k-1}$ and $\tilde{\gamma}^{(j+1)}_{2k}$,
	where $\tilde{\gamma}^{(j+1)}_{2k-1}$ (resp. $\tilde{\gamma}^{(j+1)}_{2k}$) is the lift of $\gammaup{j+1}_{2k-1}$ (resp. $\gammaup{j+1}_{2k}$) starting from $\widetilde{\infty}$.
	
	Since $\alphaup{i}_{2k+1}<\gammaup{i}_{2k+1}<\gammaup{i}<\gammaup{i}_{2k}$ by bullet (\ref{item: monotone}) of Lemma \ref{lemma: monotonicity}, the lift of $\gammaup{i}$ starting at $\widetilde{\infty}_1$ sits in between the above lifts of $\gammaup{i}_{2k}$ and $\alphaup{i}_{2k+1}$. As $k$ goes to infinity, this process provides lifts of $\gammaup{i}$ converging to
	the geodesic starting from the endpoint of $\tilde{\gamma}^{(j)}$ to the endpoint of $\tilde{\gamma}^{(j+1)}$. Thus any ray $\gammaup{j}<\alpha<\gammaup{j+1}$ intersects $\gammaup{i}$ for all $j\ge i$.
	
	A symmetric argument using lifts of $\gammaup{i}_{2k-1}$ and $\alphaup{i}_{2k}$ shows that any ray $\gammaup{j-1}<\alpha<\gammaup{j}$ intersects $\gammaup{i}$ for all $j\le i$. 
	
	\begin{figure}
		\labellist
		\small 
		\pinlabel $\widetilde{\infty}$ at 255 270
		\pinlabel $\widetilde{\Omega}$ at 350 220
		\pinlabel $\tau$ at 3 190
		\pinlabel $\gammaup{j}$ at 45 190
		\pinlabel $\widetilde{\infty}_1$ at 85 -8
		\pinlabel $\gammaup{j}_{2k-2}$ at 124 190
		\pinlabel $\overline{\alphaup{i}_{2k}\cdots \alphaup{j}_{2k}}$ at 155 210
		\pinlabel $\alphaup{i}_{2k}\cdots \alphaup{j}_{2k}$ at 15 100
		\pinlabel $\alphaup{i}_{2k+1}$ at 215 118
		\pinlabel $\gammaup{i}$ at 240 147
		\pinlabel $\gammaup{i}_{2k}$ at 245 188
		\pinlabel $\gammaup{j+1}_{2k-1}$ at 290 190
		\pinlabel $\gammaup{j+1}$ at 385 190
		\pinlabel $\gammaup{j+1}_{2k}$ at 455 190
		\pinlabel $\tau$ at 502 190
		\pinlabel $\overline{\gammaup{i}_{2k}}$ at 360 40
		\pinlabel $r_{q_{k+1}+i-1}$ at 350 80
		\endlabellist
		\centering
		\includegraphics[scale=0.7]{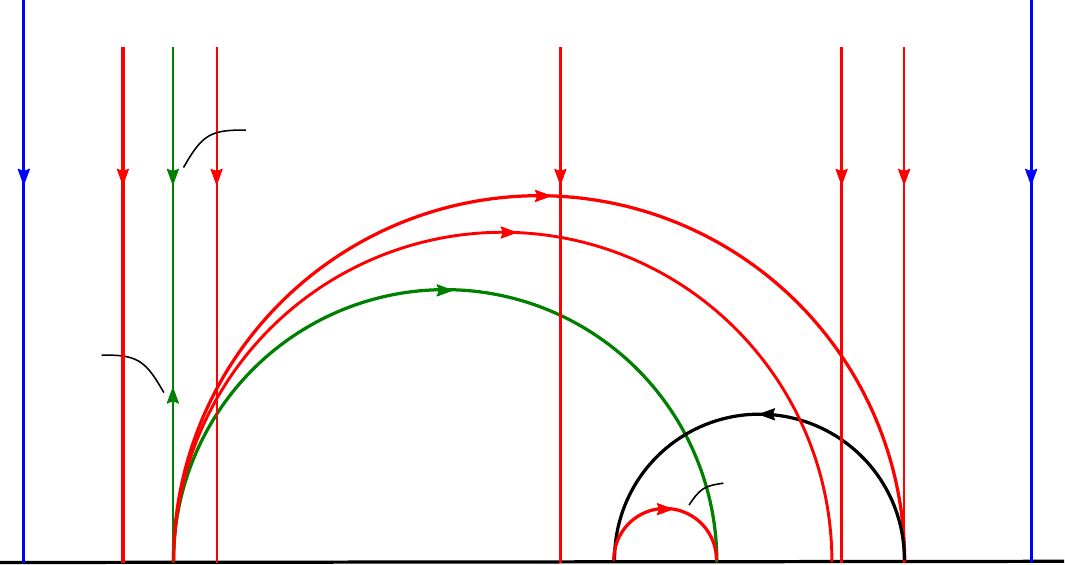}
		\vspace{5pt}
		\caption{Visualizing lifts of $\gammaup{i}_{2k}$, $\alphaup{i}_{2k+1}$ and $\gammaup{i}$ starting from $\widetilde{\infty}_1$.}\label{fig: limit}
	\end{figure}
	
	So it remains to show that $\gammaup{i}$ intersects any ray $\alpha$ satisfying $\alpha<\gammaup{1}$ or $\alpha>\gammaup{n}$ on $I_\tau$. We will focus on the case where $\alpha<\gammaup{1}$. 
	The other case can be proved in a symmetric way.
	
	Recall that $\gammaup{i}_{2k}=\alphaup{i}_{2k}\cdot\gammaup{i+1}_{2k}=\gammaup{i}_{2k-1}\cdot\ell_{p_k+n-i}\cdot\overline{\gammaup{i}_{2k-1}}\cdot\gammaup{i+1}_{2k}$.
	By Lemma \ref{lemma: monotonicity}, we have $\overline{\gammaup{i}_{2k-1}}<\gammaup{1}_{2k-1}<\gammaup{1}$, $\ell_{p_k+n-i}<\bar{\ell}_{p_k+n-i}<\overline{\gammaup{i}_{2k-1}}$ and
	$\gammaup{i}_{2k-1}<\gammaup{i}_{2k}<\gammaup{i+1}_{2k}$. Thus we obtain a lift of $\gammaup{i}_{2k}$ as shown in Figure \ref{fig: limit_2}, whose endpoint converges to the lift of $\tau$ on the left boundary of the fundamental domain $\widetilde{\Omega}$ as $k\to \infty$. 
	By Lemma \ref{lemma: convergence}, as $k$ goes to infinity, the starting point $\widetilde{\infty}_1$ of this lift converges to the endpoint of $\tilde{\gamma}^{(1)}$, the lift of $\gammaup{1}$ starting from $\widetilde{\infty}$. Since $\gammaup{i}<\gammaup{i}_{2k}$, the same convergence of endpoint holds true for the lift of $\gammaup{i}$ starting from $\widetilde{\infty}_1$.
	This provides lifts of $\gammaup{i}$ that intersect any ray $\alpha$ satisfying $\alpha<\gammaup{1}$ and completes the proof.
	
	\begin{figure}
		\labellist
		\small 
		\pinlabel $\widetilde{\infty}$ at 255 270
		\pinlabel $\widetilde{\Omega}$ at 380 220
		\pinlabel $\tau$ at 3 190
		\pinlabel $\ell_{p_k+n-i}$ at 123 190
		\pinlabel $\overline{\gammaup{i}_{2k-1}}$ at 252 190
		\pinlabel $\gammaup{i}_{2k-1}$ at 252 100
		\pinlabel $\gammaup{1}$ at 315 190
		\pinlabel $\gammaup{i}_{2k}$ at 195 125
		\pinlabel $\overline{\gammaup{i}_{2k-1}}$ at 120 50
		\pinlabel $\gammaup{i+1}_{2k}$ at 65 40
		\pinlabel $\widetilde{\infty}_1$ at 270 -8
		\pinlabel $\tau$ at 502 190
		\endlabellist
		\centering
		\includegraphics[scale=0.7]{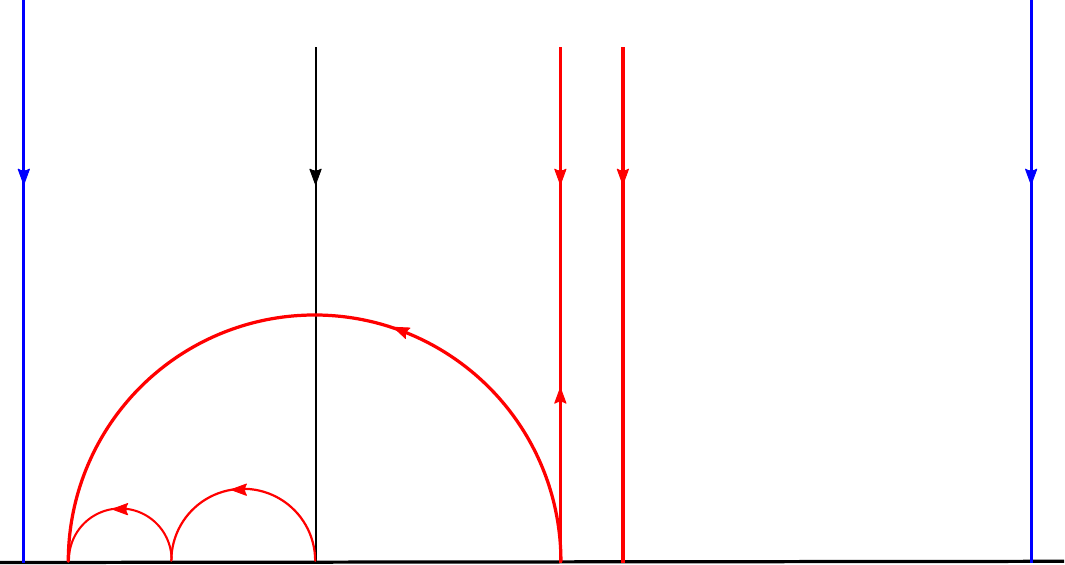}
		\vspace{5pt}
		\caption{Visualizing a lift of $\gammaup{i}_{2k}$.}\label{fig: limit_2}
	\end{figure}
\end{proof}

\begin{rmk}
	By changing the subsurface in the interior of each $\ell_i$ and $r_i$ suitably, the construction gives rise to $2$-filling rays on other surfaces of infinite type.
\end{rmk}

The construction above only produces $2$-filling rays that have \emph{finite} valence in $\Ray$, i.e. their stars in $\Ray$ are \emph{finite} cliques. We do not know the answer to the following question, which is seemingly related to the analogous question for high-filling rays recently solved by Juliette Bavard \cite{infclique}.
\begin{quest}\label{quest: infinite clique}
	Is there an infinite clique of $2$-filling rays in $\Ray$?
\end{quest}

The finite cliques of $2$-filling rays constructed above are only disjoint from a single non-filling long ray $\tau$. 
In Section \ref{sec: uniqueness} we will show that in such a situation, the long ray $\tau$ must be two-side approachable, and thus in this sense all $2$-filling rays of this type come from our construction.

In general, one could also have a finite clique of $2$-filling rays that are disjoint from several different non-filling long rays. 
This certainly can be done on surfaces with non-planar ends. See Section \ref{sec:mnf}.

As the existence of $2$-filling rays makes it more complicated to check whether a ray is high-filling (and thus contributes to a point on the Gromov boundary of the loop graph),
it is natural to ask for a (relatively simple) sufficient condition that guarantees a ray to be high-filling.
Yan Mary He and Kasra Rafi asked whether a ray is high-filling if it is filling in some stronger sense.

We believe only further requiring a filling ray to intersect all closed geodesics does not rule out the possibility that it is 2-filling. We will explain below a modification of the construction above that gives rise to a $2$-filling ray that is filling in this strong sense. 

However, the answer might become positive if we require the ray to intersect all proper geodesics (e.g. including geodesics from a point in the Cantor set to another), as $2$-filling rays might always contain proper geodesics in their limit sets.
\begin{quest}\label{quest: limit set}
	Does the limit set of a $2$-filling ray always contain a proper leaf?
\end{quest}

In the original construction with $n=1$, the $2$-filling ray $\gamma$ we obtain is disjoint from all closed geodesics in the interior of each $r_i$ or $\ell_i$. In general there could be other disjoint closed geodesics if the interiors of $r_i,\ell_i$ do not eventually ``cover'' the entire Cantor set. However, one can avoid this by choosing the two-side approachable long ray $\tau$ and $r_i,\ell_i$ appropriately.

Thus the key is to modify the construction so that $\gamma$ intersects all closed geodesics in the interior of each $r_i$ and $\ell_i$. We explain the modification near $r_1$ below. This same strategy may be applied to the other loops $r_i$ and $\ell_i$ as well. To chop up the interior of $r_1$, we repeatedly cut this disk and the Cantor subset in it into two halves by introducing infinitely many loops, where the new segments correspond to dyadic rational numbers; see Figure \ref{fig: cutCantor}. 
\begin{figure}
	\labellist
	\small 
	\pinlabel $\infty$ at -7 127
	\pinlabel $r_1$ at 80 133
	\pinlabel $1$ at 155 127
	\pinlabel $\frac{3}{4}$ at 155 95
	\pinlabel $\frac{1}{2}$ at 155 65
	\pinlabel $\frac{1}{4}$ at 155 33
	\pinlabel $0$ at 155 0
	\endlabellist
	\centering
	\includegraphics[scale=0.9]{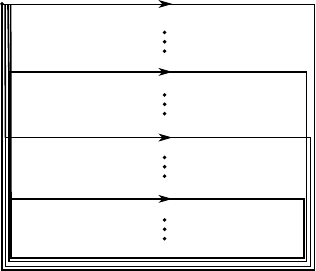}
	\caption{Loops corresponding to dyadic numbers that cut up the Cantor subset in the interior of $r_1$}\label{fig: cutCantor}
\end{figure}

In the iterative construction of a $2$-filling ray $\gamma$ through $\gamma_k$'s (we drop the superscripts as we take $n=1$), 
when we follow $\bar{\gamma}_{2k}$ to fold back and obtain $\gamma_{2k+1}$, we modify it and let some of the segments go inside the interior of $r_1$ following some of the new segments corresponding to dyadic numbers, so that in the end $\gamma$ traverses all these segments corresponding to the dyadic numbers and thus intersects all closed geodesics in the interior of $r_1$. 

An explicit way is to do the fold-back as in the original construction except that, for the strands ``carried'' by the segment corresponding to each positive dyadic number, pull the lowest strand down to traverse the segment corresponding to the closest dyadic number with twice the denominator. Figure \ref{fig: modify} illustrates this in the case of $\gamma_3$ and $\gamma_5$. One can verify that this modification only affects the part where we fold back
following $\bar{\gamma}_{2k}$ (the red portion in the figure). Thus it does not affect the previous $\gamma_i$'s. Adopting this modification to curves near each $r_i$ and $\ell_i$ simultaneously, we should obtain in the limit a $2$-filling ray that also intersects all closed geodesics.

\begin{figure}
	\labellist
	\small 
	\pinlabel $\text{old }\gamma_3$ at 140 162	
	\pinlabel $1$ at 250 290
	\pinlabel $\ell_1$ at 5 265
	\pinlabel $r_2$ at 35 296
	\pinlabel $\frac{1}{2}$ at 250 235
	\pinlabel $0$ at 250 180
	
	\pinlabel $\text{new }\gamma_3$ at 425 162
	\pinlabel $1$ at 535 290
	\pinlabel $\ell_1$ at 290 265
	\pinlabel $r_2$ at 320 296
	\pinlabel $\frac{1}{2}$ at 535 235
	\pinlabel $0$ at 535 180
	
	\pinlabel $\text{old }\gamma_5$ at 140 -8	
	\pinlabel $1$ at 250 127
	\pinlabel $\ell_1$ at 5 92
	\pinlabel $\ell_2$ at -6 112
	\pinlabel $r_2$ at 32 135
	\pinlabel $r_3$ at 21 123
	\pinlabel $\frac{3}{4}$ at 250 95
	\pinlabel $\frac{1}{2}$ at 250 70
	\pinlabel $\frac{1}{4}$ at 250 47
	\pinlabel $0$ at 250 10
	
	\pinlabel $\text{new }\gamma_5$ at 425 -8
	\pinlabel $1$ at 535 127
	\pinlabel $\ell_1$ at 290 92
	\pinlabel $\ell_2$ at 279 112
	\pinlabel $r_2$ at 317 135
	\pinlabel $r_3$ at 306 123
	\pinlabel $\frac{3}{4}$ at 535 95
	\pinlabel $\frac{1}{2}$ at 535 70
	\pinlabel $\frac{1}{4}$ at 535 47
	\pinlabel $0$ at 535 10
	\endlabellist
	\centering
	\includegraphics[scale=0.9]{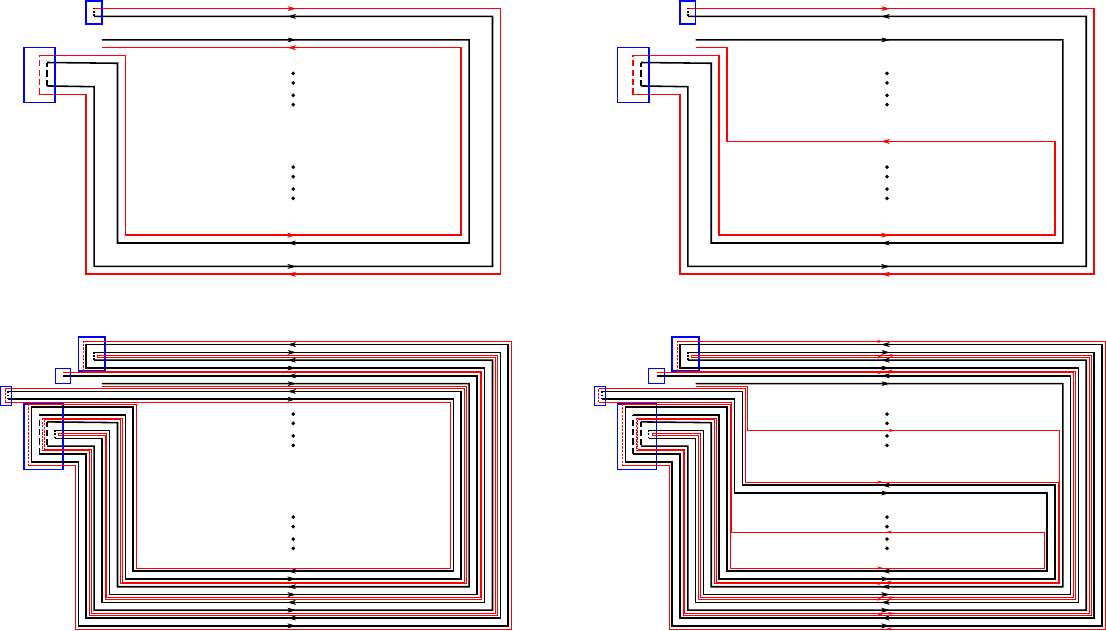}
	\caption{Comparison of $\gamma_3$ and $\gamma_5$ in original construction (left) and in the modified construction (right) near $r_1$. The portion in red represents the part obtained in the iterative construction when the curve folds back. The dotted line in each blue box indicates that the ray is away from $r_1$ and near the loop labeled.}\label{fig: modify}
\end{figure}

\section{$2$-filling rays abound}\label{sec: 2-fill abound}
	In this section we apply the construction introduced in Section \ref{sec:straightforward} to different two-side approachable long rays, and give a continuum of mapping class group orbits of $2$-filling rays as well as two-side approachable long rays.

	\begin{thm}
		The set of two-side approachable long rays is invariant under the action of the mapping class group $\Gamma$, and there is a continuum of orbits. In particular, there is a continuum of mapping class group orbits of $2$-filling rays.
	\end{thm}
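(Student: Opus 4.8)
The plan is to prove the three assertions in turn. The mapping-class-group invariance is the easy part: being two-side approachable is defined purely in terms of the existence of loops $\ell_i, r_i$ converging to $\tau$ from the two sides and disjoint from $\tau$, and all of these conditions (being a loop, convergence on $S^1_C$, disjointness, and the left/right distinction, which only depends on the orientation of $\Omega$) are preserved by any orientation-preserving homeomorphism. Since $\Gamma$ acts by orientation-preserving homeomorphisms, if $\tau$ is two-side approachable then so is $g\tau$ for any $g\in\Gamma$, with witnessing loops $g\ell_i, g r_i$. Combined with Theorem \ref{thm: construction for two side approachable}, which produces a $2$-filling ray disjoint from exactly $\tau$, the orbit of a two-side approachable ray under $\Gamma$ is carried to a corresponding set of $2$-filling rays; so a continuum of orbits of two-side approachable rays yields a continuum of orbits of $2$-filling rays (one must check that the constructed $2$-filling rays for inequivalent $\tau$'s are themselves inequivalent, which follows because a $g$ carrying one $2$-filling ray to another must carry the unique disjoint long ray to the unique disjoint long ray, hence one $\tau$ to the other).

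The heart of the theorem is producing a continuum of $\Gamma$-orbits of two-side approachable long rays. The strategy I would take is to build an explicit parametrized family $\{\tau_x\}$, indexed by $x$ in some uncountable set (e.g. $x\in\{0,1\}^{\mathbb N}$, or a real parameter), where each $\tau_x$ spirals onto a limiting geodesic or lamination whose combinatorics encode $x$. Concretely, one fixes a bi-infinite sequence of ``turning data'' — at each stage the ray passes a pants curve and turns left or right according to the next symbol of $x$ — producing a simple long ray whose intersection pattern with a fixed collection of curves records $x$. To see $\tau_x$ is two-side approachable, I would exhibit, for each $x$, loops obtained by following $\tau_x$ for a long time and then ``turning around'' to the left (resp. right) and returning to $\infty$, straightening via Lemma \ref{lemma: concat at intersection}; these are disjoint from $\tau_x$ and converge to it from the two sides, exactly as in the Example after the definition of two-side approachable. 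This step is routine given the machinery of Section \ref{sec: two-side}.

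The main obstacle is the orbit-counting: showing that uncountably many of the $\tau_x$ lie in distinct $\Gamma$-orbits. A mapping class $g$ with $g\tau_x = \tau_{x'}$ would have to carry the limit set of $\tau_x$ to that of $\tau_{x'}$ and respect all combinatorial invariants that are intrinsic to the ray (not dependent on the marking). The clean way to handle this is to extract from each $\tau_x$ a topological invariant of the pair $(\Omega,\tau_x)$ up to homeomorphism — for instance the homeomorphism type of the limit set together with how $\tau_x$ accumulates on it, or a combinatorial invariant of the complementary regions of $\mathrm{cl}(\tau_x)$, or the structure of the clique of rays disjoint from the associated $2$-filling ray. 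One then arranges the family so that this invariant takes uncountably many values: since $\Gamma$ is a countable-to-one... no — since $\Omega$ has only countably many isotopy classes of finite configurations but the invariant is genuinely infinitary, a single $g$ can only identify countably many $\tau_x$ with a given one, and in fact a direct argument (distinct $x$ give non-homeomorphic pairs) shows the orbits are distinct outright. I would make this precise by choosing the $\tau_x$ so that the ``ends'' or the branching pattern of the limiting lamination of $\tau_x$ differ topologically for different $x$; this is the step requiring the most care, and it is where the choice of the parametrized family must be made with the invariant in mind.
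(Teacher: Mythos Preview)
Your overall architecture matches the paper's: invariance is immediate, the passage from a continuum of orbits of two-side approachable $\tau$'s to a continuum of orbits of $2$-filling rays via Theorem~\ref{thm: construction for two side approachable} is exactly the paper's argument (including your observation that a mapping class carrying one $2$-filling ray to another must carry the unique disjoint $\tau$ to the other), and the idea of distinguishing orbits by the topology of the limit set is the right one.

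However, the crux of the proof is precisely the step you flag as ``requiring the most care,'' and your proposal does not actually carry it out. Two concrete issues. First, $\Gamma=\MCG(\Omega)$ is \emph{uncountable}, so the counting heuristic you start and then abandon (``a single $g$ can only identify countably many $\tau_x$ with a given one'') genuinely fails; you must produce an honest $\Gamma$-invariant taking uncountably many values. Second, your proposed construction---encoding $x\in\{0,1\}^{\mathbb N}$ by left/right turns at successive pants curves---does not obviously yield such an invariant: the resulting limit sets may all be homeomorphic (e.g.\ a single bi-infinite geodesic, or mutually conjugate under $\Gamma$, since $\Gamma$ acts transitively on many such configurations), and the pair $(\Omega,\tau_x)$ need not remember the sequence $x$ up to homeomorphism.

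The paper resolves this by engineering the limit set itself to carry an unambiguous combinatorial label. For an arbitrary increasing sequence $1\le n_1<n_2<\cdots$, it builds a lamination $L$ consisting of a nested sequence of ideal $n_k$-gons $P_k$ (limiting to a point of the Cantor set) together with bi-infinite geodesics $\gamma_k$ spiraling between consecutive $P_k$'s. The set of \emph{non-isolated} leaves of $L$ is exactly $\bigcup_k P_k$, so the multiset $\{n_k\}$---equivalently, an arbitrary infinite subset of $\{\text{$n$-gon}:n\ge 1\}$---is a topological invariant of $L$, giving a continuum of inequivalent laminations. The paper then constructs a ray $\tau$ that spirals onto $L$ (alternately following the $\gamma_k$'s outward and folding back all the way to $P_1$, each time spiraling longer than before), and checks two-side approachability by noting that near each fold-back one can peel off to either side using nearby Cantor points and return to $\infty$. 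Since the limit set of $\tau$ is $L$, the invariant $\{n_k\}$ distinguishes the orbits directly, with no appeal to cardinality of $\Gamma$.
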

	\begin{proof}
		To see that the set of two-side approachable long rays is invariant, suppose that $\varphi\in \Gamma$ and that the long ray $\tau$ is disjoint from the loops $\ell_i$ and $r_i$ which limit to $\tau$ on the left and right, respectively. Then $\varphi\tau$ is disjoint from the loops $\varphi \ell_i$ and $\varphi r_i$, which limit to $\varphi\tau$ on the left and right, respectively, since $\varphi$ acts on the conical circle $S_C^1$ by orientation-preserving homeomorphisms.
		It suffices to construct a continuum of orbits. Since for each two-side approachable long ray $\tau$ there is a $2$-filling ray only disjoint from $\tau$ by Theorem \ref{thm: construction for two side approachable}, this would also give a continuum of orbits of $2$-filling rays.
	
		We will distinguish the two-side approachable long rays we construct by their limit sets, which we now describe. 
		Fix an infinite increasing sequence of integers $1\le n_1< n_2\cdots$. 
		For each $k$ there is some ideal geodesic $n_k$-gon $P_k$ on $\Omega$, where the interior possibly contains points in the Cantor set.
		We arrange $\{P_k\}$ so that 
		\begin{itemize}
			\item they have disjoint interiors and distinct vertices,
			\item they limit to a single point in the Cantor set, and
			\item each vertex of $P_k$ is accumulated by points of the Cantor set in the exterior of $P_k$.
		\end{itemize}
		See Figure \ref{fig: limitset} for an example in the case $n_k=k$ for all $k$.
	
		\begin{figure}
			\labellist
			\small 
			\pinlabel $\infty$ at 247 7
			\pinlabel $\gamma_1$ at 343 127
			\pinlabel $P_1$ at 380 82
			\pinlabel $\gamma_2$ at 135 82
			\pinlabel $P_2$ at 228 82
			\pinlabel $P_3$ at 98 82
			\pinlabel $P_4$ at 38 95
			\endlabellist
			\centering
			\includegraphics[scale=0.9]{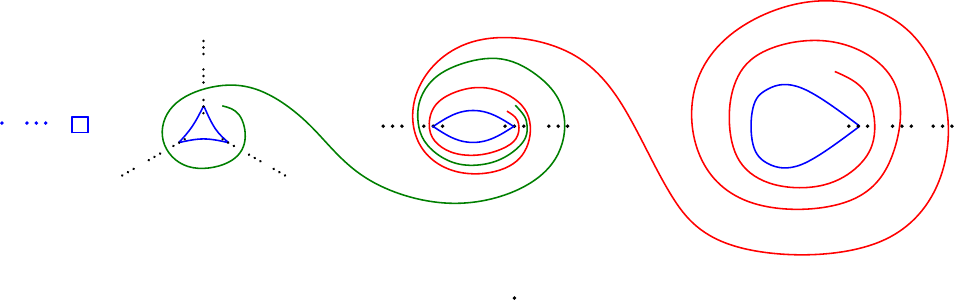}
			\caption{The closed set $L$, which contains the sequence of sets $P_k$ converging to a point in the Cantor set, and also contains bi-infinite geodesics $\gamma_k$ with only $\gamma_1$ and $\gamma_2$ shown in the figure.}\label{fig: limitset}
		\end{figure}
	
		For each $k$, add a bi-infinite geodesic $\gamma_k$ spiraling and limiting to $P_k$ and $P_{k+1}$ respectively at the two ends. 
		We arrange $\{\gamma_k\}$ so that they are mutually disjoint and also disjoint from $\cup_k P_k$. See Figure \ref{fig: limitset}.
	
		Let $L$ be the union of all $\gamma_k$ and $P_k$. Then $L$ is a lamination in $\Omega$. The union of all $P_k$ is the set of leaves in $L$ that are accumulated onto by other leaves. 
		Thus for each sequence $\{n_k\}$, the set of non-isolated leaves of $L$ combinatorially is an infinite subset of $\{n\text{-gon}\}_{n\ge1}$, and any infinite subset appears this way.
		Hence by taking all possible choices of the sequence $\{n_k\}$, we obtain a continuum of mutually nonequivalent geodesic laminations $L$. 
	
		So it suffices to construct a two-side approachable long ray $\tau$ so that its limit set is a given lamination $L$ constructed above. 
		The construction is not sensitive to the choice of $\{n_k\}$, so we will assume $n_k=k$ for simplicity.
	
		By the property of $P_k$, near each end of any $\gamma_k$ we repeatedly see points in the Cantor set to the left (resp. right) of $\gamma_k$. We will use these points in our construction to turn the ray $\tau$ around to the left-hand (resp. right-hand) side after following $\gamma_k$ for a while.
	
		The ray $\tau$ starts out following $\gamma_1$ to spiral around $P_1$. Then turn around to the right-hand side to follow $\gamma_1$ in the opposite direction and spiral around $P_2$. This time turn around to the left-hand side to go back following $\gamma_1$ and spiral around $P_1$ again for a longer time than the first time. Turn around to the right-hand side again following $\gamma_1$ and spiral around $P_2$ for a longer time, and then turn around to the right-hand side following $\gamma_2$ to spiral around $P_3$.
		The ray we obtain up to this step is shown in Figure \ref{fig: tau}.
	
		\begin{figure}
			\labellist
			\small 
			\pinlabel $\infty$ at 235 7
			\pinlabel $\gamma_1$ at 340 55
			\pinlabel $P_1$ at 370 120
			\pinlabel $\gamma_2$ at 133 115
			\pinlabel $P_2$ at 216 120
			\pinlabel $P_3$ at 86 120
			\pinlabel $P_4$ at 26 133
			\endlabellist
			\centering
			\includegraphics[scale=1]{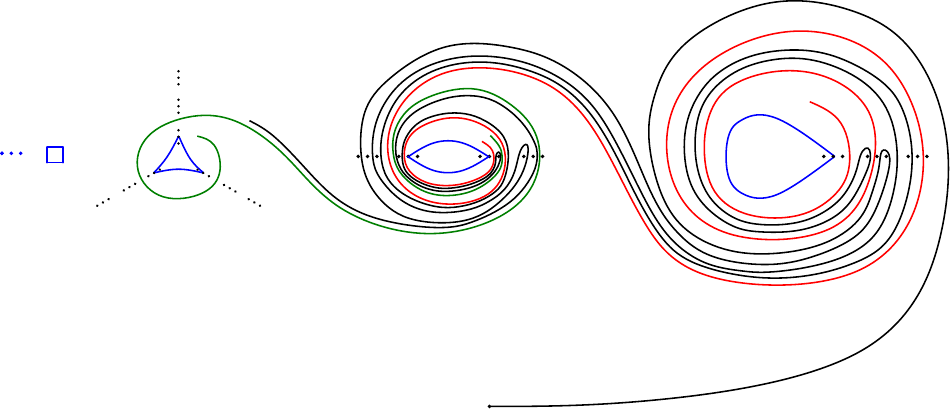}
			\caption{The ray $\tau$ we construct after the first few steps.}\label{fig: tau}
		\end{figure}
	
		To continue the construction in general, once the ray follows some $\gamma_k$ to spiral around $P_{k+1}$ (e.g. $P_2$ as above) for the first time, we turn it around to the left-hand side to follow $\gamma_k$ in the opposite direction and go back all the way until we are spiraling around $P_1$, where we spiral for a longer time than any previous time. Then there is nothing between $\tau$ up to this point and $\gamma_1$, so we can turn around to the right-hand side and follow $\gamma_1$ to spiral around $P_2$ for a longer time than any previous time, and then follow $\gamma_2$ to spiral around $P_3$ etc, until we follow $\gamma_{k+1}$ and spiral around $P_{k+2}$ for the first time. Now repeat the construction to continue.
	
		In the construction, since the ray $\tau$ spirals around each previously visited $P_k$ along $\gamma_k$ for a longer time than before and $\gamma_k$ limits to $P_k$, we can see that the limit set of $\tau$ is $L$. 
		
		Since every time we turn $\tau$ around in the construction using a set of points in the Cantor set, there are perturbations slightly to the left and right that turn around in the same way but go back all the way to $\infty$. These perturbations can be chosen to be disjoint from $\tau$. Such loops can follow $\tau$ for any desired long time, so they limit to $\tau$ on both sides. This shows that $\tau$ is two-side approachable.
	\end{proof}
	
	The proof shows that a two-side approachable ray could have various kinds of limit sets. We are curious about the following question.
	
	\begin{quest}\label{quest: lamination}
		Which kind of geodesic laminations can appear as the limit set of some two-side approachable long ray? What about $2$-filling rays?
	\end{quest}
	
	We do know that the limit set of a $2$-filling ray properly contains the limit set of some long ray disjoint from it. We first prove the following lemma.
	
	\begin{lemma}\label{lemma: filling limit set}
		For any filling ray $\gamma$ on $\Omega$, its limit set $\Lambda$ contains the limit set of any long ray $\tau$ disjoint from $\gamma$.
	\end{lemma}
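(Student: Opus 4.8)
\emph{Proposal.} We prove the equivalent statement that $\Lambda_\tau\defeq\operatorname{cl}(\tau)\setminus\tau\subseteq\Lambda$, where $\Lambda=\operatorname{cl}(\gamma)\setminus\gamma$. The idea is to confine $\tau$ to a single complementary region of $\operatorname{cl}(\gamma)$ and then use the filling hypothesis to rule out any ``interesting'' recurrence of $\tau$ inside that region. First I would check that $\tau$ is disjoint not only from $\gamma$ but from all of $\operatorname{cl}(\gamma)=\gamma\cup\Lambda$: if $\tau$ crossed a leaf $m$ of $\Lambda$ transversally, then since $m\subseteq\operatorname{cl}(\gamma)$ the ray $\gamma$ has arcs fellow-traveling $m$ arbitrarily close to that crossing point, so $\tau$ would meet one of them, contradicting $\tau\cap\gamma=\varnothing$; and $\tau$ cannot be contained in a leaf, since leaves of $\Lambda$ do not exit the cusp at $\infty$ while $\tau$ does. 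Hence $\tau$ lies in a single component $W$ of $\Omega\setminus\operatorname{cl}(\gamma)$, and since $\tau$ emanates from $\infty$, the cusp $\infty$ is an end of $W$. Recall also that $\Lambda_\tau$, being the limit set of a simple geodesic, is a union of pairwise disjoint complete geodesics.

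Next I would split on whether $\tau$, viewed as a ray in the open surface $W$, is proper (escapes every compact subset of $W$) or not. If $\tau$ is proper in $W$ it converges to a single end of $W$. This end cannot be the cusp $\infty$ (otherwise both ends of $\tau$ would be asymptotic to $\infty$, i.e.\ $\tau$ would be a loop) nor a Cantor-set end of $\Omega$ (otherwise $\tau$ would be a short ray), so it is an end of $W$ cut off by its frontier, and then every accumulation point of $\tau$ in $\Omega$ lies on $\partial W\subseteq\operatorname{cl}(\gamma)=\gamma\cup\Lambda$. A complete geodesic contained in $\gamma$ must equal $\gamma$ (note $\gamma$ is itself the projection of a complete geodesic), and $\gamma\notin\Lambda_\tau$ because $\tau$, being simple, cannot accumulate onto $\gamma$ near the cusp at $\infty$. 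Therefore no leaf of $\Lambda_\tau$ is contained in $\gamma$, so $\Lambda_\tau\subseteq\Lambda$, as desired.

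It remains to rule out the case that $\tau$ is not proper in $W$. Then $\tau$ returns infinitely often to some fixed compact $K\subseteq W$, so by compactness of the unit tangent bundle over $K$ there are parameters $s<t$ with $t-s$ as large as we like and $\tau'(s)$ as close to $\tau'(t)$ as we like; closing up $\tau|[s,t]$ by a short arc in $W$ gives a simple closed curve $c\subseteq W$ that is essential, since a long geodesic arc is not null-homotopic rel endpoints. Splicing $c$ to the initial segment of $\tau$ (which runs from the cusp $\infty$ into $W$) and performing a small isotopy produces an embedded essential loop $\lambda$ based at $\infty$ whose interior lies in $W$. As $W$ is disjoint from $\operatorname{cl}(\gamma)\supseteq\gamma$, the loop $\lambda$ is disjoint from $\gamma$, contradicting that $\gamma$ is filling. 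Hence $\tau$ is proper in $W$ and the previous paragraph applies.

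The step I expect to be the main obstacle is the bookkeeping in the non-proper case: verifying that the extracted simple closed curve and the resulting loop are genuine vertices of $\Ray$ (essential, including the case where $c$ is peripheral around a single Cantor point) and, crucially, that after the isotopy making $\lambda$ embedded it still lies in the open region $W$ --- both rest on $W$ being open and the relevant finite pieces of $\tau$ being compactly contained in $W$. A secondary technical point is the claim that $\tau$ cannot recur into the cusp at $\infty$ (so that $\Lambda_\tau$ has no leaf ending at $\infty$, and in particular $\gamma\notin\Lambda_\tau$); this is a standard consequence of $\tau$ being simple, via the observation that a deep excursion of $\tau$ into an embedded horoball at $\infty$ would force $\tau$ to meet a parabolic translate of itself.
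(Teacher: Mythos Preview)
Your approach is genuinely different from the paper's. The paper works on the conical circle: it takes the closed set $D_\gamma\subset S^1_C$ of rays disjoint from $\gamma$, and for each complementary interval $(\alpha,\beta)$ invokes Lemma~\ref{lemma: converge to L} to produce a bi-infinite geodesic $p(L)$ which lies in $\Lambda$ and shares its ends with $\tilde\alpha,\tilde\beta$; this places the limit sets of the boundary rays of $D_\gamma$ inside $\Lambda$, and then a convexity argument in a fundamental domain of $\Omega_C$ (removing the half-disks cut off by the various $L$'s) handles $\tau$ not on the boundary of any complementary interval. You instead confine $\tau$ to the complementary component $W$ of $\operatorname{cl}(\gamma)$ containing $\infty$ and run a proper/non-proper dichotomy inside $W$. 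The paper's route is uniform, reuses Lemma~\ref{lemma: converge to L}, and feeds directly into the next result (Proposition~\ref{prop: limit set not minimal}); yours is more hands-on and avoids the conical-circle apparatus entirely.

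One step needs sharpening. In the non-proper case, the peripherality worry you flag is misplaced: there are no simple closed curves peripheral around a single Cantor point, since the Cantor set has no isolated points. The real issue is that $c$ might be peripheral around $\infty$, and you do need to exclude this for $\lambda$ to be essential. It follows from the fact that $c\subset W$ is disjoint from $\gamma$: if $c$ bounded a once-punctured disk $D_\infty\ni\infty$, then $\gamma$, starting at $\infty$ and disjoint from $c$, would lie entirely in $D_\infty$; lifting to $\Hbb^2$, the $z$-invariant lift $\tilde c$ (with $z$ the parabolic fixing $\tilde\infty$) has both ends at $\tilde\infty$, so the region it bounds meets $\partial\Hbb^2$ only at $\tilde\infty$, forcing the geodesic $\tilde\gamma$ to have both endpoints at $\tilde\infty$, which is impossible. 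With that in hand, your construction of $\lambda$ from $c$ and an initial segment of $\tau$ yields an essential simple loop disjoint from $\gamma$, and the contradiction with $\gamma$ filling goes through.
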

	\begin{proof}
		The set $D_\gamma$ of (long) rays disjoint from (or coinciding with) $\gamma$ is a closed subset on the conical circle, and it contains at least two elements if $\tau$ exists. Moreover, $D_\gamma$ is nowhere dense since it is a set of simple rays (see Section \ref{sec: rays}).
		
		Let $(\alpha,\beta)$ be any complementary interval of $D_\gamma$, where we possibly have $\alpha$ or $\beta$ being $\gamma$.
		Fix a lift $\widetilde{\infty}$ of $\infty$ on the universal cover, and fix two consecutive lifts $\tilde{\gamma}_1,\ \tilde{\gamma}_2$ of $\gamma$ starting at
		$\widetilde{\infty}$. Between these two lifts, there is a unique lift $\tilde{\alpha}$ (resp. $\tilde\beta$) of $\alpha$ (resp. $\beta$) starting at $\widetilde{\infty}$.
		Let $a,b$ be the endpoints of $\tilde{\alpha},\tilde{\beta}$ respectively. 
		Let $L$ be the unique bi-infinite geodesic going from $a$ to $b$. Let $p(L)$ be its projection to $\Omega$.
		
		
		Since $\gamma$ intersects all rays in $(\alpha,\beta)$, there are lifts of $\gamma$ converging to $L$ by Lemma \ref{lemma: converge to L}. Thus the closure of $p(L)$ lies in the limit set $\Lambda$ of $\gamma$, which contains the limit sets of $\alpha$ and $\beta$ since $L$ and $\tilde\alpha$ (resp. $\tilde{\beta}$) start (resp. end) at the same point on the boundary.
		
		Now suppose $\tau$ is disjoint from $\gamma$ and is not on the boundary of any complementary interval of $D_\gamma$. 
		Remove the closed half-disk bounded by $L$ from the region between $\tilde{\gamma}_1$ and $\tilde{\gamma}_2$ for all $L$ associated to complementary intervals $(\alpha,\beta)$.
		Denote the resulting set by $\Omega_\gamma$, which is geodesically convex and thus contractible.
		Note that the lift $\tilde{\tau}$ of $\tau$ starting from $\widetilde{\infty}$ lies inside $\Omega_\gamma$.
		
		Consider the limit set $\operatorname{cl}(\tau)\setminus\tau$ of $\tau$ as a geodesic lamination. For any point $x$ in it, let $\ell_x$ be the leaf through $x$.
		Then any lift of $\ell_x$ does not intersect any $L$ associated to a complementary interval $(\alpha,\beta)$ since $\tau$ is disjoint from $\gamma$.
		This implies that any lift of $x$ lies outside $\Omega_\gamma$.
		
		Now for any $\epsilon>0$, there is some $y$ on $\tau$ and a geodesic segment $s$ of length less than $\epsilon$ connecting $x,y$. Let $\tilde{y}$ be the lift of $y$ on $\tilde{\tau}$ and $\tilde{s},\tilde{x}$ the corresponding lifts of $s$ and $x$. Then $\tilde{s}$ connects $\tilde{y}\in \Omega_\gamma$ and $\tilde{x}\notin \Omega_\gamma$, so it must intersect some $L$ associated to a complementary interval $(\alpha,\beta)$. The intersection point has distance to $\tilde{x}$ less than $\epsilon$.
		Since each $p(L)$ lies in $\Lambda$, this shows that $x\in\Lambda$. As $x$ is arbitrary, we conclude that the limit set $\Lambda$ of $\gamma$ also contains the limit set of $\tau$. 
	\end{proof}
	
	\begin{prop}\label{prop: limit set not minimal}
		For any $2$-filling ray $\gamma$ and any long ray $\tau$ that is not filling and disjoint from $\gamma$, the limit set $\Lambda$ of $\gamma$ contains the limit set of $\tau$ as a proper subset. In particular, $\Lambda$ cannot be minimal.
	\end{prop}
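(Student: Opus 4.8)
\section*{Proof proposal for Proposition \ref{prop: limit set not minimal}}

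The plan is as follows. Since a $2$-filling ray is in particular filling, Lemma~\ref{lemma: filling limit set} already gives $\Lambda\supseteq \operatorname{cl}(\tau)\setminus\tau$; write $\lambda_\tau:=\operatorname{cl}(\tau)\setminus\tau$ for the limit set of $\tau$, which is a geodesic lamination, and which is nonempty because $\tau$ is a long (hence non-proper) ray. Thus $\lambda_\tau$ is a nonempty sublamination of $\Lambda$, so as soon as we show the inclusion $\lambda_\tau\subseteq\Lambda$ is proper, the non-minimality of $\Lambda$ follows at once (a lamination admitting a proper nonempty sublamination is not minimal). So the entire task is to produce a leaf of $\Lambda$ that is not a leaf of $\lambda_\tau$.

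Since $\tau$ is not filling it is disjoint from some loop $c$; I treat this case, the case where $\tau$ misses a short ray but no loop being handled by the same kind of argument after replacing the separation step below (this is one point requiring extra care). Every geodesic contained in $\operatorname{cl}(\tau)$ is disjoint from $c$: a transverse intersection of a leaf of $\lambda_\tau$ with $c$ would force arbitrarily long arcs of $\tau$ shadowing that leaf to cross $c$. Hence no leaf of $\lambda_\tau$ crosses $c$. On the other hand $\gamma$, being filling, meets $c$. It therefore suffices to exhibit a leaf of $\Lambda$ crossing $c$ transversally; such a leaf cannot coincide with $c$ itself, since $\gamma\cap c\neq\emptyset$ while $\gamma\cap\Lambda=\emptyset$, so it is not a leaf of $\lambda_\tau$, completing the proof.

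To find this leaf I claim $\gamma$ meets $c$ in infinitely many points that all lie in a fixed compact subset of $\Omega$. For the first part: $c$ separates $\Omega$, and each component of $\Omega\setminus c$ accumulates onto infinitely many points of the Cantor set (otherwise $c$ would be inessential); if $\gamma$ met $c$ only finitely often it would eventually remain in one component, yet being filling it must cross loops bounding arbitrarily small disks around a Cantor point in the other component, forcing a compact initial arc of $\gamma$ to accumulate at that (non-existent) point --- a contradiction. For the second part: near each puncture ($\infty$, and the Cantor point to which $c$ escapes in the short-ray case) both $c$ and all arcs of $\gamma$ run essentially ``straight into the cusp'', and using that $\gamma$ is a long ray --- so its lift from $\widetilde{\infty}$ terminates away from every lift of a puncture, and along arbitrarily late parameters it cannot penetrate arbitrarily deep into any cusp --- one shows $\gamma\cap c$ avoids a small horoball neighbourhood of each puncture; since $c$ with these neighbourhoods removed is compact, $\gamma\cap c$ lies in a compact set. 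Thus the intersection points accumulate at some $x_\infty\in c$. A short argument (passing to a small ball $B$ around $x_\infty$ disjoint from $\Lambda$, in which distinct geodesics meet at most once and $\gamma$ can have only finitely many component arcs, since a simple geodesic cannot have infinitely many disjoint arcs accumulating in $B$) shows $x_\infty\notin\gamma$, hence $x_\infty\in\operatorname{cl}(\gamma)\setminus\gamma=\Lambda$. Passing to a subsequence, the unit tangents of $\gamma$ at the crossing points converge, and the leaf $m$ of $\Lambda$ through $x_\infty$ carries the limiting tangent; by the lamination property $m$ cannot meet $c$ non-transversally without equalling $c$, which is excluded, so $m$ crosses $c$ transversally. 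This $m$ is the required leaf.

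The main obstacle is this control of $\gamma\cap c$: guaranteeing infinitely many intersection points and that they accumulate \emph{inside} $\Omega$ rather than escaping into a cusp. Both halves rely crucially on $\gamma$ being a \emph{long} (non-proper) ray and on a careful analysis of the cusp geometry; by contrast, once a leaf of $\Lambda$ crossing $c$ is in hand the conclusion is immediate. A secondary point needing care is the reduction for a $\tau$ disjoint from a short ray but from no loop --- one either shows $\tau$ must then also miss a loop, or repeats the separation argument directly with the short ray. Finally one should record that $\lambda_\tau$ and $\Lambda$ are genuine geodesic laminations, so that ``proper nonempty sublamination $\Rightarrow$ not minimal'' applies; this is standard for limit sets of simple geodesics and is implicit already in Lemma~\ref{lemma: filling limit set}.
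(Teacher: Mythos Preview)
Your approach is different from the paper's and geometrically natural, but there is a genuine gap at the step asserting $|\gamma\cap c|=\infty$. Your justification—that $\gamma$ must cross loops bounding ``arbitrarily small disks'' around a Cantor point $e$ in the other component $B$, forcing a compact arc of $\gamma$ to accumulate at $e$—does not work. A loop is based at $\infty$, so its interior always contains a sector at the cusp and is never contained in a small neighbourhood of $e$; the crossing of $\gamma$ with such a loop can occur anywhere along it and need not approach $e$. In fact, if $\gamma\cap c$ is finite and $\gamma$ starts on the $A$-side, the finitely many arcs of $\gamma\cap B$ can separate $\infty$ from \emph{all} ends in $B$ inside $B$: the piece of $B\setminus(\gamma\cap B)$ adjacent to $\infty$ may be a disk with no punctures, so every essential loop in $B$ is forced to cross $\gamma\cap B$, and no contradiction arises from your argument. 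A smaller gap occurs at ``$x_\infty\notin\gamma$'': the assertion that a simple geodesic cannot have infinitely many arcs accumulating in a ball is false in general (think of an irrational line on a torus); what you actually need is that a simple ray \emph{emanating from a cusp} cannot be recurrent, which requires a separate cusp argument you have not given.

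The paper sidesteps all of this by staying inside the framework of Lemma~\ref{lemma: filling limit set}. For each complementary interval $(\alpha,\beta)$ of $D_\gamma$ one already has a geodesic $p(L)\subset\Lambda$; the paper observes that if every $p(L)$ lay in $\lambda_\tau$, then any ray disjoint from $\tau$ (hence from $\lambda_\tau$, hence from every $p(L)$) would have its lift from $\widetilde\infty$ avoid every $L$ and therefore land in $D_\gamma$, so $D_\tau\subset D_\gamma$—contradicting that $\tau$ is not filling. This yields some $p(L)\in\Lambda\setminus\lambda_\tau$ in one line. Interestingly, the cleanest proof I can see of your intermediate claim $|\gamma\cap c|=\infty$ also goes through the $p(L)$'s (if $\gamma\cap c$ were finite then $\Lambda$ would lie on one side of $c$, every $p(L)$ would miss $c$, and the lift of $c$ from $\widetilde\infty$ would avoid every $L$, giving $c\in D_\gamma$), at which point the paper's direct argument is already shorter.
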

	\begin{proof}
		We use the notation as in the proof of Lemma \ref{lemma: filling limit set}. If $\gamma$ is $2$-filling, then there is some long ray $\tau$ that is not filling and disjoint from $\gamma$. For any such $\tau$ its limit set cannot contain $p(L)$ for all complementary intervals, where $L$ is the geodesic constructed above associated to the complementary interval $(\alpha,\beta)$ of $D_\gamma$. This is because otherwise the set of rays disjoint from $\tau$ is a subset of $D_\gamma$, contradicting that $\tau$ is not filling. This shows that the limit set of $\tau$ is properly contained in $\Lambda$. This limit set is non-empty since $\tau$ is not proper.
	\end{proof}
	
\section{$2$-filling rays disjoint from a single non-filling ray}\label{sec: uniqueness}
	In this section we show the construction in Section \ref{sec:straightforward} using two-side approachable long rays is in some sense the unique way to obtain a finite clique of
	$2$-filling rays that have a single non-filling ray disjoint from them. 
	In general, if a $2$-filling ray is only disjoint from finitely many rays, then it is disjoint from an \emph{approachable} long ray.
	
	\begin{defn}\label{def: approachable}
		A long ray $\tau$ is \emph{approachable} if there is a sequence of loops $\ell_i$ disjoint from $\tau$ that converges to $\tau$.
	\end{defn}
	
	One can apply surgeries to the sequence of loops $\ell_i$ in the definition to make them pairwise disjoint and put them in a standard form analogously to Lemma \ref{lemma: strengthen two-side approachable}.
	
	Clearly two-side approachable long rays are approachable. We believe the two notions are not equivalent. In Figure \ref{fig: one_side}, we have two infinite sequences of Cantor subsets converging to certain points in the Cantor set so that at each horizontal level there are two Cantor subsets, a left one and a right one. The depicted ray $\tau$ eventually reaches each level. When $\tau$ visits each level for the first time, the left (resp. right) Cantor set is accessible from the left (resp. right) of $\tau$, which yields a loop slightly to the left (resp. right) of $\tau$ that is disjoint from the previous part of $\tau$. The ray $\tau$ is constructed to fold back later and revisit this level in a way that blocks the access to the right Cantor set from the first visit of $\tau$ without blocking the access to the left Cantor set. Continuing this process, we believe the ray $\tau$ obtained is approachable from its left but not from its right.
	
	\begin{figure}
		\labellist
		\small 
		\pinlabel $\infty$ at 183 6 
		\pinlabel $\tau$ at 350 50 
		\endlabellist
		\centering
		\includegraphics[scale=0.8]{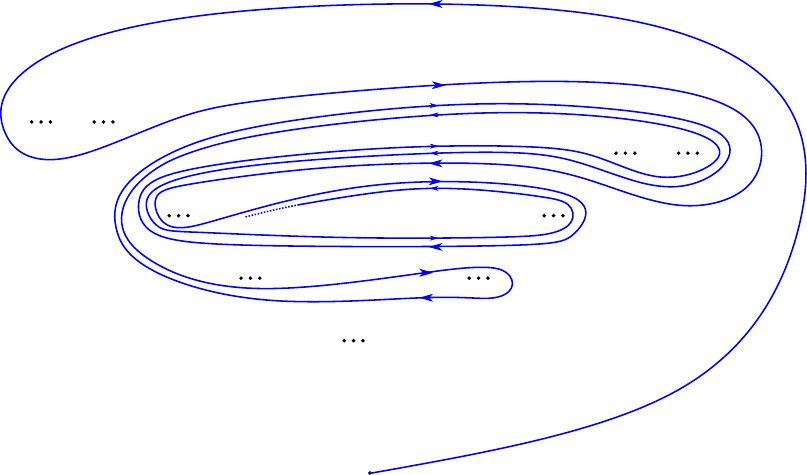}
		\caption{A ray that is approachable from its left but seemingly not approachable from its right.}\label{fig: one_side}
	\end{figure}
	
	\begin{thm}\label{thm: uniqueness of construction}
		Let $\gamma$ be a $2$-filling ray that is disjoint from finitely many rays. Then $\gamma$ is disjoint from an approachable long ray.
		In addition, if only one ray $\tau$ disjoint from $\gamma$ is not filling, then $\tau$ is two-side approachable.
	\end{thm}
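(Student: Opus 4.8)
The plan is to analyze the finite set $D_\gamma$ of rays disjoint from $\gamma$ and to manufacture the required loops by surgering $\gamma$ against rays disjoint from $\tau$. By Lemma \ref{lem:cliques}, $D_\gamma$ is a finite clique $\{\gamma,\tau_1,\dots,\tau_m\}$ of long rays, with $m\ge 1$ since $\gamma$ is $2$-filling. I would first isolate a \emph{reduction}: if $(\alpha,\tau)$ is a complementary interval of $D_\gamma$ on the conical circle whose endpoint $\alpha$ is a filling ray (for instance $\alpha=\gamma$, or $\alpha$ one of the $\tau_j$ when that ray is $2$-filling), then there is a sequence of loops disjoint from $\tau$ converging to $\tau$ from within $(\alpha,\tau)$. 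Granting this, the theorem follows at once. For the first assertion, pick $\tau=\tau_1$ adjacent to $\gamma$ in $D_\gamma$, so that $(\gamma,\tau_1)$ is such an interval with $\alpha=\gamma$ filling; the reduction shows $\tau_1$ is approachable (in the sense of Definition \ref{def: approachable}). For the second assertion, note that every element of $D_\gamma\setminus\{\tau\}$ is a filling ray: indeed it is a long ray in the clique, disjoint from $\tau$, which is in turn within distance $1$ of a loop or short ray since $\tau$ is not filling, so it has distance exactly $2$ to the loops and short rays, i.e. it is $2$-filling. The two complementary intervals of $D_\gamma$ adjacent to $\tau$ therefore have filling other-endpoints $\delta$ and $\epsilon$, so the reduction applies on both sides and shows $\tau$ is two-side approachable.

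To prove the reduction, suppose $(\alpha,\tau)$ is a complementary interval of $D_\gamma$ with $\alpha$ filling, but, toward a contradiction, that no loop disjoint from $\tau$ converges to $\tau$ from within $(\alpha,\tau)$. Using the surgeries of Section \ref{sec: two-side} (the Remark after Lemma \ref{lemma: strengthen two-side approachable}, together with analogous routine moves that push a short ray or long ray disjoint from $\tau$ across a nearby point of the Cantor set to produce a loop) one upgrades \emph{any} sequence of rays disjoint from $\tau$ converging to $\tau$ into such a sequence of loops; hence $D_\tau$ does not accumulate at $\tau$ from that side, so there is a complementary interval $(\delta',\tau)$ of $D_\tau$ on that side. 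If $\delta'$ lies strictly between $\alpha$ and $\tau$ on the conical circle, then $\delta'$ is not disjoint from $\gamma$ (it lies in the complementary interval $(\alpha,\tau)$ of $D_\gamma$), and Lemma \ref{lemma: concat at intersection} applied to the concatenation of $\delta'$ with $\gamma$ at their first intersection, turned toward $\tau$, produces a simple ray $r$ disjoint from $\tau$ with $\delta'<r<\tau$ — the inequality $r<\tau$ because $r$ follows $\delta'$ and then $\gamma$, both disjoint from $\tau$, so $r$ cannot cross to the far side of $\tau$. This contradicts that $(\delta',\tau)$ is a complementary interval of $D_\tau$. If instead $\delta'$ does not lie strictly between $\alpha$ and $\tau$, then since $\alpha$ is itself disjoint from $\tau$ (being in the clique $D_\gamma$), minimality of the complementary interval forces $\delta'=\alpha$; so $(\alpha,\tau)$ is a complementary interval of \emph{both} $D_\gamma$ and $D_\tau$.

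This last case is the crux, and I expect it to be the main obstacle. Here Lemma \ref{lemma: converge to L}, applied with reference ray $\gamma$ and then with reference ray $\tau$, shows that the same bi-infinite geodesic $L$ joining the endpoints of the lifts of $\alpha$ and of $\tau$ based at a common $\widetilde\infty$ receives lifts of $\gamma$ \emph{and} lifts of $\tau$ converging to it; in particular $p(L)$ lies in the limit sets of both $\gamma$ and $\tau$ (cf. Lemma \ref{lemma: filling limit set}), and $L$ is asymptotic to the lift $\tilde\alpha$. The rays of the interval $(\alpha,\tau)$, lifted from $\widetilde\infty$, all lie in the ideal triangle with sides $\tilde\alpha$, $\tilde\tau$, $L$, whose interior is disjoint from every lift of $\gamma$ and of $\tau$, yet each such ray meets both $\gamma$ and $\tau$ only after crossing $L$. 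I would exploit this, together with the hypothesis that $\alpha$ is filling (so that $\alpha$, too, spirals toward $p(L)$ on the relevant side near the far end of $\tau$) and that $\gamma$ is $2$-filling, to build by a surgery of $\gamma$ against $\alpha$ — or against a loop that $\gamma$ necessarily meets very close to $p(L)$ — as in Lemmas \ref{lemma: concat at intersection} and \ref{lemma: concat at infty}, a simple ray disjoint from $\tau$ that lands strictly inside $(\alpha,\tau)$, contradicting that $(\alpha,\tau)$ is complementary for $D_\tau$. The delicate points will be verifying that this surgered ray really stays strictly between $\alpha$ and $\tau$ and disjoint from $\tau$, and, earlier in the argument, justifying carefully that short and long rays accumulating at $\tau$ can always be replaced by loops; both are of the same flavour as the constructions already carried out in Sections \ref{sec: two-side} and \ref{sec:straightforward}.
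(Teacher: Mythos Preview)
Your proposal has two genuine gaps, and the paper's approach is quite different from yours.

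First, the step where you ``upgrade any sequence of rays disjoint from $\tau$ converging to $\tau$ into such a sequence of loops'' is not justified. Remark~\ref{rmk: not simple} only shows how to turn a \emph{non-simple loop} disjoint from $\tau$ into a simple one. You claim analogous moves will turn a \emph{long ray} disjoint from $\tau$ into a nearby loop, but a long ray has a nontrivial limit set in $\Omega$, and there is no obvious ``nearby Cantor point'' to push it across. Without this, you cannot conclude that $D_\tau$ has a complementary interval $(\delta',\tau)$ on the relevant side.

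Second, and more seriously, you explicitly leave the crux case (where $(\alpha,\tau)$ is a complementary interval of both $D_\gamma$ and $D_\tau$) unfinished. You observe correctly that lifts of both $\gamma$ and $\tau$ converge to the same geodesic $L$, but the sketched surgery of $\gamma$ against $\alpha$ or against a nearby loop does not produce anything: $\alpha$ is filling and hence intersects every loop, so any such surgery is as likely to cross $\tau$ as not, and you give no mechanism to control this.

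The paper bypasses your comparison of $D_\gamma$ with $D_\tau$ entirely. Instead it works directly in the universal cover: since $\gamma_1$ (the $2$-filling ray adjacent to $\tau$ in $D_\gamma$) is only disjoint from $D_\gamma$, lifts of $\gamma_1$ converge to the geodesic $L_0$ joining the endpoints of $\tilde\tau$ and $\tilde\gamma_1$. A tangent-vector argument (Claim~\ref{claim: converging infty}) shows one can choose these lifts $g_k\tilde\gamma_1$ so that their \emph{starting points} $g_k\widetilde\infty$ converge to the endpoint of $\tilde\tau$. The geodesics from $\widetilde\infty$ to $g_k\widetilde\infty$ project to loops $\ell_k$ converging to $\tau$. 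The key point is then Claim~\ref{claim: disjoint}: if infinitely many $\ell_k$ met $\tau$, one could propagate lifts of $\tau$ to converge to \emph{every} $L_i$, forcing $\tau$ to be disjoint only from rays in $D_\gamma$ and hence filling --- a contradiction. This is the idea your sketch is missing.
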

	\begin{proof}
		Let $D_\gamma$ be the set of rays disjoint from $\gamma$ including itself. By Lemma \ref{lem:cliques} all rays in $D_\gamma$ are disjoint from each other. Now we think of $D_\gamma$ as a subset of the conical circle, equipped with the induced cyclic order.
		Since $D_\gamma$ is finite by our assumption, we can enumerate it in the cyclic order as $\tau, \gamma_1,\cdots,\gamma_n$ for some $n\ge1$ so that $\tau$ is non-filling and $\gamma_1$ is $2$-filling. 
		
		Pick a fundamental domain $\widetilde{\Omega}$ of the conical cover $\Omega_C$ on $\Hbb^2$ bounded by two consecutive lifts $\tilde{\tau}_1,\tilde{\tau_2}$ of $\tau$ starting from $\widetilde{\infty}$, a lift of $\infty$ on $\partial \Hbb^2$. 
		Let $\tilde{\gamma}_i$ be the lift of $\gamma_i$ starting from $\widetilde{\infty}$ that sits in between $\tilde{\tau}_1$ and $\tilde{\tau_2}$.
		Then $(\tilde{\tau}_1,\tilde{\gamma}_1,\ldots,\tilde{\gamma}_n,\tilde{\tau_2})$ is positively oriented.
		
		For any $0\le i\le n$, let $L_i$ be the bi-infinite geodesic on $\widetilde{\Omega}$ that travels from the endpoint of $\tilde{\gamma}_i$ to the endpoint of $\tilde{\gamma}_{i+1}$, where
		$\tilde{\gamma}_0$ and $\tilde{\gamma}_{n+1}$ denote $\tilde{\tau}_1$ and $\tilde{\tau}_2$ respectively.
		For each ray $\gamma_j$ that is $2$-filling, it is only disjoint from rays in $D_\gamma$ by Lemma \ref{lem:cliques}. Thus there are lifts of $\gamma_j$ converging to each $L_i$ by Lemma \ref{lemma: converge to L}.
		
		\begin{claim}\label{claim: converging infty}
			There is a sequence of lifts $g_k \tilde{\gamma}_1$ of $\gamma_1$ where $g_k \in \pi_1(\Omega)$, 
			such that $g_k \tilde{\gamma}_1$ converges to $L_0$ (on compact sets) and the starting points $g_k\widetilde{\infty}$ converge to the endpoint of $\tilde{\tau}_1$. See Figure \ref{fig: approaching}.
		\end{claim}
		\begin{proof}
			Fix any $p\in L_0$. There is some $g_k\in \pi_1(\Omega)$ and $p_k\in \tilde{\gamma}_1$ such that
			the unit tangent vector $v_k$ of $g_k\tilde{\gamma}_1$ at $g_k p_k$ is arbitrarily close to either the unit tangent vector $v$ of $L_0$ at $p$ or $-v$, 
			as points in the unit tangent bundle of $\mathbb{H}^2$.
			Our claim holds if it happens infinitely often that $v_k$ is close to $v$ instead of $-v$. 
			We show this actually is the case as follows, illustrated as in Figure \ref{fig: direction}.
			
			\begin{figure}
				\labellist
				\small 
				\pinlabel $\tilde{\tau}_1$ at 5 212
				\pinlabel $\tilde{\ell}_k$ at 75 212
				\pinlabel $\tilde{\ell}_{k-1}$ at 100 212
				\pinlabel $\tilde{\ell}_{k-2}$ at 133 212
				\pinlabel $g_k\widetilde{\infty}$ at 68 8
				\pinlabel $g_{k-1}\widetilde{\infty}$ at 97 -8
				\pinlabel $g_{k-2}\widetilde{\infty}$ at 140 -8
				\pinlabel $L_0$ at 190 202
				\pinlabel $\tilde{\tau}_k$ at 205 182
				\pinlabel $\tilde{\gamma}_1$ at 368 212
				
				\pinlabel $g_k\tilde{\gamma}_1$ at 202 157
				\pinlabel $g_{k-1}\tilde{\gamma}_1$ at 170 115
				\pinlabel $\widetilde{\infty}$ at 362 272
				\pinlabel $g_{k-2}\tilde{\gamma}_1$ at 183 93
				\pinlabel $L_1$ at 403 68
				\pinlabel $L_n$ at 518 50
				\pinlabel $\tilde{\gamma}_2$ at 452 212
				\pinlabel $\tilde{\gamma}_n$ at 502 212
				\pinlabel $\tilde{\tau}_2$ at 550 212
				\endlabellist
				\centering
				\includegraphics[scale=0.8]{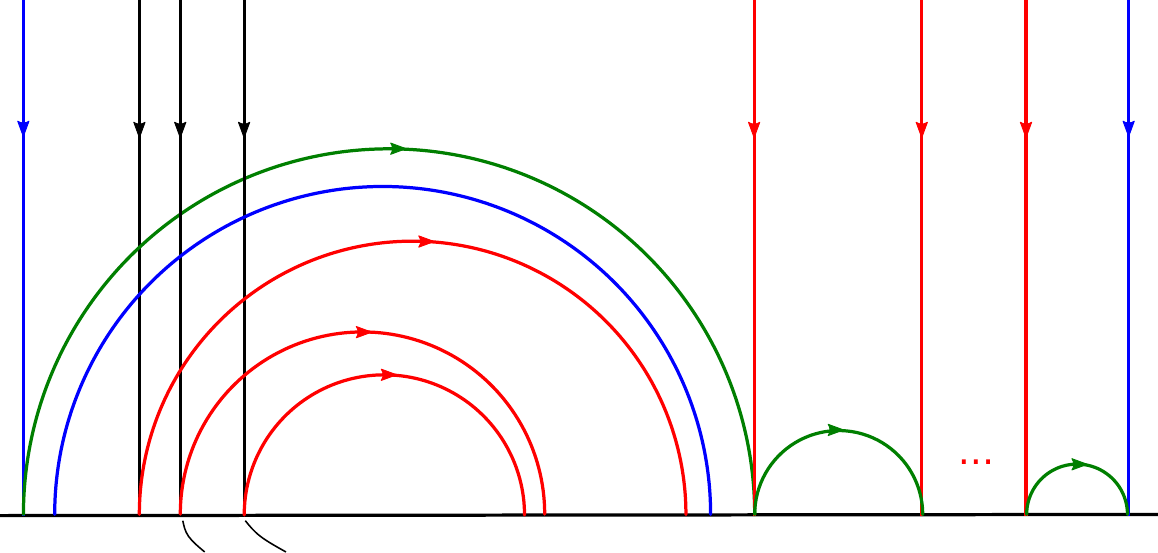}
				\vspace{8pt}
				\caption{The sequence $g_k\tilde{\gamma}_1$ converging to $L_0$ with $g_k\widetilde{\infty}$ converging to the endpoint of $\tilde{\tau}_1$. Any lift of $\tau$ intersecting $\tilde{\ell}_k$ has to be in the position of $\tilde{\tau}_k$.}\label{fig: approaching}
			\end{figure}
			
			Suppose $v_k$ is very close to $-v$ instead of $v$ whenever $g_k p_k$ is close to $p$. Then $p_k$ must be very close to the endpoint of $\tilde{\gamma}_1$, 
			as $g_k^{-1} L_0$ fellow travels with $\tilde{\gamma}_1$ on a very large
			neighborhood of $p_k$ but cannot get close to $\widetilde{\infty}$ since the projection of $L_0$ to $\Omega$ is simple (as a limiting geodesic of a simple ray $\gamma_1$).
			Hence there is some $q_k$ on $L_0$ very close to $p_k$ for $k$ large and the unit tangent vector $u_k$ is very close to $g_k^{-1} v_k$.
			It follows that $g_k u_k$ is slightly to the right of $v_k$ as shown in Figure \ref{fig: direction}. 
			It must sit in between $g_k \tilde{\gamma}_1$ and $L_0$ as in the figure since $g_k L_0$ cannot intersect any lifts of $\tau$ or $\gamma_1$.
			Now for $k'$ large enough, there is some $r_{k'}$ on $g_{k'}\tilde{\gamma}_1$ so that the unit tangent vector $w$ at $r_{k'}$ is arbitrarily close to $-u_k$.
			Hence the unit tangent vector $g_k w$ on $g_k g_{k'}\tilde{\gamma}_1$ is very close to $-g_k u_k$ and $-v_k$, and thus close to $v$ instead of $-v$.
			This contradicts our assumption and proves the claim.
			
			\begin{figure}
				\labellist
				\small 
				\pinlabel $\tilde{\tau}_1$ at 5 195
				\pinlabel $L_0$ at 110 168
				\pinlabel $p$ at 188 185
				\pinlabel $v$ at 220 185
				\pinlabel $\tilde{\gamma}_1$ at 368 195
				\pinlabel $p_k$ at 373 95
				\pinlabel $g_k^{-1}v_k$ at 383 73
				\pinlabel $q_k$ at 328 90
				\pinlabel $u_k$ at 345 55
				\pinlabel $r_{k'}$ at 310 72
				\pinlabel $w$ at 305 114
				
				\pinlabel $g_k\tilde{\gamma}_1$ at 140 35
				\pinlabel $g_kL_0$ at 110 40
				\pinlabel $g_kg_{k'}\tilde{\gamma}_1$ at 110 110
				\pinlabel $g_{k'}\tilde{\gamma}_1$ at 110 130
				\pinlabel $g_kp_k$ at 185 58
				\pinlabel $v_k$ at 160 75
				\pinlabel $g_kq_k$ at 205 75
				\pinlabel $g_ku_k$ at 175 98
				\pinlabel $g_kr_{k'}$ at 215 110
				\pinlabel $g_kw$ at 245 100
				
				\pinlabel $\widetilde{\infty}$ at 362 255
				\endlabellist
				\centering
				\includegraphics[scale=0.8]{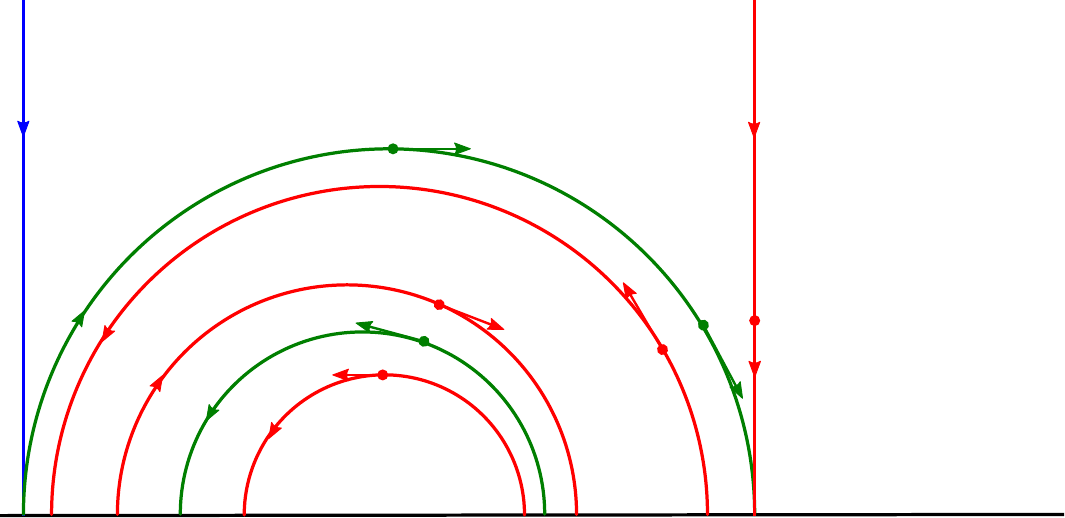}
				\caption{Construction of a lift $g_kg_{k'}\tilde{\gamma}_1$ of $\gamma_1$ that is close to $L_0$ and going in the ``same'' direction.}\label{fig: direction}
			\end{figure}
		\end{proof}
		
		Now we have a sequence of geodesics $\tilde{\ell}_k$ connecting $\widetilde{\infty}$ to $g_k\widetilde{\infty}$ and converging to $\tilde{\tau}_1$ from its left.
		Each $\tilde{\ell}_k$ projects to a (not necessarily simple) loop $\ell_k$ on $\Omega$. 
		
		\begin{claim}\label{claim: disjoint}
			$\tau$ is disjoint from $\ell_k$ for all $k$ sufficiently large.
		\end{claim}
		
		This would imply that $\tau$ is approachable as we can in addition make $\ell_k$ simple in a way similar to Remark \ref{rmk: not simple}. 
		
		If $\gamma_n$ is also $2$-filling, which is the case if $\tau$ is the only non-filling ray in $D_\gamma$, then by a symmetric argument, using $L_n$ in place of $L_0$, there is also a sequence of loops $r_k$ converging to $\tau$ from its right. In this case $\tau$ is two-side approachable since $r_k$ and $\ell_k$ can be made simple by Remark \ref{rmk: not simple}, which completes the proof of the theorem.
	\end{proof}
	
	\begin{proof}[Proof of Claim \ref{claim: disjoint}]
		Suppose that infinitely many $\ell_k$ intersect $\tau$. We will exhibit lifts of $\tau$ converging to $L_i$ for each $0\le i\le n$, from which it follows that $\tau$ is only disjoint from rays in $D_\gamma$, contradicting the fact that $\tau$ is non-filling.
			
		For each $k$ such that $\ell_k$ intersects $\tau$, some lift $\tilde{\tau}_k$ of $\tau$ intersects $\tilde{\ell}_k$. Note that $\tilde{\tau}_k$ is disjoint from $L_0$ and $g_k \tilde{\gamma}_1$, thus $\tilde{\tau}_k$ must be a geodesic in the region between $L_0$ and $g_k \tilde{\gamma}_1$ and isotopic to both. See Figure \ref{fig: approaching}. Thus letting such $k$ go to infinity, we obtain lifts $\tilde{\tau}_k$ of $\tau$ converging to $L_0$.
			
		Now for each $1\le i\le n$, fix a point $p$ on $L_i$. There are lifts $h_m \tilde{\gamma}_1$ of $\gamma_1$ and points $p_m$ on $\tilde{\gamma}_1$ such that $h_m p_m$ converges to $p$ and the tangent lines of $h_m \tilde{\gamma}_1$ at $h_m p_m$ become almost parallel to the tangent line of $L_i$ at $p$. As we explained in the proof of Claim \ref{claim: converging infty}, the point $p_m$ must be very close to the endpoint of $\tilde{\gamma}_1$ for $m$ large.
		Hence there is some $q_m$ on $L_0$ very close to $p_m$.
		Combining with the fact that there is some lift $\tilde{\tau}_k$ of $\tau$ very close to $L_0$, there is some $r_k$ on $\tilde{\tau}_k$ very close to $q_m$ and $p_m$ and such that the tangent lines of $\tilde{\tau}_k$ and $\tilde{\gamma}_1$ at $r_k$ and $p_m$ respectively are almost parallel. See Figure \ref{fig: approaching_i}. Hence $h_m r_k$ is very close to $p$ and the tangent lines of $h_m\tilde{\tau}_k$ and $L_i$ at $h_m r_k$ and $p$ respectively are almost parallel. This exhibits lifts of $\tau$ converging to $L_i$. This completes the proof.
			\begin{figure}
				\labellist
				\small 
				\pinlabel $\tilde{\tau}_1$ at 5 195
				\pinlabel $L_0$ at 50 100
				\pinlabel $\tilde{\tau}_k$ at 65 80
				\pinlabel $\tilde{\gamma}_1$ at 250 195
				\pinlabel $p_m$ at 253 60
				\pinlabel $q_m$ at 232 65
				\pinlabel $r_k$ at 208 45

				\pinlabel $\tilde{\gamma}_i$ at 310 195
				\pinlabel $p$ at 397 103
				\pinlabel $h_mp_m$ at 420 115
				\pinlabel $h_m\tilde{\gamma}_1$ at 340 108
				\pinlabel $h_mr_k$ at 400 65
				\pinlabel $h_m\tilde{\tau}_k$ at 360 38
						
				\pinlabel $L_i$ at 315 68
				\pinlabel $\tilde{\gamma}_{i+1}$ at 505 195
				\pinlabel $\tilde{\tau}_2$ at 550 195
				\endlabellist
				\centering
				\includegraphics[scale=0.8]{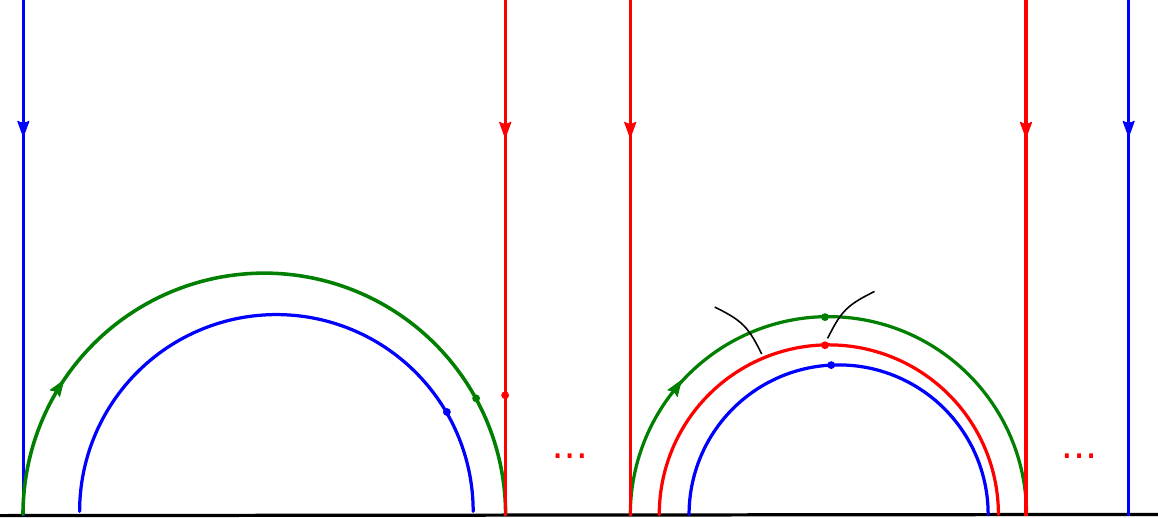}
				\caption{The sequence $h_m\tilde{\tau}_k$ converging to $L_i$}\label{fig: approaching_i}
			\end{figure}
	\end{proof}

In many cases when the star of a $2$-filling ray $\gamma$ is infinite, we can still find an approachable ray disjoint from $\gamma$. We wonder if this is always the case.
\begin{quest}\label{quest: disjoint}
	Is every $2$-filling ray disjoint from some approachable long ray?
\end{quest}

\section{Geodesic laminations from train tracks}
\label{sec:tracks}

In this section we define a geodesic lamination $\Lambda$ on $\Omega$ using a train track. For the statement of Theorem \ref{lamthm}, recall that a geodesic ray $\gamma$ \textit{spirals onto} a lamination $\Lambda$ if $\operatorname{cl}(\gamma) \setminus \gamma=\Lambda$, where $\operatorname{cl}(\gamma)$ denotes the closure.

\begin{restatable}{thm}{lamthm}
\label{lamthm}
There exists a geodesic lamination $\Lambda$ on $\Omega$ with the following properties:
\begin{enumerate}
\item $\Lambda$ has three boundary leaves;
\item the region of $\Omega\setminus \Lambda$ containing $\infty$ is a once-punctured ideal bigon $b$ with ends $e^+$ and $e^-$;
\item every leaf of $\Lambda$ is dense except for a single \emph{proper} leaf $m$;
\item every \emph{half leaf} of $\Lambda$ is dense except for the two half leaves of $m$, and the two half leaves of $\Lambda$ asymptotic to $e^+$;
\item if $\tau$ is a ray from $\infty$ to $e^+$ then $\tau$ spirals onto $m$;
\item if $\gamma$ is a ray from $\infty$ to $e^-$ then $\gamma$ spirals onto $\Lambda$.
\end{enumerate}
\end{restatable}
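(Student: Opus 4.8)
The plan is to realize $\Lambda$ as a geodesic lamination carried by an explicit train track. First I would fix the train track $T$ pictured in Figure~\ref{traintrack} together with a weight system $w:\B(T)\to\R_{\geq 0}$ read off from the invariant splitting sequence of $T$ (equivalently, a Perron--Frobenius eigenvector of the associated substitution matrix, or the transverse measure coming from the horizontal foliation of the flat surface of the companion sections). This gives the union of foliated rectangles $G$ of Section~\ref{sec:lambg} and its set of train paths $\TP(T,w)$. I would then embed $T$ in $\Omega$ with a prescribed complementary-region pattern: every complementary region is an ideal polygon or a once-punctured monogon around a point of the Cantor set, with one exception --- the region meant to contain $\infty$, which is a once-punctured ideal bigon. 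Straightening each leaf of $G$ to the bi-infinite geodesic of $\widetilde{\Omega}$ with the same pair of endpoints on $\partial\Hbb^2$, and checking that this straightening is injective on leaves and creates no complementary bigons or monogons beyond those already present, produces the geodesic lamination $\Lambda$. Properties (1) and (2) are then read off directly: the singular leaves of $G$ determine exactly three boundary paths, hence $\Lambda$ has three boundary leaves, and the complementary region of $\Lambda$ in $\Omega$ containing $\infty$ is the once-punctured bigon $b$; I name its two spikes $e^+$ and $e^-$.

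The core of the proof is the density dichotomy (3)--(4). Here the plan is to exploit the self-similarity of $(T,w)$: there is a primitive substitution $\sigma$ on the alphabet $\B(T)$ compatible with the splitting sequence, and the itinerary of any leaf of $G$ is a bi-infinite $\sigma$-admissible word. Using primitivity, I would show that if a bi-infinite train path $t\in\TP(T,w)$ is \emph{not} eventually forced, on one or both sides, either into the finite return word describing the special leaf $m$ or into the escaping run of branches along the spike $e^+$, then every sufficiently long window of $t$ contains $\sigma^{N}(b)$ for each branch $b$ and arbitrarily large $N$; since every finite train path occurs inside $\sigma^{N}(b)$ for $N$ large, $t$ is dense in $\TP(T,w)$. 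This proves (3), once $m$ is identified as the unique $\sigma$-periodic bi-infinite train path whose rectangle itinerary stays in the ``core'' subtrack, and it yields (4) after running the same argument one-sidedly: the only non-dense half leaves are the two half leaves of $m$ and the two half leaves of $\Lambda$ running into $e^+$. Along the way I would verify that $m$ straightens to a \emph{proper} geodesic --- it limits to two ends of $\Omega$ rather than accumulating onto $\Lambda$ --- and that the two $e^+$-asymptotic half leaves both spiral onto $m$.

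Finally, (5) and (6) follow from the structure of the bigon $b$. A geodesic ray $\tau$ from $\infty$ that exits $b$ through the spike $e^+$ cannot cross $\Lambda$, so once it is deep inside the spike it must fellow-travel forever one of the two boundary half leaves of $b$ at $e^+$; by the previous paragraph those half leaves spiral onto $m$, hence $\operatorname{cl}(\tau)\setminus\tau=m$. Symmetrically, a geodesic ray $\gamma$ from $\infty$ exiting $b$ through $e^-$ fellow-travels a boundary half leaf of $b$ at $e^-$, which is dense in $\Lambda$ by (4), so $\operatorname{cl}(\gamma)\setminus\gamma=\Lambda$ and $\gamma$ spirals onto $\Lambda$.

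I expect the main obstacle to be the density dichotomy of the second paragraph: precisely characterizing which one-sided-infinite train paths fail to be dense, and showing there are no others besides the half leaves of $m$ and the $e^+$ half leaves, together with the more technical point that straightening the carried foliation to geodesics preserves the full combinatorial picture --- injectivity on leaves, exactly three boundary leaves, and no spurious complementary bigons or monogons --- so that the resulting $\Lambda$ is a genuine geodesic lamination with precisely the listed complementary regions.
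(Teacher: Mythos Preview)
Your overall architecture --- carry the lamination on the explicit track $T$, embed in $\Omega$, straighten to geodesics, then prove a density dichotomy for half leaves and deduce (5)--(6) by fellow-travelling in the bigon --- matches the paper, and your arguments for (5)--(6) are essentially the paper's. But there is a genuine gap in how you locate the proper leaf $m$, and your proposed mechanism for the density dichotomy differs from the paper's in a way that matters.

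\textbf{The leaf $m$ is not a train path of $(T,w)$.} You describe $m$ as ``the unique $\sigma$-periodic bi-infinite train path whose rectangle itinerary stays in the `core' subtrack.'' In the paper's construction this is false: \emph{every} train path in $\TP(T,w)$ is dense (Lemma~\ref{halfleaves} and the density of the singular leaf $l_0$). The leaf $m$ appears only after one embeds $T$ in $\Omega$ and collapses the infinitely many parallel branches to obtain a \emph{locally finite} track $T^*$; the carrying map $\xi:\TP(T,w)\to\TP(T^*,w^*)$ misses exactly one path $t_0=\ldots f_{-1}^*f_0^*f_1^*\ldots$ (Lemma~\ref{pathimage}), and this $t_0$ straightens to $m$. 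In other words, $m$ is the geodesic onto which the branches of $T$ accumulate in $\Omega$, not a leaf of the foliated rectangle space $G$. Your proposal never passes from $T$ to a locally finite track, so you have no place for $m$ to live; the description of the embedding (``every complementary region is an ideal polygon or a once-punctured monogon'') also misses that the branches of $T$ must accumulate somewhere, and that accumulation locus is exactly $m$.

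\textbf{Density via substitution versus the paper's ergodic argument.} The paper does not argue primitivity on the alphabet $\B(T)$ --- which is infinite, so the usual primitivity machinery does not apply directly. Instead it builds an auxiliary flat sphere $\Sigma$ with a pseudo-Anosov $\phi$ admitting a \emph{finite} Markov partition $\{A,B,C,D\}$, proves (via the standard \cite{flp} argument) that any nonatomic transverse measure to $\mathcal F^h$ is Lebesgue (Lemma~\ref{transmeas}), and then shows each half leaf is dense by taking a weak limit of its hitting measures on a fixed transversal: either the limit is Lebesgue, hence the half leaf is equidistributed and dense, or it has an atom on a saddle connection, hence the half leaf accumulates on the dense singular leaf $l_0$ (Lemma~\ref{halfleaves}). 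If you want to run a substitution argument, the correct finite alphabet is the Markov partition of $\phi$, not $\B(T)$; what you wrote as ``primitivity of $\sigma$ on $\B(T)$'' would have to be reformulated in those terms before it can do the work you assign to it.
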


\begin{cor}
\label{cor:2filling}
The ray $\gamma$ is 2-filling.
\end{cor}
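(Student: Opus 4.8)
The plan is to check the two clauses in the definition of a $2$-filling ray for the ray $\gamma$ of Theorem \ref{lamthm}(6): that $\gamma$ intersects every loop and every short ray, and that $\gamma$ is disjoint from a long ray which is in turn disjoint from a loop. Note first that $\gamma$ is a long ray, as it starts at $\infty$, is not a loop (it is asymptotic to $\infty$ at only one end), and is not proper (it spirals onto the nonempty lamination $\Lambda$, so $\operatorname{cl}(\gamma)=\gamma\cup\Lambda$).

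For the filling clause, I would argue by contradiction: suppose a loop or short ray $c$ is disjoint from $\gamma$. The first step is to upgrade this to disjointness from all of $\operatorname{cl}(\gamma)=\gamma\cup\Lambda$. If $c$ crossed a leaf $\ell$ of $\Lambda$ transversally at a point $p$, then, since $\Lambda\subseteq\operatorname{cl}(\gamma)$, arbitrarily short subarcs of $\gamma$ run nearly parallel to $\ell$ through a neighbourhood of $p$, and any such subarc meets $c$; this contradicts $c\cap\gamma=\varnothing$. Also $c$ cannot coincide with, nor be asymptotic to, a leaf of $\Lambda$: each leaf other than $m$ is dense, hence non-proper, and so cannot share an end with the proper arc $c$; and $c$ asymptotic to the proper leaf $m$ would force $c$ (which starts at the cusp $\infty\in b$) to leave $\overline b$ in order to reach an end of $m$, crossing $\partial b\subseteq\Lambda$ transversally and hence $\gamma$. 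Thus $c$ is disjoint from $\Lambda$, and since $c$ meets $\infty\in b$ while $\partial b\subseteq\Lambda$, we get $c\subseteq\overline b$. The once-punctured ideal bigon $b$ contains no point of the Cantor set, so $c$ cannot be a short ray; hence $c$ is a loop based at $\infty$ inside $\overline b$. But $\gamma$ too lies in $\overline b$ (it cannot cross $\partial b\subseteq\Lambda$), and topologically $\gamma$ is a proper arc in $\overline b$ from the puncture $\infty$ out to the ideal vertex $e^-$. Cutting the once-punctured ideal bigon along such an arc produces a simply connected surface, in which the simple loop $c$, based at a boundary point, bounds a disk; hence $c$ is inessential, contradicting that it is carried by a geodesic. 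This establishes that $\gamma$ intersects every loop and every short ray.

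For the second clause, let $\tau$ be a ray from $\infty$ to $e^+$ chosen disjoint from $\gamma$; such a $\tau$ exists because $\gamma$ and $\tau$ are both confined to $\overline b$ and run from the puncture $\infty$ to the two distinct ideal vertices of the bigon, and such arcs can be realized disjointly. By Theorem \ref{lamthm}(5), $\tau$ spirals onto $m$, so $\tau$ is a long ray with $\operatorname{cl}(\tau)=\tau\cup m$, and $\tau\neq\gamma$ since they have different closures; thus $\gamma$ is disjoint from the long ray $\tau$. It remains to see that $\tau$ is disjoint from some loop. Since $m$ is a single proper leaf (a bi-infinite geodesic between two ends of $\Omega$), the complement $\Omega\setminus(\tau\cup m)$ has a component whose frontier contains $\infty$ and which is a subsurface of infinite type; any essential simple loop based at $\infty$ in this subsurface is disjoint from $\tau$. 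Putting the pieces together: $\gamma$ is filling, so its distance in $\Ray$ to every loop and short ray is at least $2$; on the other hand $\gamma$ is disjoint from $\tau$, which is disjoint from a loop, so that distance is at most $2$. Hence it equals $2$, i.e.\ $\gamma$ is $2$-filling.

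The main obstacle is the filling clause, and within it the step promoting disjointness from $\gamma$ to disjointness from all of $\Lambda$ (in particular ruling out that $c$ is asymptotic to $m$), together with the topological fact that a once-punctured ideal bigon, cut open along a proper arc from its puncture to an ideal vertex, is simply connected and therefore carries no essential loop based at its boundary. The infinite-type claim used for $\tau$ is routine since $m$ is a single bi-infinite geodesic.
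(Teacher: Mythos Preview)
Your proof is correct. The overall two–clause strategy matches the paper's, and for the filling clause you are actually more explicit: the paper simply asserts in one line that, since $\gamma$ spirals onto $\Lambda$, the link of $\gamma$ in $\Ray$ consists only of $\tau$, whereas you unpack this by showing that any geodesic disjoint from $\gamma$ must avoid $\Lambda$, hence lie in the once-punctured bigon $b$, and then use the topology of $b$ cut along $\gamma$ to rule out loops and short rays. That argument is fine.

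The genuine difference is in how you produce a loop disjoint from $\tau$. You argue abstractly that $\Omega\setminus(\tau\cup m)$ has an infinite-type component with $\infty$ on its frontier, and take an essential loop there. The paper instead gives a short constructive surgery: choose any loop $\alpha$ disjoint from the single proper leaf $m$; since $\operatorname{cl}(\tau)\setminus\tau=m$ and $\alpha$ is compact away from $\infty$, the intersection $\tau\cap\alpha$ is finite; take the last intersection point $p$ along $\alpha$ and form $\tau|[\infty,p]\cup\alpha|[p,\infty]$, which is simple, essential (as $\tau$ and $\alpha$ are geodesics in minimal position), and disjoint from $\tau$ up to homotopy. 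Your topological existence argument is valid, but the ``routine'' step hides some work (the spiraling of $\tau$ onto $m$ makes the complement's structure less immediate than for $\Omega\setminus m$ alone). The paper's surgery sidesteps this by reducing to the much easier existence of a loop disjoint from the single bi-infinite geodesic $m$, and is worth knowing as a cleaner alternative.
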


\begin{proof}[Proof of Corollary \ref{cor:2filling}]
Since $\gamma$ spirals onto $\Lambda$, its link in $\Ray$ consists only of the ray $\tau$. In particular, $\gamma$ is not disjoint from any loop or short ray. Hence, to show that $\gamma$ is 2-filling, it suffices to show that $\tau$ is disjoint from a loop. We may choose a loop $\alpha$ disjoint from the leaf $m$ of $\Lambda$. Then since $\tau$ spirals onto $m$, it intersects $\alpha$ at most finitely many times. If $\tau \cap \alpha=\emptyset$ then we have shown that $\tau$ is disjoint from a loop and the proof is complete. Otherwise, orient $\alpha$ and $\tau$. We may choose a point $p\in \alpha\cap \tau$ such that $\alpha|[p,\infty]$ is disjoint from $\tau$. Then the concatenation $\tau|[\infty,p]\cup \alpha|[p,\infty]$ is simple.
It is not homotopic into a neighborhood of $\infty$ since $\tau$ and $\alpha$ are in minimal position. Moreover, it is disjoint from $\tau$ up to homotopy. This completes the proof.
\end{proof}

In this section we introduce the construction of $\Lambda$ via a train track. We will prove Theorem \ref{lamthm} in the following sections. In Section \ref{sec:correspondence} we show that the ray $\gamma$ described in Theorem \ref{lamthm} is actually an instance of one of the 2-filling rays constructed in Section \ref{sec:straightforward}.

\begin{figure}[h]

\begin{center}
\def\svgwidth{\textwidth}
\input{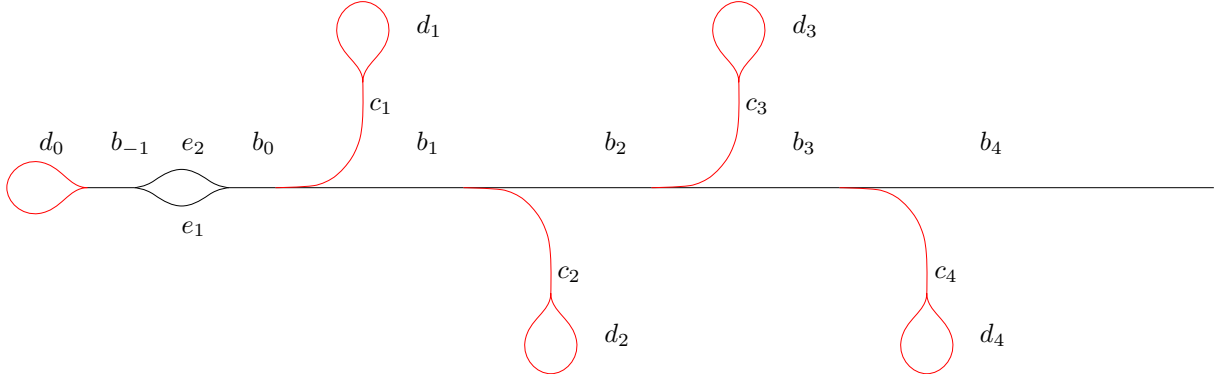}
\end{center}

\caption{The train track $T$ with branches labeled. The track has infinitely many branches stretching to the right in the picture.}
\label{traintrack}
\end{figure}

We define an abstract train track $T$ with branches labeled as in Figure \ref{traintrack}. We define a weight function $w:\mathcal{B}(T)\to [0,\infty)$ as follows. We set $w(e_1)=\frac{1}{3},$ $w(e_2)=\frac{2}{3}$ and \[w(b_{-1})=w(b_0)=1, \ \ w(b_n)=w(c_n)=\frac{1}{2^n} \text{ for } n\geq 1, \ \  w(d_n)=\frac{1}{2^{n+1}} \text{ for } n\geq 0.\]

We associate to the weighted train track $(T,w)$ a corresponding \textit{union of foliated rectangles} $G$. Namely, for each branch $b\in \mathcal{B}(T)$ we associate a rectangle $R(b)$ of width $1$ and height $w(b)$, which is endowed with its natural foliation by horizontal line segments. These rectangles are glued by isometries along their vertical sides in a pattern determined by the train track.

\begin{figure}[h]
\begin{center}
\def\svgwidth{\textwidth}
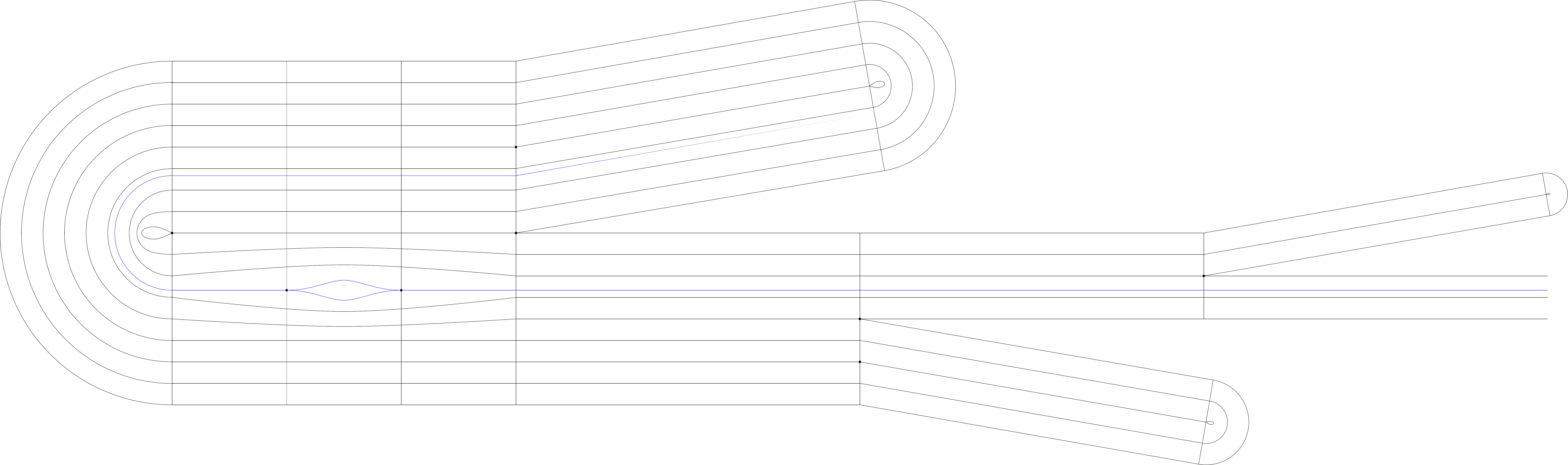
\end{center}
\caption{The union of foliated rectangles $G$.}\label{foliationfromtrack}
\end{figure}

The foliation $G$ determines a space of train paths $\TP(T,w)$ as described in Section \ref{sec:lambg}. We note that for each $n\geq 0$ exactly two distinct points of $R(d_n)$ are identified with each other in $G$ (whereas the natural map $R(d_n)\to G$ is injective on the complement of these two points). We denote by $P_n$ the resulting point. Furthermore, for $n\geq 0$, $R(b_{n+1})$ and $R(c_{n+1})$ are joined at a single point of $G$. We denote by $Q_n$ this point. The points $P_n$ and $Q_n$ are 3-pronged singularities of $G$.

\section{An abstract foliation}
\label{sec:abstractfol}

In this section we define an abstract foliation and investigate its dynamical properties. This will be used in later sections to define a geodesic lamination on the plane minus a Cantor set $\Omega$. Finally, we use this lamination to define a 2-filling ray which spirals onto it.

We consider the unit square $U=[0,1]^2$. It is foliated by the horizontal line segments $[0,1]\times \{y\}$ for $y\in [0,1]$. We will define a singular foliation $F$ by identifying certain segments of the vertical sides of $U$. If $p$ and $q$ are points of $U$ which both lie on a common vertical or horizontal side of $U$, then we denote by $[p,q]$ the subsegment of that side between $p$ and $q$.

We will now describe the side identifications on $U$. First we define a sequence of numbers $y_i\in [0,1]$ as follows. We set $y_{-2}=1$, $y_{-1}=0$, and \[y_n=\frac{1}{2}(y_{n-1}+y_{n-2}) \text{ for } n\geq 0.\] We also define $x_0=\frac{1}{2}$ and \[x_n=\frac{1}{2}(y_{n-1}+y_{n-3}) \text{ for } n\geq 1.\]

To define the side identifications on the right side $\{1\} \times [0,1]$ of $U$ we set $p_0=(1,x_0)=(1,\frac{1}{2})$. We identify the segments $[(1,0),p_0]$ and $[p_0,(1,1)]$ by a rotation of $\pi$ about the point $p_0$.

To define the side identifications on the left side of $U$, we set $p_n=(0,x_n)$ for $n\geq 1$. Furthermore, we set $q_n=(0,y_n)$ for $n\geq -2$. Note that for each $n\geq 1$, $p_n$ lies midway between $q_{n-1}$ and $q_{n-3}$ on the left side $\{0\}\times [0,1]$ of $U$. We identify the segments $[p_n,q_{n-1}]$ and $[p_n,q_{n-3}]$ by a rotation of $\pi$ about the point $p_n$. See Figure \ref{Fdefn}.

\begin{figure}[h]
\begin{center}
\def\svgwidth{\textwidth}
\begingroup%
  \makeatletter%
  \providecommand\color[2][]{%
    \errmessage{(Inkscape) Color is used for the text in Inkscape, but the package 'color.sty' is not loaded}%
    \renewcommand\color[2][]{}%
  }%
  \providecommand\transparent[1]{%
    \errmessage{(Inkscape) Transparency is used (non-zero) for the text in Inkscape, but the package 'transparent.sty' is not loaded}%
    \renewcommand\transparent[1]{}%
  }%
  \providecommand\rotatebox[2]{#2}%
  \newcommand*\fsize{\dimexpr\f@size pt\relax}%
  \newcommand*\lineheight[1]{\fontsize{\fsize}{#1\fsize}\selectfont}%
  \ifx\svgwidth\undefined%
    \setlength{\unitlength}{1680.75093427bp}%
    \ifx\svgscale\undefined%
      \relax%
    \else%
      \setlength{\unitlength}{\unitlength * \real{\svgscale}}%
    \fi%
  \else%
    \setlength{\unitlength}{\svgwidth}%
  \fi%
  \global\let\svgwidth\undefined%
  \global\let\svgscale\undefined%
  \makeatother%
  \begin{picture}(1,0.57162026)%
    \lineheight{1}%
    \setlength\tabcolsep{0pt}%
    \put(0,0){\includegraphics[width=\unitlength,page=1]{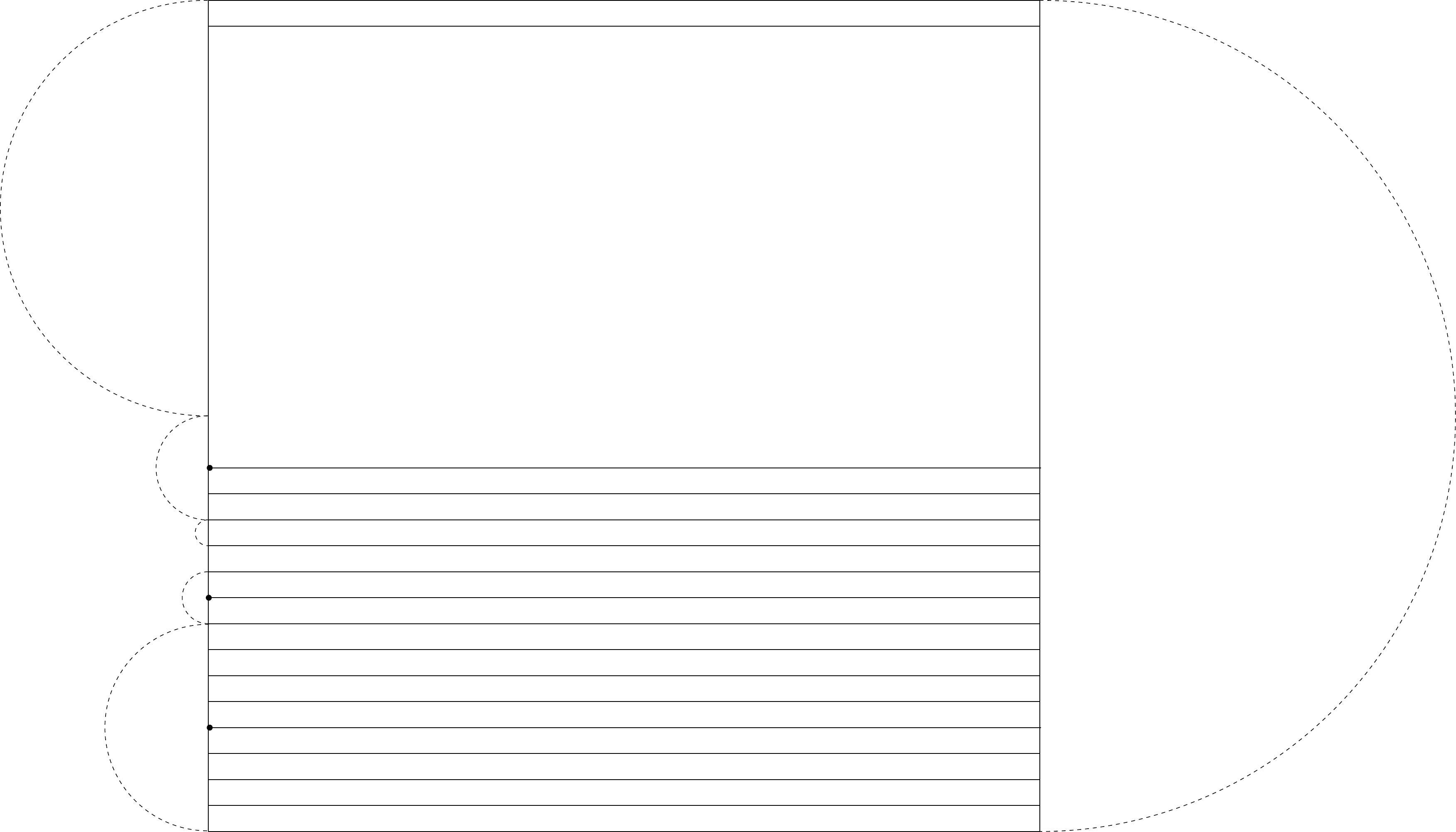}}%
    \put(0.09151306,0.42860345){\color[rgb]{0,0,0}\makebox(0,0)[lt]{\lineheight{1.25}\smash{\begin{tabular}[t]{l}$p_1$\end{tabular}}}}%
    \put(0.09151306,0.07162016){\color[rgb]{0,0,0}\makebox(0,0)[lt]{\lineheight{1.25}\smash{\begin{tabular}[t]{l}$p_2$\end{tabular}}}}%
    \put(0.09151306,0.25011177){\color[rgb]{0,0,0}\makebox(0,0)[lt]{\lineheight{1.25}\smash{\begin{tabular}[t]{l}$p_3$\end{tabular}}}}%
    \put(0,0){\includegraphics[width=\unitlength,page=2]{abstractfoliation.pdf}}%
    \put(0.73755292,0.2858101){\color[rgb]{0,0,0}\makebox(0,0)[lt]{\lineheight{1.25}\smash{\begin{tabular}[t]{l}$p_0$\end{tabular}}}}%
  \end{picture}%
\endgroup%

\end{center}

\caption{The foliation $F$. Dotted lines indicate side identifications by rotation by $\pi$ about the point $p_n$. The leaf through the accumulation point $r$ is shown in blue.}\label{Fdefn}
\end{figure}

We may also write \[y_{2n}=y_{2n-1}+\frac{1}{2^{2n+1}}, \ \ \ y_{2n+1}=y_{2n}-\frac{1}{2^{2n+2}}\] and \[x_{2n}=y_{2n-1}-\frac{1}{2^{2n+1}}, \ \ \ x_{2n+1}=y_{2n}+\frac{1}{2^{2n+2}}.\] From these facts we easily see that $y_n\to \frac{1}{3}$ and $x_n\to \frac{1}{3}$ as $n\to \infty$.


Note that,
\begin{itemize}
\item the sequences $\{p_n\}$ and $\{q_n\}$ both converge to the point $r=(0,\frac{1}{3})$, and
\item $q_{-1},q_1,q_3,\ldots$ have been identified to a single point in the quotient, and
\item $q_{-2},q_0,q_2,\ldots$ have also been identified to a single point. 
\end{itemize}
We finally identify $r$ with the common image of all the $q_i$ to form the topological space $F$, which is Hausdorff. It is homeomorphic to a closed disk by a theorem of Moore (\cite{moore}, see also \cite[Section 7]{generalized}). This fact may also be seen directly. The foliation of $U$ by horizontal lines $[0,1]\times \{y\}$ projects to a singular foliation of $F$. The points $p_n$ each project to 1-pronged singularities of $F$ whereas the point $r$ projects to an ``$\infty$-pronged'' singularity of $F$. We denote by $\pi:U\to F$ the quotient map. Thus $\pi(p_n)=\overline{p_n}$ is a 1-pronged singularity of $F$ and $\pi(r)=\pi(q_{-2})=\pi(q_{-1})=\pi(q_0)=\pi(q_1)=\ldots=\overline{r}$ is an $\infty$-pronged singularity of $F$.

\section{A flat surface and a pseudo-Anosov automorphism}
\label{sec:pA}

In this section we introduce a flat surface $\Sigma$ which is a quotient of $F$ and a pseudo-Anosov automorphism of it. We will use this pseudo-Anosov automorphism to prove facts about the foliation $F$.

The flat surface $\Sigma$ is defined as follows. We consider the unit square $U$ and points $p_n$ and $q_n$ on vertical sides of $U$, defined in the previous section. As before, we identify $[(1,0),p_0]$ and $[p_0,(1,1)]$ by a rotation by $\pi$ and $[p_n,q_{n-1}]$ and $[p_n,q_{n-3}]$ also by rotations by $\pi$. The $q_n$ are also identified with the limit point $r$.

We also consider sequences of numbers $z_n$ and $w_n$ defined by $z_n=1-x_n$ for each $n\geq 0$ and $w_n=1-y_n$ for each $n\geq -2$. We define a point $a_0=(z_0,0)=(\frac{1}{2},0)$ and identify the two segments on the bottom side of $U$, $[(0,0),a_0]$ and $[a_0,(1,0)]$ by a rotation of $\pi$ about the point $a_0$. We identify segments of the top side of $U$ as follows. For $n\geq 1$ we set $a_n=(z_n,1)$ and for $n\geq -2$ we set $b_n=(w_n,1)$. We identify the segments $[a_n,b_{n-1}]$ and $[a_n,b_{n-3}]$ by a rotation of $\pi$ about the point $a_n$. The points $a_n$ and $b_n$ are simply the image of the points $p_n$ and $q_n$ (respectively) under the reflection of $U$ across the diagonal line from $(0,1)$ to $(1,0)$. Finally, we identify all the $b_n$ with the limit point $(\frac{2}{3},1)$.

%

The surface $\Sigma$ is the quotient of $U$ under all the above identifications. It is indeed a surface and in fact homeomorphic to a sphere (again, see \cite{moore} and \cite[Section 7]{generalized}). The quotient $\Sigma$ inherits a flat metric away from the singularities and vertical and horizontal foliations $\mathcal{F}^v$ and $\mathcal{F}^h$, respectively, from the foliations of $U$ by vertical and horizontal line segments. There is an obvious quotient $\rho:F\to \Sigma$ as well. See Figure \ref{pA} for a picture of $\Sigma$.

\begin{figure}[h]
\begin{center}
\def\svgwidth{\textwidth}
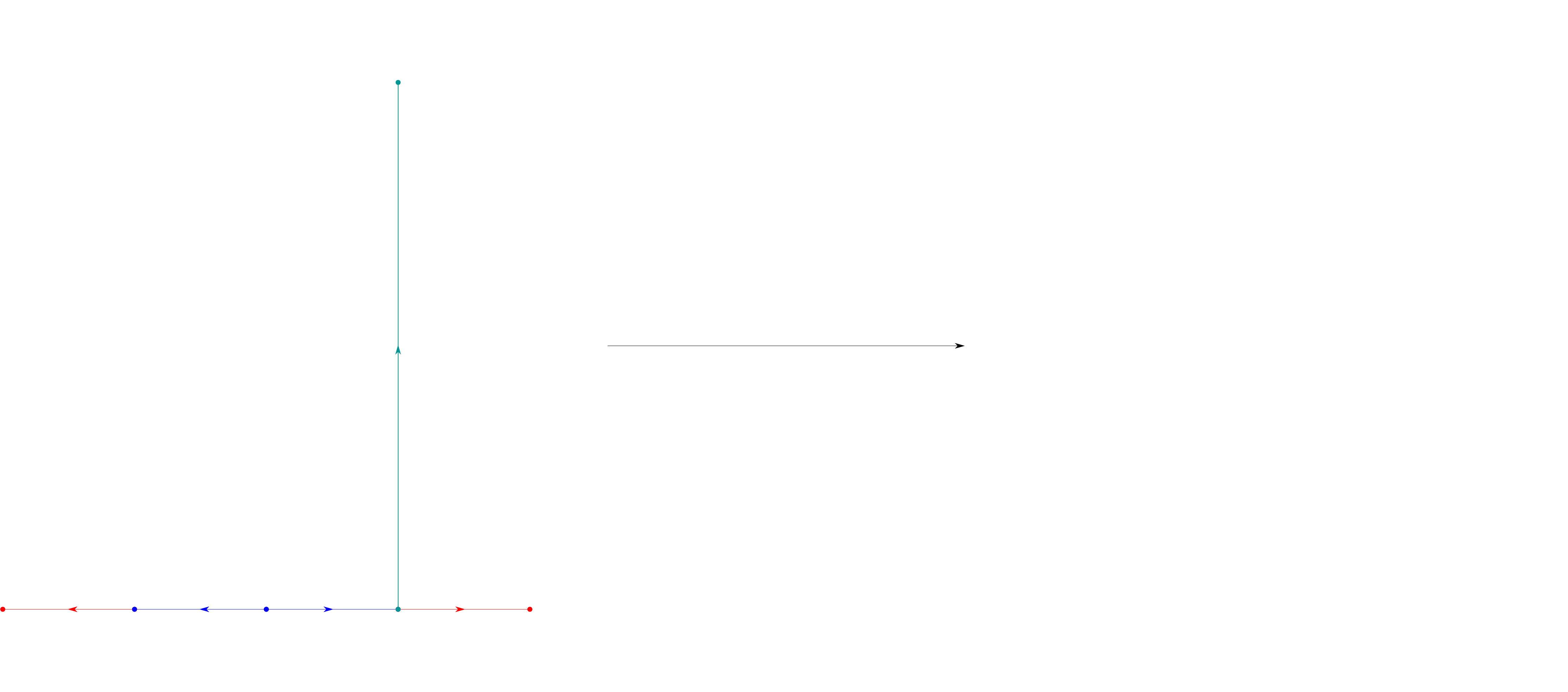
\end{center}

\caption{The surface $\Sigma$ together with the pseudo-Anosov automorphism $\phi$. Here, adjacent arrows which point away from each other are identified by a rotation by $\pi$.}
\label{pA}
\end{figure}

The horizontal foliation of $\Sigma$ is essentially the same as the foliation $F$. The only difference is that one of the singular leaves of $F$ (the singular leaf containing the quotient of the horizontal sides $[0,1]\times \{0\}$ and $[0,1]\times \{1\}$ of $U$) in the quotient $\Sigma$ consists of infinitely many saddle connections joining 1-pronged singularities to the $\infty$-pronged singularity.

The surface $\Sigma$ admits a pseudo-Anosov automorphism $\phi$ defined as follows. Consider the four sets \[A=\left[0,\frac{1}{4}\right]\times [0,1], \ B=\left[\frac{1}{4},\frac{1}{2}\right]\times [0,1], \ C=\left[\frac{1}{2},\frac{3}{4}\right]\times [0,1], \ D=\left[\frac{3}{4},1\right]\times [0,1].\] These are subrectangles of $U$ meeting along their vertical sides. Consider the following operations:
\begin{itemize}
\item cut $U$ into the subrectangles $A,B,C,D$,
\item apply the matrix $\begin{pmatrix} 4 & 0 \\ 0 & \frac{1}{4}\end{pmatrix}$ to each subrectangle $A,B,C,D$,
\item apply a rotation by $\pi$ to the subrectangles $B$ and $D$,
\item stack $C$ on top of $B$ on top of $A$ on top of $D$.
\end{itemize}
It is shown in Figure \ref{pA} that this descends to a well-defined automorphism $\phi$ of $\Sigma$.

We remark that $\phi$ arises from the \textit{generalized pseudo-Anosov} construction of de Carvalho--Hall; see \cite{generalized} (however, our automorphism $\phi$ arises from an interval endomorphism which is \textit{not} unimodal).

\begin{lemma}
\label{lem:saddleconns}
There is a saddle connection on $F$ from $\overline{p_n}$ to $\overline{r}$ for each $n\geq 0$ (see Figure \ref{saddleconns}).
\end{lemma}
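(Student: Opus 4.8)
The plan is to produce the saddle connection explicitly when $n=0$ and $n=1$, and then transport these two along the orbit of the pseudo-Anosov automorphism $\phi$. First I would identify the unique horizontal separatrix (prong) of the $1$-pronged singularity $\overline{p_n}$. For $n=0$ the point $p_0=(1,\frac12)$ lies on the right side of $U$, where $[(1,0),p_0]$ is folded onto $[p_0,(1,1)]$ by the rotation about $p_0$; the prong of $\overline{p_0}$ is the horizontal segment $[0,1]\times\{\frac12\}$, traversed from $(1,\frac12)$ into the interior toward the left side, ending at $(0,\frac12)=q_0$. Since $q_0$ is identified with $\overline r$ and the open segment $\{(t,\frac12):0<t<1\}$ contains no singularity, this is a saddle connection from $\overline{p_0}$ to $\overline r$. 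For $n=1$ the prong of $\overline{p_1}$ runs to the right from $p_1=(0,\frac34)$ to $(1,\frac34)$; the rotation about $p_0$ identifies $(1,\frac34)$ with $(1,\frac14)$, from which the leaf continues leftward at height $\frac14$ to $(0,\frac14)=q_1$, again identified with $\overline r$. A direct check shows this concatenation of two horizontal segments is embedded with no singularity in its interior (the only singular point on the right side of $U$ is $p_0$), so it is a saddle connection from $\overline{p_1}$ to $\overline r$.

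For the inductive step I would use that $\phi$ has diagonal linear part with entries $4$ and $\frac14$, so it preserves the horizontal foliation of $\Sigma$ and carries horizontal saddle connections to horizontal saddle connections, while permuting the singularities of $\Sigma$ and preserving the number of prongs at each. Since $\Sigma$ has exactly one $\infty$-pronged singularity — the images of $r$, of all the $q_i$, of all the $b_j$, and of $(\frac23,1)$ coincide, e.g.\ because $(0,1)=q_{-2}=b_{-2}$ — the automorphism $\phi$ fixes $\overline r$. The key point is the identity $\phi(\overline{p_n})=\overline{p_{n+2}}$ for all $n\ge 0$. This comes out of the cut-and-stack description of $\phi$ in Figure \ref{pA}: the point $p_0$ lies in the rightmost strip $D$, and after applying the linear map, the rotation by $\pi$, and the re-stacking that places $D$ at the bottom, $p_0$ is sent to $(0,\frac18)=(0,x_2)=p_2$; for $n\ge 1$ the point $p_n=(0,x_n)$ lies on the left edge of the strip $A$, which is stretched (not rotated) and placed into the band $[0,1]\times[\frac14,\frac12]$, so $p_n\mapsto(0,\frac14+\frac{x_n}{4})$, and it remains to check $x_{n+2}=\frac{1+x_n}{4}$. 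Writing $x_m=2y_{m-1}-y_m$ for $m\ge 1$ and using $y_m=\frac12(y_{m-1}+y_{m-2})$, this identity reduces to $y_{n-1}+2y_n=1$; the left-hand side is independent of $n$ (replacing $n$ by $n+1$ gives $y_n+2y_{n+1}=2y_n+y_{n-1}=y_{n-1}+2y_n$) with value $y_{-1}+2y_0=1$.

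Combining these facts, $\phi^{k}$ maps the saddle connection from $\overline{p_0}$ (resp.\ from $\overline{p_1}$) to $\overline r$ onto one from $\overline{p_{2k}}$ (resp.\ from $\overline{p_{2k+1}}$) to $\overline r$, and letting $k$ range over the nonnegative integers yields such a saddle connection for every $\overline{p_n}$. Finally I would note that this argument, carried out on $\Sigma$, descends to $F$: the horizontal foliations of $F$ and $\Sigma$ agree off the top and bottom sides of $U$, while each of the separatrices above is a concatenation of horizontal segments at heights strictly between $0$ and $1$ and so never meets those sides; hence the same arc is a saddle connection of $F$ from $\overline{p_n}$ to $\overline r$. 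I expect the only genuine work to be the coordinate bookkeeping behind $\phi(\overline{p_n})=\overline{p_{n+2}}$ — tracking the $\pi$-rotations of the strips $B$ and $D$ and the stacking order, and matching the resulting coordinates against the recursions for $x_n$ and $y_n$; verifying that the two base-case arcs are embedded with singularity-free interiors and that their terminal points are $q_0$ and $q_1$ (hence $\overline r$) is routine once the side identifications of $U$ are written out.
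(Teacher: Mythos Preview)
Your proof is correct and follows essentially the same approach as the paper: construct base-case saddle connections and transport them by powers of $\phi$, using $\phi(\overline{p_n})=\overline{p_{n+2}}$ and that $\phi$ fixes the unique $\infty$-pronged singularity. The only cosmetic difference is the odd base case --- the paper starts from the saddle connection $a_0\to q_{-1}$ on $\Sigma$ and applies $\phi$ once to obtain $p_1\to q_1$, whereas you build the $p_1\to q_1$ arc directly; your explicit coordinate check of $x_{n+2}=\tfrac{1+x_n}{4}$ supplies detail the paper leaves to a figure.
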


\begin{figure}[h]

\begin{center}
\def\svgwidth{0.8\textwidth}
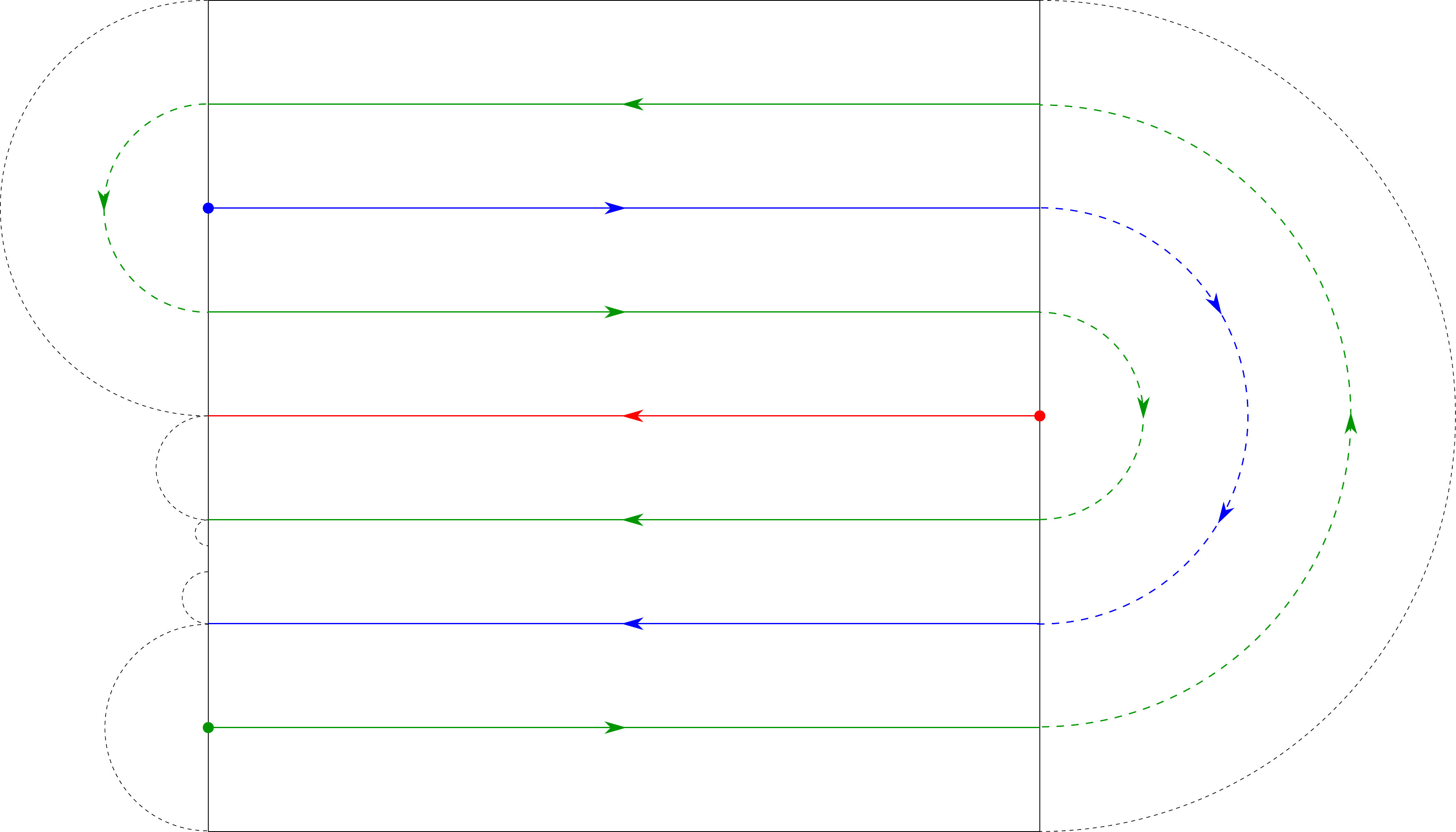
\end{center}

\caption{The horizontal saddle connections of the foliation $F$.}
\label{saddleconns}
\end{figure}

\begin{proof}
In this proof we will conflate $a_n,b_n,p_n,$ and $q_n$ with their images under $\rho \circ \pi$ in $\Sigma$. The orbit of singularities of $\Sigma$ under iteration of $\phi$ is illustrated in Figure \ref{singorbit}.
\begin{figure}[h]
\begin{center}
\def\svgwidth{0.5\textwidth}
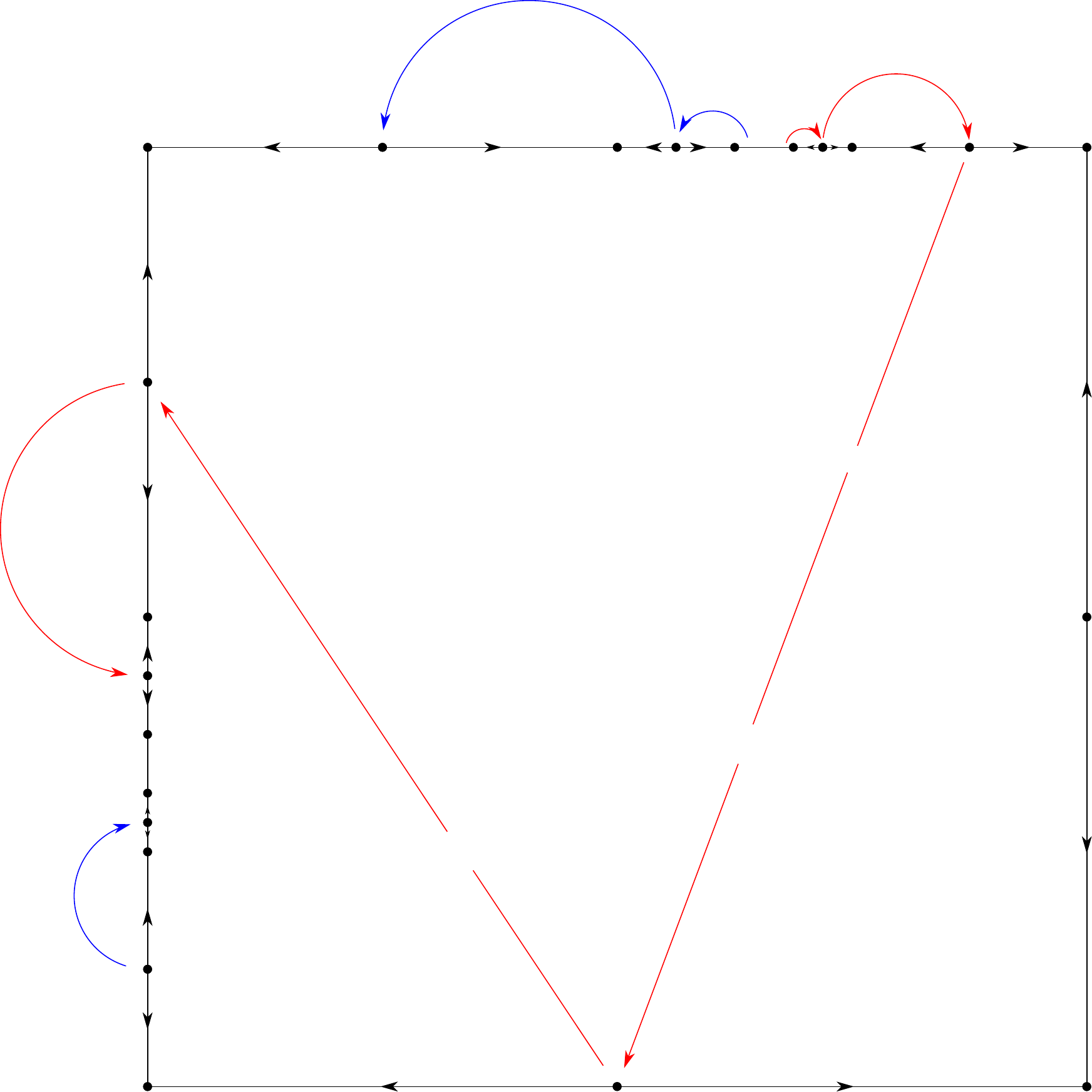
\end{center}

\caption{The orbits of singularities under $\phi$.}
\label{singorbit}
\end{figure}

In particular we see that, under iteration of $\phi$, \[\cdots a_5 \mapsto a_3 \mapsto a_1 \mapsto p_0 \mapsto p_2\mapsto p_4 \mapsto p_6 \mapsto \cdots\] and \[\cdots a_4 \mapsto a_2 \mapsto a_0 \mapsto p_1 \mapsto p_3 \mapsto p_5 \mapsto \cdots.\] We also see that \[ \cdots b_2 \mapsto b_0 \mapsto b_{-2} = q_{-2} \mapsto q_0 \mapsto q_2 \mapsto q_4 \mapsto \cdots\] and \[ \cdots        b_1 \mapsto b_{-1} \mapsto q_{-1} \mapsto q_1 \mapsto q_3 \mapsto q_5 \mapsto \cdots.\] The horizontal foliation of $\Sigma$ clearly contains a saddle connection from $p_0$ to $q_0$. Since $\phi^k(p_0)=p_{2k}$, $\phi^k(q_0)=q_{2k}$ for all $k\ge0$ and $\phi$ preserves the horizontal foliation of $\Sigma$, we see that there is a saddle connection from $p_n$ to $q_n$ whenever $n\geq 0$ is even. There is also clearly a horizontal saddle connection from $a_0$ to $q_{-1}$. Since $\phi^k(a_0)=p_{2k-1}$ and $\phi^k(q_{-1})=q_{2k-1}$ for all $k\ge1$, this shows that there is a horizontal saddle connection from $p_n$ to $q_n$ whenever $n\geq 0$ is odd.

It is easy to see that each of the above described saddle connections from $\rho(\pi(p_n))$ to $\rho(\pi(q_n))$ is the image of a saddle connection of $F$. Since $\overline{r}=\overline{q_{-2}}=\overline{q_{-1}}=\overline{q_0}=\overline{q_1}=\cdots$ on $F$ this implies that there is a saddle connection from $\overline{p_n}$ to $\overline{r}$ for each $n$.
\end{proof}

\begin{lemma}
\label{singdense}
The union of the singular leaves of $F$ is dense in $F$.
\end{lemma}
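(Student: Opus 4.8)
The plan is to identify the union of singular leaves explicitly enough to see that it already contains a dense family of horizontal arcs in $U$. First recall the singularities of $F$: the $1$-pronged points $\overline{p_n}$ ($n\ge 0$) and the $\infty$-pronged point $\overline{r}=\overline{q_{-2}}=\overline{q_{-1}}=\overline{q_0}=\cdots$. By Lemma \ref{lem:saddleconns}, for each $n\ge 0$ there is a saddle connection, which I will call $\sigma_n$, joining $\overline{p_n}$ to $\overline{r}$. Since the only identifications used to build $F$ from $U$ are along the two vertical sides, a leaf of the horizontal foliation is a concatenation of full-width horizontal traverses $\mathrm{tr}(h):=[0,1]\times\{h\}$ of $U$, which bounces only upon reaching a vertical side and terminates only at a singularity. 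Hence each $\sigma_n$ is a finite concatenation of traverses $\mathrm{tr}(h)$ with $h$ in a finite set $S_n\subset(0,1)$; unwinding the side identifications by hand gives $S_0=\{\tfrac12\}$ and $S_1=\{\tfrac14,\tfrac34\}$.

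Next I bring in the pseudo-Anosov $\phi$. From the orbit computation in the proof of Lemma \ref{lem:saddleconns} one has $\phi(\overline{p_n})=\overline{p_{n+2}}$ and $\phi(\overline{q_n})=\overline{q_{n+2}}$, so $\phi$ carries the unique horizontal prong at $\overline{p_n}$ to that at $\overline{p_{n+2}}$; since $\overline{p_{n+2}}$ is $1$-pronged there is at most one horizontal saddle connection with that endpoint, and therefore $\phi(\sigma_n)=\sigma_{n+2}$. On the other hand, unwinding the definition of $\phi$ (cut $U$ into the four vertical strips $A,B,C,D$, apply $\mathrm{diag}(4,\tfrac14)$ to each, rotate $B$ and $D$ by $\pi$, and restack as $C/B/A/D$) shows that $\phi$ sends $\mathrm{tr}(h)$ to the union of the four traverses at heights $\tfrac{1-h}{4},\ \tfrac{1+h}{4},\ \tfrac{3-h}{4},\ \tfrac{3+h}{4}$, one lying in each of the subintervals $(0,\tfrac14),(\tfrac14,\tfrac12),(\tfrac12,\tfrac34),(\tfrac34,1)$. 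Combining the two facts yields the recursion
\[
S_{n+2}\;=\;\bigcup_{h\in S_n}\left\{\tfrac{1-h}{4},\ \tfrac{1+h}{4},\ \tfrac{3-h}{4},\ \tfrac{3+h}{4}\right\}.
\]
A two-step induction starting from $S_0$ and $S_1$ then shows that $S_n$ is exactly the set of odd multiples of $2^{-(n+1)}$ in $(0,1)$: if $h=(2k-1)2^{-(n+1)}$, then each of the four numbers above is an odd multiple of $2^{-(n+3)}$, and as $k$ ranges over $\{1,\dots,2^n\}$ the four resulting families together exhaust all odd multiples of $2^{-(n+3)}$ lying in $(0,1)$.

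It follows that $\bigcup_{n\ge 0}S_n$ is the set of all dyadic rationals in $(0,1)$, which is dense in $[0,1]$. Hence $\bigcup_{n\ge 0}\sigma_n$ contains $\mathrm{tr}(h)$ for every dyadic $h$, so it is dense in $U$; projecting to $F$, the union of singular leaves of $F$ is dense, as claimed. (One small point to check along the way: although $\phi$ is an automorphism of $\Sigma$ rather than $F$, the arcs $\sigma_n$ have all their heights in $(0,1)$ and so avoid the horizontal sides of $U$, hence project homeomorphically to $\Sigma$; and density in $U$ is equivalent to density in $F$, so moving between the two surfaces is harmless.)

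The step I expect to need the most care is the bookkeeping in the second paragraph: verifying that $\phi(\sigma_n)$ is genuinely the saddle connection $\sigma_{n+2}$ and computing the precise effect of $\phi$ on heights from the cut-and-restack description, together with the routine-but-error-prone check that a horizontal leaf of $F$ really is a concatenation of full-width traverses so that "set of heights visited'' is a well-defined invariant making the density argument go through. Everything after the recursion is an elementary induction. (One could instead argue abstractly that the closure of the union of singular leaves is a closed $\phi$-invariant set and rule out a nonempty complement using area and the fact that $\phi$ scales moduli of cylinders by $16^{\pm1}$, but handling non-compact leaves in the complement seems more delicate than the explicit computation above, so I would not take that route.)
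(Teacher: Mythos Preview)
Your argument is correct and is essentially the paper's ``easy proof'': the paper likewise observes that the saddle connection from $\overline{p_i}$ to $\overline{r}$ is the union of the segments $\pi([0,1]\times\{j/2^{i+1}\})$ over odd $j$, established by the same cut-and-restack induction you carry out in detail, and concludes by density of dyadic rationals. The paper also supplies a second, more dynamical proof via Poincar\'e recurrence and the Poincar\'e--Hopf index theorem, but your route matches the first.
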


\begin{proof}[Easy proof of Lemma \ref{singdense}]
The saddle connection from $\overline{p_0}$ to $\overline{r}$ is the horizontal line segment $\pi([0,1]\times \{1/2\})$. By an easy induction, using the description of $\phi$ via cutting and restacking, we see that the saddle connection from $\overline{p_i}$ to $\overline{r}$ is the union of the horizontal line segments $\pi([0,1]\times \{\frac{j}{2^{i+1}}\})$ 
where $j$ ranges over the odd integers between $0$ and $2^{i+1}$ (see Figure \ref{saddleconns} for the first few of these saddle connections). Thus, every horizontal line segment of $F$ of the form $\pi([0,1]\times \{y\})$ where $y$ is a dyadic rational between 0 and 1 lies on a singular leaf of $F$. Since the dyadic rationals are dense in $[0,1]$, this proves the statement.
\end{proof}

We also include a less explicit, more dynamical proof of Lemma \ref{singdense} that may be of use to readers interested in generalizing the constructions of this paper. 

\begin{proof}[Dynamical proof of Lemma \ref{singdense}]
It suffices to prove the following: For an arbitrary transversal $t$, there exists a singular leaf which intersects $t$.

For convenience, we may choose $t$ to be a subsegment of the transversal $s=\{1/2\}\times [0,1]$. First, we claim that there exists a \textit{nonsingular} leaf $l$ of $F$ which intersects $t$ at least twice. For this, we consider the following interval exchange transformation (IET) $f:[0,1]\to [0,1]$. For each $n\geq 2$ $f$ is defined by sending the interval between $y_n$ and $y_{n+2}$ by a translation to the interval between $1-y_n$ and $1-y_{n+2}$. The IET $f$ may be extended in an arbitrary way to the endpoints $y_n$ and their accumulation point $1/3$. See Figure \ref{ietdefn}. The resulting map $f$ preserves the Lebesgue measure on $[0,1]$. Thus we may apply Poincar\'{e} Recurrence to it.

We flow the leaves of $F$ to the right from $s$. Note that every nonsingular leaf of $F$ intersects $s$ infinitely many times. Moreover, if the leaf through a point $(1/2,x)$ of $s$ is nonsingular, then the \textit{second} return under this flow of $(1/2,x)$ to $s$ is given by \[(1/2,x)\mapsto (1/2,f(x)).\] We may write $t=\{1/2\}\times [a,b]$. By Poincar\'{e} Recurrence, almost every point of $[a,b]$ with respect to Lebesgue measure returns to $[a,b]$ infinitely many times under iteration of $f$. Since only countably many points of $t$ intersect singular leaves of $F$, this implies that there is a point $x\in [a,b]$ such that the leaf of $F$ through $(1/2,x)$ is nonsingular and $x$ returns to $[a,b]$ infinitely many times under iteration of $f$. Consequently, the nonsingular leaf $l$ of $F$ through $(1/2,x)$ returns to $t$ infinitely many times.

\begin{figure}[h]
\begin{center}
\def\svgwidth{0.8\textwidth}
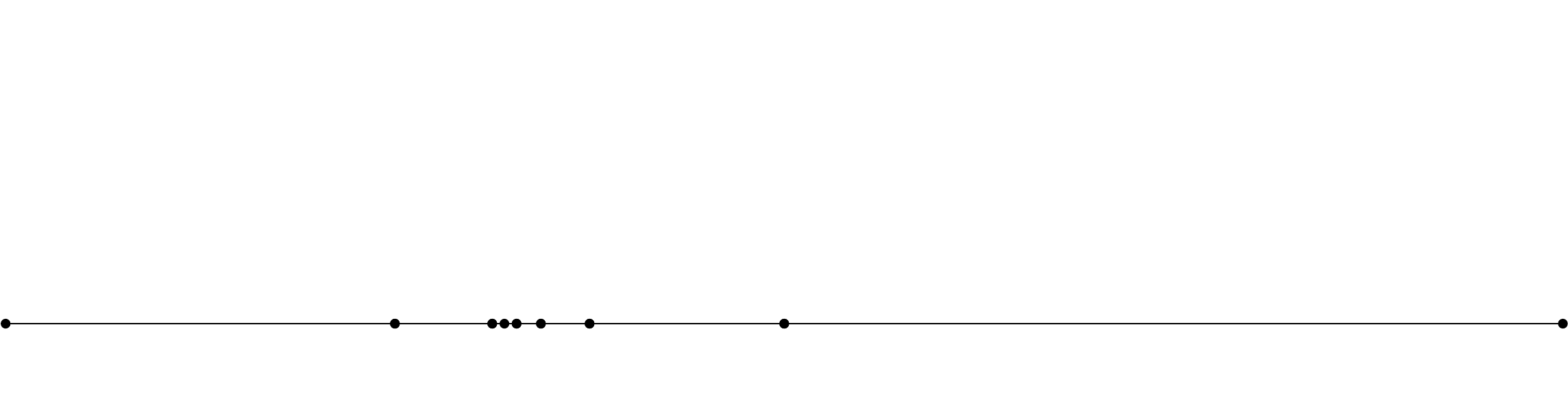
\end{center}

\caption{The IET $f$ described in the proof of Lemma \ref{singdense}.}
\label{ietdefn}
\end{figure}

Now consider a subsegment $l_0\subset l$ which intersects $t$ at its endpoints $v$ and $w$ and nowhere in its interior. Then $c=t|[v,w]\cup l_0$ is a simple closed curve. It bounds a disk in $F$ containing some finite number of 1-pronged singularities and not containing the $\infty$-pronged singularity in its interior. The winding number of the foliation about $c$ is $1/2,1,$ or $3/2$. By the Poincar\'{e}--Hopf Index Theorem, this is equal to half the number of 1-pronged singularities inside the disk bounded by $c$. In particular, this disk contains at least one 1-pronged singularity $\overline{p_n}$. Since this disk does not contain $\overline{r}$, the saddle connection from $\overline{p_n}$ to $\overline{r}$ must intersect $c$. Since this saddle connection does not intersect $l$, it must intersect $t|[v,w]$. This completes the proof.
\end{proof}

Recall that a \emph{transverse measure} to $F$ assigns to each arc $s$ transverse to the foliation a finite \emph{Borel measure} $\mu|s$. It is required to be invariant under leaf-preserving isotopies. See for instance \cite{primer} for more information.

\begin{lemma}\label{transmeas}
Let $\mu$ be a transverse measure to the horizontal foliation of $\Sigma$. Then either $\mu$ has an atom along a horizontal saddle connection of $\Sigma$ or $\mu$ is a multiple of Lebesgue measure.
\end{lemma}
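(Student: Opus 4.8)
The plan is to exploit the pseudo-Anosov automorphism $\phi$ of $\Sigma$, which multiplies transverse measures to the horizontal foliation by a scalar, together with unique ergodicity of the interval exchange transformation governing the first-return map to a vertical transversal. First I would recall that $\phi$ expands the horizontal direction by $4$ and contracts the vertical direction by $1/4$, so $\phi^\ast$ acts on the (convex) cone of transverse measures to $\mathcal F^h$, scaling each invariant measure by the appropriate factor; in particular, if $\mu$ is a transverse measure to $\mathcal F^h$, then so is $4^{-1}\phi_\ast\mu$, and any ergodic such measure is an eigenvector. The strategy is then to decompose an arbitrary $\mu$ into an atomic part (supported on leaves through singularities) and a non-atomic part, and to show the non-atomic part is forced to be Lebesgue.

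The key steps, in order: (1) Restrict $\mu$ to the vertical transversal $s=\{1/2\}\times[0,1]$; since the horizontal foliation of $\Sigma$ is a suspension, the measure $\mu|s$ is invariant under the first-return interval exchange. As in the dynamical proof of Lemma \ref{singdense}, the relevant second-return map is the IET $f$ on $[0,1]$ exchanging the intervals between consecutive $y_n$'s with their mirror images; one checks this $f$ (or a suitable power/return of it) is \emph{uniquely ergodic} with Lebesgue as its unique invariant probability measure — this follows from the fact that $f$ is conjugate (via the cutting-and-restacking description of $\phi$) to an infinite IET whose orbit structure is that of a minimal rotation-like map, and the atoms can only accumulate at the $\infty$-pronged singularity. (2) Write $\mu|s = \mu_{\mathrm{at}} + \mu_{\mathrm{cont}}$ where $\mu_{\mathrm{at}}$ is the atomic part. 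Each atom of $\mu|s$ must sit at a point whose leaf is singular — otherwise its forward $f$-orbit is infinite and disjoint from itself, and the pushforwards of the atom would accumulate, contradicting finiteness of $\mu$ on $s$; hence $\mu_{\mathrm{at}}$ is supported on points lying on horizontal saddle connections of $\Sigma$, giving the first alternative. (3) The continuous part $\mu_{\mathrm{cont}}$ is a finite $f$-invariant non-atomic measure, hence by unique ergodicity of $f$ it is a multiple of Lebesgue measure on $s$; transporting this along the flow (the horizontal foliation) recovers that $\mu_{\mathrm{cont}}$ is a multiple of Lebesgue measure on $\Sigma$ as a transverse measure. Combining (2) and (3) gives the dichotomy: if $\mu$ has no atom along a horizontal saddle connection then $\mu_{\mathrm{at}}=0$ and $\mu=\mu_{\mathrm{cont}}$ is a multiple of Lebesgue.

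The main obstacle I anticipate is step (1), namely pinning down the precise ergodic-theoretic input: one must verify that the return IET $f$ is genuinely uniquely ergodic, not merely minimal with Lebesgue invariant. The natural route is to use the self-similarity coming from $\phi$: $f$ (or its return to a suitable subinterval) is conjugate to itself under the contraction induced by $\phi$, so any invariant measure is a fixed point of a contraction on the space of measures and is therefore unique; alternatively one can invoke that $\phi$ being pseudo-Anosov forces the stable/unstable foliations to be uniquely ergodic on every compact piece, with the only subtlety being the non-compactness near $\overline r$, which is exactly where atoms (the saddle-connection leaves) are permitted to live. A careful statement must therefore isolate the $\infty$-pronged singularity: away from it the foliation is uniquely ergodic in the usual sense, and the saddle connections limiting on $\overline r$ are the only places a transverse measure can concentrate mass beyond Lebesgue. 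Once this is set up, steps (2) and (3) are routine.
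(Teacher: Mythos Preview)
Your overall architecture is right and matches the paper in its second half: the argument that any atom of $\mu|s$ must lie on a singular leaf (because a nonsingular leaf meets $s$ infinitely often, forcing $\mu|s$ to be infinite) is exactly what the paper does. The difference is in how the non-atomic part is handled. You route the argument through unique ergodicity of the first-return IET $f$ on $s$, and correctly flag this as the delicate step: $f$ is an \emph{infinite} IET, so one cannot simply quote a standard unique ergodicity criterion, and your proposed self-similarity/contraction argument, while morally correct, would need real work to make rigorous in that setting.

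The paper sidesteps this difficulty entirely. Rather than working with the infinite IET on $s$, it observes that the four rectangles $A,B,C,D$ form a \emph{finite} Markov partition for the pseudo-Anosov $\phi$, and then invokes the classical argument of \cite[Theorem 12.1]{flp} (Perron--Frobenius on the finite transition matrix) to conclude that any non-atomic transverse measure is a multiple of Lebesgue. This is exactly the ``contraction on the space of measures'' idea you mention, but carried out on a four-dimensional cone rather than on the infinite-dimensional space of measures invariant under $f$; the finiteness of the partition is what makes the citation to FLP immediate. So your strategy is sound, but you are making life harder than necessary by passing to the return map; working directly with $\phi$ and its finite Markov partition is both easier and cleaner.
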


\begin{proof}
The sets $A,B,C,D$ form a Markov partition for the automorphism $\phi$. Using the fact that $\phi$ admits a finite Markov partition, a proof identical to that of \cite[Theorem 12.1]{flp} shows that if $\mu$ is a transverse measure to $\mathcal{F}^h$ and $\mu$ has no atoms, then $\mu$ is a multiple of Lebesgue measure.

It remains to show that if $\mu$ is a transverse measure to $\mathcal{F}^h$ and $\mu$ has an atom then it has an atom along a singular leaf. It is easy to see that if $\mu$ has an atom then $\mu|s$ has an atom where $s$ is the vertical transversal $\pi(\{1/2\}\times [0,1])$. If $\pi(1/2,x)$ is such an atom of $\mu|s$ and $\pi(1/2,x)$ lies on a nonsingular leaf $l$ of $\mathcal{F}^h$ then $\mu|s$ is infinite since $l$ intersects $s$ infinitely many times. This is a contradiction.
\end{proof}

\section{Train path properties}
\label{sec:trainpaths}

Recall that $G$ is the union of foliated rectangles constructed from the weighted train track $(T,w)$. In this section we investigate properties of the train paths $\TP(T,w)$. There is a Lipschitz quotient map $\Pi:G\to F$, sending leaves of $G$ to leaves of $F$, defined as follows:
\begin{itemize}
\item the leaves of the rectangles $R(b_n)$, $R(c_n)$, and $R(d_n)$ are collapsed to points for each $n\geq 1$,
\item the rectangles $R(b_{-1}), R(b_0), R(e_1), $ and $R(e_2)$ are shrunk horizontally to a width of $\frac{1}{3}$,
\item the upper horizontal side of $R(e_1)$ is identified isometrically with the lower horizontal side of $R(e_2)$.
\end{itemize}
We see immediately that $\Pi(P_n)=\overline{p_n}$ and $\Pi(Q_n)=\overline{r}$ for each $n\geq 0$.

By Lemma \ref{lem:saddleconns} and its proof we immediately obtain:

\begin{cor}
\label{pqscs}
For each $n\geq 0$ there is a saddle connection in $G$ from $P_n$ to $Q_n$.
\end{cor}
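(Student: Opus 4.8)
The plan is to transport the horizontal saddle connections of $F$ supplied by Lemma \ref{lem:saddleconns} back through the Lipschitz quotient map $\Pi: G \to F$. Recall from the construction of $\Pi$ that the rectangles $R(b_n)$, $R(c_n)$, $R(d_n)$ for $n \geq 1$ are collapsed \emph{along their leaves} to points, while $R(b_{-1})$, $R(b_0)$, $R(e_1)$, $R(e_2)$ are merely shrunk horizontally; in particular $\Pi$ sends leaves of $G$ to leaves of $F$ and sends horizontal line segments to horizontal line segments. The key structural fact we already have is that $\Pi(P_n) = \overline{p_n}$ and $\Pi(Q_n) = \overline{r}$ for every $n \geq 0$.

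First I would identify, for each fixed $n \geq 0$, the saddle connection $\sigma_n$ of $F$ from $\overline{p_n}$ to $\overline{r}$ guaranteed by Lemma \ref{lem:saddleconns}; by the easy proof of Lemma \ref{singdense} this $\sigma_n$ is the finite union of horizontal segments $\pi([0,1] \times \{j/2^{n+1}\})$ with $j$ odd between $0$ and $2^{n+1}$. Next I would lift $\sigma_n$ to $G$: since $\sigma_n$ is a finite concatenation of horizontal segments and $\Pi$ restricted to each non-collapsed rectangle is a horizontal rescaling (hence a homeomorphism on leaf space, injective on leaves), while the collapsed rectangles $R(b_m), R(c_m), R(d_m)$ for $m \geq 1$ contribute exactly the singular points $P_m, Q_m$, one obtains a preimage in $G$ which is a union of horizontal line segments running from $P_n$ to $Q_n$. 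Concretely, because $\sigma_n$ terminates at the $1$-pronged singularity $\overline{p_n} = \Pi(P_n)$ and at the $\infty$-pronged singularity $\overline{r} = \Pi(Q_n)$, its lift must terminate at the $3$-pronged singularities $P_n$ and $Q_n$ of $G$ respectively, and has no singularity in its interior since $\sigma_n$ has none. This exhibits a saddle connection of $G$ from $P_n$ to $Q_n$, as desired.

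The only genuine point requiring care — and hence the main (mild) obstacle — is verifying that the lift of $\sigma_n$ is \emph{embedded} and terminates correctly at $P_n$ and $Q_n$ rather than overshooting into another rectangle or hitting a different singularity. This is where one uses the precise combinatorics of the train track $T$ in Figure \ref{traintrack}: the leaf of $G$ carrying the lift, read as a train path, is forced at each switch by the requirement that it stay within the $\Pi$-preimage of $\sigma_n$, and the endpoints $P_n \in \partial R(d_n)$ and $Q_n = R(b_{n+1}) \cap R(c_{n+1})$ are precisely the singular points of $G$ lying over $\overline{p_n}$ and over $\overline{r}$ along this particular family of horizontal segments. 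Since $\Pi$ is injective on leaves away from the collapsed rectangles, embeddedness of $\sigma_n$ in $F$ pulls back to embeddedness of its lift in $G$. Thus the corollary follows immediately from Lemma \ref{lem:saddleconns} together with the leaf-preserving, horizontal-segment-preserving nature of $\Pi$.
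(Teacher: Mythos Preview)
Your proposal is correct and is precisely the argument the paper has in mind: the paper's entire proof is the single line ``By Lemma~\ref{lem:saddleconns} and its proof we immediately obtain,'' and your expansion via the leaf-preserving quotient $\Pi:G\to F$ is the intended mechanism. The one point worth sharpening is your justification that the lift terminates at $Q_n$ rather than at some other $Q_m$ (all of which sit over $\overline r$): this is exactly why the paper cites the \emph{proof} of Lemma~\ref{lem:saddleconns} and not merely its statement, since that proof shows the saddle connection in $F$ runs from $\overline{p_n}$ to the specific point $\overline{q_n}$ (before the identification $\overline{q_n}=\overline r$), and $Q_n$ is the unique singularity of $G$ lying over that specific prong.
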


By Lemma \ref{singdense} we immediately obtain:

\begin{cor}
Saddle connections are dense in $G$.
\end{cor}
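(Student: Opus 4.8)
The plan is to transfer the density of the singular leaves of $F$ (Lemma \ref{singdense}) down through the quotient map $\Pi\colon G\to F$. The first point to record is that $\Pi$ is an \emph{open} map: it is the composition of collapsing the horizontal leaves of the rectangles $R(b_n),R(c_n),R(d_n)$ for $n\ge 1$ to points (locally the projection of a rectangle onto its vertical side, hence open), a horizontal rescaling of $R(b_{-1}),R(b_0),R(e_1),R(e_2)$ (a homeomorphism), and the identification of the top side of $R(e_1)$ with the bottom side of $R(e_2)$ (open, since a half-disc neighborhood of a glued point is carried onto a disc). Consequently the $\Pi$-preimage of any dense subset of $F$ is dense in $G$.

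Next I would take $D\subseteq F$ to be the union of the horizontal segments $\pi([0,1]\times\{y\})$ over dyadic rationals $y\in[0,1]$. By the easy proof of Lemma \ref{singdense}, $D$ is dense in $F$ and each of its points lies in the interior of a saddle connection of $F$ running from some $\overline{p_n}$ to $\overline{r}$; hence $\Pi^{-1}(D)$ is dense in $G$, and it remains to show $\Pi^{-1}(D)$ is contained in the union of the saddle connections of $G$. Fix $g\in\Pi^{-1}(D)$; if $g$ is a singularity of $G$ it already lies on a saddle connection by Corollary \ref{pqscs}, so assume not, and let $\ell_g$ be the maximal horizontal arc of $G$ through $g$. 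Since $\Pi$ carries leaves of $G$ to leaves of $F$, and over any point in the interior of a saddle connection of $F$ its fibre is a single point or a single compact horizontal leaf-segment, the image $\Pi(\ell_g)$ is a horizontal arc of $F$ contained in the \emph{compact} saddle connection $S$ of $F$ through $\Pi(g)$. This rules out $\ell_g$ being a bi-infinite leaf, a half-infinite singular ray, or a closed leaf; therefore $\ell_g$ terminates at singularities of $G$ at both ends, i.e.\ $\ell_g$ is a saddle connection of $G$ (in fact the one from $P_n$ to $Q_n$ furnished by Corollary \ref{pqscs}). Combined with density of $\Pi^{-1}(D)$, this proves the corollary.

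The step demanding the most care is the last one: excluding a maximal horizontal arc of $G$ that lies over a saddle connection of $F$ yet is not itself a saddle connection. The crucial inputs are that $\Pi$ does not wrap such an arc infinitely around a saddle connection of $F$ — which follows from the decay $w(b_n)=w(c_n)=2^{-n}$, so that a leaf of $G$ meets only finitely many collapsed rectangles before entering the core rectangles $R(b_{-1}),R(b_0),R(e_1),R(e_2)$ — together with the fact that Lemma \ref{lem:saddleconns} and Corollary \ref{pqscs} pin down the singular endpoints $P_n,Q_n$ explicitly. The openness of $\Pi$ and the existence of the dense family $D$ of saddle connections in $F$ are both immediate from the results already established.
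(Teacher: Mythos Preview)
Your claim that $\Pi$ is open is false. Take a small open ball $V$ contained in the interior of one of the collapsed rectangles, say $R(b_1)$. Every horizontal segment of $V$ collapses to a single point, so $\Pi(V)$ is a one-dimensional arc sitting inside the two-dimensional disk $F$; this is not open. The error in your factorization is that projecting $R(b_1)$ onto its vertical side is open \emph{as a map to the interval}, but the subsequent inclusion of that interval into $F$ is not open, and openness does not survive the composition.

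The paper records no proof beyond ``by Lemma \ref{singdense} we immediately obtain,'' and the implicit argument bypasses global openness entirely. One clean way to phrase it: for each rectangle $R(x)$ separately, $\Pi|_{R(x)}$ is (after projecting to the height coordinate) a piecewise isometry onto a vertical transversal arc of $F$, and the dyadic horizontal leaves of $F$ meet any such transversal in a dense set of heights. Thus $\Pi^{-1}(D)\cap R(x)$ is dense in each $R(x)$, hence in $G$. Your final step also needs a small repair: the assertion that a leaf meets only finitely many collapsed rectangles before entering the core fails for the bigon-boundary leaves $l^\pm$, whose non-dense halves traverse $R(b_1),R(b_2),R(b_3),\ldots$ forever; however, those leaves lie at the non-dyadic height $\tfrac{1}{3}$ in $R(b_0)$ and so do not meet $\Pi^{-1}(D)$.
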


Finally, the following is clear by inspection:

\begin{lemma}
\label{qqscs}
There is a saddle connection from $Q_1$ to $Q_0$. Moreover, for each $n\geq 2$ there is a saddle connection from $Q_n$ to $Q_{n-2}$. Finally, there is a saddle connection from $P_n$ to itself for each $n\geq 0$.
\end{lemma}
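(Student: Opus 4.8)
The strategy is to transport the information about horizontal saddle connections on $F$ (and on $\Sigma$) through the quotient map $\Pi:G\to F$ and to verify the last claim (a loop saddle connection at each $P_n$) by direct inspection of the foliated rectangles glued at $P_n$. For the first two claims, I would note that a saddle connection from $Q_n$ to $Q_{n-2}$ projecting under $\Pi$ to a saddle connection on $F$ would be useless, since $\Pi(Q_n)=\Pi(Q_{n-2})=\overline{r}$ and all the $q_i$ have been collapsed together on $F$; instead the honest place to look is the flat surface $\Sigma$ together with the pseudo-Anosov $\phi$, exactly as in the proof of Lemma \ref{lem:saddleconns}. Recall from that proof and Figure \ref{singorbit} the two $\phi$-orbits $\cdots\mapsto b_0\mapsto q_{-2}\mapsto q_0\mapsto q_2\mapsto\cdots$ and $\cdots\mapsto b_{-1}\mapsto q_{-1}\mapsto q_1\mapsto q_3\mapsto\cdots$. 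I would first exhibit by hand in the square $U$ the two "seed" horizontal saddle connections: the segment $\pi([0,1]\times\{y\})$ at the appropriate level realizing a saddle connection from $q_1$ to $q_{-1}$ (equivalently, between the identified points $q_{-1}$ and $q_1$; these are the levels $y=0$ and $y=1$ after identification of the horizontal sides, so one uses the neighboring structure of the top/bottom identification), and the one realizing a saddle connection from $q_0$ to $q_{-2}$. Applying the powers $\phi^k$, which preserve $\mathcal{F}^h$ and act on the singularity orbits as above, pushes the first seed to a saddle connection from $q_{2k+1}$ to $q_{2k-1}$ and the second to one from $q_{2k}$ to $q_{2k-2}$ for all $k\geq 1$. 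Each of these saddle connections of $\Sigma$ is the image under $\rho$ of a saddle connection of $F$ joining the corresponding $\overline{q}$-points, which in $G$ (via $\Pi^{-1}$, using that $\Pi(Q_n)=\overline{r}$ and that $\Pi$ is a collapse along the rectangles $R(b_n),R(c_n),R(d_n)$) lifts to a union of horizontal leaf segments from $Q_n$ to $Q_{n-2}$ with no singularity in its interior: this is the desired saddle connection. The case $n=1$ (from $Q_1$ to $Q_0$) is the single remaining one and I would simply point it out directly in $G$ from Figure \ref{foliationfromtrack}, tracing the horizontal leaf through the branch $R(d_0)$ (recall $R(b_{n+1})$ and $R(c_{n+1})$ meet at $Q_n$, and the two identified points of $R(d_n)$ form $P_n$), which runs from $Q_1$ to $Q_0$ through the top band of the picture.

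For the final claim, that there is a saddle connection from $P_n$ to itself for each $n\geq 0$, I would argue directly from the side-identification data: on $F$ the point $\overline{p_n}$ is a $1$-pronged singularity, and $p_n=(0,x_n)$ is the center of the $\pi$-rotation identifying $[p_n,q_{n-1}]$ with $[p_n,q_{n-3}]$ on the left side of $U$. The horizontal leaf emanating from the single prong at $\overline{p_n}$ starts into the rectangle, and under the identifications around $p_n$ it is forced to return: either it reaches $\overline{r}$ (which is excluded, since that is the content of Lemma \ref{lem:saddleconns}, a \emph{different} saddle connection) — so instead I should phrase this as: the union of horizontal leaf segments forming the one prong at $\overline{p_n}$, when continued, must terminate at a singularity, and the only way to get a $1$-pronged singularity's prong to close up within finitely many rectangles and not hit $\overline{r}$ is to return to $\overline{p_n}$ itself. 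Concretely, lifting to $G$, the point $P_n$ is obtained by identifying two points of $R(d_n)$; the horizontal leaf through these identified points traverses $R(d_n)$ and returns, giving a saddle connection from $P_n$ to $P_n$. This is "clear by inspection" of Figure \ref{foliationfromtrack} as the statement asserts, and I would draw the relevant local picture.

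The main obstacle I anticipate is bookkeeping, not mathematics: correctly matching the indices between the square-$U$ model, the surface $\Sigma$ with its $\phi$-orbits, the disk $F$, and the foliated-rectangle complex $G$, and in particular making sure the two seed saddle connections are at the right levels and that $\phi$ shifts indices by exactly $2$ in the right direction (as in $\phi^k(p_0)=p_{2k}$, $\phi^k(q_0)=q_{2k}$). A secondary subtlety is checking that the lifts through $\Pi$ of these $F$- (or $\Sigma$-) saddle connections really are saddle connections of $G$ — i.e. that no new singularity $P_m$ or $Q_m$ is introduced in the interior when one passes from the collapsed picture back to $G$ — but this follows because $\Pi$ only collapses the leaves of $R(b_m),R(c_m),R(d_m)$ (which do not create interior singularities along a horizontal leaf) and the singularities of $G$ map bijectively onto those of $F$ away from the $\overline{r}$-fiber. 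I expect the whole argument to be short once the index conventions are pinned down.
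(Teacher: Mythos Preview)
Your plan has a genuine gap in the treatment of the $Q_n\to Q_{n-2}$ saddle connections. You correctly observe that passing to $F$ is useless because $\Pi(Q_n)=\Pi(Q_{n-2})=\overline{r}$, but you then propose to work on $\Sigma$ instead. This does not help: in $\Sigma$ as well, all the $q_i$ (and all the $b_i$, and $r$) are identified to the single $\infty$-pronged singularity---see the definition of $\Sigma$ in Section~\ref{sec:pA}. The ``$\phi$-orbits'' $q_0\mapsto q_2\mapsto\cdots$ from Figure~\ref{singorbit} are orbits on the \emph{prongs} of this single fixed singularity, not on distinct points. More decisively, the saddle connections $Q_n\to Q_{n-2}$, $Q_1\to Q_0$, and $P_n\to P_n$ live entirely inside the rectangles $R(b_m),R(c_m),R(d_m)$ that $\Pi$ collapses; they project to \emph{single points} in $F$ (namely $\overline{r}$ or $\overline{p_n}$), and correspond to no horizontal saddle connection of $F$ or $\Sigma$ whatsoever. (In $F$ each $\overline{p_n}$ is $1$-pronged, so it has exactly one separatrix---the one going to $\overline{r}$ from Lemma~\ref{lem:saddleconns}---and there is no loop there; likewise there is no horizontal saddle connection joining $\overline{r}$ to itself without passing through some $\overline{p_m}$.) So the seed-plus-$\phi^k$ strategy cannot produce these objects: there is nothing in $\Sigma$ to iterate.

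The paper's proof is literally ``clear by inspection'': one simply traces the horizontal boundary edges of the rectangles $R(d_n),R(c_n),R(b_n)$ in Figure~\ref{foliationfromtrack}. The loop $P_n\to P_n$ is the boundary of the monogon at the tip of $R(d_n)$; the segments $Q_n\to Q_{n-2}$ and $Q_1\to Q_0$ are read off from the way consecutive rectangles $R(c_n),R(b_n)$ are stacked at the switches (and are exactly the pieces listed in the decomposition of $l_0$ in the proof of Lemma~\ref{lem:boundarypaths}). Your instinct to argue by inspection for $Q_1\to Q_0$ and for $P_n\to P_n$ was the right one; the same direct argument in $G$ handles all cases, and no detour through $F$ or $\Sigma$ is needed or possible.
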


\begin{lemma}
\label{lem:boundarypaths}
The system of train paths $\TP(T,w)$ contains three boundary paths.
\end{lemma}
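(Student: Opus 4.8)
The plan is to unwind the definitions and then carry out a combinatorial bookkeeping. By definition a boundary path is the train path in $\TP(T,w)$ traced out by a \emph{singular} leaf of $G$, so the goal is to enumerate the singular leaves of $G$ up to the relation ``traces the same edge path on $T$'' and to check that exactly three classes result.

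First I would assemble the full inventory of saddle connections of $G$. By Corollary \ref{pqscs} there is a saddle connection from $P_n$ to $Q_n$ for every $n\ge 0$, and by Lemma \ref{qqscs} there is a saddle connection from $Q_1$ to $Q_0$, one from $Q_n$ to $Q_{n-2}$ for every $n\ge 2$, and a self-saddle-connection at $P_n$ for every $n\ge 0$. Next I would verify, by inspection of Figure \ref{foliationfromtrack}, that these account for \emph{all} separatrices of the horizontal foliation of $G$: at each $P_n$ the three prongs are the separatrix meeting $Q_n$ together with the two ends of the self-connection, while at each $Q_n$ the three prongs meet $P_n$, $Q_{n-2}$ (or $Q_0$ when $n\le 1$), and $Q_{n+2}$; the prong counts at every singularity add up to $3$, so $G$ has no singular ray. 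Consequently every singular leaf of $G$ is a bi-infinite concatenation of the saddle connections listed above, assembled according to the left/right bending convention in the definition of a leaf.

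The heart of the proof is then to follow these concatenations. Reading off the cyclic order of the three prongs at each $P_n$ and each $Q_n$ from Figure \ref{foliationfromtrack}, and using that this local picture is the same at $P_n$ and $Q_n$ for all large $n$ by the self-similarity of $T$, one traces a leaf outward from an arbitrary singularity in both directions, in each of the left- and right-bending modes. I expect the outcome to be exactly three bi-infinite singular leaves up to reparametrization, and I expect that translating each of them into the edge path it traces on $T$ (via the branch labels of Figure \ref{traintrack}) yields three \emph{distinct} train paths; since every singular leaf meets a singularity, the tracing also shows there are no others. (These three should be the train path of the proper leaf $m$ and the two sides of the once-punctured bigon $b$ of Theorem \ref{lamthm}.) The main obstacle is precisely this tracing: determining from the picture of $G$ the direction in which a leaf turns at each $P_n$ and $Q_n$ (and keeping the two possible left- versus right-bending extensions straight), checking that the self-connection at each $P_n$ is forced to appear as a ``turnaround'' hung off the chain of $Q_n$'s rather than inside a closed leaf --- so that there are no periodic boundary paths --- and verifying that the turning stays consistent all along the two chains $\cdots Q_4 Q_2 Q_0$ and $\cdots Q_5 Q_3 Q_1 Q_0$, so that the singular leaves organize into three families and no more. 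This is a finite check propagated by the self-similar structure of $T$, but it must be carried out carefully with respect to orientations.
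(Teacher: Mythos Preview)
Your overall strategy---enumerate the singular leaves by tracing separatrices at the singularities $P_n$ and $Q_n$---is reasonable, but your expectations about the outcome are wrong in two places, and this matters.

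First, the tracing you propose through the $P_n$ and $Q_n$ will not produce three leaves; it will produce \emph{one}. The paper carries out exactly this trace and finds a single leaf $l_0$ of the form
\[
\ldots P_5 \to Q_5 \to Q_3 \to P_3 \to P_3 \to Q_3 \to Q_1 \to P_1 \to P_1 \to Q_1 \to Q_0 \to P_0 \to P_0 \to Q_0 \to Q_2 \to P_2 \to \ldots
\]
which visits every $P_n$ and every $Q_n$ exactly twice. The paper then closes the count with a clean local argument you do not have: at a $3$-pronged singularity, at most two boundary leaves can pass through, and exactly one if some boundary leaf passes through twice. Since $l_0$ hits each singularity twice, it is the unique boundary path through the $P_n,Q_n$.

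Second, the remaining two boundary paths are not reached by your separatrix analysis at all: they are the two sides of the bigon in $\partial G$ formed by the upper side of $R(e_1)$ and the lower side of $R(e_2)$. These leaves do not pass through any $P_n$ or $Q_n$, so ``every singular leaf is a bi-infinite concatenation of the saddle connections listed above'' is false as stated. Relatedly, your identification of the third boundary path as the proper leaf $m$ is incorrect: $m$ corresponds to the path $t_0=\ldots f_{-1}^* f_0^* f_1^*\ldots$, which lives only in $\TP(T^*,w^*)$ and is precisely the one path \emph{not} in the image of $\xi$ (Lemma~\ref{pathimage}); it is not a train path of $(T,w)$ at all. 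The three boundary paths are $L^+$, $L^-$ (the bigon sides) and $L_0$ (the leaf $l_0$ through all the $P_n,Q_n$).
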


\begin{proof}
Refer to Figures \ref{traintrack} and \ref{foliationfromtrack} for the proof.
The upper horizontal side of $R(e_1)$ and the lower horizontal side of $R(e_2)$ form a bigon and give rise to two boundary paths in $\TP(T,w)$.

Since there is a loop based at $P_0$ there is a single boundary train path of $\TP(T,w)$ corresponding to the monogon with vertex at $P_0$. This train path corresponds to a leaf $l_0$ in $G$ which decomposes as $l_l * l_b * l_r$ where $l_b$ is the loop based at $P_0$. The path $l_l$ has the form
\[\ldots \to P_5 \to Q_5 \to Q_3\to P_3 \to P_3\to Q_3\to Q_1\to P_1\to P_1\to Q_1 \to Q_0 \to P_0\]
where each arrow $\to$ denotes a saddle connection. Similarly, $l_r$ has the form \[P_0 \to Q_0 \to Q_2 \to P_2 \to P_2 \to Q_2 \to Q_4 \to P_4 \to P_4 \to Q_4 \to Q_6 \to P_6 \to \ldots.\] Thus $l_0$ visits each singularity $P_n$ and $Q_n$ exactly twice.

For each singularity $P_n$ or $Q_n$, there are at most two boundary paths corresponding to leaves in $G$ which pass through that singularity. Moreover, there is exactly one if there is a boundary path corresponding to a path in $G$ which passes through that singularity twice. Since $l_0$ passes through each singularity $P_n$ and $Q_n$ exactly twice, the train path corresponding to it is the only boundary path besides the two already mentioned.
\end{proof}

We continue to denote by $l_0$ the leaf in $G$ which passes through all the singularities $P_n$ and $Q_n$. We denote by $l^+$ and $l^-$ the leaves corresponding to the sides of the bigon of $G$. Denote by $L_0,L^+,$ and $L^-$ the corresponding train paths in $\TP(T,w)$, respectively.

\begin{lemma}
The path $l_0$ is dense in $G$.
\end{lemma}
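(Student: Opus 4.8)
The plan is to show that $l_0$ contains \emph{every} saddle connection of $G$; combined with the fact that saddle connections are dense in $G$ (the corollary deduced from Lemma \ref{singdense}), this gives density of $l_0$ immediately. So the whole argument reduces to a statement about saddle connections.

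The key input is the analysis in the proof of Lemma \ref{lem:boundarypaths}. There it is shown that the singular leaf $l_0$ passes through each singularity $P_n$ and $Q_n$ exactly twice, and that $G$ has exactly three boundary paths, namely $L_0$, $L^+$, $L^-$. Since $l_0$ passes through each $P_n$ (resp.\ $Q_n$) twice, it is the \emph{unique} boundary path through that singularity; consequently neither $l^+$ nor $l^-$ passes through any of the points $P_n$ or $Q_n$. I would record this as the first step.

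Now let $\sigma$ be an arbitrary saddle connection of $G$. Its two endpoints are singularities of $G$, hence lie among the points $P_n$, $Q_n$ ($n\geq 0$), which from the construction of $G$ are all of its singularities. Extending $\sigma$ beyond each of its endpoints by following the foliation according to a fixed (say, left-handed) rule produces a bi-infinite leaf $\ell$ of $G$ containing $\sigma$, and in particular containing a singularity; thus $\ell$ is a singular leaf, so the train path it defines is a boundary path and $\ell$ is one of $l_0$, $l^+$, $l^-$. But $\ell$ passes through an endpoint of $\sigma$, which is some $P_n$ or $Q_n$, while $l^+$ and $l^-$ do not; hence $\ell=l_0$ and $\sigma\subseteq l_0$. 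Since $\sigma$ was arbitrary and saddle connections are dense in $G$, we conclude that $\overline{l_0}\supseteq\overline{\bigcup \text{(saddle connections)}}=G$, i.e.\ $l_0$ is dense in $G$. (As a cross-check, one may instead read off directly from Corollary \ref{pqscs}, Lemma \ref{qqscs}, and the explicit decomposition $l_0=l_l * l_b * l_r$ that $l_0$ traverses the saddle connections from $P_n$ to $Q_n$, from $Q_n$ to $Q_{n-2}$, from $Q_1$ to $Q_0$, and the loop at each $P_n$, and that these exhaust the saddle connections of $G$.)

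The main obstacle is the extension step: one should verify carefully that prolonging a saddle connection by a fixed left/right rule really does yield a leaf in the precise sense of Section \ref{sec:lambg}, and that ``singular leaf'' and ``boundary path'' match up closely enough that Lemma \ref{lem:boundarypaths} forces $\ell$ into the list $\{l_0,l^+,l^-\}$. If one prefers to sidestep this, the parenthetical route makes the argument purely combinatorial, but then one instead owes a proof that $G$ has no saddle connections other than those already named in Corollary \ref{pqscs} and Lemma \ref{qqscs} --- which again follows from the uniqueness of the boundary path through each singularity.
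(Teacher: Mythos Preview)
Your proposal is correct and follows the same approach as the paper: saddle connections are dense in $G$, and $l_0$ contains every saddle connection. The paper's proof is a one-line assertion of the second fact (implicitly relying on the explicit decomposition $l_0=l_l * l_b * l_r$ from Lemma \ref{lem:boundarypaths}), whereas you supply a clean argument for it via the uniqueness of the boundary path through each $P_n$, $Q_n$; your parenthetical cross-check is essentially what the paper has in mind.
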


\begin{proof}
This is clear since saddle connections are dense in $G$ and $l_0$ traverses each saddle connection.
\end{proof}

\begin{lemma}
\label{halfleaves}
Every half train path of $\TP(T,w)$ is dense except for a half train path of $L^+$ and a half train path of $L^-$. For these half train paths, they simply traverse the branches $b_0,b_1,b_2,\ldots$ in order.
\end{lemma}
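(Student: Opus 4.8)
The plan is to first pin down the two exceptional half train paths, and then to show that every other half train path is dense.

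For the first step, I would read off the local picture near the bigon from Figures~\ref{traintrack} and~\ref{foliationfromtrack}. The bigon of $G$ is bounded by the upper horizontal side of $R(e_1)$ and the lower horizontal side of $R(e_2)$, and its two corners are $3$-pronged singularities; the right-hand corner is the one meeting $R(b_0)$. Each of the bigon sides $l^+$ and $l^-$ therefore continues, past this corner, into $R(b_0)$, and from there runs along the ``main line'' $R(b_0), R(b_1), R(b_2), \ldots$ forever, since at each switch between $b_n$ and $b_{n+1}$ the two strands that abut the horizontal boundary of $G$ continue onto $b_{n+1}$ rather than turning onto $c_{n+1}$. So one half leaf of $l^+$ and one half leaf of $l^-$ traverse precisely the branches $b_0, b_1, b_2, \ldots$ in order, and this train path is plainly not dense, as it omits $d_0$, $e_1$, $e_2$, and every $c_n$.

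For the second step, let $h$ be a half leaf that is not one of these two; I must show that the train path of $h$ contains every finite train path as a contiguous sub-train-path. Since $l_0$ is dense by the preceding lemma, every finite train path is such a sub-train-path of $L_0$; writing $L_0=\cdots\sigma_{-1}\sigma_0\sigma_1\cdots$ as the bi-infinite concatenation of the train paths of the saddle connections that $l_0$ traverses, every finite train path lies in some finite block $\sigma_{-N}\cdots\sigma_N$. By the description of $l_0$ in the proof of Lemma~\ref{lem:boundarypaths}, each such block is contained in the train path of a subarc of $l_0$ that enters the depth-$N$ part of the picture along the odd-indexed side, descends through $Q_1,Q_0,P_0$ and the loop at $P_0$, and climbs back out to depth $N$ along the even-indexed side. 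So it suffices to show that $h$ contains such a subarc for every $N$.

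I would then divide into cases according to whether $h$ is recurrent, in the sense that it crosses some fixed vertical transversal $s$ — chosen to separate the bigon and $R(d_0)$ from the rest of $G$ — infinitely often. If $h$ is \emph{not} recurrent, a tail of $h$ lies in the subtrack carried by $\{b_n,c_n,d_n:n\geq N\}$ for some $N$; inspecting the gluings shows that the only half leaves that remain in such a subtrack forever are the two exceptional ones and the singular half leaves passing through $P_n$ and $Q_n$ for indices tending to infinity (such as the two half leaves of $l_0$ itself), and for the latter the saddle-connection graph — whose vertices are the $P_n,Q_n$ and whose edges are supplied by Corollary~\ref{pqscs} and Lemma~\ref{qqscs} — is rigid enough that the resulting sequence of saddle connections must contain all the required blocks. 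If $h$ \emph{is} recurrent, I would run the closing-up argument from the dynamical proof of Lemma~\ref{singdense}: between two consecutive crossings of $s$, the arc of $h$ together with a subarc of $s$ bounds a disk which, by the Poincar\'e--Hopf index theorem, contains a $1$-pronged singularity but not $\overline r$, so the saddle connection from that $1$-pronged singularity to $\overline r$ must cross $h$; pulling this back through $\Pi$ and using Corollary~\ref{pqscs} shows $h$ meets saddle connections of $G$ that $l_0$ traverses, and combining this with the self-similar structure of $(T,w)$ coming from $\phi$ (which identifies the depth-$\geq N$ part of the picture with the whole picture, up to a shift of indices) propagates the conclusion to every depth. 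The step I expect to be the main obstacle is precisely this combinatorial rigidity: upgrading ``$h$ returns to $s$ infinitely often (or passes through infinitely many singularities) and therefore meets arbitrarily deep saddle connections'' to ``$h$ contains every finite block of $L_0$.'' This requires a careful analysis of the saddle-connection graph together with the index shift induced by $\phi$; the bookkeeping of the bubble gluings needed to treat the non-recurrent case is a secondary, more routine, difficulty.
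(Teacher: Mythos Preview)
Your identification of the two exceptional half paths is correct and matches the paper. The difficulty is entirely in the density argument, and there your approach diverges from the paper's and contains a real error together with the gap you yourself flag.

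In the recurrent case you write that the saddle connection from the enclosed $1$-pronged singularity to $\overline r$ ``must cross $h$.'' This is not what the closing-up argument gives: leaves of a foliation do not cross one another, so that saddle connection cannot cross the leaf arc of $h$; it crosses the \emph{transversal} subarc of $s$ between the two return points. All you learn is that some saddle connection passes through the transversal near $h$, not that $h$ follows it. From this you cannot conclude that $h$ contains any particular finite block of $L_0$, and indeed the step you single out as the main obstacle --- upgrading ``$h$ returns near saddle connections'' to ``$h$ contains every finite block of $L_0$'' --- is exactly where the argument is missing a genuine idea. The self-similarity under $\phi$ shifts indices but does not by itself force a nonsingular half leaf to shadow $l_0$.

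The paper avoids this combinatorics entirely by a measure-theoretic argument. For a half leaf $h$ whose image in $F$ is nonsingular, one records its successive hits $v_1,v_2,\ldots$ on the vertical transversal $s=\pi(\{1/2\}\times[0,1])$, forms the empirical measures $\mu_n=\frac{1}{n}\sum_{i\leq n}\delta_{v_i}$, and passes to a subsequential limit $\mu$, which is a transverse measure to $\mathcal F^h$ on $\Sigma$. Lemma~\ref{transmeas} then gives a dichotomy: either $\mu$ is a multiple of Lebesgue, in which case the hit set is dense in $s$ and hence $h$ is dense; or $\mu$ has an atom on a saddle connection, which forces $h$ to accumulate on $l_0$, and $l_0$ is already known to be dense. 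The half leaves of $l_0$ themselves are handled by observing they traverse $P_n\to Q_n$ for arbitrarily large $n$ and hence accumulate on $l^\pm$. This replaces your case analysis and the unresolved ``rigidity'' step with a single clean ergodic argument.
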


\begin{proof}
If $l$ is either:
\begin{itemize}
\item one of the boundary leaves $l^+$ or $l^-$ or
\item a nonsingular leaf of $G$,
\end{itemize}
then $\Pi(l)$ is a non-singular leaf of the foliation $F$. We consider the vertical transversal $s=\pi(\{\frac{1}{2}\}\times [0,1])$ to $F$. The leaf $\Pi(l)$ contains a ray which intersects $s$ infinitely many times. We consider the subsequent points of intersection $v_1, v_2, v_3,\ldots$ and the sequence of counting measures \[\mu_n=\frac{1}{n} \sum_{i=1}^n \delta_{v_i}\] on $s$, where $\delta_v$ is the Dirac unit mass at the point $v$. Up to taking a subsequence, the sequence $\mu_n$ converges to a measure $\mu$ on $s$. We see that if two subsegments $u_1$ and $u_2$ of $s$ are isotopic via a leaf-preserving isotopy then $\mu(u_1)=\mu(u_2)$. Thus, by translating arbitrary transversals to $s$ via leaf-preserving isotopies, we see that $\mu$ induces a transverse measure to $F$. Furthermore, by taking the quotient $\rho:F\to \Sigma$, we see that $\mu$ induces a transverse measure $\rho_* \mu$ to the horizontal foliation of $\Sigma$.

There are two possibilities by Lemma \ref{transmeas}. If $\rho_*\mu$ has no atoms along a horizontal saddle connection of $\Sigma$ then it is a multiple of Lebesgue measure. Hence $\mu$ itself is a multiple of Lebesgue measure along $s$. This proves that $l\cap s$ is a dense subset and therefore $l$ is dense in $G$. Otherwise, $\rho_*\mu$ has an atom along a horizontal saddle connection of $\Sigma$. We see immediately that $l$ accumulates onto a saddle connection of $G$. Thus in particular $l$ accumulates onto the leaf $l_0$. But the leaf $l_0$ is dense in $G$ and therefore $l$ itself is dense.

Now, if $l$ is not $l^+$, $l^-$ or $l_0$, then we could define the hitting measure $\mu$ by using either half leaf of $l$. This implies that both half leaves of $l$ are dense. In case $l$ is $l^+$ or $l^-$ then we see that one half leaf of $l$ is dense. The other half leaf corresponds to a train path that simply traverses the branches $b_0,b_1,b_2,\ldots$.

Finally, we already know that $l_0$ is dense. It remains to be shown that each half leaf of $l_0$ is dense. To see this, note that any half leaf of $l_0$ traverses a saddle connection $P_n\to Q_n$ for $n$ arbitrarily large. This shows that this half leaf accumulates onto either $l^+$ or $l^-$ and therefore this half leaf is also dense.
\end{proof}

\section{Lamination properties}
\label{sec:laminations}

Recall that $\Omega$ is the plane minus a Cantor set. We embed the train track $T$ on $\Omega$ as shown on the left of Figure \ref{Tembedded}. 
The blue curves in the middle of Figure \ref{Tembedded} are chosen to lie in the pants decomposition $\mathcal{P}$ from Section \ref{sec: rays}. In particular, there are sequences of these curves, $\ldots C_{-2}, C_{-1}, C_0, C_1 , \ldots$ such that:
\begin{itemize}
\item for each $i$, $C_i$ separates from $C_{i-1}$ and $C_{i+1}$,
\item for each $i\notin \{-1,0\}$, $C_{i-1}$ and $C_{i+1}$ are separated by no other element of $\mathcal{P}$,
\item $C_{-1}$ and $C_0$ bound a four-holed sphere together with $\infty$ and one other element of $\mathcal{P}$.
\end{itemize}

 Collapsing parallel branches of $T$ yields a locally finite train track $T^*$ as shown in the middle
of Figure \ref{Tembedded}. There is also a \textit{carrying map} $\zeta$ which assigns to each branch of $T$ a finite train path of $T^*$.

\begin{figure}[h]

\begin{tabular}{c c c}

\def\svgwidth{0.21\textwidth}
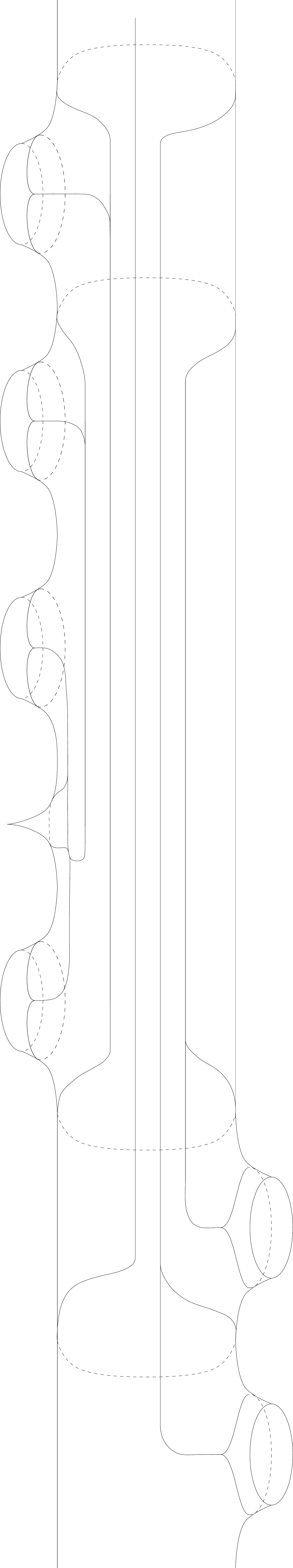

&

\def\svgwidth{0.21\textwidth}
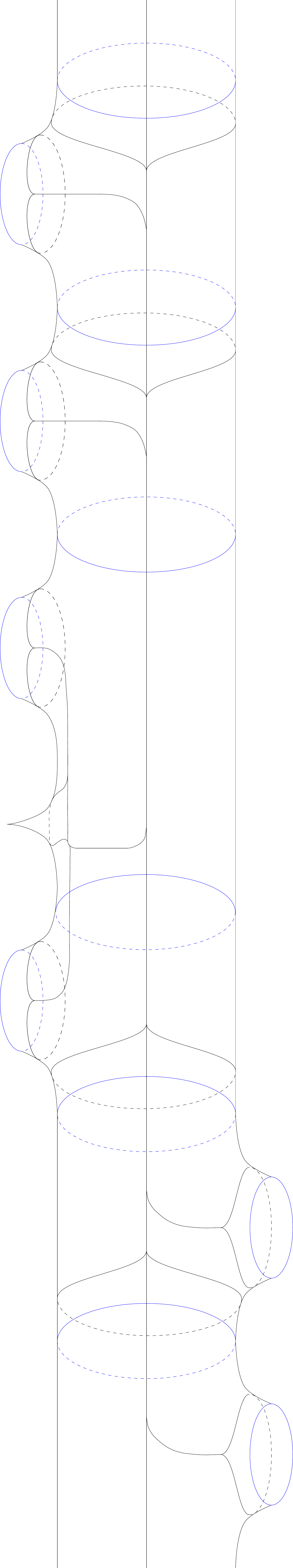

&

\def\svgwidth{0.183\textwidth}
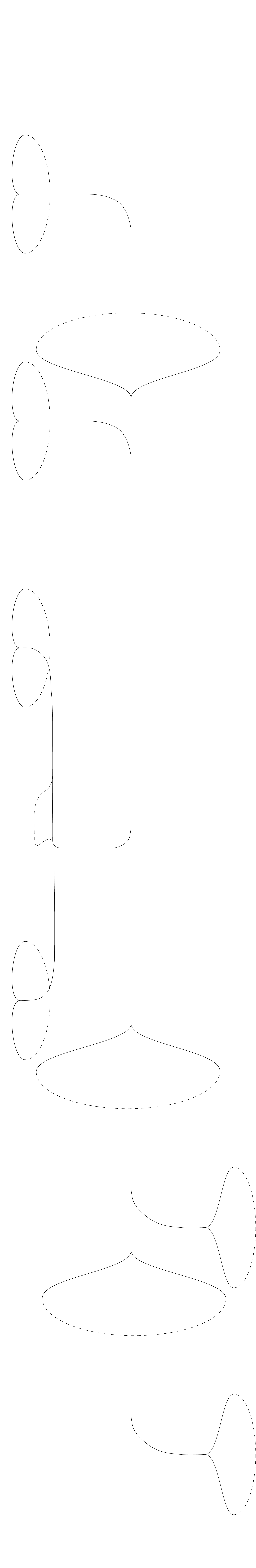

\end{tabular}

\caption{Left: the train track $T$ embedded in the surface $\Omega$. Here each boundary component bounds a disk minus a Cantor set in $\Omega$. Middle: collapsing parallel branches yields a (locally finite) track $T^*$. Right: we name the branches of $T^*$ with the labels shown.
}\label{Tembedded}
\end{figure}

We label the branches of $T^*$ as shown on the right of Figure \ref{Tembedded}. Thus we see that \[\zeta(e_n)=e_n^*, \ \ \ \zeta(d_n)=d_n^*.\] We also have \[\zeta(c_1) = c_1^*, \ \zeta(c_n)=c_n^* \text{ for $n$ even, \ and } \zeta(c_n)=f_{-n+1}^*c_n^* \text{ for $n\geq 3$ odd.}\] Finally, we have \[\zeta(b_{-1})=b_{-1}^*, \ \ \ \zeta(b_0)=b_0^*,  \ \ \ \zeta(b_1)=b_1^*f_0^*, \ \ \ \zeta(b_2)=f_1^* h_1^* f_1^* f_0^* f_{-1}^*,  \ \ \ \zeta(b_3)=h_2^* f_{-1}^* f_0^* f_1^* f_2^*, \ \ \ \ldots.\] In general, \[\zeta(b_n)=f_{n-1}^* h_{n-1}^* f_{n-1}^* f_{n-2}^* \ldots f_0^* f_{-1}^* \ldots f_{-n+1}^*\] for $n\neq 0$ even and \[\zeta(b_n)=h_{n-1}^* f_{-n+2}^* f_{-n+3}^* \ldots f_0^* f_1^* \ldots f_{n-1}^*\] for $n\neq -1, 1$ odd.

The weights $w$ on $\mathcal{B}(T)$ induce a system of weights $w^*$ on $\mathcal{B}(T^*)$ via \[w^*(b^*) = \sum_{b\in \mathcal{B}(T)} (\text{\# of occurrences of $b^*$ in $\zeta(b)$}) \cdot w(b).\] Thus we have \[w^*(e_1^*)=w(e_1)=\frac{1}{3}, w^*(e_2^*)=w(e_2)=\frac{2}{3},\] \[w^*(b_{-1}^*)=w(b_{-1})=1, w^*(b_0^*)=w(b_0)=1, w^*(b_1^*)=w(b_1)=\frac{1}{2}\] and \[w^*(c_n^*)=w(c_n)=\frac{1}{2^n}, w^*(d_n^*)=w(d_n)=\frac{1}{2^{n+1}}.\] Finally we have \[w^*(f_0^*)=1, w^*(f_1^*)=\frac{3}{4}, w^*(f_2^*)=\frac{1}{4}, w^*(f_3^*)=\frac{3}{16}, w^*(f_4^*)=\frac{1}{16}, w^*(f_5^*)=\frac{3}{64}, \ldots\] and \[w^*(f_{-n}^*)=\frac{1}{2^n}, w^*(h_n^*)=\frac{1}{2^{n+1}}\] for each $n\geq 1$. Denote by $G^*$ the union of foliated rectangles defined by  $(T^*,w^*)$.

Now we will describe a continuous injection $\xi:\TP(T,w)\to \TP(T^*,w^*)$. For this purpose, consider the preimages of the switches of $T^*$ under $\zeta$. The preimages  induce a partition of each rectangle $R(b)=[0,1]\times [0,w(b)]$ into $\#\zeta(b)$ vertical subrectangles of equal width $1/\#\zeta(b)$. Each switch preimage is either a switch of $T$ or lies in the interior of a branch of $T$. For convenience, we will consider each preimage as a (possibly new, valence two) switch of $T$. These switches partition $T$ into a set of branches, each of which is mapped homeomorphically by $\zeta$ to a branch of $T^*$. Thus we may consider $\zeta$ as a surjection $\B(T)\to \B(T^*)$. Moreover, each new branch of $T$ corresponds to one of the rectangles in the partition of the old rectangles described before. By abuse of notation, we will denote by $G$ the union of foliated rectangles corresponding to $(T,w)$ considered as a train track with the new switches. It is obtained from the old $G$ by rescaling rectangles horizontally.

For a branch $b^*\in \B(T^*)$, its preimage $\zeta^{-1}(b^*)$ is a possibly infinite subset of $\B(T)$ and \[w^*(b^*)=\sum_{b\in \zeta^{-1}(b^*)} w(b).\] Moreover, $\zeta^{-1}(b^*)$ inherits a total order $<_{b^*}$ where we order parallel branches on the left of Figure \ref{Tembedded} from left to right. For a branch $b^* \in \B(T^*)$, the rectangle $R(b^*)=[0,1] \times [0,w^*(b^*)]$ is divided into horizontal subrectangles as follows. For a branch $b\in \zeta^{-1}(b^*)$, we consider the horizontal subrectangle \[R^*(b)=[0,1] \times \left[\sum_{\substack{b' \in \zeta^{-1}(b^*) \\ b' <_{b^*} b}} w(b'), \left(\sum_{\substack{b' \in \zeta^{-1}(b^*) \\ b' <_{b^*} b}} w(b')\right) + w(b) \right]\] of height $w(b)$. The map $\chi:G\to G^*$ is defined by sending $R(b)=[0,1]\times [0,w(b)]$ isometrically to $R^*(b)$ in the natural way.


The injection $\xi:\TP(T,w)\to \TP(T^*,w^*)$ is defined by replacing each branch $b$ in the train path $t$ with the branch $\zeta(b)$. That the image $\xi(t)$ actually lies in $\TP(T^*,w^*)$ follows from the fact that $\chi$ is a map sending leaves to leaves. 

\begin{lemma}
\label{pathimage}
The image of $\xi$ consists of all of $\TP(T^*,w^*)$ except for a single path which has the form \[t_0=\ldots f_{-2}^* f_{-1}^* f_0^* f_1^* f_2^* \ldots.\]
\end{lemma}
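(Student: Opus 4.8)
The plan is to reformulate membership in the image of $\xi$ in terms of \emph{block decompositions} and then to isolate the unique obstruction. By definition of $\xi$, for $t=\ldots b_{i_{-1}}b_{i_0}b_{i_1}\ldots\in\TP(T,w)$ one has $\xi(t)=\ldots\zeta(b_{i_{-1}})\zeta(b_{i_0})\zeta(b_{i_1})\ldots$, so a train path $t^*\in\TP(T^*,w^*)$ lies in the image of $\xi$ if and only if it can be cut into a bi-infinite concatenation of the finite ``blocks'' $\zeta(b)$, $b\in\B(T)$, in such a way that the underlying sequence of branches of $T$ is itself a train path on $T$; this is the combinatorial shadow of the fact (already used to define $\xi$) that the horizontal rescaling $\chi\colon G\to G^*$ carries leaves of $G$ to leaves of $G^*$. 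With this reformulation the non-membership of $t_0$ is immediate: reading off the values of $\zeta$, \emph{every} block $\zeta(b)$ contains a branch of $T^*$ that is not one of the ladder branches $f^*_n$ --- it is a single non-$f^*$ branch for $b\in\{e_1,e_2,b_{-1},b_0,c_1\}$, for $b=d_n$, and for $b=c_n$ with $n\ge2$ even, it contains $c^*_n$ for $b=c_n$ with $n\ge3$ odd, it contains $b^*_1$ for $b=b_1$, and it contains $h^*_{n-1}$ for $b=b_n$ with $n\ge2$ --- whereas $t_0=\ldots f^*_{-1}f^*_0f^*_1\ldots$ traverses only ladder branches, so it admits no block decomposition.

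For the reverse inclusion I would show that every $t^*\ne t_0$ admits a block decomposition. The first ingredient, read off Figure~\ref{Tembedded}, is the local structure of the ladder $\{f^*_n\}_{n\in\Z}$: each $f^*_n$ is adjacent among ladder branches only to $f^*_{n\pm1}$, so a train path crosses the ladder monotonically in the index, the only train path of $T^*$ using solely ladder branches is $t_0$ itself, and --- the more delicate point --- $T^*$ admits no train path escaping to infinity along the ladder \emph{except} $t_0$ (equivalently, the maximal monotone ladder-sub-runs inside any $t^*\ne t_0$ are finite). Hence any $t^*\ne t_0$ meets $\B(T^*)\setminus\{f^*_n\}$ infinitely often in each direction. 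The second ingredient, also extracted from Figure~\ref{Tembedded} together with the explicit $\zeta$, is a rigidity statement: each non-$f^*$ branch of $T^*$ lies in exactly one block $\zeta(b)$ and occurs there exactly once, the switch structure of $T^*$ around that branch forces any train path through it to traverse the whole of $\zeta(b)$, and two blocks occurring consecutively along a train path abut along a switch of $T$ with one branch incoming and the other outgoing. Granting both ingredients, one anchors a block at an occurrence of a non-$f^*$ branch of $t^*$ and extends in both directions: the strings of ladder branches interpolating between consecutive anchored blocks are exactly the monotone runs prescribed by the block formulas and so involve no choice, and since non-$f^*$ branches occur infinitely often in each direction the process never stalls and exhausts all of $t^*$. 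This yields a block decomposition of $t^*$, so $t^*\in\xi(\TP(T,w))$, and the lemma follows.

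I expect the main obstacle to be the two figure-based rigidity inputs: that $T^*$ has no bi-infinite train path escaping along the $f^*$-ladder other than $t_0$, and that passing through any non-$f^*$ branch forces the entire surrounding block $\zeta(b)$ together with a unique way of abutting the neighboring blocks. Both come down to a patient inspection of the local picture of $T^*$ in Figure~\ref{Tembedded}, cross-referenced with the list of values of the carrying map $\zeta$; once they are in place, the reduction to block decompositions, the exclusion of $t_0$, and the extension argument are formal.
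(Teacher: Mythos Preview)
Your approach is genuinely different from the paper's: you attempt a purely combinatorial argument via block decompositions, whereas the paper argues geometrically using the map $\chi:G\to G^*$ on the unions of foliated rectangles. The paper shows that inside each rectangle $R(f_i^*)$ the images of the subrectangles of $G$ form two stacks whose heights sum to $w^*(f_i^*)$, leaving exactly one horizontal leaf segment $l_i$ uncovered; these $l_i$ must glue end to end (since any other continuation would land in the image of $\chi$), producing the single missing leaf $t_0$. Every other leaf of $G^*$ meets $\operatorname{im}\chi$, hence equals $\chi$ of a leaf of $G$, so its train path lies in $\xi(\TP(T,w))$.

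Your argument has a genuine gap at the first ``rigidity input.'' You assert that $T^*$ admits no train path escaping to infinity along the ladder $\{f^*_n\}$ except $t_0$, and you phrase this as a combinatorial fact about $T^*$ to be verified by inspecting the figure. But this is false combinatorially. Working out the switch structure from the weights and the carrying map (for instance: one end of $f_0^*$ has $f_0^*\mid f_{-1}^*,b_1^*$ and the other has $f_0^*\mid f_1^*,c_2^*$; one end of $f_1^*$ has $f_1^*\mid h_1^*,h_1^*,f_2^*$; one end of $f_2^*$ has $f_2^*\mid c_4^*,f_3^*$; and so on), one sees that a path such as $\ldots b_0^*\,b_1^*\,f_0^*\,f_1^*\,f_2^*\,f_3^*\ldots$ is a perfectly legal bi-infinite train path on $T^*$ which escapes along the ladder and is not $t_0$. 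What \emph{is} true is that no element of $\TP(T^*,w^*)$ --- i.e.\ no train path arising from a \emph{leaf} of $G^*$ --- does this, because the nested subrectangles of $R(f_0^*)$ corresponding to the prefixes $f_0^*f_1^*\ldots f_n^*$ have heights $w^*(f_n^*)\to 0$ and intersect in a single horizontal segment, which the paper identifies with the missing leaf. But establishing this requires precisely the geometric input (the map $\chi$ and the rectangle-stacking picture) that your combinatorial reformulation was meant to bypass. Without it, your extension argument cannot rule out that the block-anchoring process stalls on an infinite $f^*$-run, and the surjectivity half of the lemma does not go through.
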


\begin{proof}
Except for the rectangles $R(f_i^*)$, every rectangle of $G^*$ is the homeomorphic image under $\chi$ of a single subrectangle from the foliation $G$.

On the other hand, each rectangle $R(f_i^*)$ contains the homeomorphic images of infinitely many subrectangles from $G^*$. These images are concatenated into two stacks. Identifying $R(f_i^*)$ with $[0,1]\times [0,w^*(f_i^*)]$, one stack consists of rectangles whose heights decrease with increasing $y$-coordinate. The other stack consists of rectangles whose heights increase with increasing $y$-coordinate. 
See Figure \ref{rectanglestack}.

\begin{figure}[h]
\def\svgwidth{0.7\textwidth}
\begingroup%
  \makeatletter%
  \providecommand\color[2][]{%
    \errmessage{(Inkscape) Color is used for the text in Inkscape, but the package 'color.sty' is not loaded}%
    \renewcommand\color[2][]{}%
  }%
  \providecommand\transparent[1]{%
    \errmessage{(Inkscape) Transparency is used (non-zero) for the text in Inkscape, but the package 'transparent.sty' is not loaded}%
    \renewcommand\transparent[1]{}%
  }%
  \providecommand\rotatebox[2]{#2}%
  \newcommand*\fsize{\dimexpr\f@size pt\relax}%
  \newcommand*\lineheight[1]{\fontsize{\fsize}{#1\fsize}\selectfont}%
  \ifx\svgwidth\undefined%
    \setlength{\unitlength}{2970.75122263bp}%
    \ifx\svgscale\undefined%
      \relax%
    \else%
      \setlength{\unitlength}{\unitlength * \real{\svgscale}}%
    \fi%
  \else%
    \setlength{\unitlength}{\svgwidth}%
  \fi%
  \global\let\svgwidth\undefined%
  \global\let\svgscale\undefined%
  \makeatother%
  \begin{picture}(1,0.73368175)%
    \lineheight{1}%
    \setlength\tabcolsep{0pt}%
    \put(0,0){\includegraphics[width=\unitlength,page=1]{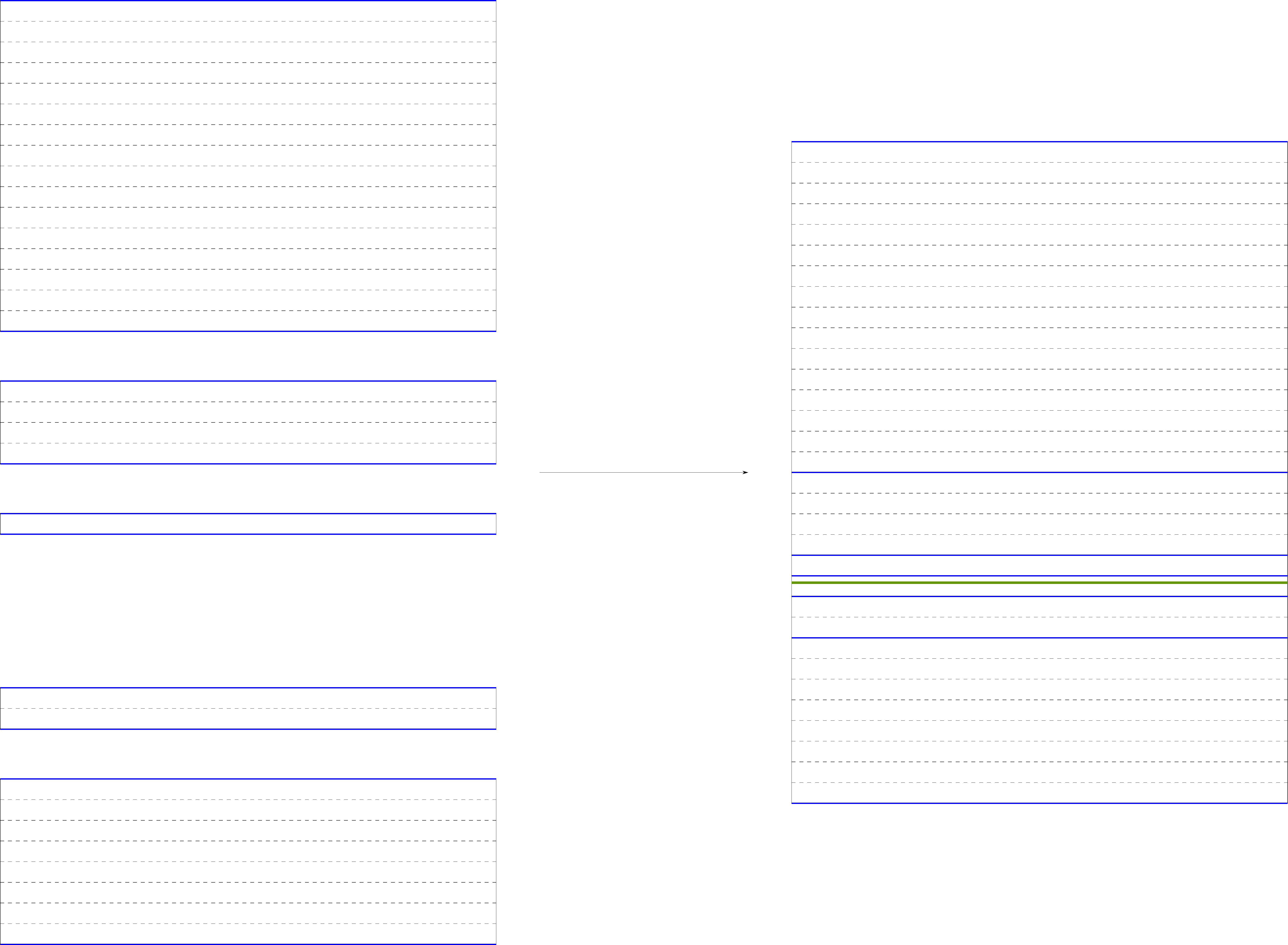}}%
    \put(0.1897229,0.2814726){\color[rgb]{0,0,0}\rotatebox{-90}{\makebox(0,0)[lt]{\lineheight{1.25}\smash{\begin{tabular}[t]{l}$\cdots$\end{tabular}}}}}%
  \end{picture}%
\endgroup%

\caption{Each subrectangle $R(f_i^*)$ consists of infinitely many subrectangles of $G$ stacked together, limiting to a single non-singular leaf of the foliation $G^*$ (shown here in green).}
\label{rectanglestack}
\end{figure}

Since $w^*(f_i^*)$ is the sum of the heights of the rectangles in these two stacks, we see that the images of these rectangles consist of all of $R(f_i^*)$ except for a single horizontal leaf segment which we call $l_i$. Consider the intersection of $l_i$ with the vertical boundary component of $R(f_i^*)$ which meets $R(f_{i+1}^*)$. We claim that $l_i$ meets this boundary component in an endpoint of $l_{i+1}$. For otherwise, $l_i$ meets this vertical boundary component in a point which lies in the image of a subrectangle of $G$. Therefore $l_i$ itself is in the image of a subrectangle of $G$. This is a contradiction. Thus, \[\ldots l_{-1} l_0 l_1 \ldots\] is a (nonsingular) leaf of $G^*$ and \[\ldots f_{-1}^* f_0^* f_1^* \ldots\] is an element of $\TP(T^*,w^*)$. It is not in the image of $\xi$ because it does not pass through some of the branches of $T^*$, whereas every train path in the image of $\xi$ passes through every branch of $T^*$.

On the other hand, every other train path in $\TP(T^*,w^*)$ is represented by a leaf of $G^*$ possibly passing through singularities, which intersects the image of $\chi$. It is therefore the image of a leaf of $G$. Therefore the train path is the image under $\xi$ of a train path in $\TP(T,w)$.

\end{proof}

\begin{cor}
\label{T*props}
The space $\TP(T^*,w^*)$ has the following properties:
\begin{enumerate}
\item every path of $\TP(T^*,w^*)$ is dense except for the train path $t_0$ from Lemma \ref{pathimage},
\item $\TP(T^*,w^*)$ contains exactly three boundary paths.
\end{enumerate}
\end{cor}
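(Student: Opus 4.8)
\textbf{Proof proposal for Corollary \ref{T*props}.}
The plan is to deduce both bullets from Lemma \ref{pathimage}, which identifies the image of $\xi$ with $\TP(T^*,w^*)\setminus\{t_0\}$, together with the structural facts about $\TP(T,w)$ already proved in Section \ref{sec:trainpaths}.

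For the first bullet, I would first record that \emph{every} (bi-infinite) train path in $\TP(T,w)$ is dense: by Lemma \ref{halfleaves} the only non-dense half train paths of $\TP(T,w)$ are one half of $L^+$ and one half of $L^-$, and since the complementary half of each of $L^+$ and $L^-$ is dense, no bi-infinite train path can have \emph{both} of its halves non-dense; a bi-infinite path with a dense half is dense. (This also re-proves the density of $l_0$, i.e. of $L_0$.) Next I would show that $\xi$ carries dense paths to dense paths. The key point is that every finite train path $\sigma^*$ of $T^*$ occurs as a subpath of the concatenation $\zeta(a_1)\cdots\zeta(a_m)$ for some finite train path $\sigma=a_1\cdots a_m$ of $T$: the set of leaves of $G^*$ traversing the branches of $\sigma^*$ in order sweeps out a subrectangle of positive area, whereas the complement of $\chi(G)$ in $G^*$ is the union $\bigcup_i l_i$ of the countably many leaf segments produced in the proof of Lemma \ref{pathimage} and hence has measure zero; so some leaf of $\chi(G)$ traverses $\sigma^*$, and a finite subarc of the corresponding leaf of $G$ (with a little margin) supplies the desired $\sigma$. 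Granting this, if $t\in\TP(T,w)$ is dense then $\sigma$ appears in $t$, so $\zeta(a_1)\cdots\zeta(a_m)$, and therefore $\sigma^*$, appears in $\xi(t)$; as $\sigma^*$ was arbitrary, $\xi(t)$ is dense. Combining the two observations with Lemma \ref{pathimage} shows that every element of $\TP(T^*,w^*)\setminus\{t_0\}$ is dense, while $t_0$ is visibly not dense (it avoids $e_1^*$, for instance). This is the first bullet.

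For the second bullet, recall that $\TP(T,w)$ has exactly three boundary paths $L_0,L^+,L^-$ by Lemma \ref{lem:boundarypaths}, a boundary path being by definition the train path of a singular leaf. I would argue that $\xi$ restricts to a bijection between the boundary paths of $\TP(T,w)$ and those of $\TP(T^*,w^*)$. The map $\chi\colon G\to G^*$ is an embedding onto the co-null subset $G^*\setminus\bigcup_i l_i$ respecting the horizontal foliations; since each $l_i$ is a non-singular leaf, every singularity of $G^*$ is the $\chi$-image of a singularity of $G$ (one of the $P_n$ or $Q_n$), and $\chi$ sends singular leaves to singular leaves and non-singular leaves to non-singular leaves. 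Hence $t^*\in\TP(T^*,w^*)$ is a boundary path if and only if $t^*=\xi(t)$ for a boundary path $t$ of $\TP(T,w)$. Because $t_0$ is non-singular, every boundary path of $\TP(T^*,w^*)$ lies in the image of $\xi$ by Lemma \ref{pathimage}; therefore the boundary paths of $\TP(T^*,w^*)$ are exactly $\xi(L_0),\xi(L^+),\xi(L^-)$, which are three distinct paths since $\xi$ is injective.

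The main obstacle will be the two structural claims about $\chi$ used above: that $\chi$ is an embedding with image precisely $G^*\setminus\bigcup_i l_i$, and that it matches the singularities of $G$ bijectively with those of $G^*$. Both follow by inspecting the rectangle decompositions and the carrying map $\zeta$ exactly as in the construction preceding Lemma \ref{pathimage} — each $R(f_i^*)$ being the only rectangle of $G^*$ into which infinitely many $\chi$-images of subrectangles of $G$ are stacked, the two stacks limiting onto $l_i$ — but it requires writing out the side identifications near the singularities $P_n$ and $Q_n$ and checking that they are unaffected by the horizontal rescaling performed by $\chi$. Once this is in hand, the density-transfer argument and the boundary-path count are routine.
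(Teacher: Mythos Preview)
Your proposal is correct. For bullet (2) your argument is essentially the same as the paper's: both reduce to the fact that $\chi$ (equivalently $\zeta$) matches singularities of $G$ bijectively with singularities of $G^*$, so that $\xi$ carries boundary paths to boundary paths and non-boundary paths to non-boundary paths, and then one counts using Lemma \ref{lem:boundarypaths} and the fact that $t_0$ is non-singular.

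For bullet (1) you take a genuinely different route. The paper argues in two steps: first, continuity of $\xi$ together with density of every $t\in\TP(T,w)$ immediately gives that $\xi(t)$ accumulates onto every element of $\TP(T^*,w^*)\setminus\{t_0\}$; second, a direct computation shows $\xi(t)$ also accumulates onto $t_0$, namely that $t$ dense implies $t$ contains each $b_n$, so $\xi(t)$ contains $\zeta(b_n)$ and hence arbitrarily long strings $f_{-m}^*\cdots f_m^*$. Your argument instead handles both cases uniformly via the measure-theoretic observation that $\chi(G)$ is co-null in $G^*$: any finite subpath $\sigma^*$ of any element of $\TP(T^*,w^*)$ is traversed by a positive-measure family of leaves, hence by a leaf of $\chi(G)$, hence $\sigma^*$ appears in $\zeta(\sigma)$ for some finite $T$-path $\sigma$, and density of $t$ finishes. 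Your approach is slightly more conceptual and avoids the case split; the paper's approach is more elementary in that it never needs to invoke the co-null property of $\chi(G)$ or argue about transverse measure, only the combinatorics of $\zeta(b_n)$. Both are short once the ingredients from Section \ref{sec:trainpaths} and Lemma \ref{pathimage} are in place.
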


\begin{proof}
For (1), we note that since $\xi:\TP(T,w)\to \TP(T^*,w^*)$ is continuous and every train path in $\TP(T,w)$ is dense, every element of $\TP(T^*,w^*) \setminus \{t_0\}$ accumulates onto every other element. We must show that every element of $\TP(T^*,w^*)$ also accumulates onto $t_0$. Since $\xi:\TP(T,w)\to \TP(T^*,w^*) \setminus \{t_0\}$ is surjective, we may write such a train path as $\xi(t)$ where $t\in \TP(T,w)$. Since $t$ is dense, it contains the branch $b_n$ for each $n$. Thus, $f_{-m}^* f_{-m+1}^* \ldots f_{m-1}^* f_m^*$ is a subpath of $\xi(t)$ for arbitrarily large $m$. Thus $\xi(t)$ accumulates onto $t_0$.

Of course, $t_0$ itself is not dense.

Since $\zeta:T\to T^*$ is a local homeomorphism, a non-boundary path in $\TP(T,w)$ is sent by $\xi$ to a non-boundary path in $\TP(T^*,w^*)$. On the other hand, it is easy to see that $\xi$ sends the paths on the boundary of the complementary bigon to $G$ to paths on the boundary of the complementary bigon to $G^*$. Finally, $\TP(T^*,w^*)$ clearly contains at least one more boundary path corresponding to the remaining singular leaves of $G^*$. Since $\xi$ sends non-boundary paths to non-boundary paths, this boundary path in $\TP(T^*,w^*)$ must be the image of the third boundary path of $\TP(T,w)$. This proves (2).
\end{proof}

We define a union of geodesics on $\Omega$ as follows. Recall that each element of $\mathcal{P}$, and in particular every blue curve in Figure \ref{Tembedded}, has length one.  
We lift the embedded train track $T^*$ to the universal cover $\tilde{\Omega}$, which is isometric to the hyperbolic plane when given the pullback metric induced by the cover $\tilde{\Omega}\to \Omega$. In Section \ref{appendix}, we verify the following lemma:

\begin{lemma}
\label{lem:straighten}
Let $\tilde{t}$ be a lift of a train path $t\in \TP(T^*,w^*)$ to $\tilde{\Omega}$. Then $\tilde{t}$ has well-defined (distinct) endpoints on $\partial \tilde{\Omega}$.
\end{lemma}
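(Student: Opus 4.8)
The plan is to reduce the lemma to the assertion that $\tilde t$ is a \emph{quasigeodesic} in $\tilde\Omega$, with constants independent of $t$. Once this is known, stability of quasigeodesics in $\Hbb^2$ (the Morse Lemma) places $\tilde t$ within bounded Hausdorff distance of a bi-infinite geodesic, which has two distinct endpoints on $\partial\tilde\Omega$; hence so does $\tilde t$. The quasigeodesic estimate will come from the interaction of $T^*$ with the pants decomposition $\mathcal P$, and in particular with the curves $C_i$ of Figure \ref{Tembedded}. Every curve of $\mathcal P$ is a simple closed geodesic of length one, so by the Collar Lemma it has an embedded collar of a fixed width $w_0>0$, and these collars are pairwise disjoint; moreover distinct lifts of curves of $\mathcal P$ to $\tilde\Omega$ are disjoint geodesics, with disjoint collar lifts and no common endpoint on $\partial\tilde\Omega$ (two lifts of disjoint simple closed geodesics cannot share an ideal point, since point stabilizers in a Fuchsian group are cyclic).

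First I would record two facts that are read off from the embedded picture of $T^*$ in Figure \ref{Tembedded}. (i) \textbf{Efficient position:} $T^*$ meets each curve of $\mathcal P$ transversally; inside each complementary region of $\mathcal P$ the track $T^*$ is a union of arcs running between boundary curves, none cutting off a disk; there are no complementary bigons between $T^*$ and $\mathcal P$; and a single branch of $T^*$ meets at most a fixed number $K$ of collars. (ii) \textbf{Cofinal crossings:} by Corollary \ref{T*props} and Lemma \ref{halfleaves}, every half train path of $\TP(T^*,w^*)$ is either dense or is one of the explicit paths $t_0=\ldots f_{-1}^*f_0^*f_1^*\ldots$ / images of the paths of Lemma \ref{halfleaves}; in every case it runs through branches of $T^*$ of arbitrarily large index, hence crosses the curves $C_i$ cofinally, and consecutive $C_i$-crossings can be chosen with intermediate combinatorial length at most a fixed $N_0$.

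Next, using (i), (ii), and the linear separation order ($C_i$ separates $C_{i-1}$ from $C_{i+1}$), together with the arrangement of the branches of $T^*$ crossing a given $C_i$, one shows that along a lift $\tilde t$ the crossed lifts of curves of $\mathcal P$ assemble into a bi-infinite \emph{nested} sequence of distinct geodesics $\ldots,\tilde c_{-1},\tilde c_0,\tilde c_1,\ldots$, each $\tilde c_j$ separating $\tilde c_{j-1}$ from $\tilde c_{j+1}$ and met by $\tilde t$ in this order, with $d(\tilde c_j,\tilde c_{j+1})\ge 2w_0$ and with at most $N_0$ branches of $\tilde t$ between the crossings of $\tilde c_j$ and $\tilde c_{j+1}$. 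A short hyperbolic estimate then finishes the proof: a geodesic segment crossing $k$ pairwise disjoint, $2w_0$-separated, nested geodesics has length at least $2w_0(k-1)$, so $\tilde t$ is a uniform quasigeodesic; more directly, the half-plane $H_j^+$ cut off by $\tilde c_j$ on the far side from $\tilde c_0$ has boundary diameter shrinking geometrically in $j$, the end of $\tilde t$ beyond $\tilde c_0$ is eventually contained in every $\overline{H_j^+}$, and so it converges to the single point $\bigcap_{j\ge0}\overline{H_j^+}\cap\partial\tilde\Omega$; symmetrically the other end converges to a point on the opposite side of $\tilde c_0$, so the two endpoints are distinct.

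The main obstacle is the extraction of the nested sequence of distinct curves in the previous paragraph: a priori $\tilde t$ could cross a single lift of a curve of $\mathcal P$ more than once, which would destroy both the monotone nesting and the uniform separation between consecutive curves. This is exactly the point where the explicit combinatorics of Figure \ref{Tembedded} must be used: one checks that, because the $C_i$ are linearly ordered by separation and the branches of $T^*$ crossing $C_i$ are so arranged that a train path leaving region $i$ across $C_i$ cannot re-cross $C_i$ without first crossing $C_{i-1}$ or $C_{i+1}$, the lifted curves met by $\tilde t$ occur in a monotone block pattern from which the clean nested sequence $\{\tilde c_j\}$ is extracted with bounded gaps (and in the quasigeodesic formulation it suffices to bound the number of re-crossings, which is also visible from the figure). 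Everything else in the argument is routine hyperbolic geometry.
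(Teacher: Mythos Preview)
Your overall strategy---show $\tilde t$ is a uniform quasi-geodesic by decomposing it along the pants curves $C_i$---is exactly the paper's approach. The paper cuts $\Omega$ along the blue curves of Figure~\ref{Tembedded} into a four-holed sphere $V$ (containing $\infty$) and three-holed spheres $U_i$; any train path then decomposes as a concatenation $\ldots t_{-1}t_0t_1\ldots$ of segments $t_j$, each lying in one pants piece $W_j$ with endpoints on $\partial W_j$. The paper then verifies \emph{by direct enumeration} (at most $17$ train paths through $T^*\cap V$ and $3$ through each $T^*\cap U_i$) the single crucial fact: \emph{no such segment $t_j$ is homotopic into $\partial W_j$}. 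This immediately gives that consecutive lifted pants curves crossed by $\tilde t$ are distinct; with a uniform upper bound $\kappa$ on the length of each $t_j$ and a uniform lower bound $\eta$ on distances between distinct lifts, the quasi-geodesic estimate follows at once.

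Your execution of this key step is where the proposal falters. Your claim in the final paragraph---that ``a train path leaving region $i$ across $C_i$ cannot re-cross $C_i$ without first crossing $C_{i-1}$ or $C_{i+1}$''---is false as stated: the paper explicitly exhibits eight train paths through $V$ that enter and exit through the \emph{same} boundary component. What makes the argument work is not that such segments are absent, but that they are \emph{essential} (not boundary-parallel), so that in $\tilde\Omega$ they connect \emph{different} lifts of that boundary curve. Your item (i), about individual arcs not cutting off disks, does not establish this, since a segment $t_j$ may traverse several branches. And your item (ii), invoking density via Corollary~\ref{T*props} and Lemma~\ref{halfleaves} to get cofinal crossings, is unnecessary: the quasi-geodesic estimate is purely local and holds uniformly for all train paths regardless of their global dynamics. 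If you replace the hand-waving in the last paragraph with the direct check that each pants-region train path segment is essential, your argument becomes the paper's.
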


Letting $\tilde{t}^+$ and $\tilde{t}^-$ be the endpoints of $\tilde{t}$ on $\partial \tilde{\Omega}$, there is a unique geodesic $[\tilde{t}^+,\tilde{t}^-]$ in $\tilde{\Omega}$ from one endpoint to the other. We define $\tilde{\Lambda}$ to be the union of all such geodesics $[\tilde{t}^+,\tilde{t}^-]$, where $t$ ranges over train paths in $\TP(T^*,w^*)$ and $\tilde{t}$ ranges over the lifts of $t$. We define $\Lambda$ to be the image of $\tilde{\Lambda}$ in $\Omega$.

\begin{lemma}
\label{lem:lamclosed}
The sets $\tilde{\Lambda}$ and $\Lambda$ are closed. Hence, they are geodesic laminations in $\tilde{\Omega}$ and $\Omega$, respectively.
\end{lemma}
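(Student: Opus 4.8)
The plan is to show that $\tilde\Lambda$ is closed; closedness of $\Lambda$ then follows, since $\tilde\Lambda$ is $\pi_1(\Omega)$-invariant by construction, $\Lambda$ is its image under the covering $\tilde\Omega\to\Omega$, and the image of a closed invariant set under a covering map is closed. Granting this, $\tilde\Lambda$ and $\Lambda$ are geodesic laminations because their leaves are pairwise disjoint in $\tilde\Omega$: the train paths of $\TP(T^*,w^*)$ are realized by pairwise disjoint (or coinciding) leaves of a fibered carrying neighborhood $N(T^*)$ of $T^*$ in $\Omega$, and disjoint properly embedded arcs in $\tilde\Omega$ with distinct pairs of ideal endpoints straighten to disjoint geodesics (to a common geodesic when the endpoint pairs coincide). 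So the real content is the closedness of $\tilde\Lambda$.

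First I would fix a closed fibered neighborhood $N(T^*)$, foliated by ties, which the ties cut into compact \emph{branch rectangles} $B_\beta$, one for each branch $\beta$ of $T^*$; let $\tilde N$ be its preimage in $\tilde\Omega$ and let $\{\tilde\sigma\}$, $\{\tilde B_\beta\}$ be the resulting locally finite families of lifted ties and lifted branch rectangles. Each $t\in\TP(T^*,w^*)$ is carried as a proper arc in $N(T^*)$ transverse to the ties, running through the bi-infinite sequence of branch rectangles indexed by the branches of $t$. The proof of Lemma \ref{lem:straighten} in Section \ref{appendix} exhibits every such carried arc as a $(K,C)$-quasigeodesic with constants independent of $t$; so by the Morse lemma the geodesic $[\tilde t^+,\tilde t^-]$ lies within a uniform Hausdorff distance $R$ of the lifted arc, hence inside $\mathcal N_R(\tilde N)$, and, using the same efficiency estimates, it crosses exactly the same bi-infinite sequence of lifted ties as the lifted arc. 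In this way each geodesic of $\tilde\Lambda$ acquires a well-defined bi-infinite \emph{tie itinerary}, which is the lift of a train path in $\TP(T^*,w^*)$, and the geodesic is recovered from its itinerary as $[\tilde s^+,\tilde s^-]$ with $\tilde s$ the lift of the leaf of $G^*$ carrying that itinerary.

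Now suppose $g_n=[\tilde t_n^+,\tilde t_n^-]\in\tilde\Lambda$ and $g_n\to g$; after applying deck transformations I may assume all $g_n$ and $g$ meet a fixed compact set $K_0$ (which also forces $g$ to be an honest geodesic with distinct ideal endpoints). Since each $g_n$ lies in the closed set $\mathcal N_R(\tilde N)$, so does $g$, and since only finitely many lifted branch rectangles meet $\mathcal N_R(K_0)$, after passing to a subsequence all $g_n$ and $g$ run through a single lifted branch rectangle $\tilde B_{\beta_0}$. Because $g_n\to g$ uniformly on compact subsets of $\tilde\Omega$ and the lifted ties are locally finite, for each $k$ the length-$2k$ central block of the tie itinerary of $g_n$ stabilizes, as $n\to\infty$, to that of $g$; hence $g$ has a well-defined bi-infinite tie itinerary, which is a train path $s$ on $T^*$ whose finite central blocks agree with those of $t_n\in\TP(T^*,w^*)$ for $n$ large. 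To see $s\in\TP(T^*,w^*)$: fixing a reference rectangle and a finite central block $w$ of $s$, the heights in that rectangle which extend to a leaf segment of $G^*$ with combinatorics $w$ form a closed interval, nonempty because it contains the height of the leaf realizing $t_n$ once $n$ is large; these intervals are nested and decreasing in $w$, so their intersection is nonempty and any of its points is the initial height of a leaf of $G^*$ with itinerary $s$. Finally, taking the lift $\tilde s$ of this leaf through $\tilde B_{\beta_0}$, the arcs $\tilde s$ and $g$ thread the same bi-infinite sequence of uniformly sized lifted branch rectangles, so they stay a bounded distance apart; thus $\tilde s^\pm=g^\pm$ and $g=[\tilde s^+,\tilde s^-]\in\tilde\Lambda$, proving $\tilde\Lambda$ (and hence $\Lambda$) closed.

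The main obstacle I anticipate is the bridge between hyperbolic geometry and combinatorics: controlling the geodesic $[\tilde t^+,\tilde t^-]$ closely enough by its carried arc to determine precisely which lifted ties it meets --- so that convergence of geodesics genuinely forces convergence of tie itineraries --- and then guaranteeing that the limiting itinerary lies in $\TP(T^*,w^*)$ rather than merely in the (larger) set of all train paths combinatorially carried by $T^*$. That last point is exactly the nested-interval realizability argument above, and it is where the specific weight system $w^*$, rather than just the underlying graph $T^*$, is needed.
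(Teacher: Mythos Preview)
Your proposal is correct and is precisely the standard argument the paper has in mind: the paper does not give a proof at all, but cites \cite[Section 1.8]{bonahon} and says the result follows from Lemma \ref{lem:straighten} together with local finiteness of the embedding of $T^*$. What you have written is a careful unwinding of that Bonahon-style argument---uniform quasi-geodesicity plus the Morse lemma to keep geodesics near the fibered neighborhood, local finiteness to get itinerary stabilization, and a compactness/nested-interval argument in the tie space to realize the limiting itinerary by an actual leaf of $G^*$.

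One point worth highlighting: you correctly isolate the only nonstandard wrinkle, namely that the limit itinerary must lie in $\TP(T^*,w^*)$ (the train paths \emph{realized by leaves of} $G^*$) and not merely in the set of all combinatorial train paths on $T^*$. In the finite-type setting these coincide for a recurrent track, but here they need not, and your nested-interval argument is exactly what is needed. An equivalent and slightly slicker phrasing: the heights in a fixed rectangle $R(\beta_0)$ of the leaves realizing $t_n$ lie in the compact interval $[0,w^*(\beta_0)]$, so a subsequential limit height exists, and the leaf of $G^*$ at that height has the limiting itinerary. Either way, this is the content the paper outsources to Bonahon.
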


The proof of Lemma \ref{lem:lamclosed} follows from Lemma \ref{lem:straighten} and the fact that the embedding of $T^*$ on $\Omega$ is locally finite. The proof is essentially the same as the one given in \cite[Section 1.8]{bonahon} ``Geodesic laminations weakly carried by train tracks.'' Therefore we omit it. Furthermore, a leaf $l$ of $\Lambda$ defined by the train path $t$ accumulates onto the leaf $l'$ of $\Lambda$ defined by the train path $t'$ if and only if $t$ accumulates onto $t'$ in the sense of Section \ref{sec:lambg}. We refer the reader again to \cite[Section 1.8]{bonahon}.

Theorem \ref{lamthm} is now a quick corollary. Recall the statement:

\lamthm*

\begin{proof}[Proof of Theorem \ref{lamthm}]
The leaf $m$ of $\Lambda$ corresponds to the train path $t_0$ described in Lemma \ref{pathimage} and is clearly not dense. Since the leaf space of $\Lambda$ is identical to $\TP(T^*,w^*)$, Points (1), (2), and (3) follow from Corollary \ref{T*props}.

The fact that all but four half leaves are dense in $\Lambda$ follows from Lemma \ref{halfleaves}. Clearly neither half leaf of $m$ is dense. On the other hand, consider the two leaves of $\Lambda$ bounding the bigon with ends $e^+$ and $e^-$. The two leaves are both asymptotic to $e^+$ on one end and to $e^-$ on the other end. We define $e^+$ to be the end such that either half leaf on the boundary of the bigon asymptotic to $e^+$ corresponds to a train path $\zeta(b_0)\zeta(b_1)\zeta(b_2) \ldots$. Such a train path contains subtrain paths \[f_n^* f_{n-1}^* \ldots f_0^* \ldots f_{-n+1}^* f_{-n}^*\] for $n$ arbitrarily large, proving that the corresponding leaf accumulates onto $m$. However, the train path doesn't accumulate onto any other train path, since every other train path traverses every single branch of $T^*$ whereas the train path $\zeta(b_0)\zeta(b_1)\zeta(b_2) \ldots$ traverses only branches $f_i^*$ and $h_i^*$. This proves both (4) and (5).

On the other hand, the two half leaves of $\Lambda$ asymptotic to $e^-$ are dense, again by Lemma \ref{halfleaves}. This proves (6).
\end{proof}

\section{Correspondence of the constructions}
\label{sec:correspondence}

In this section we prove that the 2-filling ray $\gamma$ constructed in Section \ref{sec:tracks} in fact has the form of one of the 2-filling rays constructed in Section \ref{sec:straightforward}. In particular, the lamination $\Lambda$ is the $\omega$-limit set of one of the 2-filling rays constructed in Section \ref{sec:straightforward}. For this purpose, we explicitly demonstrate that the ray $\tau$ of Theorem \ref{lamthm} is two-side approachable.

Consider the embedding of $T$ in the plane minus a Cantor set $\Omega$. Define a sequence of loops $r_i$ in $\Omega$ approaching $\tau$ from the right as follows. The loop $r_1$ is homotopic to the (non-simple) path given by traveling within the bigon containing $\infty$ to the branch $b_{-1}$ of $T$, traversing the branch $d_0$ \textit{clockwise}, traversing $b_{-1}$ backwards to the bigon containing $\infty$, and then returning to $\infty$ within the bigon. Similarly, for $i>1$, $r_i$ is homotopic to the path which
\begin{itemize}
\item travels from $\infty$ to the branch $b_0$, staying within the bigon of $T$ containing $\infty$,
\item traverses the branches $b_0,b_1,\ldots,b_{2i-3},c_{2i-2}$,
\item traverses the branch $d_{2i-2}$ in the \textit{clockwise} direction,
\item traverses the branches $c_{2i-2},b_{2i-3},\ldots,b_1,b_0$ backwards to the bigon containing $\infty$,
\item and finally returns to $\infty$ within the bigon.
\end{itemize}

Similarly, we define a sequence of loops $l_i$ approaching $\tau$ from the left as follows. The loop $l_i$ is homotopic to the path which
\begin{itemize}
\item travels from $\infty$ to the branch $b_0$, staying within the bigon of $T$ containing $\infty$,
\item traverses the branches $b_0,b_1,\ldots, b_{2i-2},c_{2i-1}$,
\item traverses the branch $d_{2i-1}$ in the \textit{counterclockwise} direction,
\item traverses the branches $c_{2i-1},b_{2i-2},\ldots,b_1,b_0$ backwards to the bigon containing $\infty$,
\item and finally returns to $\infty$ within the bigon.
\end{itemize}

It is easy to see that $\tau$ is two-side approachable using the loops $r_i$ and $l_i$. We claim that $\gamma$ is homotopic to the concatenation \[r_1 \cdot l_1 \cdot \overline{r_1} \cdot r_2 \cdot r_1 \cdot \overline{l_1} \cdot \overline{r_1} \cdot l_2 \cdot r_1 \cdot l_1 \cdot \overline{r_1} \cdot \overline{r_2} \cdot r_1 \cdot \overline{l_1} \cdot \overline{r_1} \cdot r_3\cdots\] which is a fixed word of the substitution \[f:r_1\mapsto r_1 \cdot l_1 \cdot \overline{r_1} \cdot r_2 \cdot r_1 \cdot \overline{l_1}\cdot \overline{r_1}, \ \ \ f:r_i\mapsto r_{i+1} \text{ for } i\geq 2, \ \ \ f:l_i\mapsto l_{i+1} \text{ for } i\geq 1\] on the infinite alphabet $\{r_1,l_1,r_2,l_2,\ldots\}$. The claim follows by studying the singular leaf of the horizontal foliation of $\Sigma$ through the point $(0,\frac{1}{3})$. This leaf is fixed by the pseudo-Anosov $\phi$ and hence its trajectory can be determined by iterating $\phi$ on the horizontal line segment $[0,1]\times \{\frac{1}{3}\}$. One sees that a short horizontal line segment traveling around $p_{2k+1}$ is sent to one traveling around $p_{2k+3}$ and a horizontal line segment traveling around $p_{2k}$ for $k\geq 1$ is sent to one traveling around $p_{2k+2}$. On the other hand, a horizontal line segment traveling across the square to the right, then around $p_0$, then across the square to the left is sent to one of the following form. The image of the segment first travels around $p_0$, then $p_1$, then $p_0$ again, then $p_2$, then $p_0$, then $p_1$, and then $p_0$ again, traversing the square a total of eight times. The claim follows from these facts.

Now, the 2-filling ray constructed using the sequences of loops $\{r_i\}$ and $\{l_i\}$ and the numbers $p_k=q_k=k$ is the limit of the sequence of loops $\{\alpha_k\}$ where $\alpha_1=r_1$, $\alpha_{2k}=\alpha_{2k-1}\cdot l_k \cdot \overline{\alpha_{2k-1}}$, and $\alpha_{2k-1}=\alpha_{2k-2} \cdot r_k \cdot \overline{\alpha_{2k-2}}$. Note that \[\alpha_3=r_1 \cdot l_1 \cdot \overline{r_1} \cdot r_2 \cdot r_1 \cdot \overline{l_1} \cdot \overline{r_1}=f(\alpha_1).\] An easy induction establishes immediately that $f(\alpha_k)=\alpha_{k+2}$ for all $k$. Thus, the sequence $\{\alpha_1,\alpha_3,\alpha_5,\ldots\}=\{\alpha_1,f(\alpha_1),f^2(\alpha_1),\ldots\}$ approaches the limiting concatenation \[r_1 \cdot l_1 \cdot \overline{r_1} \cdot r_2 \cdot r_1 \cdot \overline{l_1} \cdot \overline{r_1} \cdot l_2 \cdot r_1 \cdot l_1 \cdot \overline{r_1} \cdot \overline{r_2} \cdot r_1 \cdot \overline{l_1} \cdot \overline{r_1} \cdot r_3\cdots\] as claimed.

\section{Cliques with multiple non-filling rays}\label{sec:mnf}

In this section we prove the following theorem:

\begin{thm}
Let $\Psi_n$ be the surface with a single planar end and exactly $2n$ non-planar ends. Then $\mathcal{R}(\Psi_n)$ contains a clique containing exactly $n$ 2-filling rays and $n$ non-filling rays.
\end{thm}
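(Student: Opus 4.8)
The plan is to build $\Psi_n$ with an order-$n$ rotational symmetry $\sigma$ fixing the planar end $\infty$, cyclically permuting $n$ ``pockets'', each of which is a subsurface with exactly two non-planar ends (so the $n$ pockets account for all $2n$ non-planar ends), and then to re-run the construction of Section \ref{sec:straightforward} with the \emph{single} two-side approachable long ray $\tau$ replaced by $n$ of them, $\tau_1,\dots,\tau_n$, one heading into each pocket. The key structural feature distinguishing this from the planar case is that, with enough genus, the complement $\Psi_n\setminus(\tau_1\cup\cdots\cup\tau_n)$ is \emph{connected}, so a single ray is free to wind close to every $\tau_i$ in turn while remaining simple and disjoint from all of them; this is exactly where the $2n$ non-planar ends are used.

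\textbf{Set-up.} First I would fix a complete hyperbolic metric on $\Psi_n$ invariant under an order-$n$ isometry $\sigma$, pick a two-side approachable long ray $\tau_1$ from $\infty$ whose limit set is a lamination supported in (the closure of) one pocket --- for instance a geodesic joining the two non-planar ends of that pocket, as in the Example of Section \ref{sec: two-side} --- and set $\tau_i=\sigma^{i-1}\tau_1$. One arranges $\tau_1,\dots,\tau_n$ to be pairwise disjoint and cyclically ordered on $S^1_C$, with $\tau_i,\tau_{i+1}$ consecutive and bounding between them a planar ``slot''. Applying Lemma \ref{lemma: strengthen two-side approachable} $\sigma$-equivariantly --- legitimate because the surgeries there take place inside the disjoint slots --- yields disjoint loop sequences $\ell^{(i)}_m,r^{(i)}_m$ converging to $\tau_i$ and satisfying (\ref{eqn: order of loops}) near each $\tau_i$, chosen so that for every $i$ the right-hand loops $r^{(i)}_m$ of $\tau_i$ and the left-hand loops $\ell^{(i+1)}_m$ of $\tau_{i+1}$ have mutually disjoint interiors exhausting the slot between them.

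\textbf{The zig-zag and its parallel copies.} Running the iterative construction of Section \ref{sec:straightforward} (case $n=1$) between each consecutive pair $\tau_i,\tau_{i+1}$, using $r^{(i)}_m$ as the right sequence and $\ell^{(i+1)}_m$ as the left sequence, and splicing the $n$ pieces together near $\infty$, produces a single ray which successively hugs $\tau_1$ from the right, $\tau_2$ from the left, $\tau_2$ from the right, $\tau_3$ from the left, $\dots$ cyclically, each pass going deeper. The monotonicity and convergence estimates of Lemmas \ref{lemma: monotonicity}--\ref{lemma: convergence} go through with ``$n$ flanking rays'' in place of one, so after choosing the loop indices large enough the sequence of loops converges to a simple ray $\gamma_1$ disjoint from all of $\tau_1,\dots,\tau_n$. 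As in Theorem \ref{thm: construction for two side approachable}, where one produces $n$ mutually disjoint $2$-filling rays, I would then take $n$ parallel strands $\gamma_1,\dots,\gamma_n$, pairwise disjoint and each disjoint from every $\tau_j$. To finish: (i) each $\tau_j$ is non-filling, being disjoint from $\ell^{(j)}_1$; (ii) each $\gamma_i$ is $2$-filling --- any ray $\alpha\notin\{\tau_j,\tau_{j+1},\gamma_i\}$ whose initial lift from a fixed $\widetilde\infty$ lands in the region between $\tilde\tau_j$ and $\tilde\tau_{j+1}$ is crossed by $\gamma_i$, by the intersection argument in the proof of Theorem \ref{thm: construction for two side approachable} (the lifts of $\gamma_i$ come arbitrarily close to the initial segments of $\tau_j$ and of the flanking loops on \emph{both} sides), and since every loop and short ray has such an initial lift, it is crossed; moreover $\gamma_i$ spirals onto a lamination whose limit set properly contains that of each $\tau_j$ (Proposition \ref{prop: limit set not minimal}), so it is not high-filling and is genuinely $2$-filling; (iii) the same intersection argument shows nothing outside $K=\{\tau_1,\dots,\tau_n,\gamma_1,\dots,\gamma_n\}$ is disjoint from $\gamma_1$, so by Lemma \ref{lem:cliques} $K$ is exactly the closed star of each $\gamma_i$ and consists of exactly $n$ $2$-filling rays and $n$ non-filling rays.

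\textbf{Main obstacle.} The crux is the verification that the spliced zig-zag $\gamma_i$ is simultaneously (a) \emph{simple}, (b) disjoint from all $n$ of the $\tau_j$'s and from the other $\gamma_k$'s, and (c) \emph{forced} to come arbitrarily close to the starting segment of \emph{every} $\tau_j$ on both sides --- so that a short ray to any of the $2n$ non-planar ends, or a loop around a handle near some $\tau_j$, must cross $\gamma_i$. Making a single simple ray wind close to all $n$ of the $\tau_j$'s in turn, and fitting $n$ disjoint parallel copies alongside, is precisely what forces the complement of $\tau_1\cup\cdots\cup\tau_n$ to be connected and hence forces the $2n$ non-planar ends; and re-deriving the estimates of Section \ref{sec:straightforward} with $n$ flanking rays instead of one requires careful bookkeeping of the cyclic indexing. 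Once the configuration and these estimates are in place, the remaining clique properties follow as above.
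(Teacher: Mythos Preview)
Your approach is genuinely different from the paper's. The paper does \emph{not} adapt the combinatorial zig-zag construction of Section~\ref{sec:straightforward}; instead it follows the train-track/lamination route of Sections~\ref{sec:tracks}--\ref{sec:laminations}. Concretely, it builds a weighted train track $(T_1,w_1)$ whose associated foliation is governed by an explicit generalized pseudo-Anosov, then takes the degree-$n$ cyclic cover $T_n\to T_1$ branched over the bigon, embeds $T_n$ in $\Psi_n$, and straightens to a lamination $\Lambda_n$. The complementary region containing the planar end is then automatically a $2n$-gon, and the alternating ``dense half-leaf / non-dense half-leaf'' dichotomy on the $2n$ sides (lifted from the dichotomy on the bigon in $G_1$) yields exactly $n$ rays spiraling onto all of $\Lambda_n$ and $n$ rays spiraling onto a single proper leaf. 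What the cyclic-cover method buys is that the hard dynamical work (density of singular leaves, unique ergodicity up to atoms) is done once on the quotient and then transported upstairs; the $2n$-gon structure and the exact count $n+n$ fall out of the covering combinatorics rather than from delicate order bookkeeping on $S^1_C$.

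Your outline is plausible in spirit, but as written it has real gaps. First, the concatenation machinery of Lemmas~\ref{lemma: concat at intersection}--\ref{lemma: concat at infty} and the monotonicity lemma~\ref{lemma: monotonicity} are all stated relative to a \emph{single} cut point $\tau$ and the resulting total order on $I_\tau$; with $n$ cut points $\tau_1,\dots,\tau_n$ you have $n$ arcs and only a cyclic order, and the ``unique concatenation at $\infty$ locally disjoint from $\tau$'' of Lemma~\ref{lemma: concat at infty} no longer has a single canonical meaning --- you must specify, and then re-prove, which slot each concatenation takes place in and why the resulting loop stays disjoint from \emph{all} the $\tau_j$ simultaneously. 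Second, the step ``take $n$ parallel strands $\gamma_1,\dots,\gamma_n$'' borrows the mechanism of Theorem~\ref{thm: construction for two side approachable}, but there the $n$ strands are nested between a single $\tau$ and interact with a single pair of loop sequences; here each strand must thread all $n$ slots, and you have not said how the $n$ strands are ordered relative to one another in each slot nor why the limit-of-lifts argument (Figures~\ref{fig: limit}--\ref{fig: limit_2}) pins down the star of $\gamma_i$ as \emph{exactly} $\{\tau_1,\dots,\tau_n,\gamma_1,\dots,\gamma_n\}$ rather than something larger. Third, your appeal to Proposition~\ref{prop: limit set not minimal} to conclude $\gamma_i$ is not high-filling is circular: that proposition \emph{assumes} $\gamma$ is $2$-filling. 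The correct (and easy) argument is the one you already have in (i)--(ii): $\gamma_i$ is disjoint from $\tau_j$, and $\tau_j$ is disjoint from a loop, so $\gamma_i$ has distance $2$ to a loop in $\Ray$.
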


To prove the theorem we will follow the methods of Sections \ref{sec:tracks}--\ref{sec:laminations}. Thus, we explicitly produce a lamination $\Lambda_n$ on $\Psi_n$ using a train track. The complementary region of $\Lambda_n$ containing the planar end $p$ will be a $2n$-gon. There are rays to each end of this $2n$-gon and we will show that $n$ of these rays are 2-filling and the other $n$ rays are not 2-filling.

As it turns out, the construction relies in an essential way on the fact that $\Psi_n$ has infinite genus. At the present time, we do not know a way to get around this. Since the methods of this section are similar to the methods of Sections \ref{sec:tracks}--\ref{sec:laminations}, we will only give the construction and sketch the proofs that the claimed properties of the construction hold.

First we define an infinite train track $T_1$ and a system of weights $w_1$ on $T_1$. See Figure \ref{fig:mnftrack}. The weights of various of the branches are displayed and the weights of the remaining branches of $T_1$ may be inferred from these using the switch conditions. Note that there are infinitely many branches which are loops based at trivalent switches and these have weights $\frac{1}{8\cdot 4^k}$ for $k\geq 0$. There are also infinitely many branches which are loops based at quadrivalent switches and these have weights $\frac{1}{4^k}$ for $k\geq 1$.

\begin{figure}[h]
\def\svgwidth{\textwidth}
\input{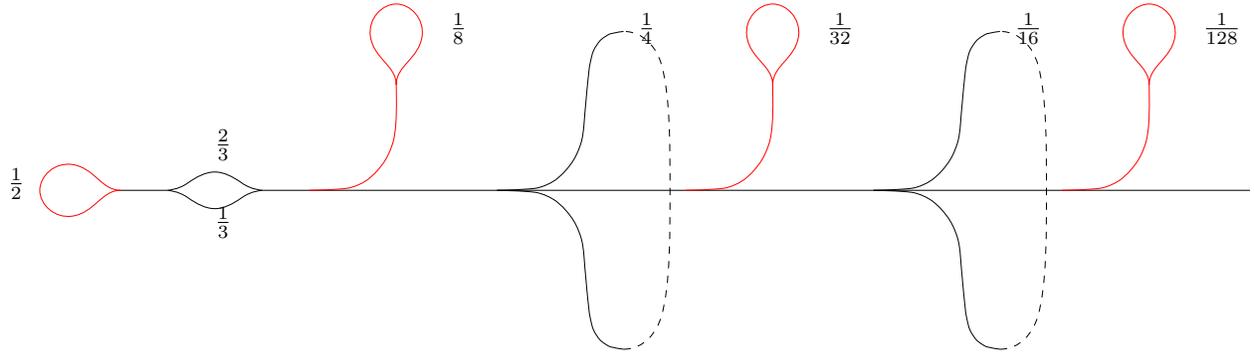}

\caption{The track $T_1$ with weights $w$ given by labels adjacent to branches.}
\label{fig:mnftrack}
\end{figure}

The track $T_1$ has a bigon which we denote by $c$ consisting of the branches of weights $\frac{1}{3}$ and $\frac{2}{3}$. For any $n\geq 2$, there is a cyclic cover $\pi_n:T_n\to T_1$ with the property that $c$ has a unique lift $\tilde{c}_n$ to $T_n$ with respect to which the restriction $\pi_n|\tilde{c}_n:\tilde{c}_n\to c$ is a degree $n$ cover of the circle $c$. See Figure \ref{fig:covertracks} for a picture of the tracks $T_2$ and $T_3$.

\begin{figure}[h]
\begin{tabular}{c}
\def\svgwidth{0.6\textwidth}
\input{mnfcovertrackdeg2.pdf_tex}

\\

\def\svgwidth{0.6\textwidth}

\end{tabular}

\caption{The tracks $T_2$ and $T_3$.}
\label{fig:covertracks}
\end{figure}

The tracks $T_n$ carry weights $w_n$ induced by pulling back the weights $w_1$ on $T_1$. Namely, $w_n(b)=w_1(\pi_n(b))$ for any branch $b$ of $T_n$. For each $n$, the weighted train track $(T_n,w_n)$ defines a union of foliated rectangles $G_n$. These foliations are pictured in Figures \ref{fig:mnffol} and \ref{fig:coverfols}. Please note that in these figures a number of rectangles have been contracted to vertical line segments for ease of drawing. This has no effect on the dynamics of the foliations.

\begin{figure}[h]

\def\svgwidth{0.6\textwidth}
\begingroup%
  \makeatletter%
  \providecommand\color[2][]{%
    \errmessage{(Inkscape) Color is used for the text in Inkscape, but the package 'color.sty' is not loaded}%
    \renewcommand\color[2][]{}%
  }%
  \providecommand\transparent[1]{%
    \errmessage{(Inkscape) Transparency is used (non-zero) for the text in Inkscape, but the package 'transparent.sty' is not loaded}%
    \renewcommand\transparent[1]{}%
  }%
  \providecommand\rotatebox[2]{#2}%
  \newcommand*\fsize{\dimexpr\f@size pt\relax}%
  \newcommand*\lineheight[1]{\fontsize{\fsize}{#1\fsize}\selectfont}%
  \ifx\svgwidth\undefined%
    \setlength{\unitlength}{2403.42917615bp}%
    \ifx\svgscale\undefined%
      \relax%
    \else%
      \setlength{\unitlength}{\unitlength * \real{\svgscale}}%
    \fi%
  \else%
    \setlength{\unitlength}{\svgwidth}%
  \fi%
  \global\let\svgwidth\undefined%
  \global\let\svgscale\undefined%
  \makeatother%
  \begin{picture}(1,0.34803004)%
    \lineheight{1}%
    \setlength\tabcolsep{0pt}%
    \put(0,0){\includegraphics[width=\unitlength,page=1]{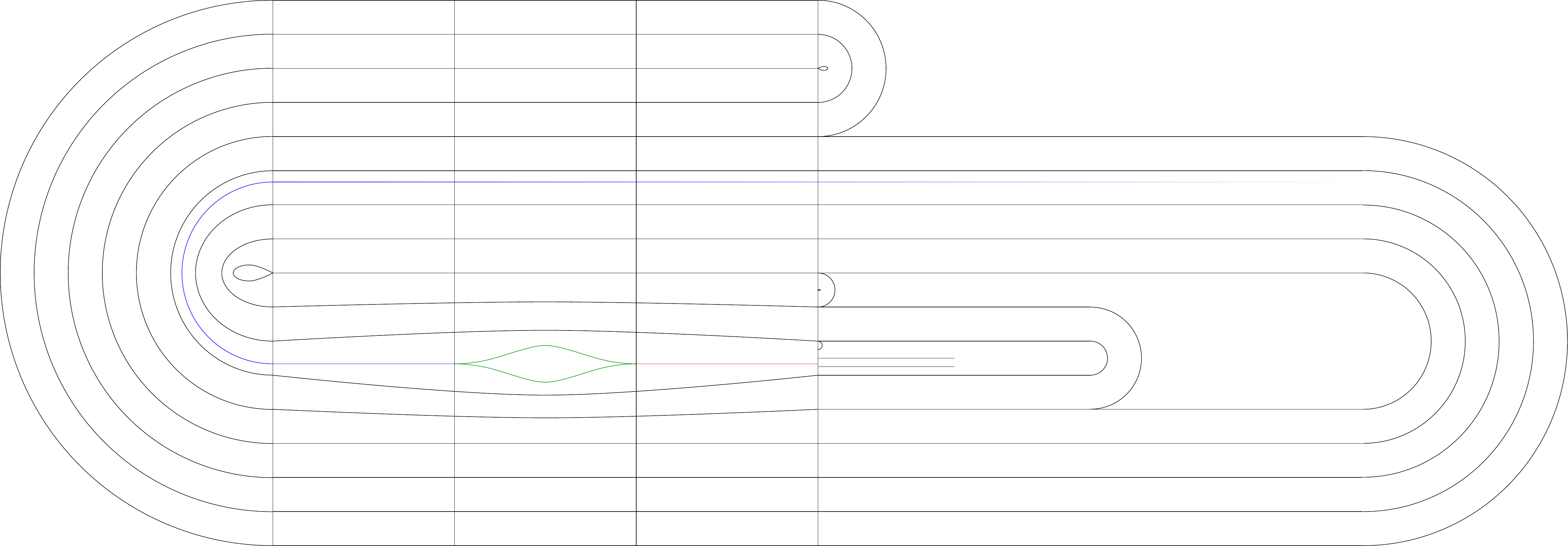}}%
    \put(0.21860933,0.12588352){\color[rgb]{0,0,1}\makebox(0,0)[lt]{\lineheight{1.25}\smash{\begin{tabular}[t]{l}$\alpha$\end{tabular}}}}%
    \put(0.42362568,0.12588352){\color[rgb]{1,0,0}\makebox(0,0)[lt]{\lineheight{1.25}\smash{\begin{tabular}[t]{l}$\beta$\end{tabular}}}}%
    \put(0,0){\includegraphics[width=\unitlength,page=2]{mnffoliationfromtrack.pdf}}%
  \end{picture}%
\endgroup%

\caption{The foliation $G_1$.}
\label{fig:mnffol}
\end{figure}

\begin{figure}[h]

\begin{tabular}{c c}
\def\svgwidth{0.42\textwidth}
\input{mnfcoverfoliationdeg2.pdf_tex}

&

\def\svgwidth{0.58\textwidth}
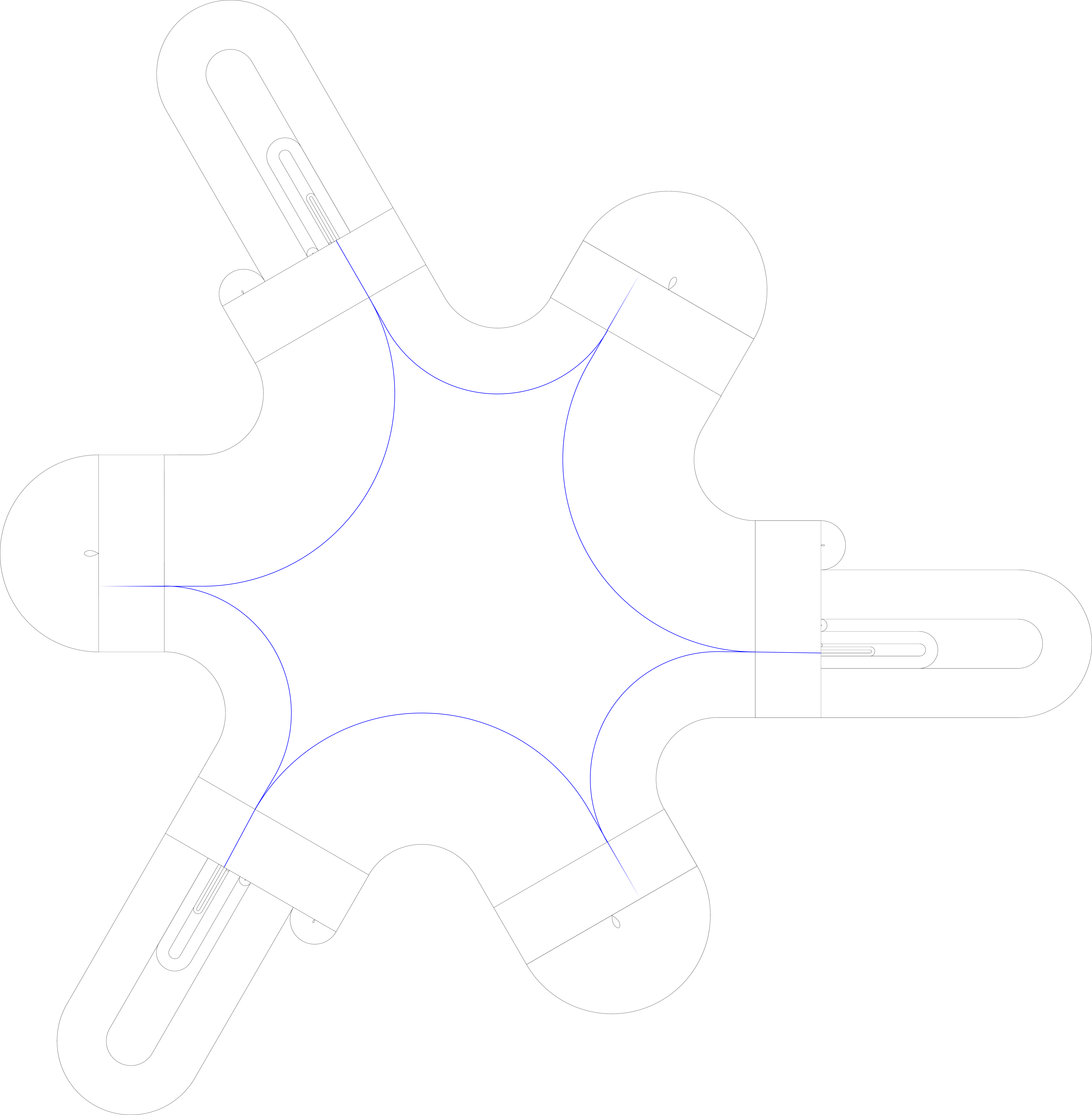
\end{tabular}

\caption{The foliations $G_2$ and $G_3$.}
\label{fig:coverfols}
\end{figure}

We will first study the dynamics of the foliation $G_1$ and then leverage this to study the foliations $G_n$. First we claim that every half leaf of $G_1$ is dense except for a single singular half leaf. For this, we introduce the pseudo-Anosov homeomorphism shown in Figure \ref{fig:mnfpA}. As before, we see that the fixed horizontal foliation of this homeomorphism has the same dynamics as the foliation $G_1$. We use the pseudo-Anosov $\phi$ to study the dynamics of $G_1$.

Analogously to Lemma \ref{singdense}, we use the orbits of singularities of $\phi$ to prove that the union of singular leaves of $G_1$ is dense. By studying transverse measures, we prove, using Lemma \ref{transmeas} that every non-singular half-leaf of $G_1$ is dense. Moreover, as in Lemma \ref{lem:boundarypaths} we see that the singularities of $G_1$ are joined by saddle connections to each other in such a way that $\TP(T_1,w_1)$ contains three boundary paths. One of these boundary paths has both half leaves dense while exactly one half leaf of each of the other two boundary leaves is dense.

\begin{figure}[h]
\def\svgwidth{\textwidth}
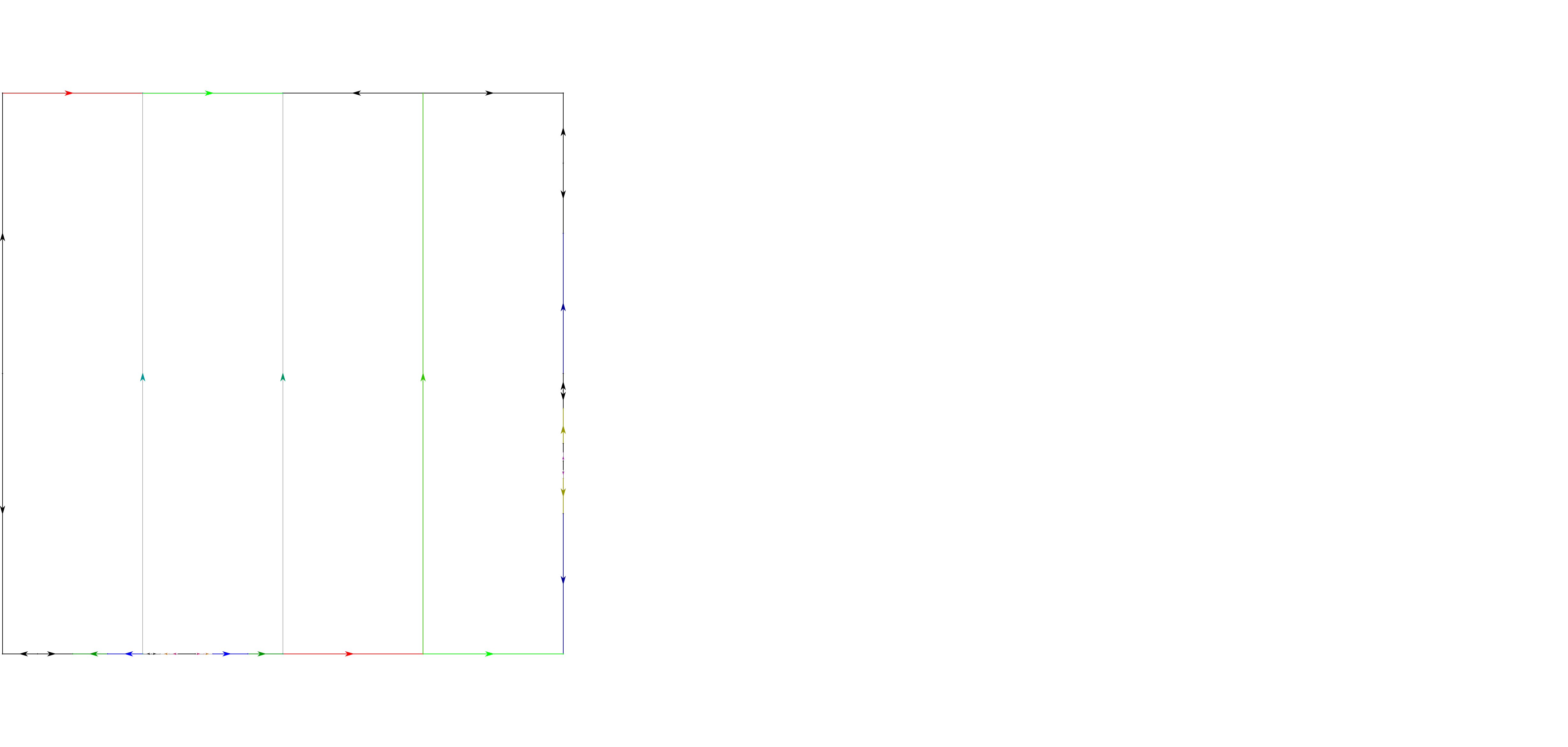

\caption{A pseudo-Anosov homeomorphism. Again, arrows based at a common vertex and pointing in opposite directions are identified by a rotation by $\pi$ about the common vertex.}
\label{fig:mnfpA}
\end{figure}

We now leverage these claims to prove the following result:

\begin{thm}
\label{thm:covertrack}
Every train path $\TP(T_n,w_n)$ is dense. Furthermore, every half train path of $\TP(T_n,w_n)$ is dense except for $2n$ half train paths.
\end{thm}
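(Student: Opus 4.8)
The plan is to bootstrap everything from the dynamical properties of the base foliation $G_1$. Those properties are obtained exactly as in Sections~\ref{sec:tracks}--\ref{sec:laminations}: the pseudo-Anosov homeomorphism of Figure~\ref{fig:mnfpA} has a finite Markov partition $\{A,B,C,D\}$, and the arguments of Lemmas~\ref{singdense}, \ref{transmeas}, \ref{halfleaves} and \ref{lem:boundarypaths} carry over verbatim to show that every train path of $\TP(T_1,w_1)$ is dense, that every half train path is dense except for a bounded number of them associated with the sides of the complementary region containing the planar end, and that $\TP(T_1,w_1)$ has exactly three boundary paths. Granting this base case, the content of Theorem~\ref{thm:covertrack} is entirely about transferring these statements through the cyclic covering $\pi_n\colon T_n\to T_1$. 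Since $\pi_n$ is a local homeomorphism of train tracks, it sends train paths to train paths and half train paths to half train paths, it induces a covering $\hat\pi_n\colon G_n\to G_1$ of foliated rectangle complexes which is an isometry on each rectangle and sends leaves to leaves, and a (half) leaf of $G_n$ is dense \emph{only if} its image in $G_1$ is dense (because $\hat\pi_n$ is surjective on rectangles, so a dense leaf upstairs meets every lift of every rectangle and hence its image meets every rectangle downstairs).

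The substantive direction is the converse: if $l$ is a leaf or half leaf of $G_n$ whose image $\hat\pi_n(l)$ is dense in $G_1$, then $l$ is dense in $G_n$. I would prove this in two steps. First, density of $\hat\pi_n(l)$ means in particular that it accumulates onto the boundary leaf of $G_1$, so the associated train path contains arbitrarily long sub–train paths spiralling into a neighborhood of the planar end (the analogues of the subpaths $b_0b_1\cdots b_m$ of Lemma~\ref{halfleaves}). The covering $\pi_n$ is precisely the cyclic cover associated with the loop around the planar end of $\Psi_1$, so that once such a spiral is long enough and lifted to $T_n$ it winds through all $n$ sheets of the cover; consequently $l$ meets every lift of every branch near the end and, by choosing the sheet, a prescribed lift. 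Second, away from the end $\hat\pi_n(l)$ already passes through every rectangle of $G_1$, so by positioning an occurrence of a rectangle $\hat\pi_n(R)$ of $G_1$ just after a spiral of controlled length — hence controlled sheet — we conclude that $l$ passes through $R$ itself, and more generally through every prescribed finite train path of $T_n$. This gives density of $l$. Feeding the base case through both implications yields that every train path of $\TP(T_n,w_n)$ is dense, and that the non-dense half train paths of $\TP(T_n,w_n)$ are exactly the lifts of the (finitely many) non-dense half train paths of $\TP(T_1,w_1)$; the count is $2n$ because the complementary region of $\Lambda_1$ containing the planar end lifts, through this cover, to the complementary $2n$-gon of $\Lambda_n$, whose $2n$ sides supply exactly $2n$ such half leaves.

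The main obstacle is the ``winding through all $n$ sheets'' step: making rigorous that a sufficiently long combinatorial spiral toward the planar end, when lifted, realizes the generating deck transformation of $\Z/n$. This requires a careful description of how the branches making up the boundary spiral of $T_1$ are cyclically arranged around the planar end and how that arrangement lifts to $T_n$ — concretely, that each ``turn'' of the spiral advances the sheet by a fixed amount, so that after enough turns every element of $\Z/n$ has been realized. Once this is in place, the remaining bookkeeping (identifying the three boundary paths of $\TP(T_1,w_1)$, lifting them, and matching the $2n$-gon complementary region) is routine and parallels the corresponding passages of Sections~\ref{sec:tracks}--\ref{sec:laminations}.
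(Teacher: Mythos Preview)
Your framework is right---reduce to the base case and transfer via the covering $\pi_n$---and the base case does go through as you describe. The gap is in your ``winding'' step. You claim that the subpaths analogous to $b_0b_1\cdots b_m$ of Lemma~\ref{halfleaves}, when lifted to $T_n$, wind through all $n$ sheets. But those subpaths run out along an \emph{arm} of $T_1$, toward a non-planar end; they do not circle the planar end. In the cover each arm lifts to $n$ disjoint arms (Figure~\ref{fig:covertracks}), so a path along an arm stays in a single sheet. What actually changes the sheet is traversing the bigon $c$, and density of $\alpha$ in $G_1$ does not by itself give you occurrences of $c$-traversals positioned so that a prescribed finite subpath lands in a prescribed sheet---yet that is exactly what your ``positioning an occurrence just after a spiral of controlled length'' argument would require.

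The paper handles this step by a different mechanism: a renormalization (unzipping) argument exploiting self-similarity. One unzips $G_n$ along a specific collection of saddle connections to obtain a foliation $G_n'$ isomorphic to $G_n$, with each rectangle replaced by a subrectangle of one quarter the height. A direct check (Figure~\ref{fig:unzipping}) shows that $\alpha_i$ enters the rectangle $R_{i+1}$, and under the isomorphism $G_n'\cong G_n$ this upgrades to: $\alpha_i$ enters the subrectangle $R_{i+1}'\subset R_{i+1}$ that meets $\alpha_{i+1}$. Iterating the unzipping shows that $\alpha_i$ accumulates onto $\alpha_{i+1}$, hence (cycling through the indices) onto every $\alpha_j$. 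Since the union $\alpha_1\cup\cdots\cup\alpha_n$ is dense---this much does follow from density downstairs, as you observe---each individual $\alpha_i$ is then dense, and every other half leaf follows because it accumulates onto some $\alpha_i$. The self-similarity under unzipping is what replaces your winding argument, and it is the point where the proof genuinely uses more than density of $\alpha$ in $G_1$.
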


As before, the argument of \cite[Theorem 12.1]{flp} shows that every half leaf of $G_1$ is dense except for a single ray which has an endpoint on the complementary bigon. The ray $\alpha$, pictured in Figure \ref{fig:mnffol} is also dense in $G_1$. On the other hand, the other ray beginning from an endpoint of the bigon, which we call $\beta$ is not dense.

We now consider the union of foliated rectangles $G_n$. The ray $\alpha$ has exactly $n$ lifts $\alpha_1,\ldots,\alpha_n$ to $G_n$, each with an endpoint on the complementary $2n$-gon to $G_n$. Similarly, $\beta$ has $n$ lifts $\beta_1,\ldots,\beta_n$. Since every half leaf of $G_1$ is dense except for $\beta$, we have in particular that every half leaf besides $\beta$ accumulates onto $\alpha$, and that $\alpha$ itself is dense. Transporting these facts to $G_n$, we see that the \textit{union} $\alpha_1\cup \ldots \cup \alpha_n$ is dense and moreover that every half leaf of $G_n$ not lying in $\{\beta_1,\ldots,\beta_n\}$ accumulates onto \textit{some} $\alpha_i$. If we can show that $\alpha_i$ accumulates onto $\alpha_j$ for each $i,j$, then we will have that each $\alpha_i$ is dense, and thus every half leaf of $G_n$ not lying in $\{\beta_1,\ldots,\beta_n\}$ is also dense.

\begin{figure}[h]
\def\svgwidth{\textwidth}
\begingroup%
  \makeatletter%
  \providecommand\color[2][]{%
    \errmessage{(Inkscape) Color is used for the text in Inkscape, but the package 'color.sty' is not loaded}%
    \renewcommand\color[2][]{}%
  }%
  \providecommand\transparent[1]{%
    \errmessage{(Inkscape) Transparency is used (non-zero) for the text in Inkscape, but the package 'transparent.sty' is not loaded}%
    \renewcommand\transparent[1]{}%
  }%
  \providecommand\rotatebox[2]{#2}%
  \newcommand*\fsize{\dimexpr\f@size pt\relax}%
  \newcommand*\lineheight[1]{\fontsize{\fsize}{#1\fsize}\selectfont}%
  \ifx\svgwidth\undefined%
    \setlength{\unitlength}{4632.15981791bp}%
    \ifx\svgscale\undefined%
      \relax%
    \else%
      \setlength{\unitlength}{\unitlength * \real{\svgscale}}%
    \fi%
  \else%
    \setlength{\unitlength}{\svgwidth}%
  \fi%
  \global\let\svgwidth\undefined%
  \global\let\svgscale\undefined%
  \makeatother%
  \begin{picture}(1,1.02085848)%
    \lineheight{1}%
    \setlength\tabcolsep{0pt}%
    \put(0,0){\includegraphics[width=\unitlength,page=1]{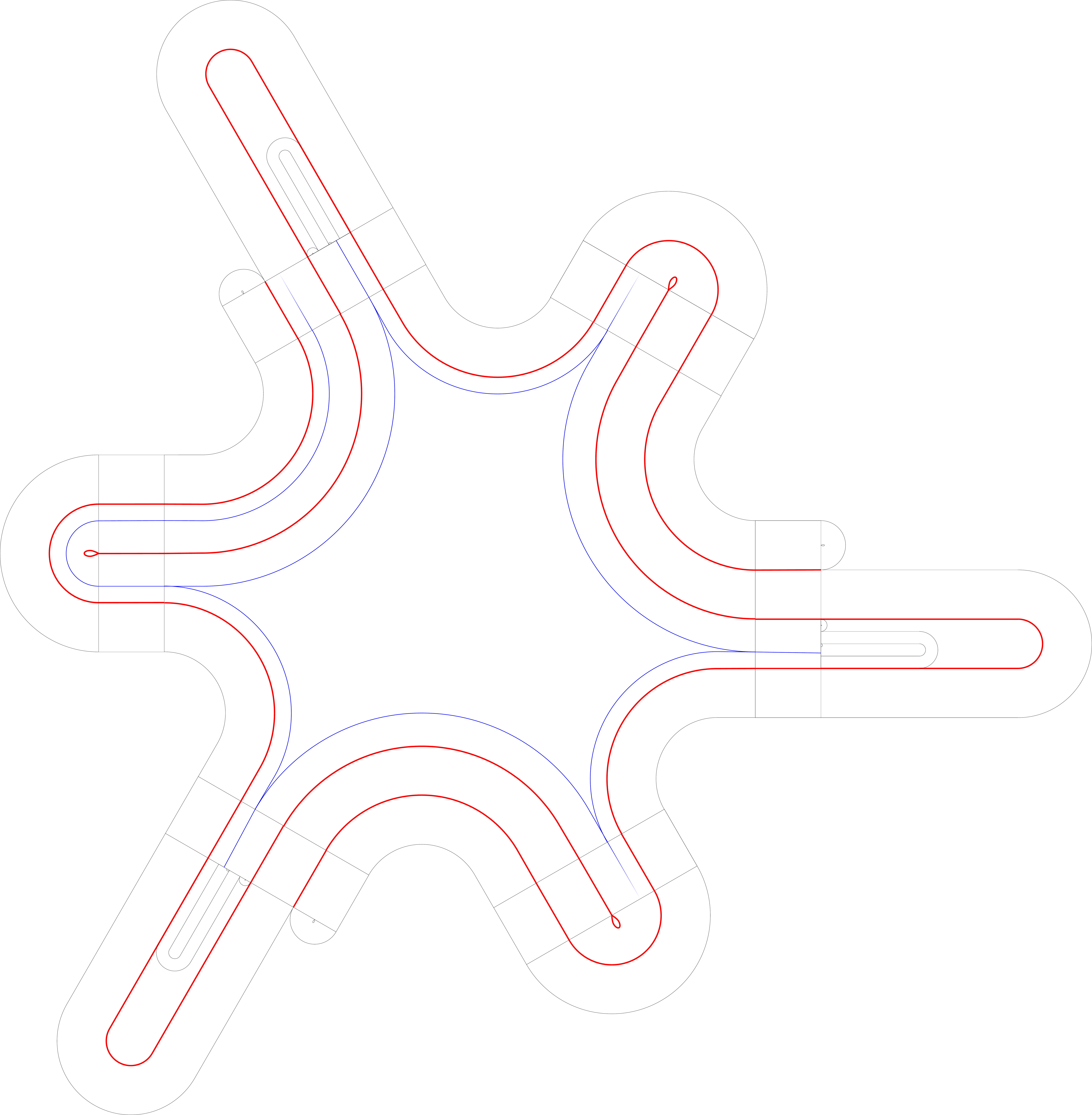}}%
    \put(0.10761939,0.40620603){\color[rgb]{0,0,0}\makebox(0,0)[lt]{\lineheight{1.25}\smash{\begin{tabular}[t]{l}$R_1$\end{tabular}}}}%
    \put(0.10761939,0.4936754){\color[rgb]{0,0,1}\makebox(0,0)[lt]{\lineheight{1.25}\smash{\begin{tabular}[t]{l}$\alpha_1$\end{tabular}}}}%
    \put(0.62777798,0.25781634){\color[rgb]{0,0,0}\makebox(0,0)[lt]{\lineheight{1.25}\smash{\begin{tabular}[t]{l}$R_3$\end{tabular}}}}%
    \put(0.49620792,0.78248177){\color[rgb]{0,0,0}\makebox(0,0)[lt]{\lineheight{1.25}\smash{\begin{tabular}[t]{l}$R_2$\end{tabular}}}}%
    \put(0.53998254,0.22111754){\color[rgb]{0,0,1}\makebox(0,0)[lt]{\lineheight{1.25}\smash{\begin{tabular}[t]{l}$\alpha_3$\end{tabular}}}}%
    \put(0.55984299,0.73175331){\color[rgb]{0,0,1}\makebox(0,0)[lt]{\lineheight{1.25}\smash{\begin{tabular}[t]{l}$\alpha_2$\end{tabular}}}}%
    \put(0,0){\includegraphics[width=\unitlength,page=2]{unzipping.pdf}}%
  \end{picture}%
\endgroup%

\caption{The singular rays $\alpha_i$ and the rectangles $R_i$ that they pass through.}
\label{fig:unzipping}
\end{figure}

For this, we unzip the foliation $G_n$ along the union of the red saddle connections shown in Figure \ref{fig:unzipping} (see \cite[Chapter 1.7]{traintracks} for the notion of unzipping, which is called \textit{splitting} there). We notice that the unzipped foliation $G_n'$ is isomorphic to $G_n$. Each rectangle has been replaced by a rectangle of $\frac{1}{4}$ the same height. Denote by $R_i$ the first rectangle which $\alpha_i$ enters (see Figure \ref{fig:unzipping}). We see by unzipping that $\alpha_i$ enters $R_{i+1}$ for each $i=1,\ldots, n$ (indices being taken modulo $n$).

The foliation $G_n'$ contains $n$ rays $\alpha_1',\ldots,\alpha_n'$ defined analogously to $\alpha_1,\ldots,\alpha_n$, which first pass through the rectangles $R_1',\ldots,R_n'$ of $G_n'$, respectively. By the isomorphism of $G_n'$ with $G_n$, we see that $\alpha_i'$ enters $R_{i+1}'$ for each $i$. However, $R_i'$ is identified with a subrectangle of $R_i$, of $\frac{1}{4}$ the height, intersecting $\alpha_i$ for each $i$. Thus we see that $\alpha_i$ enters not only $R_{i+1}$ but this subrectangle of $\frac{1}{4}$ the height intersecting $\alpha_{i+1}$. Repeating the unzipping process infinitely many times, we see that $\alpha_i$ accumulates onto $\alpha_{i+1}$ for each $i$. Consequently, each $\alpha_i$ accumulates onto $\alpha_j$ for any $j$, as desired.

Finally, since the union of saddle connections of $G_1$ is dense, we see that each boundary path of $\TP(T_n,w_n)$ accumulates onto the train path induced by some $\alpha_i$ and therefore each boundary path is also dense. We have that $\TP(T_n,w_n)$ contains $3n$ boundary paths, $2n$ of which correspond to sides of the complementary $2n$-gon of $G_n$. These $2n$ boundary paths each have one dense and one non-dense half path since each $\alpha_i$ is dense and no $\beta_i$ is dense. On the other hand, each half path of one of the $n$ remaining boundary paths of $\TP(T_n,w_n)$ accumulates onto the train path corresponding to $\alpha_i$ for some $i$, and thus is also dense. This proves Theorem \ref{thm:covertrack}.

Finally, we define the lamination $\Lambda_n$. The train track $T_n$ may be embedded on the surface $\Psi_n$. We illustrate how to do this on the surface $\Psi_2$ with four non-planar ends in Figure \ref{fig:mnftrackonsurface}. 

\begin{figure}[h]
\def\svgwidth{\textwidth}


\caption{The train track $T_2$ embedded in the surface $\Psi_2$.}
\label{fig:mnftrackonsurface}
\end{figure}

Collapsing parallel branches of $T_n$ gives a locally finite train track on $\Psi_n$ with a system of weights induced by $w_n$. We check as in Section \ref{appendix} that the train paths resulting from this locally finite train track and system of weights can be straightened to geodesics on $\Psi_n$. We verify as in Lemma \ref{lem:lamclosed} that the union is closed. This union of geodesics is the lamination $\Lambda_n$. We verify that:

\begin{itemize}
\item The lamination $\Lambda_n$ contains $n$ proper geodesics $m_1,\ldots,m_n$ (these are where the branches of $T_n$ accumulate).
\item The complementary region to $\Lambda_n$ containing the planar end $p$ is a $2n$-gon.
\item Rays to $n$ of the ends of this polygon spiral onto $\Lambda_n$ whereas rays to the other $n$ ends of the polygon each spiral onto one of the proper geodesics $m_i$.
\end{itemize}

Finally then, we form a clique of rays in $\mathcal{R}(\Psi_n,p)$ consisting of the rays to the ends of the $2n$-gon containing $p$. Exactly $n$ of these are 2-filling and the other $n$ are not 2-filling, as desired.

\section{Open questions}\label{sec: open}
Here is a list of open questions about $2$-filling rays.
\begin{enumerate}
		\item Is there a collection of infinitely many disjoint $2$-filling rays on the plane minus a Cantor set? See Question \ref{quest: infinite clique}.
		\item More generally, is there any restriction on the clique of rays disjoint from a $2$-filling ray? Can the clique contain more than one non-filling ray if the ambient surface is the plane minus a Cantor set?
		\item Does the limit set of a $2$-filling ray always contain a proper leaf? See Question \ref{quest: limit set}.
		\item Is a ray intersecting all proper geodesics necessarily high-filling? This is asked by Yan Mary He and Kasra Rafi.
		\item Which kind of geodesic laminations can appear as the limit set of some two-side approachable long ray? What about $2$-filling rays? See Question \ref{quest: lamination}.
		\item Is every $2$-filling ray disjoint from some approachable long ray? See Question \ref{quest: disjoint}.
\end{enumerate}

\section{Appendix: construction of the lamination from the train track with weights}
\label{appendix}

In this section, we verify Lemma \ref{lem:straighten}. To do this, we show that lifts of train paths on $T$ to $\tilde{\Omega}$ are uniform-quality quasi-geodesics. 

We consider again the middle of Figure \ref{Tembedded}. The blue curves pictured divide $\Omega$ into a sphere $V$ with three boundary components and the puncture $\infty$ as well as infinitely many spheres with three boundary components. We denote the spheres with three boundary components by $U_i, i\in \Z$. The numbering is chosen such that if we consider the bi-infinite sequence \[\ldots U_{-2}, U_{-1}, V, U_0, U_1,  U_2,\ldots\] then each surface in the sequence is joined to each of the adjacent surfaces by a boundary component.

Notice that $T$ intersects each three-holed sphere $U_i$ with $i\neq 0$ in the same subtrack. 
Each $U_i$ is endowed with an isometric hyperbolic metric with boundary components of length one. In each $U_i$ with $i\neq 0$ we isotope $T$ so that there is an isometry $U_i\to U_j$ for each $i,j\neq 0$ taking the intersection $T\cap U_i$ to $T\cap U_j$. The components $U_0$ and $V$ are also equipped with hyperbolic metrics with boundary components of length one. We glue the surfaces $U_i$ and $V$ together by isometries along the boundary components such that the tracks $U_i \cap T$ and $V \cap T$ glue together to give $T$. Although a gluing was fixed at the beginning of the paper, any choices of gluing give quasi-isometric surfaces, so whether or not train paths are uniform-quality quasi-geodesics does not depend on the choice of gluing.

Note that there are exactly 17 possible train paths through the train track $T\cap V$ (up to possibly changing the orientation of the path). Eight of these join one of the boundary components of $V$ to itself. They are homotopic, keeping the endpoints on the boundary, to the eight paths drawn in Figure \ref{trainpaths}. The other nine possible train paths join one boundary component of $V$ to another. In particular we note the following by inspection: \textit{no train path in $T\cap V$ is homotopic into $\partial V$}.

\begin{figure}[h]

\begin{tabular}{l l}
\def\svgwidth{0.4\textwidth}
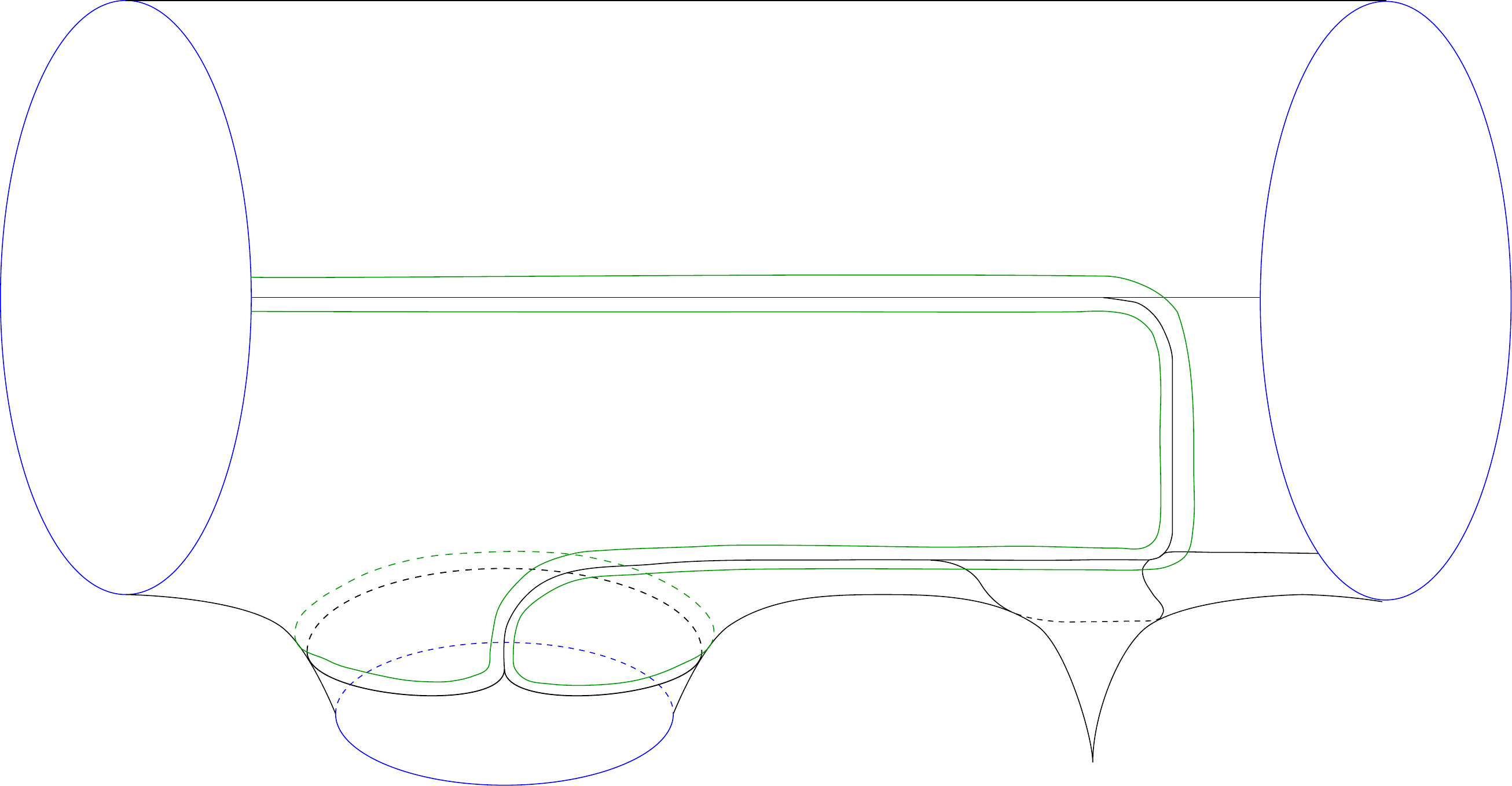 &

\def\svgwidth{0.4\textwidth}
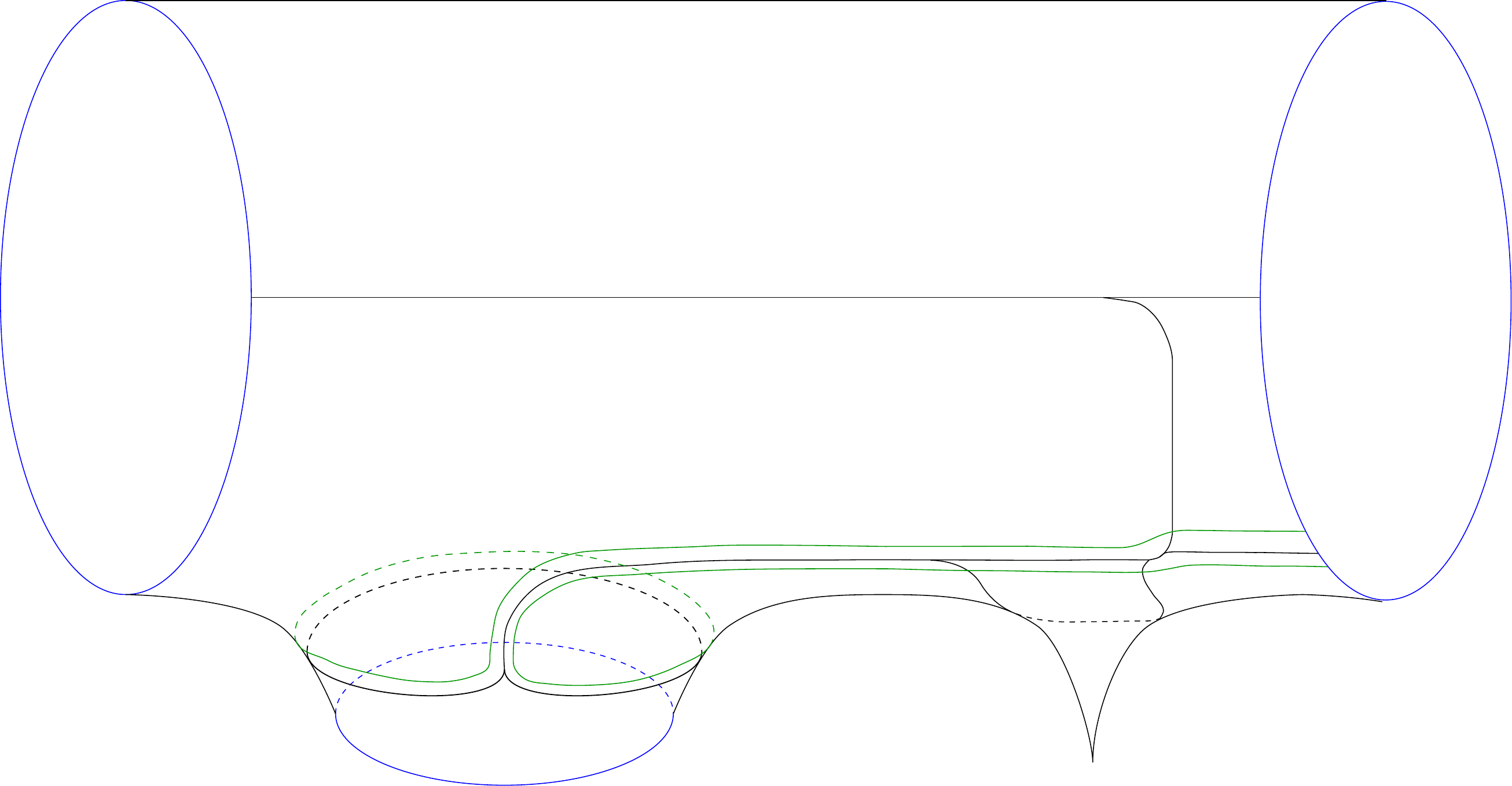 \\

\def\svgwidth{0.4\textwidth}
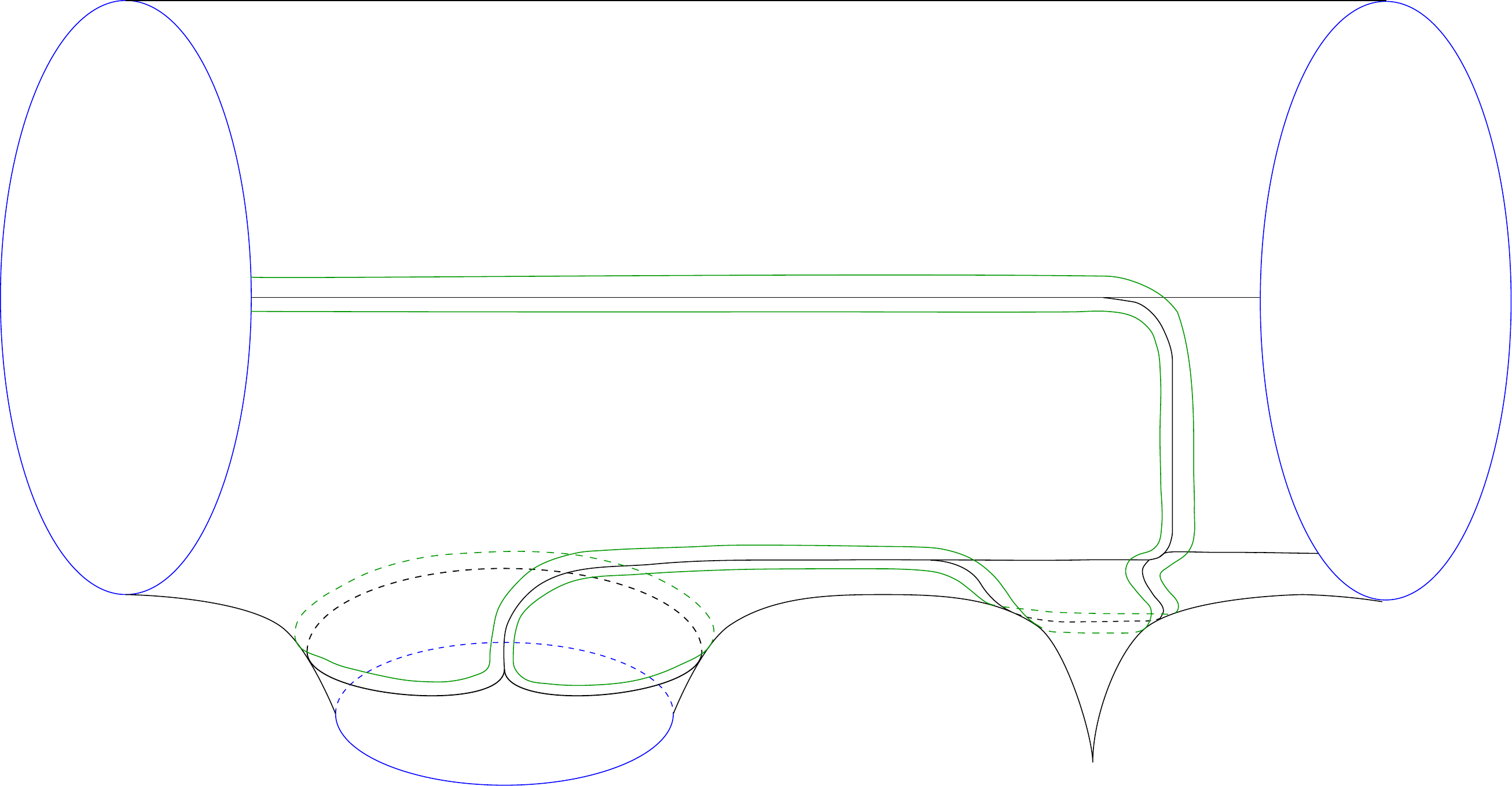 &

\def\svgwidth{0.4\textwidth}
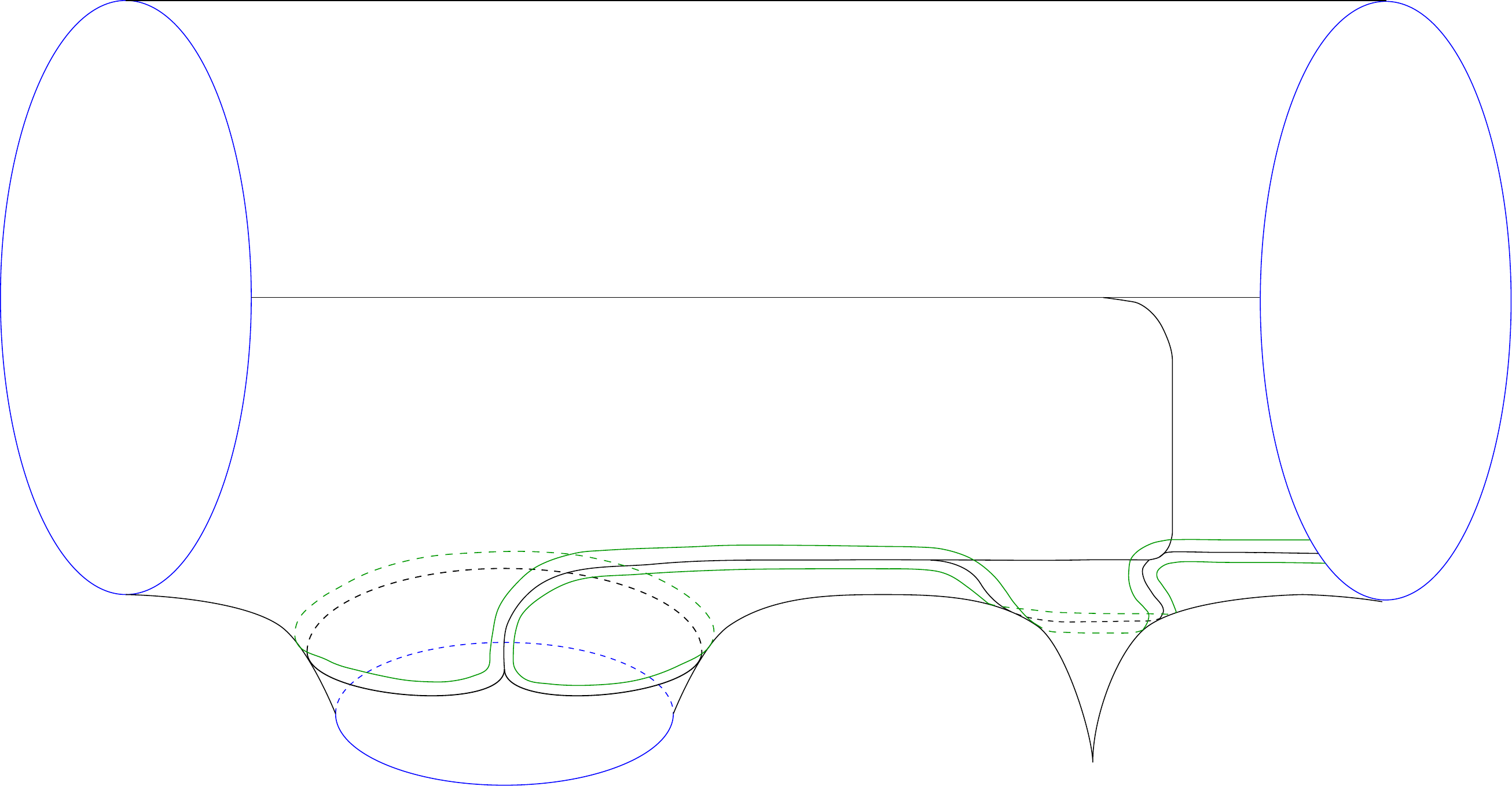 \\

\def\svgwidth{0.4\textwidth}
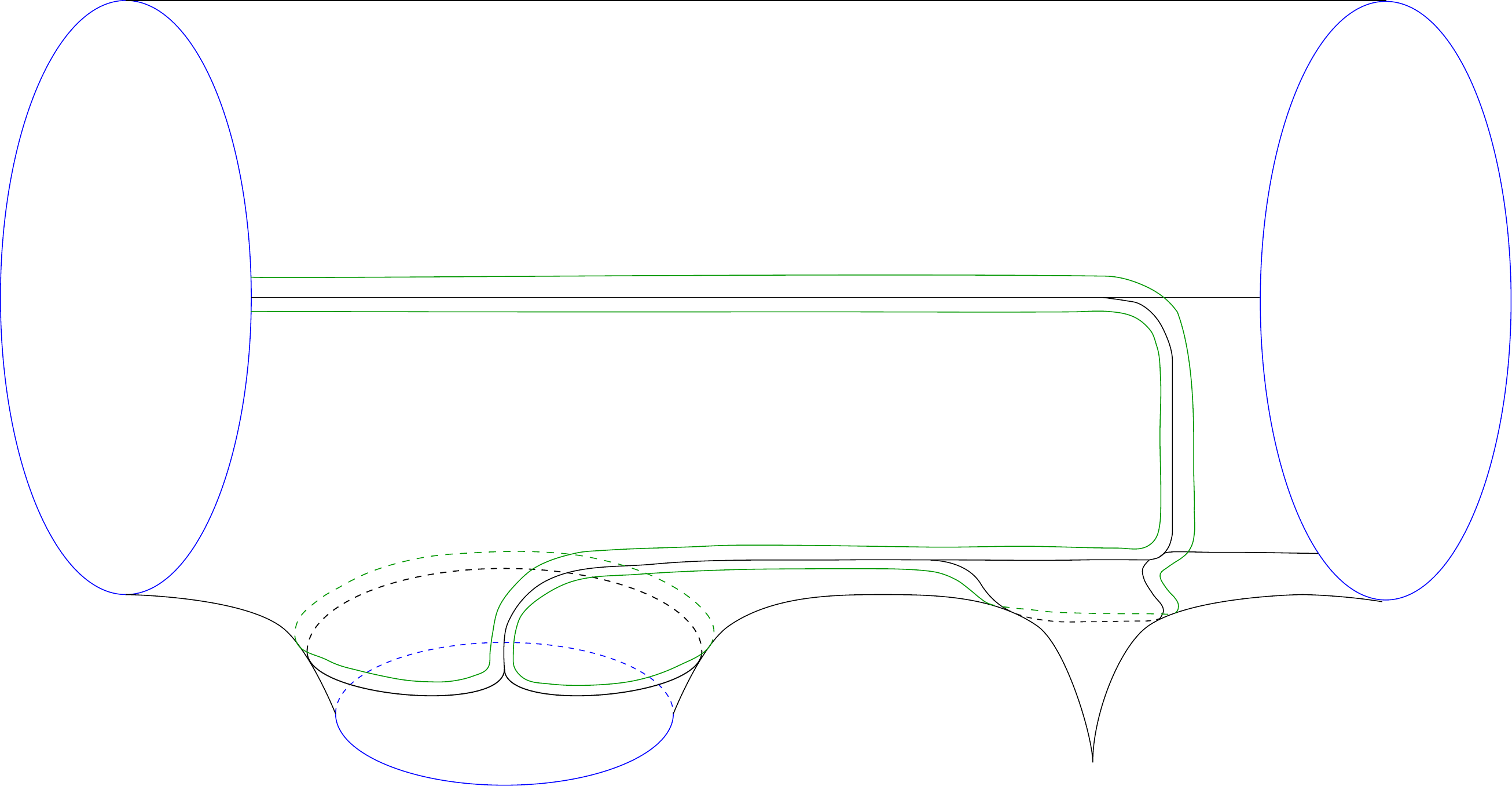 &

\def\svgwidth{0.4\textwidth}
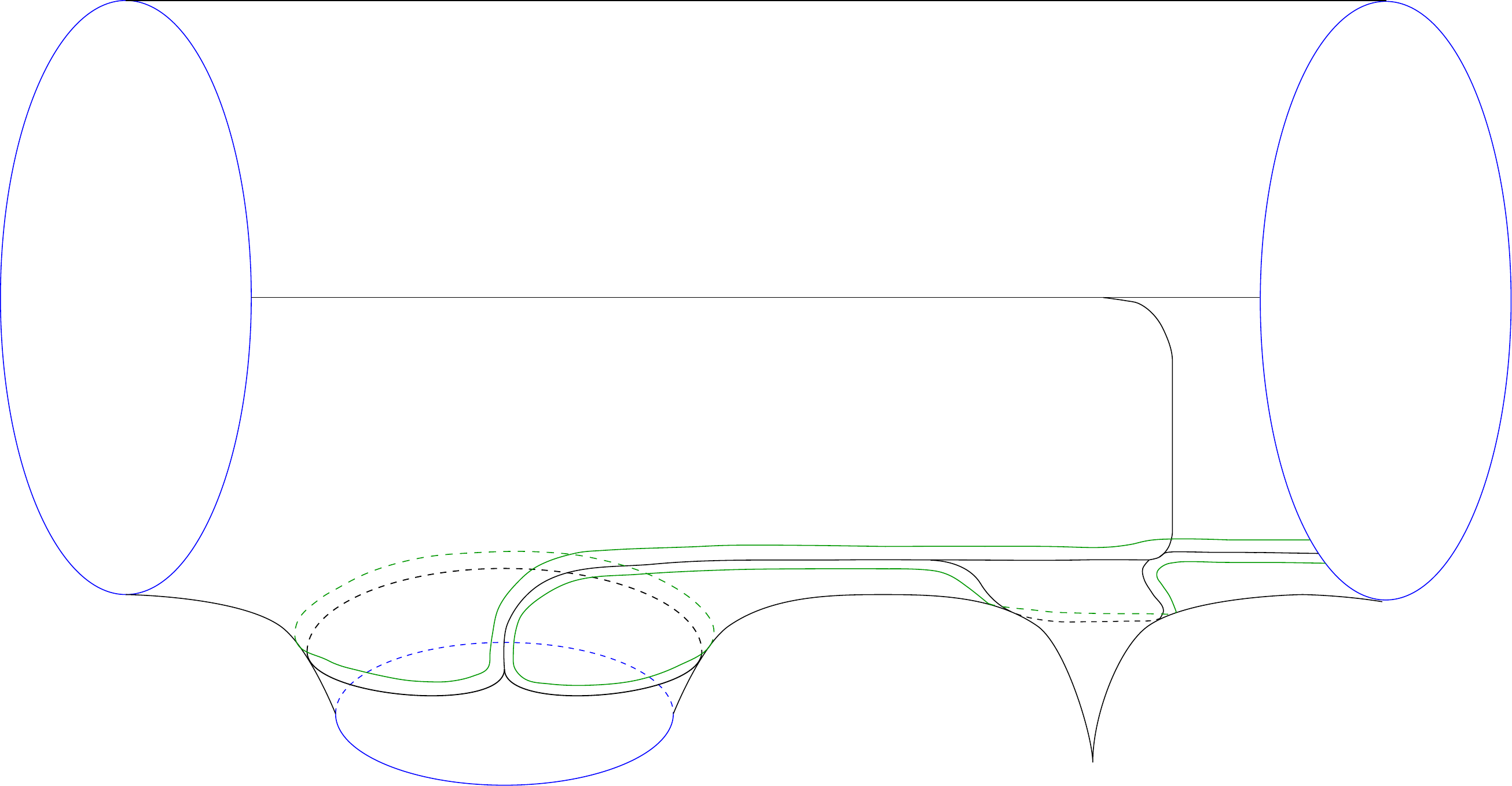 \\

\def\svgwidth{0.4\textwidth}
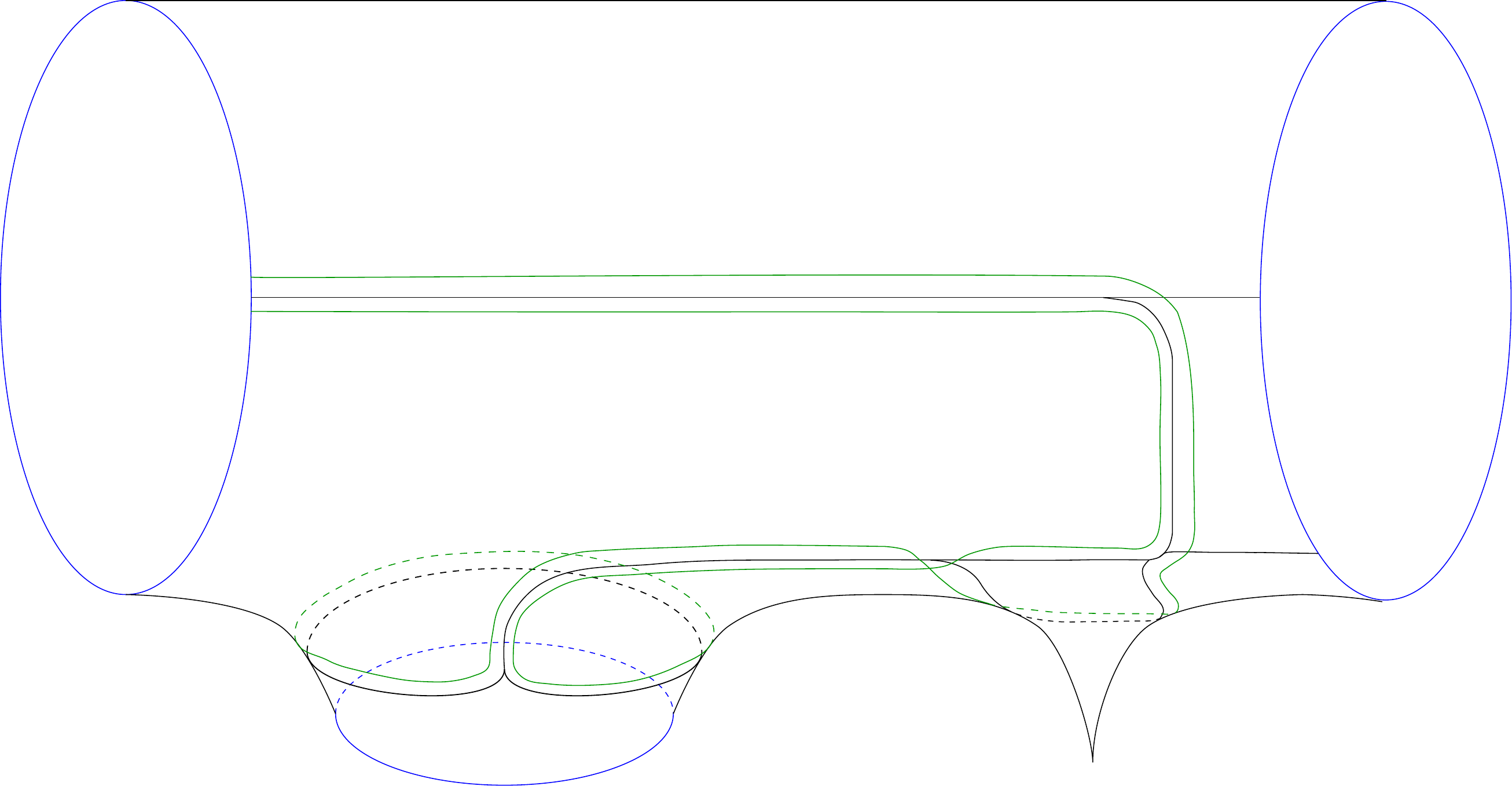 &

\def\svgwidth{0.4\textwidth}
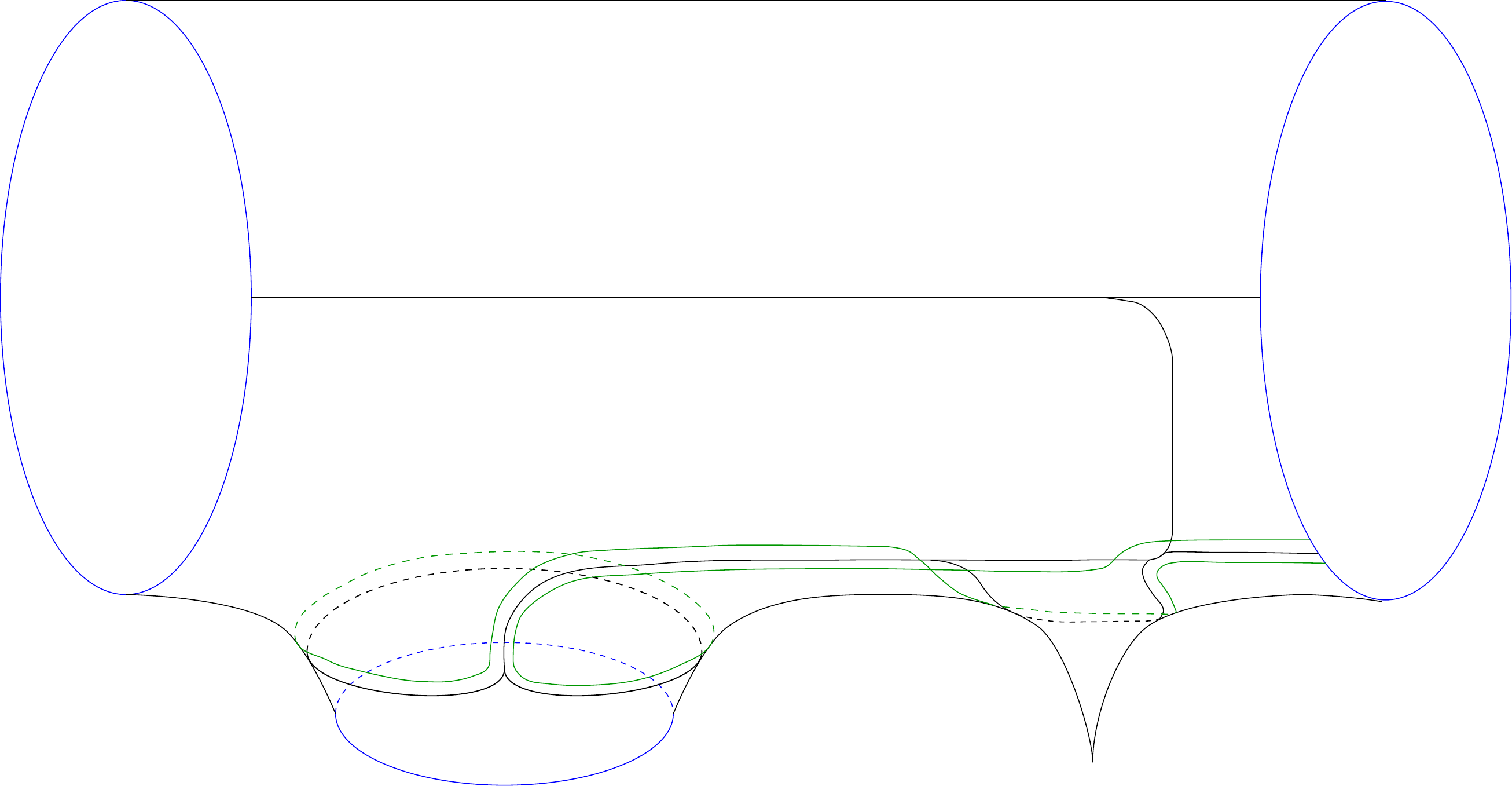
\end{tabular}

\caption{The eight possible train paths through $V$ joining a boundary component to itself.}\label{trainpaths}
\end{figure}

Similarly we analyze train paths through the train tracks with stops $T\cap U_i$. For each $i$, there are three possible train paths through $T\cap U_i$ up to changing the orientation of the path, none of which is homotopic into $\partial U_i$.

These facts imply the following. For any train path $t\in \TP(T,w)$, $t$ consists of a concatenation \[t=\ldots t_{-1} t_0 t_1 \ldots\] where each $t_i$ is a train path through $W_i\cap T$ where $W_i \in \{U_j\}_{j\in \Z}\cup \{V\}$, with endpoints on the boundary. The choice of metric on the pieces of $\Omega$, the isotopic representative chosen for $T$, and the upper bound of 17 on the number of train paths in $T\cap W_i$ imply that there is an upper bound $\kappa$ on the length of each segment $t_i$. Furthermore, no $t_i$ is homotopic into $\partial W_i$. Let \[\tilde{t}=\ldots \tilde{t}_{-1} \tilde{t}_0 \tilde{t}_1 \ldots\] be a lift of $t$ to $\tilde{\Omega}$, where each $\tilde{t}_i$ covers the segment $t_i$. Let $\mathcal{L}$ be the collection of the lifts of the curves $C_i$ on $\Omega$ to $\tilde{\Omega}$. Then there is a lower bound $\eta$ on the distance between any distinct $L_1,L_2\in \mathcal{L}$.

Consider a subpath $s$ of $\tilde{t}$ of length $D$. Then $s$ contains at least $\lfloor D/\kappa-1\rfloor\geq D/\kappa-2$ segments $\tilde{t}_i$. By the fact that no $t_i$ is homotopic into $\partial W_i$, $s$ crosses at least $D/\kappa-2$ distinct elements of $\mathcal{L}$ each of which is distance at least $\eta$ from the next. Hence the distance between the endpoints of $s$ is at least \[(D/\kappa-2)\eta=D\eta/\kappa-2\eta.\] This proves that $\tilde{t}$ is a $(\kappa/\eta,2\eta)$-quasi-geodesic.

\bibliographystyle{plain}
\bibliography{2filling}

\begin{thebibliography}{10}

\bibitem{ray}
Juliette Bavard.
\newblock Hyperbolicit\'{e} du graphe des rayons et quasi-morphismes sur un
  gros groupe modulaire.
\newblock {\em Geometry \& Topology}, 29:491--535, 2016.

\bibitem{infclique}
Juliette Bavard.
\newblock An infinite clique of high-filling rays in the plane minus a cantor
  set.
\newblock arXiv:2106.01962, 2021.

\bibitem{Bavard_Walker}
Juliette Bavard and Alden Walker.
\newblock The gromov boundary of the ray graph.
\newblock {\em Transactions of the American Mathematical Society},
  370(11):7647--7678, 2018.

\bibitem{simultaneous}
Juliette Bavard and Alden Walker.
\newblock Two simultaneous actions of big mapping class groups.
\newblock arXiv:1806.10272, 2018.

\bibitem{bonahon}
Francis Bonahon.
\newblock Closed curves on surfaces.
\newblock Available at https://dornsife.usc.edu/francis-bonahon/preprints/,
  2000.

\bibitem{calblog}
Danny Calegari.
\newblock Big mapping class groups and dynamics, 2009.
\newblock
  https://lamington.wordpress.com/2009/06/22/big-mapping-class-groups-and-dynamics/.

\bibitem{calegari_chen}
Danny Calegari and Lvzhou Chen.
\newblock Big mapping class groups and rigidity of the simple circle.
\newblock {\em Ergodic Theory and Dynamical Systems}, pages 1--27, 2020.

\bibitem{generalized}
Andr\'{e} de~Carvalho and Toby Hall.
\newblock Unimodal generalized pseudo-anosov maps.
\newblock {\em Geometry \& Topology}, 8, 2004.

\bibitem{primer}
Benson Farb and Dan Margalit.
\newblock {\em A Primer on Mapping Class Groups}.
\newblock Princeton Mathematical Series. Princeton University Press, 2012.

\bibitem{flp}
Albert Fathi, Fran\c{c}ois Laudenbach, and Valentin Po\'{e}naru.
\newblock {\em Thurston's Work on Surfaces}.
\newblock Princeton University Press, 2012.

\bibitem{kla}
Erica Klarreich.
\newblock The boundary at infinity of the curve complex and the relative
  teichm\"{u}ller space.
\newblock arXiv:1803.10339, 1999.

\bibitem{moore}
R.L. Moore.
\newblock Concerning upper semi-continuous collections of continua.
\newblock {\em Transactions of the American Mathematical Society}, 27, 1925.

\bibitem{traintracks}
R.C. Penner and J.L. Harer.
\newblock {\em Combinatorics of Train Tracks}.
\newblock Annals of Mathematics Studies. Princeton University Press, 1992.

\bibitem{sc}
Anschel Schaffer-Cohen.
\newblock Graphs of curves and arcs quasi-isometric to big mapping class
  groups.
\newblock arXiv:2006.14760, 2020.

\end{thebibliography}

\end{document}